\documentclass[a4paper]{amsart}

\pdfoutput=1

\usepackage{lmodern}
\usepackage{bbm}
\usepackage{amsmath,amssymb,amsthm,url,scalerel}
\usepackage[utf8]{inputenc}
\usepackage[T1]{fontenc}
\usepackage{libertine}
\usepackage[libertine,cmintegrals,cmbraces]{newtxmath}
\usepackage{verbatim}
\usepackage[shortlabels]{enumitem}
\usepackage{stmaryrd}
\usepackage{mathtools}
\usepackage{microtype}
\usepackage{multicol}
\usepackage{xcolor}
\definecolor{darkred}{RGB}{139,0,0}
\definecolor{darkblue}{RGB}{0,0,139}
\definecolor{darkgreen}{RGB}{0,100,0}
\usepackage{hyperref}
\hypersetup{
  colorlinks   = true, 
  urlcolor     = darkred, 
  linkcolor    = darkred, 
  citecolor   = darkgreen}
\urlstyle{sf}
\usepackage{tikz}
\usepackage{tikz-cd}
\usepackage{sseq}
\usepackage[enableskew]{youngtab}
\usepackage{ytableau}
\usepackage[capitalise]{cleveref}
\usepackage{nicefrac}
\usepackage{faktor}
\usepackage{dutchcal}

\AtBeginDocument{%
   \def\MR#1{}
}
\setenumerate[1]{label=(\roman*),} 
\setitemize[1]{label=\raisebox{0.25ex}{\tiny$\bullet$}}

\DeclareMathAlphabet{\mathpzc}{OT1}{pzc}{m}{it}

\newcommand{\Psh}{\mathrm{Psh}}
\newcommand{\Fun}{\operatorname{Fun}}
\newcommand{\Conf}{\mathrm{Conf}}
\newcommand{\Grp}{\mathrm{Grp}}
\newcommand{\Pro}{\mathrm{Pro}}
\newcommand{\fin}{\mathrm{fin}}
\newcommand{\Spin}{\mathrm{Spin}}



\hyphenation{Thurs-ton}
\hyphenation{sur-ger-y}





\newcommand{\Manf}{\mathrm{Manf}}



\newcommand{\numset}[1]{\mathbb{#1}}
\newcommand{\N}{\numset{N}}




\DeclareMathOperator{\Hom}{Hom} 


\DeclareMathOperator{\op}{op}

\makeatletter
\newcommand{\colim@}[2]{%
  \vtop{\m@th\ialign{##\cr
    \hfil$#1\operator@font colim$\hfil\cr
    \noalign{\nointerlineskip\kern1.5\ex@}#2\cr
    \noalign{\nointerlineskip\kern-\ex@}\cr}}%
}
\newcommand{\colim}{%
  \mathop{\mathpalette\colim@{\rightarrowfill@\textstyle}}\nmlimits@
}
\makeatother








\newcommand*{\defeq}{\mathrel{\vcenter{\baselineskip0.5ex \lineskiplimit0pt
                     \hbox{\scriptsize.}\hbox{\scriptsize.}}}%
                     =}




\newcommand{\Disk}{\operatorname{Disc}}

\newcommand{\Map}{\operatorname{Map}}
\newcommand{\Fr}{\operatorname{Fr}}
\newcommand{\Alg}{\operatorname{Alg}_{E_1}}
\newcommand{\Mat}{\operatorname{Mat}}
\newcommand{\hoAut}{\operatorname{hoAut}}
\newcommand{\coker}{\operatorname{coker}}

\newcommand{\fib}{\mathrm{fib}}

\newcommand{\Cat}{\mathrm{Cat}}
\newcommand{\Aut}{\operatorname{Aut}}


\newcommand*\circled[1]{\tikz[baseline=(char.base)]{
            \node[shape=circle,draw,inner sep=0.5pt] (char) {#1};}}


\tikzset{
math to/.tip={Glyph[glyph math command=rightarrow]},
loop/.tip={Glyph[glyph math command=looparrowleft, swap]},
weird/.tip={Glyph[glyph math command=Rrightarrow, glyph length=1.5ex]},
pi/.tip={Glyph[glyph math command=pi, glyph length=1.5ex, glyph axis=0pt]},
}

\DeclareMathOperator{\Int}{Int} 

\DeclareMathOperator{\Homeo}{Homeo}

\DeclareMathOperator{\Diff}{Diff}
\DeclareMathOperator{\Emb}{Emb}



\newtheorem{theorem}{Theorem}[section]

\newtheorem{corollary}[theorem]{Corollary}

\newtheorem{proposition}[theorem]{Proposition}
\newtheorem{lemma}[theorem]{Lemma}

\newtheorem{thmx}{Theorem}

\theoremstyle{definition}

\newtheorem{definition}[theorem]{Definition}
\newtheorem{example}[theorem]{Example}
\newtheorem{question}[theorem]{Question}

\newtheorem{construction}[theorem]{Construction}

\theoremstyle{remark}
\newtheorem{remark}[theorem]{Remark}

\setcounter{tocdepth}{1}


\begin{document}
\author{Fadi Mezher}
\address{Department of Mathematical Sciences, University of Copenhagen, 2100 Copenhagen, Denmark.}
\email{fm@math.ku.dk}


\title{Residual finiteness of some automorphism groups of high dimensional manifolds}

\begin{abstract}
    We show that for a smooth, closed 2-connected manifold $M$ of dimension $d \geq 6$, the topological mapping class group $\pi_0 \Homeo(M)$ is residually finite, in contrast to the situation for the smooth mapping class group $\pi_0 \Diff(M)$. Combined with a result of Sullivan, this implies that $\pi_0 \Homeo(M)$ is an arithmetic group. The proof uses embedding calculus, and is of independent interest: we show that the $T_k$-mapping class group, $\pi_0 T_k \Diff(M)$, is residually finite, for all $k \in \mathbb{N}$. The statement on the topological mapping class group is then deduced from the Weiss fibre sequence, convergence of the embedding calculus tower and smoothing theory.
\end{abstract}
\maketitle

\section{Introduction}
In his seminal work \cite{Sullivan-infinitesimal}, Sullivan shows, among other things, a structural result on mapping class groups of smooth manifolds. Combining surgery theory and rational homotopy theory, Sullivan shows that, for $M$ a closed, simply connected manifold of dimension $d \geq 5$, $\pi_0 \Diff(M)$, the group of isotopy classes of diffeomorphisms of $M$, is \emph{commensurable up to finite kernel} to an arithmetic group (see §\ref{section:arithmeticity} for more details on this equivalence relation).

The equivalence relation of commensurability up to finite kernel differs from the classical notion of commensurability, and studying this difference is initiated in \cite{Manuel-Oscar-arithmetic}. Among groups commensurable up to finite kernel to an arithmetic group, being arithmetic is equivalent to another classical group theoretic notion, that of \emph{residual finiteness}. A group $G$ is said to be residually finite if its finite residual $\mathrm{fr}(G)$, the intersection of all finite index normal subgroups of $G$, is the trivial group; see \cref{def:resfinite} for further details. Building on an example of Deligne of a $\mathbb{Z}$-extension of the symplectic group $\mathrm{Sp}_{2g}(\mathbb{Z})$ which is not residually finite, Krannich and Randal-Williams show that not all smooth mapping class groups of high dimensional manifolds are residually finite in \cite{Manuel-Oscar-arithmetic}. They consider the following example: let $W_g^n \defeq \#_g (S^n \times S^n)$ where we assume $g \geq 5$, and fix an embedding $D^{2n} \hookrightarrow W_g^n$. Extension by the identity yields a group homomorphism
$$\pi_0 \Diff_\partial (D^{2n}) \to \pi_0 \Diff(W_g^n)$$
where the domain may be identified with the group of exotic spheres $\Theta_{2n+1}$. Whenever $n = 5\pmod{8}$, Krannich and Randal-Williams show that the subgroup $\mathrm{bP}_{2n+2}$ of exotic spheres bounding a parallelisable compact manifold embeds into $\mathrm{fr}(\pi_0 \Diff(W_g^n))$. For these values of $n$, $\mathrm{bP}_{2n+2}$ is known to be non-trivial \cite{Kervaire-Milnor}; this thus yields a family of examples of closed smooth manifolds with non-residually finite smooth mapping class groups.
\subsection*{Main results}
Given the smooth nature of the above counterexamples, one may wonder what happens in the category of topological manifolds. The following three theorems constitute three of the main results of this work
\begin{thmx}\label{thmA}
    Let $M$ be a smoothable, closed 2-connected topological manifold of dimension $d \geq 6$. Then, $\pi_0 \Homeo(M)$ and $\pi_0 \Homeo^+ (M)$ are residually finite groups. For $W$ a smoothable, 2-connected compact manifold of dimension $d \geq 5$, such that $\partial W \neq \varnothing$, then $\pi_0 \Homeo_\partial (W)$ is residually finite.
\end{thmx}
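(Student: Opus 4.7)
The plan is to execute the two-step strategy foreshadowed in the abstract. The first move is to reduce to the bounded case: for closed $M$, one removes a disk and uses the isotopy-extension fibration relating $\Homeo(M)$ to $\Homeo_\partial(M \smallsetminus \topint D^d)$, with fibre the space of topological locally flat embeddings $D^d \hookrightarrow M$, whose low-dimensional homotopy is controlled by finite groups (the $\pi_0$ and $\pi_1$ of the frame bundle of $M$ and of $\TOP(d)$). The orientation-preserving variant is then an index-two subgroup and follows automatically. It thus suffices to prove residual finiteness of $\pi_0 \Homeo_\partial(W)$ for a smoothable compact $2$-connected $W$ of dimension $\geq 5$ with $\partial W \neq \varnothing$. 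Since the Krannich--Randal-Williams examples preclude proving this directly through $\pi_0 \Diff_\partial(W)$, one must approximate $\Diff_\partial(W)$ by the embedding calculus tower and compare to $\Homeo_\partial(W)$ via smoothing theory.

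The central technical input is that the $T_k$-mapping class group $\pi_0 T_k \Diff_\partial(W)$ is residually finite for every $k \in \N$, which I would prove by induction on $k$. The base $k=1$ is a tangential-data computation controlled by homotopy self-equivalences of the tangent microbundle rel $\partial$, yielding a subgroup of an arithmetic group and hence residually finite. For the inductive step, one analyses the homotopy fibre $L_k$ of $T_k \Diff_\partial(W) \to T_{k-1}\Diff_\partial(W)$: using $2$-connectedness of $W$, the relevant $\pi_0$ of $L_k$ should be a finitely generated abelian group carrying an action of the previous stage that factors through an arithmetic quotient, and residual finiteness then propagates through the layer extension. The hard part is exactly this step: residual finiteness is not preserved by general extensions (cf.\ the Deligne $\Z$-central extension of $\mathrm{Sp}_{2g}(\Z)$ recalled in the introduction), so one must extract from the embedding calculus layers the precise arithmetic structure that rules out such phenomena, uniformly in $k$.

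Given residual finiteness at every finite stage, convergence of the embedding calculus tower in the sense of Goodwillie--Klein--Weiss applies because $W$ is $2$-connected of dimension $\geq 5$: the map $\Diff_\partial(W) \to T_\infty \Diff_\partial(W) = \lim_k T_k \Diff_\partial(W)$ is sufficiently highly connected that one gets control of $\pi_0$ in terms of the inverse limit of residually finite groups. Finally, the Weiss fibre sequence together with smoothing theory (in this dimension the relative space $\TOP/O$ has finitely generated homotopy groups by Kirby--Siebenmann, yielding a section-space model with residually finite $\pi_0$) realises $\pi_0 \Homeo_\partial(W)$ as a controlled extension built from $\pi_0$-data on the $\Diff_\partial$-side and finite-group data on the $\TOP/O$-side. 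A long exact sequence argument and closure of residual finiteness under these specific, well-behaved extensions then yields the theorem, with the inductive layer analysis of the previous paragraph being the sole genuine obstacle.
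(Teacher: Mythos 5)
The framing steps at the two ends of your argument are sound and do align with the paper: the reduction from $\pi_0\Homeo(M)$ to $\pi_0\Homeo_\partial(M^\circ)$ via the isotopy-extension fibre sequence over $\Emb^{\mathrm{Top}}(D^d,M)\simeq\Fr^{\mathrm{Top}}(M)$, the use of the Weiss fibre sequence and the Alexander trick to identify $B\Homeo_\partial(M^\circ)\simeq B\Emb_{\partial/2}^{\mathrm{Top},\cong}(M^\circ,M^\circ)$, and the smoothing-theoretic control on the fibre $\Gamma_{\partial/2}(\xi_M)$ (finite $\pi_0$ and finite $\pi_1$ on each component) followed by the two elementary closure lemmas (\cref{lemma:quotient-resfinite}, \cref{lemma:finite-index-resfin}). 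Convergence of embedding calculus rel half the boundary, together with the vanishing of $\lim^1$ via increasing connectivity of the layers, also matches.

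But the centre of your proposal --- proving residual finiteness of $\pi_0 T_k\Diff_\partial(W)$ by induction on $k$ via the layer extensions $L_k\to T_k\to T_{k-1}$ --- is not the route the paper takes, and it is not a route that works. You correctly flag the problem yourself: residual finiteness is not closed under extensions, as the Deligne example already shows, and you concede that extracting ``the precise arithmetic structure that rules out such phenomena, uniformly in $k$'' is ``the sole genuine obstacle.'' That obstacle is, in effect, the whole theorem; there is no known arithmetic structure on the $T_k$-layers that directly propagates residual finiteness. The paper sidesteps the layer induction entirely. Its proof of \cref{thmC} is a generalization of the Serre--Sullivan argument for $\pi_0\hoAut(X)$ of a simply connected finite complex: one completes the disc-presheaf $E_M$ (or $E_M^{\Spin}$, to kill $\pi_1$ of the evaluations) pointwise to $\Phi^s E_M$, shows via \cref{thm:profinite} that $\pi_0\Map^{\simeq}(\widehat{E_M},\widehat{E_M})$ is profinite, and then reduces residual finiteness to $\pi_0$-injectivity of composition with $E_M\to\Phi^s E_M$ (\cref{thm:injective-pi_0}). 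That injectivity is proved by equivariant obstruction theory on a \emph{restricted Postnikov tower} of the presheaf, using the Reedy/recollement pullback square (\cref{thm:Reedy-pullback}) to translate lifting problems in $\Psh(\Disk_d^{\leq k,\Spin})$ into $G_k$-equivariant lifting problems for the free, finite $G_k$-CW pair $(E_M^{\Spin}(D_k),\partial E_M^{\Spin}(D_k))$, where $G_k=\mathfrak{S}_k\ltimes\Spin(d)^k$. The arithmetic input is concentrated in the comparison of cokernels in equivariant cohomology, which is handled by exactness of $\Phi^g$ on finitely generated nilpotent groups (\cref{thm:Hilton-Roitberg}, \cref{lemma:long-exact}) and by the near-preservation of finite limits by $\Phi^s$ on nilpotent finite-type spaces (\cref{thm:finite-limit-finite-completion}, \cref{cor:finite-limits-finite-completion}). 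The induction that occurs is over the Postnikov degree $n$ within a fixed $k$, not over $k$. Your base case $k=1$ is also described differently than in the paper: it is not shown by identifying an arithmetic group directly, but by the same Sullivan-style obstruction theory, made $\Spin(d)$-equivariant using the finite free $\Spin(d)$-CW structure of $\Fr^{\Spin}(M)$. So the proposal is missing the decisive ideas: pointwise profinite completion of disc-presheaves, the $\pi_0$-injectivity criterion, the restricted Postnikov tower, and the exactness/near-limit-preservation toolkit that replaces any attempt to propagate residual finiteness through layer extensions.
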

This is shown in the text as \cref{thm:homeo-res-finite} for the closed case, and \cref{thm:relbdryresfinite} for the boundary case. As a corollary, we obtain the following (in the text \cref{thm:homeo-arithmetic})
\begin{thmx}\label{thmB}
    Let $M$ be a smoothable, closed 2-connected topological manifold of dimension $d \geq 6$. Then, $\pi_0 \Homeo(M)$ and $\pi_0 \Homeo^+ (M)$ are arithmetic groups.
\end{thmx}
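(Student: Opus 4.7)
The plan is to combine Theorem~\ref{thmA} with Sullivan's theorem, using smoothing theory as the bridge between the smooth and topological settings. Since $M$ is smoothable, fix a smooth structure on it. Because $M$ is simply connected (being $2$-connected) of dimension $d \geq 6 \geq 5$, Sullivan's theorem applies and $\pi_0 \Diff(M)$ is commensurable up to finite kernel to an arithmetic group. The goal is then to transfer this structural statement from $\pi_0 \Diff(M)$ to $\pi_0 \Homeo(M)$, and finally to combine it with Theorem~\ref{thmA}.

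For this transfer, I would invoke smoothing theory to compare $\pi_0 \Diff(M)$ and $\pi_0 \Homeo(M)$. The homotopy fibre of the forgetful map $B\Diff(M) \to B\Homeo(M)$ is identified with a space of sections $\Gamma$ of a bundle over $M$ with fibre $\TOP/\mathrm{O}$. The associated long exact sequence of homotopy groups yields
\[
\pi_1(\Gamma) \to \pi_0 \Diff(M) \to \pi_0 \Homeo(M) \to \pi_0(\Gamma).
\]
Since $\pi_k(\TOP/\mathrm{O})$ is finite for every $k$ (by Kirby--Siebenmann together with Kervaire--Milnor's finiteness of the groups of exotic spheres) and $M$ is a finite CW complex, a Moore--Postnikov / obstruction-theoretic argument shows that $\pi_k(\Gamma)$ is finite for every $k$. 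In particular the kernel and cokernel of the map $\pi_0 \Diff(M) \to \pi_0 \Homeo(M)$ are finite, so this map witnesses $\pi_0 \Homeo(M)$ as commensurable up to finite kernel to $\pi_0 \Diff(M)$, and hence to an arithmetic group.

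Once this is established, the conclusion is immediate from Theorem~\ref{thmA}: $\pi_0 \Homeo(M)$ is residually finite, and the characterisation recalled in the introduction---namely that within the class of groups commensurable up to finite kernel to arithmetic groups, being arithmetic is equivalent to being residually finite---forces $\pi_0 \Homeo(M)$ itself to be arithmetic. The statement for $\pi_0 \Homeo^+(M)$ then follows because it is either all of $\pi_0 \Homeo(M)$ or an index-two subgroup, and arithmeticity is preserved under passage to finite-index subgroups.

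The main obstacle I would expect is the bookkeeping in the smoothing-theoretic step: one must ensure that the identification of the homotopy fibre is set up correctly and that the relevant finiteness of section-space homotopy groups really yields a finite kernel and cokernel on $\pi_0$. The inputs---finiteness of $\pi_\ast(\TOP/\mathrm{O})$ and compactness of $M$---are classical, but the compatibility with the precise form of commensurability up to finite kernel used in the introduction requires care. By contrast, the reduction to Theorem~\ref{thmA} and Sullivan's theorem is formal once this transfer is in hand.
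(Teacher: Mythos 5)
Your proposal follows essentially the same route as the paper: Sullivan's theorem for the smooth mapping class group, then Kirby--Siebenmann smoothing theory to transfer commensurability up to finite kernel from $\pi_0\Diff(M)$ to $\pi_0\Homeo(M)$, and finally Theorem~\ref{thmA} combined with the observation that residual finiteness upgrades commensurability up to finite kernel to genuine commensurability, hence arithmeticity. One small imprecision: the fibre of $B\Diff(M)\to B\Homeo(M)$ is a union of components of a section space of a bundle with fibre $\mathrm{Top}(d)/O(d)$, not the stable quotient $\mathrm{Top}/\mathrm{O}$; this makes no difference to the argument since $\mathrm{Top}(d)/O(d)\to\mathrm{Top}/\mathrm{O}$ is $(d+2)$-connected for $d\geq 5$, so $\pi_k(\mathrm{Top}(d)/O(d))$ is still finite in the range needed, and you only need finiteness of $\pi_0$ and $\pi_1$ of the section space for the commensurability statement.
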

We furthermore show that the failure of residual finiteness of smooth mapping class groups is entirely due to exotic spheres, as occurring in Krannich and Randal-Williams' example. Indeed, we show the following (as \cref{thm:fin-res-smooth} in the text)
\begin{thmx}\label{thm:new}
    Let $M$ be a smooth, closed 2-connected manifold of dimension $d \geq 6$, and fix an embedded disc $D^d \subset M$. Then,
    \begin{enumerate}[(i)]
        \item $\mathrm{fr}(\pi_0 \Diff^+ (M)) \subseteq \mathrm{Im}(\pi_0 \Diff_\partial(D^d) \to \pi_0 \Diff^+(M))$, where the map $\pi_0 \Diff_\partial (D^d) \to \pi_0 \Diff^+(M)$ is given by extension by the identity;
        \item $\mathrm{fr}(\pi_0 \Diff(M)) \subseteq \mathrm{Im}(\pi_0 \Diff_\partial(D^d) \to \pi_0 \Diff(M))$, where the map $\pi_0 \Diff_\partial (D^d) \to \pi_0 \Diff(M)$ is given by extension by the identity
    \end{enumerate}
\end{thmx}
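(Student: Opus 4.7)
The plan is to combine three ingredients highlighted in the introduction: the residual finiteness of the $T_k$-mapping class groups $\pi_0 T_k \Diff(M)$, convergence of the embedding calculus tower in the $2$-connected, $d \geq 6$ range, and a Weiss-type fibre sequence identifying the kernel of the map $\pi_0 \Diff(M) \to \pi_0 T_\infty \Diff(M)$ with the image of $\pi_0 \Diff_\partial(D^d)$ under extension by the identity.

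The starting observation is elementary: if $\phi \colon G \to H$ is any group homomorphism and $H$ is residually finite, then $\phi(\mathrm{fr}(G)) \subseteq \mathrm{fr}(H) = \{1\}$, since the preimage under $\phi$ of any finite-index normal subgroup of $H$ is a finite-index normal subgroup of $G$. Applying this to the canonical maps $\phi_k \colon \pi_0 \Diff(M) \to \pi_0 T_k \Diff(M)$, and invoking the main technical result of the paper that each $\pi_0 T_k \Diff(M)$ is residually finite, one concludes that $\mathrm{fr}(\pi_0 \Diff(M)) \subseteq \bigcap_k \ker \phi_k$.

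Convergence of the embedding calculus tower for $2$-connected $M$ of dimension $d \geq 6$ ensures that the kernel of $\pi_0 \Diff(M) \to \pi_0 T_\infty \Diff(M)$ equals $\bigcap_k \ker \phi_k$; more precisely, one checks that the relevant ${\lim}^{1}$-obstruction vanishes, so that $\pi_0 T_\infty \Diff(M) = \lim_k \pi_0 T_k \Diff(M)$. The Weiss fibre sequence
$$B\Diff_\partial(D^d) \longrightarrow B\Diff(M) \longrightarrow B T_\infty \Diff(M)$$
then gives an exact sequence on $\pi_0$ in which the image of $\pi_0 \Diff_\partial(D^d)$ under extension by the identity is precisely the kernel of the second map. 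Combining the two steps proves part (ii). Part (i) follows by the same argument, noting that extension by the identity of disc diffeomorphisms takes values in $\pi_0 \Diff^+(M)$, and that the oriented $T_k$-mapping class group $\pi_0 T_k \Diff^+(M)$ is residually finite as well, as an at most index-$2$ subgroup of $\pi_0 T_k \Diff(M)$.

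The genuine obstacles are not in this deduction but in the two non-trivial inputs it consumes: the residual finiteness of $\pi_0 T_k \Diff(M)$ for all $k$, which forms the technical core of the paper, and the precise form of the Weiss fibre sequence identifying the homotopy fibre of $B\Diff(M) \to BT_\infty \Diff(M)$ as $B\Diff_\partial(D^d)$. Once these are in hand, Theorem~C is an essentially formal consequence.
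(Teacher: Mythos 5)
Your proposal contains a genuine gap, and in fact the argument as written cannot be repaired without substantially redoing it.

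The central issue is that the embedding calculus tower for a \emph{closed} manifold $M$ does not converge, and there is no Weiss fibre sequence of the form $B\Diff_\partial(D^d) \to B\Diff(M) \to BT_\infty\Diff(M)$ for closed $M$. Convergence requires $h\dim \leq d-3$, but a closed $d$-manifold has a top-dimensional handle; indeed the paper discusses at length, via Krannich--Kupers' disc-structure space, that $\Diff(W_g^n) \to T_\infty\Diff(W_g^n)$ is \emph{not} an equivalence — if it were, the Krannich--Randal-Williams non-residual-finiteness counterexamples would be impossible. Moreover, the Weiss fibre sequence appearing in the paper is $B\Diff_\partial(D^d) \to B\Diff_\partial(M^\circ) \to B\Emb^\cong_{\partial/2}(M^\circ, M^\circ)$, which lives entirely over the manifold-with-boundary $M^\circ = M\setminus\mathrm{int}(D^d)$. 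There is no analogous fibre sequence with $B\Diff(M)$ as the total space and $BT_\infty\Diff(M)$ as the base, so your identification of $\ker(\pi_0\Diff(M)\to \pi_0 T_\infty\Diff(M))$ with $\mathrm{Im}(\pi_0\Diff_\partial(D^d))$ does not hold; that kernel is related to $\pi_1 \mathcal{S}^{\Disk}(M)$, not a priori to $\Theta_{d+1}$.

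Your elementary lemma (if $H$ is residually finite, $\mathrm{fr}(G)\subseteq\ker(G\to H)$) and the general strategy of pushing the finite residual into the kernel of the embedding calculus map are sound ideas, and indeed close to what the paper does — but applied to $\pi_0\Diff_\partial(M^\circ)$, not to $\pi_0\Diff(M)$. The correct route is: (a) use the Weiss fibre sequence for $M^\circ$ and convergence of the rel-$\partial/2$ tower to show $\pi_0\Emb^\cong_{\partial/2}(M^\circ,M^\circ)$ is residually finite, giving $\mathrm{fr}(\pi_0\Diff_\partial(M^\circ))\subseteq\mathrm{Im}(\pi_0\Diff_\partial(D^d))$; then (b) transfer this to $\pi_0\Diff^+(M)$ and $\pi_0\Diff(M)$ via the further fibre sequences $B\Diff_\partial(M^\circ)\to B\Diff^+(M,\ast)\to BSO(d)$ and $M\to B\Diff^+(M,\ast)\to B\Diff^+(M)$, checking that the extension-by-identity map $\pi_0\Diff_\partial(M^\circ)\to\pi_0\Diff^+(M)$ is surjective (using that $BSO(d)$ is simply connected and $M$ is $2$-connected), that the image of $\Theta_{d+1}$ remains normal, and that the induced map $\pi_0\Diff_\partial(M^\circ)/\Theta_{d+1}\to\pi_0\Diff^+(M)/\Theta_{d+1}$ is an isomorphism (and, in the unoriented case, an inclusion of finite index). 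Your proposal omits step (b) entirely and misstates the inputs to step (a).
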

\subsection*{Outline of proof} The proof of the above statements is broken into various key steps, which we outline here. We begin by pointing out that \cref{thmA} implies both \cref{thmB} and \cref{thm:new}: see §\ref{section:arithmeticity} for the first implication, and the proof of \cref{thm:fin-res-smooth} for the second. We therefore focus on presenting an outline of the proof of \cref{thmA}.
\subsubsection*{Weiss fibre sequences} The first key tool is the Weiss fibre sequence, as developed in \cite{Sander-finiteness,Weiss-Dalian}. For a smooth manifold $M$ of dimension $d$, let $M^\circ$ denote the manifold with boundary $M \setminus \mathrm{int}(D^d)$, where $D^d \hookrightarrow M$ is a fixed embedding of a $d$-dimensional disc. We then have a commutative diagram

\begin{center}
    \begin{tikzcd}
        B\Diff_{\partial}(D^d) \arrow{r} \arrow{d} & B\Diff_\partial(M^\circ) \arrow{r} \arrow{d} & B\Emb_{\partial/2}^{\cong}(M^\circ,M^\circ) \arrow{d}{\Psi}
        \\
        \ast \simeq B\Homeo_\partial(D^d) \arrow{r} & B\Homeo_{\partial}(M^\circ) \arrow{r}[swap]{\simeq} & B\Emb_{\partial/2}^{\mathrm{Top},\cong}(M^\circ, M^\circ)
    \end{tikzcd}
\end{center}
where both rows are fibre sequences, the smooth and topological \emph{Weiss fibre sequences} (§\ref{section:Weiss-fibre-sequence}), and where $\Emb_{\partial/2}^{\cong}(M^\circ,M^\circ)$ denotes the space of smooth embeddings that are isotopic to a diffeomorphism, and that restrict to the identity on a neighbourhood around the lower hemisphere $D^{d-1}_{-} \subset S^{d-1}\cong \partial M^{\circ}$; $\Emb_{\partial/2}^{\mathrm{Top},\cong}$ is its topological analogue. By the Alexander trick, $B\Homeo_{\partial}(D^d)$ is contractible, so that $B\Homeo_{\partial}(M^\circ) \simeq B\Emb_{\partial/2}^{\mathrm{Top},\cong}(M^\circ, M^\circ)$. Following \cite{Sander-finiteness,Weiss-Dalian}, we study the space of self-embeddings relative half the boundary using embedding calculus.
\bigskip

\subsubsection*{Embedding calculus} We consider the $\infty$-category $\Manf_d$ consisting of $d$-dimensional smooth manifolds with empty boundary as objects, and spaces of embeddings as 1-morphisms. We can consider the full subcategory of $\Manf_d$, $\Disk_d$, on those objects of the form $\sqcup_\ell \mathbb{R}^d$, which further decomposes into full subcategories $\{\Disk_d^{\leq k}\}_{k \in \mathbb{N}}$, where for a given $k \in \mathbb{N}$, the objects of $\Disk_d^{\leq k}$ are precisely $\sqcup_{\ell} \mathbb{R}^d$, for $\ell \leq k$. We denote the inclusions $\Disk_d^{\leq k} \hookrightarrow \Manf_d$ by $\iota_k$, for $k \in \mathbb{N} \cup \{\infty\}$. Thus, we define a tower of functors via
$$\iota_k^\ast E\colon \Manf_d \xrightarrow[]{h} \Psh(\Manf_d) \xrightarrow[]{\iota_k^\ast} \Psh(\Disk_d^{\leq k})$$
with the convention that $\iota_\infty^\ast E_M$ is simply denoted by $E_M$. Concretely, given a smooth $d$-manifold $M$, $E_M$ is the disc-presheaf $\sqcup_\ell \mathbb{R}^d \mapsto \Emb(\sqcup_\ell \mathbb{R}^d, M)$. For each $k \in \mathbb{N}\cup\{\infty\}$, and smooth manifolds $M,N \in \Manf_d$, we define
$$T_k \Emb(M,N) \defeq \Map_{\Psh(\Disk_d^{\leq k})}(E_M,E_N)$$
and
$$T_k\Diff(M) \defeq \Map_{\Psh(\Disk_d^{\leq k})}^{\simeq}(E_M,E_M)$$
On mapping spaces, the above tower of functors yields the \emph{embedding tower}. This consists of a tower $\{T_k \Emb(M,N)\}_{k \in \mathbb{N}}$, receiving a map from $\Emb(M,N)$. We say the embedding tower \emph{converges} if the map
$$\Emb(M,N) \to T_\infty \Emb(M,N) \simeq \lim_{k} T_k \Emb(M,N)$$
is an equivalence. By the celebrated result of Goodwillie-Weiss \cite[Corollary 2.5]{Goodwillie-Weiss}, this holds in the case where $h\dim(M) \leq d-3$, where $h\dim$ denotes the handle dimension of $M$.
\\

A setup for embedding calculus relative boundary can be formulated, and the analogous convergence result holds; see, for instance, \cite[Chapter 5]{Goodwillie-Weiss} or \cite[§3.3.2]{Sander-finiteness}. We shall make use of this setup as follows. Given $M$ a closed, smooth 2-connected manifold of dimension $d \geq 6$, a standard Morse theoretic argument shows that $M^\circ \defeq M \setminus \mathrm{int}(D^d)$ admits a handle decomposition of handle dimension at most $d-3$, relative half the boundary; thus, the embedding calculus tower for $\Emb_{\partial/2}(M^\circ,M^\circ) \simeq \Emb_{\partial}(M^\ast,M^\ast)$, where $M^\ast \defeq M^\circ \setminus \partial/2$, converges. Consequently, one can infer information on $\Emb_{\partial/2}^{\cong}(M^\circ , M^\circ)$ from embedding calculus. As convergence of embedding calculus is not known in the topological category, we restrict ourselves to the case of smoothable manifolds and use smoothing theory to infer information in the topological setting. As the class of residually finite groups is closed under taking limits, the convergence of embedding calculus allows us to infer that $\pi_0 \Emb_{\partial/2}^{\cong}(M^\circ,M^\circ)$ is residually finite, by showing that $\pi_0 T_k \Emb_{\partial/2}^{\cong}(M^\circ,M^\circ)$ is residually finite. The following is the technical heart of this paper (in the text as \cref{thm:spin-T_k-residual-finite,thm:main-result-resfinite,prop:resfinite-boundary}).
\begin{thmx} \label{thmC}
    For $M$ a smooth, closed 2-connected manifold of dimension $d \geq 5$, and for any $k \in \mathbb{N}$, the groups $\pi_0 T_k \Diff(M)$ and $\pi_0 T_k \Emb_{\partial/2}^{\cong}(M^\circ,M^\circ)$ are residually finite.
\end{thmx}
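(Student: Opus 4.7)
The plan is to induct on $k \in \mathbb{N}$ using the layers of the Taylor tower of embedding calculus. I describe the argument for $\pi_0 T_k \Diff(M)$; the relative case $\pi_0 T_k \Emb_{\partial/2}^{\cong}(M^\circ, M^\circ)$ runs in parallel via relative-boundary embedding calculus, with configuration spaces of $M$ replaced by configuration spaces in $M^\ast \defeq M^\circ \setminus \partial/2$, and the 2-connectedness of $M$ again providing simple-connectivity of the relevant configuration spaces. For the base case, one identifies $T_1 \Diff(M)$ via the Smale-Hirsch-type theorem of embedding calculus with a space of homotopy automorphisms of the tangent microbundle of $M$; its $\pi_0$ is a finitely generated group whose residual finiteness follows from Sullivan-style arithmeticity for homotopy automorphism groups of 2-connected finite complexes carrying tangential structure (noting that 2-connectedness forces $M$ to be Spin, which streamlines the tangential analysis).

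For the inductive step, I use the layer fibration
$$L_k \longrightarrow T_k \Diff(M) \longrightarrow T_{k-1} \Diff(M).$$
By the standard embedding calculus identification, $L_k$ is a space of sections of a bundle over the unordered configuration space $\Conf_k(M)/\Sigma_k$, with fibers given by total homotopy fibers of $k$-cubical diagrams of embedding spaces of $k$ disjoint discs. The hypothesis that $M$ is 2-connected together with $d \geq 5$ ensures that $\Conf_k(M)/\Sigma_k$ is simply connected and that the fiber is highly connected; a Postnikov tower / obstruction argument then yields that $\pi_0 L_k$ is a finitely generated abelian group, morally a cohomology group of $\Conf_k(M)/\Sigma_k$ with coefficients in a finitely generated local system. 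The long exact sequence of the fibration produces a short exact sequence
$$1 \longrightarrow A_k \longrightarrow \pi_0 T_k \Diff(M) \longrightarrow Q_{k-1} \longrightarrow 1,$$
with $A_k$ a finitely generated abelian quotient of $\pi_0 L_k$ and $Q_{k-1} \leq \pi_0 T_{k-1} \Diff(M)$ residually finite by the inductive hypothesis (residual finiteness passes to subgroups).

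The main obstacle is propagating residual finiteness through this extension: as the Deligne-type example underlying the Krannich--Randal-Williams counterexample shows, residual finiteness is not preserved under arbitrary extensions of a residually finite (arithmetic) group by a finitely generated abelian group. The strategy is twofold. First, show that the action of $Q_{k-1}$ on $A_k$ factors through a $\mathbb{Z}$-linear arithmetic action, induced by the action of diffeomorphisms on the (co)homology of configuration spaces; this makes the corresponding semidirect product residually finite by Mal'cev's theorem. Second, and this is the delicate point, one must control the extension class and verify that its image under each candidate finite-quotient projection remains nontrivial on nonidentity elements. Unlike the smooth diffeomorphism case, where the extension class absorbs the contribution of exotic spheres (which is precisely what breaks residual finiteness there), the layer-to-layer extensions in the embedding calculus tower are purely homotopy-theoretic, and I would expect them to be detectable by finite-index congruence subgroups through a direct homological analysis of the relevant Taylor derivatives, yielding residual finiteness of $\pi_0 T_k \Diff(M)$ and, by the parallel argument, of $\pi_0 T_k \Emb_{\partial/2}^{\cong}(M^\circ, M^\circ)$.
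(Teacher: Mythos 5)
Your approach is genuinely different from the paper's and, as written, has an essential gap exactly at the point you flag as ``delicate.'' The inductive scheme via the layer fibration $L_k \to T_k\Diff(M) \to T_{k-1}\Diff(M)$ reduces the problem to understanding an extension
$$1 \to A_k \to \pi_0 T_k \Diff(M) \to Q_{k-1} \to 1$$
with $A_k$ finitely generated abelian and $Q_{k-1}$ residually finite. But this is precisely the shape of problem where residual finiteness fails in general: Deligne's example is a \emph{central} extension of the residually finite (indeed arithmetic) group $\mathrm{Sp}_{2g}(\mathbb{Z})$ by $\mathbb{Z}$, so your first strategy (control the action and invoke Mal'cev) cannot suffice on its own, since the Deligne extension already has trivial action. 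Everything hinges on your second step, controlling the extension class, and there you write only that you ``would expect'' the classes to be detectable by congruence subgroups. That is not an argument; it is a restatement of the theorem at the level of $H^2$. There is no reason given why the relevant extension class lives in the image of $H^2(\widehat{Q_{k-1}};\widehat{A_k}) \to H^2(Q_{k-1};A_k)$, which is the actual obstruction, and this is where the entire difficulty of the theorem sits. In fact I do not see how a layer-by-layer induction on $k$ could avoid precisely this issue without importing machinery equivalent to what the paper develops.

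The paper takes a different route entirely. It does not induct on the layers of the Taylor tower. Instead it fixes $k$, passes to the Spin tangential structure (so that the presheaf $\iota_k^\ast E_M^{\Spin}$ is pointwise a simply connected finite complex), and shows directly that the natural map $\pi_0 T_k\Diff^{\Spin}(M) \to \Phi^g \pi_0 T_k\Diff^{\Spin}(M)$ is injective. The two ingredients are: (a) a categorical statement (proved in the appendix via endomorphism objects) that $\pi_0 \Map^\simeq_{\Fun(\mathcal{C},\Pro(\mathcal{S}_\pi))}(\widehat{\mathcal{F}},\widehat{\mathcal{F}})$ is a profinite group, so injectivity of $\pi_0\Map(\mathcal{F},\mathcal{F}) \to \pi_0\Map(\mathcal{F},\Phi^s\mathcal{F})$ suffices; and (b) a Sullivan-style obstruction-theoretic argument up a \emph{restricted Postnikov tower} of the presheaf (not up the Taylor tower in $k$), using the recollement/Reedy pullback description of $\Psh(\Disk_d^{\leq k,\Spin})$ over $\Psh(\Disk_d^{\leq k-1,\Spin})$ and $\Psh(BG_k)$ with $G_k = \mathfrak{S}_k\ltimes\Spin(d)^k$ to translate the lifting problem into equivariant obstruction theory over the free finite $G_k$-CW pair $(E_M^{\Spin}(D_k),\partial E_M^{\Spin}(D_k))$. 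The key point is that there the relevant groups $\coker(\Omega\kappa_n)$ are finitely generated \emph{abelian}, hence residually finite for free, and exactness of group profinite completion on finitely generated nilpotent groups propagates this up the Postnikov tower with no extension-class subtleties. The unoriented and oriented cases are then deduced from the Spin case by the elementary Lemmas on quotients by finite normal subgroups and finite-index subgroups, not by anything like your induction on $k$. This design choice --- working up Postnikov stages of the presheaf at fixed $k$, rather than up $k$ itself --- is what lets the paper sidestep the extension problem that your proposal runs straight into.
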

\bigskip

\subsubsection*{Profinite homotopy theory} Theorem \ref{thmC} above builds on a known result, observed by Serre in \cite[p. 108]{Serre-arithmetic}, as a corollary of Sullivan's work on profinite completion of spaces, namely \cite[thm 3.2]{Sullivan-genetics}; we give a careful and modern exposition of its proof, that is adaptable to embedding calculus.

\begin{thmx}[Serre-Sullivan]
    Let $X$ be a finite, simply connected CW-complex. Then, $\pi_0 \hoAut(X)$ is a residually finite group.
\end{thmx}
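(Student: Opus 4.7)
The plan is to filter $\pi_0 \hoAut(X)$ through the Postnikov tower of $X$ and exploit arithmeticity at each stage. First I would reduce to analyzing a single Postnikov section. Let $d = \dim(X)$. Because $X$ is a $d$-dimensional CW complex and the Postnikov truncation $q \colon X \to X^{(d)}$ is a $(d+1)$-equivalence, obstruction theory gives that the induced map $[X, X] \to [X, X^{(d)}]$, $g \mapsto q \circ g$, is a bijection. From this I would deduce that the natural group homomorphism $\pi_0 \hoAut(X) \to \pi_0 \hoAut(X^{(d)})$, sending $f$ to its Postnikov truncation $f^{(d)}$, is injective: if $f^{(d)} \simeq \id$, then $q \circ f \simeq f^{(d)} \circ q \simeq q$, so $f \simeq \id_X$ by the bijection above. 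It thus suffices to prove residual finiteness of $\pi_0 \hoAut(X^{(d)})$.

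I would then argue by induction on $n$ that each $\pi_0 \hoAut(X^{(n)})$ is residually finite. The base case $n = 1$ is trivial, since $X^{(1)} = \ast$ by simple-connectivity. For the inductive step, I would analyze the principal $K(\pi_n(X), n)$-fibration $X^{(n)} \to X^{(n-1)}$ classified by the $k$-invariant $k_n \in H^{n+1}(X^{(n-1)}; \pi_n(X))$. The homotopy group $\pi_n(X)$ is finitely generated abelian, since $X$ is a finite simply-connected CW complex. Standard analysis of self-equivalences of principal fibrations then yields an exact sequence of groups
$$ H^n(X^{(n-1)}; \pi_n(X)) \longrightarrow \pi_0 \hoAut(X^{(n)}) \longrightarrow \mathrm{Stab}(k_n) \longrightarrow 1, $$
where $\mathrm{Stab}(k_n) \leq \pi_0 \hoAut(X^{(n-1)}) \times \Aut(\pi_n(X))$ is the stabilizer of $k_n$ under the natural action on cohomology.

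Both the kernel and the quotient in this sequence are residually finite: the kernel is a finitely generated abelian group, and the quotient sits inside a product of residually finite groups, the first factor by the inductive hypothesis, and the second because $\Aut$ of a finitely generated abelian group is commensurable with $GL_r(\mathbb{Z})$, which is residually finite by Mal'cev's theorem on finitely generated linear groups. The hard part will be propagating residual finiteness through the extension itself, since in general such extensions need not preserve this property. To handle this, I would observe that the action of $\mathrm{Stab}(k_n)$ on the kernel is linear and $\mathbb{Z}$-arithmetic, realizing $\pi_0 \hoAut(X^{(n)})$, up to a finite normal subgroup and passage to a subgroup of finite index, as an arithmetic subgroup of an affine group of the form $\mathbb{Z}^r \rtimes \mathrm{Stab}(k_n) \hookrightarrow GL_{r+1}(\mathbb{Z})$. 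A final invocation of Mal'cev's theorem then completes the induction, and hence the proof.
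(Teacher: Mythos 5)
Your reduction to $\pi_0\hoAut(X^{(d)})$ is fine, and the exact sequence governing self-equivalences of a Postnikov stage is essentially correct. But the approach is genuinely different from the paper's, and the step you yourself flag as the hard part has a real gap that your proposed fix does not close.

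The extension
\[
1 \to A \longrightarrow \pi_0\hoAut\bigl(X^{(n)}\bigr) \longrightarrow \mathrm{Stab}(k_n) \to 1
\]
with $A$ a quotient of $H^n(X^{(n-1)};\pi_n X)$ is exactly the configuration that can destroy residual finiteness: Deligne's example (which the paper recounts) is an extension $1 \to \ZZ \to \widetilde{\mathrm{Sp}}_{2g}(\ZZ) \to \mathrm{Sp}_{2g}(\ZZ) \to 1$ of an arithmetic, residually finite group by a finitely generated abelian group, and its finite residual is $2\ZZ$. So "kernel f.g.\ abelian, quotient residually finite" is strictly insufficient, and some further structural input is unavoidable. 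Your proposed input — realizing the extension as $\ZZ^r \rtimes \mathrm{Stab}(k_n) \hookrightarrow GL_{r+1}(\ZZ)$ — does not supply it for several reasons. First, the extension need not split, so no semidirect-product model is available; Deligne's example is a non-split central extension of precisely this shape. Second, even when the extension splits, the action of $\mathrm{Stab}(k_n)$ on the abelian kernel need not be faithful (in Deligne's example it is trivial), so the proposed affine embedding into $GL_{r+1}(\ZZ)$ does not exist. Third, your inductive invariant is residual finiteness, but the final step silently upgrades the quotient to an arithmetic linear group; carrying arithmeticity through the induction would require controlling the extension class and the linear representation at every stage, which is not addressed and is essentially the content of Sullivan's commensurability theorem rather than something one gets from an elementary Postnikov induction.

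The paper avoids the group-extension problem entirely by working one categorical level up. It proves (\cref{thm:profinite} plus the universal property of $\Phi^g$) that $\pi_0\Map_{\Pro(\mathcal{S}_\pi)}(\widehat X,\widehat X)$ is a profinite group receiving $\Phi^g\pi_0\hoAut(X)$, and then reduces residual finiteness of $\pi_0\hoAut(X)$ to $\pi_0$-injectivity of $\Map(X,X)\to\Map(X,\Phi^s X)$, i.e.\ \cref{thm:Sullivan3.2}. That injectivity is proved by a Postnikov induction on the \emph{target}, where the obstruction at each stage lands in a finitely generated abelian cohomology group and the extension problem is replaced by exactness of group profinite completion on finitely generated nilpotent groups (\cref{thm:Hilton-Roitberg}, \cref{lemma:long-exact}). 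That exactness is the precise structural fact that distinguishes this situation from Deligne's, and it is exactly the ingredient missing from your argument. If you want to rescue the Postnikov-by-Postnikov strategy, you would have to encode that your group extension arises from a map of spaces and compare it against its profinite completion at each stage, which is in effect what the paper's proof does.
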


As alluded to in the above, this follows from \emph{profinite homotopy theory}. Roughly speaking, profinite homotopy theory attempts to lift the group profinite completion functor to spaces. We begin by briefly recalling group profinite completion. We consider the functor $\widehat{(-)}\colon \Grp \to \Pro(\Grp^{\fin})$, where $\Grp$ denotes the 1-category of groups and group morphisms, and $\Grp^{\fin}$ the full subcategory thereof, consisting of finite groups; the functor is defined by sending a group $G$ to the functor $F \mapsto \Hom(G,F) \in \Fun(\Grp^\fin,\Grp)^{\op}$. The latter is classically denoted by $\widehat{G}$, and coincides with the cofiltered system of finite quotients of $G$. The above functor admits a right adjoint $\Mat^g$, the \emph{materialisation functor}, sending a pro-object of finite groups $F$ to its limit in groups. The composite $\Mat^g \circ \, \widehat{(-)} \colon \Grp \to \Grp$ we denote by $\Phi^g$. The unit of the adjunction $\widehat{(-)} \dashv \Mat^g$ yields, for every group $G$, a map $\eta_G \colon G \to \Phi^g G$, called the \emph{profinite completion map}. More concretely, $\Phi^g G$ is the limit over the diagram $\{G/N_i\}$ over $\{N_i\}$, the cofiltered system of finite index subgroups of $G$. Thus, $\ker(\eta_G \colon G \to \Phi^g)$ coincides with the finite residual $\mathrm{fr}(G)$; consequently, a group $G$ is residually finite if and only if the map $\eta_G \colon G \to \Phi^g G$ is injective, which is the key connection between residually finite groups and group profinite completion.
\\

As is the case for groups, one can also approximate spaces by certain spaces with a finiteness condition. We say a space $X$ is \emph{$\pi$-finite} if
\begin{itemize}
    \item $\pi_0 X$ is a finite set;
    \item There exists some $N \in \mathbb{N}$ such that for all $x \in X$, and all $k \geq N$, $\pi_k(X,x)=0$;
    \item $\pi_n (X,x)$ is finite for all $n\in \mathbb{N}$ and $x \in X$.
\end{itemize}
We denote $\mathcal{S}_\pi \subset \mathcal{S}$ for the full subcategory of the $\infty$-category of spaces consisiting of $\pi$-finite spaces. We define a functor $\widehat{(-)} \colon \mathcal{S} \to \Pro(\mathcal{S}_\pi)$, $X \mapsto (F \mapsto \Map_{\mathcal{S}}(X,F))$. This functor similarly admits a right adjoint $\Mat\colon \Pro(\mathcal{S}_\pi) \to \mathcal{S}$, obtained by applying the limit in $\mathcal{S}$ of an object $X \in \Pro(\mathcal{S}_\pi)$. We denote the composite $\Mat \circ \,\widehat{(-)}$ by $\Phi^s$, which we call the \emph{finite completion functor} following Sullivan. The unit of the adjunction $\widehat{(-)} \dashv \Mat$ yields, for each $X$, a natural map $\eta_X \colon X \to \Phi^s X$. Residual finiteness of $\pi_0 \hoAut(X)$ then follows from \cite[thm 3.2]{Sullivan-genetics}, stating that the map
$$\Map(Y,Z) \to \Map(Y,\Phi^s Z)$$
is $\pi_0$-injective, for nice enough spaces $Y$ and $Z$. The key input of Sullivan's proof of the above statement, which we study in some detail, is that the functor $\Phi^s$ comes close to preserving finite limits when restricted to nice enough spaces, namely componentwise nilpotent spaces of finite type (\cref{thm:finite-limit-finite-completion} and \cref{cor:finite-limits-finite-completion}).
\begin{thmx}
    Let $\mathcal{F} \colon \mathcal{D} \to \mathcal{S}^{\mathrm{nil,ft}}$ be a functor from a finite $\infty$-category to the category of componentwise nilpotent spaces of finite type. Then, the canonical coassembly morphism
    $$\mathrm{coass} \colon \Phi^s\lim_{\mathcal{D}}\mathcal{F} \to \lim_{\mathcal{D}}\Phi^s \mathcal{F}$$
    induces an isomorphism on $\pi_n$, for all $n \geq 1$, based at points coming from $\lim_{\mathcal{D}} \mathcal{F}$ via the maps of the following commutative diagram
    \begin{center}
        \begin{tikzcd}
            & \lim_{\mathcal{D}}\mathcal{F} \arrow{dl}[swap]{\eta} \arrow{dr}{\lim \eta}
            \\
            \Phi^{s}\lim_{\mathcal{D}}\mathcal{F} \arrow{rr}[swap]{\mathrm{coass}} && \lim_{\mathcal{D}}\Phi^s \mathcal{F}
        \end{tikzcd}
    \end{center}
    where all limits are taken in $\mathcal{S}$.
\end{thmx}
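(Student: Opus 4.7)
The plan is to reduce the statement to the case of a single pullback square and then to induct on the Postnikov stages of the nilpotent finite-type spaces, exploiting the compatibility of $\Phi^s$ with principal fibrations over nilpotent bases.

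First I would decompose $\lim_{\mathcal{D}} \mathcal{F}$ as an iterated composition of finite products and pullback squares; any finite $\infty$-categorical limit admits such a decomposition, and the coassembly map is natural in the decomposition. The product case is essentially formal: on each component, $\pi_n(\Phi^s X, \eta(x)) \cong \widehat{\pi_n(X,x)}$ for $X$ nilpotent of finite type and $n \geq 1$, and profinite completion commutes with finite products of finitely generated nilpotent groups. The genuinely nontrivial case is a single pullback $P = X \times_Z Y$ of componentwise nilpotent finite-type spaces.

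For the pullback, I would induct on the Postnikov stage. Equip $X,Y,Z$ with their Postnikov towers, which are natural in maps, and consider the per-stage pullbacks $P_n \defeq \tau_{\leq n} X \times_{\tau_{\leq n} Z} \tau_{\leq n} Y$. Each Postnikov section $\tau_{\leq n} \to \tau_{\leq n-1}$ is a principal fibration with fiber $K(A,n)$, where $A$ is finitely generated abelian (or nilpotent when $n=1$). The technical heart is the input, due to Sullivan, that $\Phi^s$ sends such principal fibrations to fibrations at basepoints coming from the original space: $\Phi^s K(A,n) \simeq K(\widehat{A}, n)$, and the resulting sequence of $\Phi^s$'s is again a fibration. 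Together with the exactness of profinite completion on finitely generated nilpotent groups, this lets the Mayer--Vietoris long exact sequences of homotopy groups of $P_n$ and of $\Phi^s \tau_{\leq n} X \times_{\Phi^s \tau_{\leq n} Z} \Phi^s \tau_{\leq n} Y$ be compared via a ladder to which the five-lemma applies, yielding the desired $\pi_n$-isomorphism at each Postnikov stage. Passing to the limit along the tower uses that $X \simeq \lim_n \tau_{\leq n} X$ is preserved by $\Phi^s$ in the finite-type nilpotent setting, with $\mathrm{lim}^1$-terms vanishing because each $\pi_n$ is finitely generated.

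The main obstacle I anticipate is the bookkeeping at the basepoint level. Components of $\Phi^s P$ not coming from $P$ itself need not match components of $\lim_{\mathcal{D}} \Phi^s \mathcal{F}$, and indeed extra components may appear — this is precisely why the theorem restricts attention to basepoints arising from $\lim_{\mathcal{D}} \mathcal{F}$. Keeping track of basepoints tightly throughout the Postnikov induction, and handling the nonabelian $\pi_1$-stage via a nilpotent version of the five-lemma, is where most of the care is needed; with that secured, the Postnikov induction and the Milnor-sequence argument assemble into the full statement.
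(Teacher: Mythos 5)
Your overall plan — reduce the finite limit to iterated pullbacks and products, treat products formally, and attack the pullback $P = X \times_Z Y$ by Postnikov induction using Sullivan's identification $\pi_n(\Phi^s W, \eta(w)) \cong \Phi^g \pi_n(W,w)$ together with exactness of $\Phi^g$ on finitely generated nilpotent groups — is in the same spirit as the paper. But the specific execution you propose has gaps that the paper's route is designed to avoid. First, you assert that each Postnikov section $\tau_{\leq n} \to \tau_{\leq n-1}$ is a principal fibration; for a nilpotent but non-simple space this is false, and the section only factors as a \emph{finite composition} of principal fibrations (the principal refinement). Second, your comparison of the two Mayer--Vietoris long exact sequences via the five-lemma breaks down at the bottom: for the fibration $\Omega Z \to P_n \to \tau_{\leq n}X \times \tau_{\leq n}Y$, the boundary map $\pi_1 X \times \pi_1 Y \to \pi_0 \Omega Z \cong \pi_1 Z$ is a group homomorphism only if $\pi_1 Z$ is abelian, so the ladder is not a ladder of groups where you most need it, precisely to prove the $\pi_1$-isomorphism. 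You flag this as a place where a "nilpotent five-lemma" is needed, but that is the actual content of the proof, not a step that can be outsourced. Third, to invoke Sullivan's theorem on $\pi_k P_n'$ you implicitly need $P_n$ to again be componentwise nilpotent of finite type, which requires an argument and is not addressed.

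The paper circumvents all three points by truncating \emph{only} the cospan target $Z$ along its principal refinement $\{Z_n\}$, keeping $X$ and $Y$ fixed: then $P_{n+1} \simeq \ast \times_{K_n} P_n$ is a genuine principal fibration over an Eilenberg--MacLane space of a finitely generated abelian group, so each stage is handled cleanly by the fibration lemma (compare \cref{lemma:fibration-profinite-completion}), with no nonabelian boundary map ever arising and no separate verification needed of the nilpotence of the intermediate pullbacks. The passage to the limit is then handled by observing that $\{P_n\}$ stabilizes on each homotopy group at a finite stage (because the principal refinement is degreewise eventually constant), rather than by a $\lim^1$ argument. If you restrict to truncating $Z$ alone and replace the raw Postnikov tower by the principal refinement, your Postnikov induction does become the paper's proof of \cref{thm:finite-limit-finite-completion}; as currently written, the nonabelian $\pi_1$ issue is a genuine gap.
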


We now fix $M$ a closed, 2-connected smooth manifold, and study residual finiteness of $\pi_0 T_k \Diff(M)$. Given $\iota_k^\ast E_M \in \Psh(\Disk_d^{\leq k})$, we may consider the finite and profinite completions pointwise. As in the setting of homotopy automorphisms, residual finiteness follows from the $\pi_0$-injectivity of the map 
$$\Map_{\Psh(\Disk_d^{\leq k})}(\iota_k^\ast E_M,\iota_k^\ast E_M) \to \Map_{\Psh(\Disk_d^{\leq k})}(\iota_k^\ast E_M , \iota_k^\ast \Phi^s E_M)$$
given by composition with the map $\iota_k^\ast E_M \to \iota_k^\ast \Phi^s E_M$. This is shown in the text as \cref{thm:injective-pi_0}, and occupies the technical heart of the paper.
\bigskip
\subsubsection*{Smoothing theory} Convergence of the embedding tower thus implies that $\pi_0 \Emb_{\partial/2}^{\cong}(M^\circ, M^\circ)$ is residually finite. In order to deduce the same for the topological analogue, we use smoothing theory as follows: the fibre of the map 
$$ B\Emb_{\partial/2}^{\cong}(M^\circ,M^\circ) \to B\Emb_{\partial/2}^{\cong,\mathrm{Top}}(M^\circ,M^\circ) \simeq B\Homeo_{\partial}(M^\circ)$$
can be described as a collection of components of the space of sections $\Gamma_{\partial/2}(\xi_M)$ of a bundle over $M^\circ$ with fibres $\mathrm{Top}(d)/O(d)$, relative some fixed section over $\partial/2$. One can show that $\Gamma_{\partial/2}(\xi_M)$ has finitely many components, and that on each of these components, the fundamental group is a finite group. The long exact sequence on homotopy groups associated to the above fibre sequence thus yields an exact sequence
$$\pi_1 \Gamma_{\partial/2}(\xi_M) \to \pi_0 \Emb_{\partial/2}^{\cong}(M^\circ,M^\circ) \to \pi_0 \Homeo_{\partial}(M^\circ) \to \pi_0 \Gamma_{\partial/2}(\xi_M)$$
where exactness on the last term is in the category of pointed sets. It now follows from elementary group theoretic arguments (\cref{lemma:quotient-resfinite,lemma:finite-index-resfin}) that $\pi_0 \Homeo_{\partial}(M^\circ)$ is residually finite. Finally, to glue back in the deleted disc, we use parametrized isotopy extension again to obtain a fibre sequence
$$\Homeo_{\partial}(M^\circ) \to \Homeo(M) \to \Emb^{\mathrm{Top}}(D,M)$$
where the base space of the fibration is equivalent to the topological frame bundle of $M$, which in particular has $\pi_0 \Fr^\mathrm{Top} (M) \cong \mathbb{Z}/2\mathbb{Z}$, and $\pi_1 \Fr^\mathrm{Top} (M) \cong \mathbb{Z}/2\mathbb{Z}$; again, applications of \cref{lemma:quotient-resfinite,lemma:finite-index-resfin} implies $\pi_0 \Homeo(M)$ is indeed residually finite.

\subsection*{Remark on notation} It is common to use the notation $\widehat{X}$ to denote profinite completion of an object $X$, be it a set, group or space. In this work, the notation $\widehat{X}$ is only reserved for $\Pro$-objects. To make clear the distinction between the different types of objects in use, the endofunctor on our favorite category obtained after materialisation of the profinite completion is denoted $\Phi$, with a decoration to emphasize which object we are looking at. The choice of $\Phi$ is to remind us that this functor is called \emph{finite completion}, following Sullivan. We will mainly be interested in three functors:
\begin{itemize}
    \item $\Phi^g \colon \Grp \to \Grp$;
    \item $\Phi^{tg}\colon \Grp \to \mathrm{Top}\Grp$;
    \item $\Phi^s\colon \mathcal{S} \to \mathcal{S}$
\end{itemize}
the first of which is the group finite completion, whereas the second also remembers the canonical topological group structure obtained on the finite completion $\Phi^g G$ of a group $G$, and the last is the finite completion functor on spaces.
\subsection*{Conventions} Throughout this paper, we use the language of $\infty$-categories. However, for the more geometrically minded reader, $\infty$-categories, while being sometimes more convenient, can be substituted by more classical homotopy theoretic language without much trouble. The reader is encouraged to think of classical homotopy theory for the idea and intuition, and $\infty$-categories for the convenience. We will throughout attempt to point out classical analogues of some stated $\infty$-categorical facts or constructions.
\\

Furthermore, throughout the work, the sets of smooth embeddings and diffeomorphisms of smooth manifolds are endowed with the $C^\infty$-topology, while sets of topological embeddings and homeomorphisms are endowed with the compact-open topology.

\subsection*{Acknowledgements} I would like to thank my PhD supervisor Søren Galatius for suggesting the study of residual finiteness through embedding calculus, many encouraging and engaging discussions, and for careful readings of previous versions of this paper. I would also like to thank Thomas Blom, Robert Burklund, Manuel Krannich, Alexander Kupers, Jan Steinebrunner and Tim With Berland for many helpful discussions related to this work. I would also like to thank Manuel Krannich, Az\'elie Picot and Nathalie Wahl for reading previous drafts of this manuscript. I was supported by the Danish National Research Foundation through the Copenhagen Center for Geometry and Topology (DNRF151).

\tableofcontents

\section{Homotopy automorphisms}
In this section, we lay out a careful exposition for a proof of the following theorem, with a view towards later generalizations. This theorem was observed by Serre in \cite[p. 108]{Serre-arithmetic}, building on work of Sullivan, namely \cite[thm 3.2]{Sullivan-genetics}.

\begin{theorem}[Serre-Sullivan]\label{thm:Serre-Sullivan}
    For $X$ a simply connected, finite CW complex, the group $\pi_0 \hoAut(X)$ is residually finite.
\end{theorem}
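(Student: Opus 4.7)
The plan is to reduce the residual finiteness of $\pi_0 \hoAut(X)$ to the injectivity result of \cite[thm 3.2]{Sullivan-genetics} quoted in the excerpt, applied with $Y = Z = X$. Since $X$ is simply connected (hence nilpotent) and is a finite CW complex (hence of finite type), it qualifies as a ``nice enough'' space in Sullivan's sense. Sullivan's theorem then yields that post-composition with $\eta_X \colon X \to \Phi^s X$ induces an injection
\[
\eta_{X,*} \colon \pi_0 \Map(X,X) \hookrightarrow \pi_0 \Map(X, \Phi^s X),
\]
and in particular, restricting to components of self-equivalences, an injective group homomorphism $\pi_0 \hoAut(X) \hookrightarrow G$, where $G$ denotes the group of invertible elements of the monoid $\pi_0 \Map(X, \Phi^s X)$.

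Next, I would exhibit $G$ as a profinite group. Unwinding the definitions, $\Phi^s X = \Mat(\widehat{X})$ is the limit in $\mathcal{S}$ of the cofiltered diagram $\{X_\alpha\}$ of $\pi$-finite approximations of $X$, so that $\Map(X, \Phi^s X) \simeq \lim_\alpha \Map(X, X_\alpha)$. For each $\alpha$, since $X$ is a finite CW complex and $X_\alpha$ is $\pi$-finite, a short obstruction theoretic argument (inducting up the finite Postnikov tower of $X_\alpha$) shows that $\Map(X, X_\alpha)$ is itself $\pi$-finite; hence $\pi_0 \Map(X, X_\alpha)$ is a finite set and its units form a finite group $\pi_0 \hoAut(X_\alpha)$. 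Because the inverse system of finite sets $\{\pi_0 \Map(X, X_\alpha)\}$ is automatically Mittag--Leffler, $\pi_0$ commutes with the defining cofiltered limit, and $G$ is identified with $\lim_\alpha \pi_0 \hoAut(X_\alpha)$, a cofiltered limit of finite groups, i.e.\ a profinite group.

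To finish, I would invoke the standard fact that any subgroup of a profinite group is residually finite: for a nontrivial $g \in \pi_0 \hoAut(X)$, some continuous projection of $G$ onto a finite quotient separates the image of $g$ from the identity, and pulling back this finite quotient through the injection $\pi_0 \hoAut(X) \hookrightarrow G$ produces a finite index normal subgroup of $\pi_0 \hoAut(X)$ not containing $g$. Since $g$ was arbitrary, $\mathrm{fr}(\pi_0 \hoAut(X)) = 1$.

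The hard part of this strategy, and the reason the excerpt promises a careful exposition, is the injectivity input, namely showing that $\eta_{X,*}$ is $\pi_0$-injective. This rests on the coassembly statement recorded in the excerpt---that $\Phi^s$ nearly commutes with finite limits on componentwise nilpotent spaces of finite type---which one would apply to Postnikov stages and appropriate principal fibration sequences built from $X$ in order to detect a self-map of $X$ by its behaviour on finite $k$-invariants and homotopy groups modulo powers of primes. The other technical points---$\pi$-finiteness of $\Map(X,X_\alpha)$ and the commutation of $\pi_0$ with the defining limit of $\Phi^s X$---are routine but worth recording, since the same pattern must survive the later generalisation to the disc-presheaf $\iota_k^\ast E_M$ when proving Theorem~\ref{thmC}.
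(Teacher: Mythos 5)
Your overall strategy coincides with the paper's: establish $\pi_0$-injectivity of $\hoAut(X) \to \Map(X, \Phi^s X)$ via Sullivan's theorem (\cref{thm:Sullivan3.2}), identify the group of invertible elements of $\pi_0\Map(X, \Phi^s X)$ as a profinite group, and conclude since subgroups of profinite groups are residually finite. The paper phrases the last step as constructing, via the universal property of $\Phi^g$, a map $\Phi^g \pi_0\hoAut(X) \to \pi_0 \Map^{\simeq}_{\Pro(\mathcal{S}_\pi)}(\widehat{X},\widehat{X})$, but this is an equivalent packaging.

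However, there is a genuine gap in your argument that the group of units $G$ is profinite. You assert that ``$G$ is identified with $\lim_\alpha \pi_0 \hoAut(X_\alpha)$, a cofiltered limit of finite groups.'' This identification is both ill-posed and, under any reasonable interpretation, not established. First, $\pi_0 \Map(X, X_\alpha)$ is a finite \emph{set}, not a monoid, since domain and codomain differ; it has no ``units,'' and $\pi_0 \hoAut(X_\alpha)$ (self-equivalences of $X_\alpha$) is not a subset of $\pi_0\Map(X, X_\alpha)$. Second, the assignment $\alpha \mapsto \pi_0 \hoAut(X_\alpha)$ is not a diagram on the cofiltered index category: a structure map $X_\alpha \to X_\beta$ does not induce a homomorphism between automorphism groups, so the limit you write down is not a well-defined group. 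Third, and at the heart of the matter, the monoid structure on $\pi_0 \Map(X, \Phi^s X) \cong \pi_0 \Map_{\Pro(\mathcal{S}_\pi)}(\widehat X, \widehat X)$ -- composition in $\Pro(\mathcal{S}_\pi)$ -- does not decompose levelwise under the bijection $\pi_0\Map(X, \Phi^s X) \cong \lim_\alpha \pi_0\Map(X, X_\alpha)$: to compute the level-$\alpha$ component of a composite $\phi_f \circ \phi_g$ (where $\phi_f \colon \widehat X \to \widehat X$ is adjoint to $f \colon X \to \Phi^s X$) one must use the extension $\Mat(\phi_f)\colon \Phi^s X \to \Phi^s X$ of $f$ along $\eta_X$, and the resulting map to $X_\alpha$ depends on the behaviour of $f$ at levels other than $\alpha$. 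Exhibiting a compatible \emph{profinite topology} on this monoid is the real content of the step, and it does not follow from Mittag--Leffler considerations.

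This is precisely what \cref{appendix} supplies: \cref{thm:profinite}, specialised to the terminal $\infty$-category $\mathcal{C}=\ast$, shows that $\pi_0\Map^{\simeq}_{\Pro(\mathcal{S}_\pi)}(\widehat X,\widehat X)$ promotes to a profinite topological group. The proof constructs a cotensoring of $\Pro(\mathcal{S}_\pi)$ over $\mathcal{S}$, recognises the cotensor $\widehat{X}^{X}$ as an endomorphism object of $\widehat X$ (\cref{lemma:endomorphism_object}), compares the resulting $E_1$-multiplication with naive composition after materialisation (\cref{lemma:composition}), and finally passes from the profinite monoid to its group of units (\cref{lemma:group-of-units-profinite}). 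You would need an argument of this kind, or an equally careful substitute, to close the gap.
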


As a build-up for the proof, we review some constructions on groups and spaces that are crucial for the argument: the classical theory of profinite groups and group profinite completion, Sullivan's theory of profinite completions of spaces, along with a functorial setup for some classical constructions in homotopy theory.

\subsection{Profinite completion of groups}

One approach to studying groups is by approximating them by their finite quotients. For a group $G$, consider the cofiltered system $\{G/N\}$, where $N$ ranges over all finite index, normal subgroups of $G$. Equivalently, one may look at all group morphisms $G \to F$, where $F$ is a finite group. The categorical notion that captures this type of object is called a \emph{Pro-object}; namely, if $\mathrm{Grp}^{\mathrm{fin}}$ denotes the 1-category of finite groups and group morphisms, a Pro-object in $\mathrm{Grp}^{\mathrm{fin}}$ is then an object in $\Fun(\mathrm{Grp}^{\mathrm{fin}},\mathrm{Set})^{\op}$, that preserves finite limits. The \emph{(group) profinite completion} functor is the functor $\mathrm{Grp} \to \Pro(\mathrm{Grp}^\mathrm{fin})$, $G \mapsto (F \mapsto \Hom(G,F))$. The \emph{group materialisation functor} is defined to be the functor $\Mat^g \colon \Pro(\mathrm{Grp}^{\mathrm{fin}}) \to \mathrm{Grp}$, sending a pro-object of finite groups to its limit in groups, and we have an adjunction $\widehat{(-)} \dashv \Mat^g$. The composite
$$\Grp \to \Pro(\Grp^\fin) \to \Grp$$
is denoted by $\Phi^{g}$. The unit of the above adjunction gives a natural transformation $\mathrm{id} \Rightarrow \Phi^g$, and for a given group $G$, we denote by $\eta_G$ the associated map $\eta_G \colon G \to \Phi^g G$. Note that for a group $G$, the group $\Phi^g G$ carries a canonical structure of a topological group, given by the subgroup topology it inherits from $\Pi (G/N)$, where the product is taken over the same indexing set as above, namely over all finite quotients of $G$, and where each factor is endowed with the discrete topology. When endowed with this topology, we obtain a functor $\Grp \to \mathrm{Top}\Grp$, the category of topological groups and continuous group homomorphisms; this functor is the \emph{finite completion} functor of groups, which we shall denote $\Phi^{tg}$ (classically, $\Phi^{tg}$ is referred to as profinite completion, and denoted by $\widehat{(-)}$; we call it finite completion to stay consistent with Sullivan's terminology later). Such a topological group has a fairly explicit description in terms of its topology: it is compact, Hausdorff and totally disconnected. We thus define
\begin{definition}
    A topological group $G$ is said to be \emph{profinite} if it is a compact, Hausdorff totally disconnected group.
\end{definition}
Let $\mathrm{ProfGrp}$ be the full subcategory of $\mathrm{Top}\Grp$ consisting of profinite topological groups. The functor $\Phi^{tg}$ does indeed land in the above full subcategory, as can be seen through the following classical result
\begin{theorem}
    A topological group is profinite if and only if it can be written as a cofiltered limit of finite groups (with its canonical topology).
\end{theorem}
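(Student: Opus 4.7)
The plan is to prove the two implications separately, with the harder work concentrated in the reverse direction. For the "if" direction, I would observe that each finite group with the discrete topology is trivially compact, Hausdorff and totally disconnected, and that a cofiltered limit in topological groups sits as a closed subgroup of the product $\prod_i G_i$ of its constituent finite groups. By Tychonoff the product is compact, and it is obviously Hausdorff and totally disconnected as a product of spaces with these properties; a closed subspace inherits all three, so the limit is profinite in the sense of the stated definition.

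For the "only if" direction, the key structural input is the following claim: \emph{in a profinite topological group $G$, the open normal subgroups form a neighborhood basis of the identity}. Granted this, I would set $\mathcal{N}$ to be the cofiltered poset of open normal subgroups of $G$. Each quotient $G/N$ is compact as a continuous image of $G$ and discrete because $N$ is open, hence finite. The canonical map $G \to \lim_{N \in \mathcal{N}} G/N$ is continuous by the universal property; it is injective because $\bigcap_{N} N = \{e\}$, which is immediate from Hausdorffness and the basis property applied to any $x \neq e$; and it has dense image by a standard diagonal argument in the inverse limit. Since the source is compact and the target Hausdorff, a continuous injection with dense image is a homeomorphism onto the whole target, giving the desired expression of $G$ as a cofiltered limit of finite groups.

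The technical heart is the claim about open normal subgroups, and I would prove it in three stages. First, in a compact Hausdorff totally disconnected space the clopen sets form a basis for the topology, so every neighborhood $U$ of $e$ contains a clopen neighborhood $V$ of $e$. Second, given such a $V$, one produces an open subgroup inside $U$: by continuity of inversion and multiplication, choose a symmetric clopen neighborhood $W$ of $e$ with $W \cdot W \subseteq V$; then the set $H = \bigcup_{n \geq 1} W^n$ is an open subgroup contained in $V$, and open subgroups are automatically closed (being complements of unions of their other cosets), hence clopen, hence of finite index by compactness. Third, an open subgroup $H$ of finite index has only finitely many conjugates, so their intersection $N = \bigcap_g g H g^{-1}$ is still open, is normal by construction, and is contained in $H \subseteq U$.

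The main obstacle is this production of an open subgroup from a clopen neighborhood of the identity, since it is the only place where the three hypotheses (compact, Hausdorff, totally disconnected) genuinely interact with the group structure. Everything else is either a formal statement about cofiltered limits in topological groups or a routine compact-Hausdorff argument.
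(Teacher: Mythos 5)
Your \emph{if} direction and the overall skeleton of your \emph{only if} direction are sound, and the paper itself states this result without proof, so the comparison is against standard references rather than an internal argument. The gap lies exactly where you flag the technical heart. From a symmetric clopen $W \ni e$ with $W\cdot W \subseteq V$ it does \emph{not} follow that $\bigcup_{n\ge 1}W^n \subseteq V$: you control $W^2 \subseteq V$, but $W^3 = W \cdot W^2 \subseteq W\cdot V$, and you have no control over $W\cdot V$. A concrete failure in a finite (hence profinite) group: in $G = \mathbb{Z}/8\mathbb{Z}$ take $V = \{0,1,2,6,7\}$ and $W = \{0,1,7\}$. Then $W$ is symmetric and $W+W = V$, yet $3 = 1+1+1 \in W^3 \setminus V$, and in fact $\bigcup_n W^n = G$.

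The containment you actually need is $W\cdot V \subseteq V$, since that iterates to $W^n V \subseteq V$ and hence $W^n \subseteq V$ for all $n$ (because $e\in V$). This is precisely where compactness of $V$ does genuine work. For each $v\in V$, continuity of multiplication at $(e,v)$ gives opens $W_v\ni e$ and $U_v\ni v$ with $W_v\,U_v \subseteq V$; compactness of $V$ lets you pass to a finite subcover $U_{v_1},\dots,U_{v_k}$ of $V$, and then $W := \bigcap_i W_{v_i}$ is an open neighbourhood of $e$ with $W V \subseteq V$. Replacing $W$ by $W\cap W^{-1}$, your $H = \bigcup_{n\ge 1}W^n$ is then an open subgroup contained in $V$, and the remainder of your argument (clopen basis from total disconnectedness, open implies clopen of finite index by compactness, intersecting the finitely many conjugates, density in the inverse limit, and the compact-to-Hausdorff homeomorphism argument) is correct.
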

We also obtain an adjunction $\Phi^{tg} \dashv \mathrm{fgt}$ between the group finite completion functor and the forgetful functor. This adjunction thus allows us to write the following universal property

\begin{theorem}[Universal property of group finite completion]\label{thm:group-finite-completion-universal-ppty}
    Let $G$ be a group, and let $H$ be a profinite topological group. Given a group homomorphism $f\colon G \to H$, there exists a unique continuous group homomorphism $\Phi^{tg} G \to H$ making the following diagram
    \begin{center}
        \begin{tikzcd}
            G \arrow{r}\arrow{dr}[swap]{f} &  \Phi^{tg}G \arrow[dashed]{d}
            \\
             & H
        \end{tikzcd}
    \end{center}
    commute.
\end{theorem}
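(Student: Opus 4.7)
The plan is to prove existence and uniqueness separately, using the description of $\Phi^{tg}G$ as a cofiltered limit and the corresponding description of $H$ as a cofiltered limit of finite groups. This is essentially unpacking the adjunction $\Phi^{tg} \dashv \mathrm{fgt}$ stated just before the theorem, so the content lies in checking that the adjunction isomorphism behaves as claimed.

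For existence, I would use the classical characterization stated in the theorem immediately preceding, namely that any profinite topological group $H$ can be written as a cofiltered limit $H \cong \lim_{i \in I} F_i$ of finite groups, with $I$ a cofiltered index category. For each $i \in I$, the composite $G \xrightarrow{f} H \xrightarrow{p_i} F_i$ is an ordinary group homomorphism from $G$ to a finite group, hence its kernel $N_i$ is a finite index normal subgroup of $G$, and the composite factors as $G \twoheadrightarrow G/N_i \to F_i$. Since $\Phi^{tg}G$ is the limit of the cofiltered diagram of all finite quotients $\{G/N\}$, we obtain a canonical continuous homomorphism $\Phi^{tg}G \to G/N_i \to F_i$ for each $i$, and these are compatible in $i$ by naturality, so assemble into a continuous group homomorphism $\Phi^{tg}G \to \lim_i F_i \cong H$. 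Continuity is automatic since each map $\Phi^{tg}G \to F_i$ is continuous (being a composite of the projection to a finite discrete quotient with a map of finite groups) and the topology on $H$ is the limit topology. Commutativity of the triangle with $f$ is checked after postcomposition with each $p_i$, where it reduces to the construction.

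For uniqueness, the key input is that the image of $\eta_G \colon G \to \Phi^{tg}G$ is dense. This follows directly from the definition: the topology on $\Phi^{tg}G \subseteq \prod_N G/N$ has as a basis preimages of points in finite products of quotients, and for any such basic open neighborhood of an element $(g_N)_N$ one can find an element of $G$ mapping to the prescribed finite tuple, by the cofilteredness of the diagram of finite quotients. Once density is established, uniqueness is a standard argument: given two continuous homomorphisms $\Phi^{tg}G \rightrightarrows H$ agreeing after precomposition with $\eta_G$, their equalizer is a closed subgroup of $\Phi^{tg}G$ (since $H$ is Hausdorff) containing the dense subset $\eta_G(G)$, hence equals all of $\Phi^{tg}G$.

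The main obstacle I would expect is not in the abstract adjunction argument, which is formal, but in verifying the two classical facts I am assuming: that $\eta_G(G)$ is dense in $\Phi^{tg}G$, and that profinite groups are precisely cofiltered limits of finite groups with their canonical topology (the latter is stated as the theorem immediately preceding, so can be used freely). Both of these rely essentially on understanding the subspace topology inherited from $\prod_N G/N$; since the excerpt defines this topology explicitly, the density argument should cause no real difficulty, and the proof reduces to a careful bookkeeping of the two cofiltered systems involved.
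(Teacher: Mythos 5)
Your proposal is correct. The paper does not give an explicit proof of this theorem: it states the adjunction $\Phi^{tg} \dashv \mathrm{fgt}$ between the finite completion functor and the forgetful functor in the sentence immediately preceding, and then records the universal property as a direct formal consequence of that adjunction (it is precisely the statement that $\eta_G$ is an initial object of the comma category $G \downarrow \mathrm{fgt}$). You instead give the hands-on argument that one would use to establish that adjunction in the first place: writing $H$ as a cofiltered limit of finite groups (using the characterization stated just before), factoring each composite $G \to F_i$ through a finite quotient $G/N_i$ to get the induced map, and proving uniqueness from density of $\eta_G(G)$ together with Hausdorffness of $H$ via the closed-equalizer argument. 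Both are valid; the paper's route is shorter because it black-boxes the adjunction, while yours makes the mechanism transparent and is self-contained given only the preceding definition of the subgroup topology and the characterization of profinite groups as cofiltered limits. One small bookkeeping point worth making explicit in your existence step: the compatibility of the maps $\Phi^{tg}G \to G/N_i \to F_i$ over $i$ uses that for a transition map $F_i \to F_j$ one has $N_i \subseteq N_j$, so the induced square with $G/N_i \to G/N_j$ commutes; this is straightforward but is exactly where "by naturality" needs to be unpacked.
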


\bigskip

\subsubsection{Residual finiteness}
The functor $\Phi^g$ can furthermore be used to define \emph{residual finiteness}, the main group theoretic notion under investigation in this study.
\begin{definition}\label{def:resfinite}
    A group $G$ is said to be \emph{residually finite} if the canonical group homomorphism
    $$G \to \Phi^gG$$
    is injective; equivalently, if the intersection of all finite index, normal subgroups of $G$ is the trivial subgroup. In other words, given any $g \in G$ such that $g \neq 1$, there exists some finite group $F_g$ and a group homomorphism $\varphi_g \colon G \to F_g$ such that $\varphi_g (g) \neq 1 \in F_g$; we will be informally referring to this property by saying that $g \in G$ can be \emph{detected by finite groups}.
\end{definition}
For $G$ a group, we denote $\mathrm{fr}(G)$ for the \emph{finite residual} of $G$, namely, the intersection of all finite index, normal subgroups of $G$. From the definitions, it follows that $\mathrm{fr}(G)=\ker(G \to \Phi^g G)$, and that $G$ is residually finite if and only if $\mathrm{fr}(G)$ is the trivial subgroup.
\begin{example}
    The following classes of groups are residually finite
    \begin{itemize}
        \item Finite groups;
        \item Free groups;
        \item Finitely generated nilpotent groups;
        \item Finitely generated subgroups of general linear groups;
    \end{itemize}
\end{example}
The class of residually finite groups is closed under taking subgroups and inverse limits. However, it is not closed under extensions: 
\begin{example}
    The following example is due to Deligne \cite{Deligne}. We consider the Lie group $\mathrm{Sp}_{2g}(\mathbb{R})$. We recall that $\pi_1 \mathrm{Sp}_{2g}(\mathbb{R}) \cong \mathbb{Z}$; let $\widetilde{\mathrm{Sp}}_{2g}(\mathbb{Z})$ denote the pullback of the universal cover of $\mathrm{Sp}_{2g}(\mathbb{R})$ along the inclusion map $\mathrm{Sp}_{2g}(\mathbb{Z}) \to \mathrm{Sp}_{2g}(\mathbb{R})$. We thus obtain a central extension
    $$1 \to \mathbb{Z} \to \widetilde{\mathrm{Sp}}_{2g}(\mathbb{Z}) \to \mathrm{Sp}_{2g}(\mathbb{Z}) \to 1$$
    For $g \geq 2$, Deligne shows that the finite residual of $\widetilde{\mathrm{Sp}}_{2g}(\mathbb{Z})$ is $2\mathbb{Z}$.
\end{example}
The above example was used in \cite{Manuel-Oscar-arithmetic}, to show that $\pi_0 \Diff(\#_g (S^n \times S^n))$ is not residually finite for certain $n \in \mathbb{N}$. Combining their result with one of the main results of this paper (\cref{thm:homeo-res-finite}), we obtain a similar example:
\begin{example}
    Let $n = 5\pmod{8}$ and $g \geq 5$, so that $\pi_0 \Diff(W_g^n)$ is not residually finite, where $W_g^n \defeq \#_g (S^n \times S^n)$. One of the main results of this work, which is shown much later, namely \cref{thm:homeo-res-finite}, implies that $\pi_0 \Homeo(W_g^n)$ is residually finite. By smoothing theory, we can describe the fibre $F$ of the forgetful map 
    $$B\Diff(W_g^n) \to B\Homeo(W_g^n)$$
    as a collection of components of a space of sections $\Gamma(\xi)$ of a bundle $\xi$ over $W_g^n$ with fibres $\mathrm{Top}(d)/O(d)$ as in \cite[Essay V. §3,4]{Kirby-Siebenmann}, which in particular has finite $\pi_1$ on each of its components. Thus, we obtain an exact sequence
    $$\pi_1 F \to \pi_0 \Diff(W_g^n) \to \pi_0 \Homeo(W_g^n)$$
    and restricting to the image and kernel, we obtain an extension of a residually finite group by a finite group, which is not residually finite.
\end{example}

The following two elementary lemmas concern certain properties of that class of groups, that prove to be necessary in some arguments later.

\begin{lemma}\label{lemma:quotient-resfinite}
    Let $G$ be a residually finite group, and let $N$ be a finite normal subgroup. Then, $G/N$ is again residually finite.
\end{lemma}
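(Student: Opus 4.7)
The plan is to show directly that the finite residual of $G/N$ is trivial, i.e., for every element $gN \in G/N$ with $g \notin N$, I will exhibit a finite index normal subgroup of $G/N$ that does not contain $gN$. Since normal subgroups of $G/N$ correspond to normal subgroups of $G$ containing $N$, the task reduces to finding a finite index normal subgroup $H \trianglelefteq G$ with $N \subseteq H$ and $g \notin H$.

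The construction goes as follows. Fix $g \in G \setminus N$. For each $n \in N$, the element $n^{-1}g$ is non-trivial (otherwise $g = n \in N$), so by residual finiteness of $G$ there exists a finite index normal subgroup $K_n \trianglelefteq G$ with $n^{-1}g \notin K_n$. Since $N$ is finite, the intersection
\[
K \defeq \bigcap_{n \in N} K_n
\]
is a finite intersection of finite index normal subgroups, and hence itself a finite index normal subgroup of $G$. I then set $H \defeq NK$. Because $N$ is normal in $G$, the product $NK$ is actually a subgroup; it is normal since both factors are normal, it contains $N$ tautologically, and it has finite index in $G$ because $[G:H] \leq [G:K] < \infty$.

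The remaining point, which I view as the one step requiring care, is to verify $g \notin NK$. Suppose for contradiction that $g = nk$ for some $n \in N$ and $k \in K$; then $n^{-1}g = k \in K \subseteq K_n$, contradicting the choice of $K_n$. Thus $H/N$ is a finite index normal subgroup of $G/N$ not containing the coset $gN$, showing that the intersection of all finite index normal subgroups of $G/N$ is trivial, i.e., $G/N$ is residually finite. The essential use of finiteness of $N$ is precisely in the step where we intersect the family $\{K_n\}_{n\in N}$ and still obtain a finite index subgroup; the argument would fail if $N$ were merely of finite index or similar, which is why the hypothesis cannot be weakened directly.
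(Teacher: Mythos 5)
Your proof is correct and is essentially the same argument as the paper's: you intersect finitely many finite-index normal subgroups avoiding the (finitely many) lifts of $gN$, then pass to $NK$, while the paper intersects subgroups $H_i$ avoiding the lifts $\widetilde x_i$ and then takes the image $p(H)=NH/N$. Your presentation of forming $NK$ directly and observing it has finite index is a mildly more streamlined way of arriving at the same subgroup of $G/N$.
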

\begin{proof}
    Let $x \in G/N$ be a non-zero element; we work to find a normal finite index subgroup of $G/N$ that does not contain $x$. Consider the set $\{\widetilde x_i\}_{i \in \{1,\cdots,n\}}$ of lifts of $x$ via the canonical morphism $p:G \to G/N$, where $n$ is the order of the finite group $N$. Since $G$ is residually finite, it follows that for each $i$, there exists $H_i$ normal, finite index subgroup of $G$ that does not contain $\widetilde x_i$. Consequently, $H \defeq \cap_i H_i$ is again a finite index normal subgroup with the further property that $\widetilde x_i \notin H$, for all $i$. Then, $p(H)$ is a normal subgroup of $G/N$ which does not contain $x$. To see that $p(H)$ is a finite index subgroup, we observe that 
    $$\faktor{\left(\faktor{G}{N}\right)}{p(H)}=\faktor{\left(\faktor{G}{N}\right)}{\left(\faktor{NH}{N}\right)} \cong \faktor{G}{NH}$$
    where the latter is finite as it receives a surjection from the finite group $G/H$.
\end{proof}

We next see that residual finiteness can be already detected at the level of finite index subgroups.

\begin{lemma}\label{lemma:finite-index-resfin}
    Let $G$ be a group, and let $H$ be a finite index subgroup of $G$ that is residually finite. Then, $G$ is also residually finite.
\end{lemma}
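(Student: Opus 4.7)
The plan is to bootstrap residual finiteness of $H$ up to $G$ by passing to the normal core, and then handling separately the elements inside and outside this core.

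Set $H_G \defeq \bigcap_{x \in G} x H x^{-1}$, the normal core of $H$ in $G$. Since $H$ has finite index in $G$, only finitely many conjugates $xHx^{-1}$ occur, so $H_G$ has finite index in $G$; by construction it is normal. Moreover, $H_G \leq H$, and residual finiteness passes to subgroups (given a nontrivial element $k \in K \leq H$, pick $M \triangleleft H$ of finite index avoiding $k$; then $M \cap K$ is a finite index normal subgroup of $K$ avoiding $k$). So $H_G$ is a normal, finite index, residually finite subgroup of $G$, and it suffices to prove residual finiteness of $G$ under the stronger assumption that $H = H_G$ is normal.

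Now fix $1 \neq g \in G$; I want a normal finite index subgroup of $G$ avoiding $g$. If $g \notin H_G$, the quotient $G \twoheadrightarrow G/H_G$ already detects $g$, since $G/H_G$ is finite. Otherwise $g \in H_G$, and by residual finiteness of $H_G$ there is a finite index (normal) subgroup $N \leq H_G$ with $g \notin N$. The catch is that $N$ need not be normal in $G$, so I cannot immediately use $G/N$. To fix this, observe that $N$ has finite index in $G$ (since $[G:N] = [G:H_G][H_G:N]$), so its normal core $N_G \defeq \bigcap_{x \in G} x N x^{-1}$ is again finite index in $G$ (as the kernel of the finite permutation representation $G \to \mathrm{Sym}(G/N)$) and normal in $G$. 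Since $N_G \subseteq N$ and $g \notin N$, we get $g \notin N_G$, so the quotient $G \twoheadrightarrow G/N_G$ is a finite quotient detecting $g$.

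Combining the two cases, every nontrivial $g \in G$ survives in some finite quotient of $G$, so $G$ is residually finite. The only mildly delicate point is the core step for $N$; everything else is a direct application of the pigeonhole-style finiteness of cosets and the stability of residual finiteness under passing to subgroups.
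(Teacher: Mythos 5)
Your proof is correct and reaches the same conclusion by essentially the same case split the paper uses (whether $g$ lies inside or outside the finite index subgroup), but you work out a detail the paper elides. The paper invokes the equivalent characterization that $G$ is residually finite iff the intersection of all finite index subgroups (not necessarily normal) is trivial, and then the argument is immediate: $H$ itself handles $g \notin H$, and any finite index subgroup of $H$ missing $g$ is automatically finite index in $G$. You instead stick with the definition in terms of normal finite index subgroups and restore normality by hand, twice, via the normal core construction ($H_G$ and then $N_G$). This is a perfectly sound way to justify the equivalence inline, at the cost of a somewhat longer argument; note also that your first reduction to $H_G$ is unnecessary overhead, since you could have applied the normal-core step directly to the $N \leq H$ avoiding $g$ in the second case and let the quotient $G/N_G$ also handle $g \notin H$ (because $N_G \subseteq H$). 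Either way, every step you take is valid.
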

\begin{proof}
    As normality is not transitive for subgroups, we use the equivalent formulation of residual finiteness: we have to check that the intersection of all finite index subgroups of $G$ is the trivial group. Let $g\neq0 \in G$. If $g \notin H$, then $H$ is a finite index subgroup of $G$ not containing $g$. Otherwise, $g \in H$, and $g\neq 0$. Since $H$ was itself residually finite, there exists some finite index subgroup $K$ of $H$ such that $g \notin K$, and as $H$ is finite index in $G$, then so too is $K$. Consequently, $K$ is a finite index subgroup of $G$ not containing $g$, and thus $G$ is residually finite.
\end{proof}

\subsubsection{Exactness properties of profinite completions of groups}
In this section, we study the exactness of the profinite completion functor of groups, on a certain subclass of groups. The result is stated as \cite[thm 2.2]{Hilton-Roitberg}, which they credit to Bousfield and Kan \cite{Bousfield-Kan}, a proof of which can be also found in \cite[§5.3]{Schneebeli}. Let $\mathrm{Nil}\Grp^{fg}$ be the full subcategory of $\Grp$ consisting of finitely generated, nilpotent groups.
\begin{theorem}\label{thm:Hilton-Roitberg}
    The functors $\Phi^g$ and $\Phi^{tg}$ are exact when restricted to $\mathrm{Nil}\Grp^{fg}$.
\end{theorem}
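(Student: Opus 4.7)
The plan is to first reduce to showing that $\Phi^g \colon \mathrm{Nil}\Grp^{fg} \to \Grp$ preserves short exact sequences, and then upgrade to $\Phi^{tg}$ essentially for free: since profinite groups are compact Hausdorff and continuous surjections between them are automatically open, once the underlying sequence of groups is short exact it is also short exact in the category of topological groups. So the heart of the proof is the set-theoretic exactness of $\Phi^g$.

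Given a short exact sequence $1 \to A \xrightarrow{i} B \xrightarrow{p} C \to 1$ in $\mathrm{Nil}\Grp^{fg}$, I would proceed by induction on the nilpotency class of $B$. For the base case, when $B$ is abelian, all three terms are finitely generated abelian; here I would identify $\Phi^g$ with $(-)\tensor_{\Z}\wh{\Z}$ (directly verified on cyclic summands by the explicit description of finite quotients), and invoke flatness of $\wh{\Z}$ over $\Z$ to conclude. For the inductive step, write $Z \defeq Z(B)$ for the center of $B$ (which is nontrivial as $B$ is nilpotent, and which is itself finitely generated since $B$ is polycyclic). Form the commutative diagram with exact rows and columns
\begin{center}
\begin{tikzcd}
1 \arrow{r} & Z \intersect A \arrow{r}\arrow{d} & Z \arrow{r}\arrow{d} & p(Z) \arrow{r}\arrow{d} & 1 \\
1 \arrow{r} & A \arrow{r}\arrow{d} & B \arrow{r}\arrow{d} & C \arrow{r}\arrow{d} & 1 \\
1 \arrow{r} & A/(Z \intersect A) \arrow{r} & B/Z \arrow{r} & C/p(Z) \arrow{r} & 1
\end{tikzcd}
\end{center}
The top row lies entirely in finitely generated abelian groups (handled by the base case), and the bottom row lies entirely in finitely generated nilpotent groups of strictly smaller class (so the inductive hypothesis applies). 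Applying $\Phi^g$ preserves the vertical short exact sequences by a diagram chase with the five lemma, once one knows exactness of each row after completion.

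The chase itself requires a key technical input which I would invoke as a black box: for $G$ a finitely generated nilpotent group and $H \leq G$ a subgroup, the profinite topology on $H$ coincides with the topology it inherits as a subgroup of $G$ with its profinite topology. This is a classical fact about polycyclic-by-finite groups (it amounts to saying that every finite index subgroup of $H$ contains the intersection of $H$ with some finite index subgroup of $G$). Using this, injectivity of $\Phi^g A \to \Phi^g B$ is immediate: $\Phi^g A$ is computed as $\lim_M A/(A \intersect M)$ with $M$ ranging over finite index subgroups of $B$, and each term maps injectively into $B/M$. Surjectivity of $\Phi^g B \to \Phi^g C$ follows from the fact that a cofiltered limit of surjections of finite groups is a surjection (Mittag-Leffler is automatic for finite groups), combined with the observation that every finite quotient of $C$ pulls back to a finite quotient of $B$. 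Middle exactness in the top and bottom rows of the diagram is then the content of the base case and the inductive hypothesis respectively.

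The main obstacle is verifying that the $3\times 3$ diagram really is exact before applying $\Phi^g$ (which is routine group theory, using that $p(Z) \isom Z/(Z \intersect A)$), and that the induction lowers nilpotency class in all three columns; the latter is why one centers on $Z(B)$ rather than some other central subgroup. Once exactness of $\Phi^g$ is established on $\mathrm{Nil}\Grp^{fg}$, exactness of $\Phi^{tg}$ follows because the topology on $\Phi^{tg} G$ is canonically recovered from the cofiltered diagram of finite quotients, and any surjective homomorphism of profinite groups is an open quotient map.
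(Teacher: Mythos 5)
The paper does not actually prove this theorem; it cites Hilton--Roitberg, Bousfield--Kan, and Schneebeli, so there is no internal argument to compare against, and your proposal has to be judged on its own. You have correctly isolated the genuinely nontrivial input: the fact (true for polycyclic, in particular finitely generated nilpotent, groups) that the profinite topology on a subgroup $A \leq B$ agrees with the topology induced from the profinite topology on $B$. The base case via $-\tensor_{\Z}\widehat{\Z}$ and flatness is also fine. The trouble is the inductive step, which does not close as written. After applying $\Phi^g$ to the $3\times 3$ diagram you know exactness of the top and bottom rows, but a nine-lemma-style chase to recover the middle row also requires exactness of the columns, and in particular of $1 \to Z \to B \to B/Z \to 1$. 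That sequence has middle term $B$, of exactly the nilpotency class you are currently treating, so the inductive hypothesis says nothing about it: this is circular. Your remark that ``the induction lowers nilpotency class in all three columns'' is not correct --- it lowers the class along the bottom row, but the middle entry of each column ($A$, $B$, $C$) may have the same class as $B$. Two exact rows alone never force the third. You then switch to deriving injectivity of $\Phi^g A \to \Phi^g B$ and surjectivity of $\Phi^g B \to \Phi^g C$ directly, which is right, but injectivity at $A$ plus surjectivity at $C$ plus the sequence being a complex does not give exactness at $B$, and middle exactness of the original sequence is the one thing your write-up never addresses; you only address it for the top and bottom rows.

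The repair is to drop the induction and the $3\times 3$ entirely: they are not needed once you have the black box. Right exactness of $\Phi^{g}$, i.e.\ $\Phi^g A \to \Phi^g B \to \Phi^g C \to 1$ exact, is formal, because $\Phi^{tg}$ is left adjoint to the forgetful functor and hence preserves cokernels (pushouts along $A \to 1$); the paper itself uses this in the proof of \cref{thm:Sullivan3.2}. Equivalently and more concretely, $\ker(\Phi^g B \to \Phi^g C) = \lim_M AM/M \isom \lim_M A/(A \intersect M)$ as $M$ ranges over finite-index normal subgroups of $B$, and this is exactly the image of $\Phi^g A$ once the black box identifies $\Phi^g A$ with $\lim_M A/(A\intersect M)$. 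The same identification, together with $A/(A \intersect M) \hookrightarrow B/M$, gives injectivity of $\Phi^g A \to \Phi^g B$. The nilpotency hypothesis enters precisely once, inside the black box, and no induction on nilpotency class is required.
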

\begin{remark}
Exactness in the above theorem means that the functor sends short exact sequences to short exact sequences. For the category of compact Hausdorff topological groups, a short exact sequence is a triple $(G_1,G_2,G_3)$ of topological groups together with consecutive continuous morphisms
$$1 \to G_1 \xrightarrow[]{f_1} G_2 \xrightarrow[]{f_2} G_3 \to 1$$
that is an exact sequence on the underlying groups. It was shown in the above references that $\Phi^{tg}$ sends a short exact sequence of finitely generated nilpotent groups to a short exact sequence of compact Hausdorff topological groups. Among the topological groups that we are interested in, namely topologically finitely generated profinite groups, the two notions of exactness agree: given a triple $(K,G,H)$ of topologically finitely generated profinite groups, and maps $K \to G \to H$, exactness as underlying discrete groups is equivalent to exactness in the category of compact Hausdorff groups. This can be seen as a corollary of \cite[thm 1.1]{Nikolov-Segal}. However, we strive to make our presentation independent of \emph{loc. cit.}, and focus on the functor $\Phi^g$.
\end{remark}

\bigskip

For the purposes of later application, we show that \cref{thm:Hilton-Roitberg} implies that arbitrary long exact sequences of finitely generated nilpotent groups are indeed sent to long exact sequences by the group profinite completion functor. Such considerations would be automatic should the category $\mathrm{Nil}\Grp^{fg}$ be an abelian category, which it is not, and consequently further arguments are in order.

\begin{lemma}\label{lemma:long-exact}
    Consider a long exact sequence
    $$\cdots \xrightarrow[]{\varphi_2} G_2 \xrightarrow{\varphi_1} G_1 \xrightarrow[]{\varphi_0} G_0$$
    of finitely generated nilpotent groups. Then, the sequence
    $$\cdots \xrightarrow[]{\Phi^g\varphi_2} \Phi^g G_2 \xrightarrow{\Phi^g\varphi_1}\Phi^g G_1 \xrightarrow[]{\Phi^g\varphi_0} \Phi^g G_0$$
    is again exact.
\end{lemma}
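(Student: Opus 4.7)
The natural approach is to break the given long exact sequence into short exact sequences, apply \cref{thm:Hilton-Roitberg} to each piece, and splice the resulting short exact sequences back together. Concretely, for each $i$ I would factor $\varphi_i \colon G_{i+1} \to G_i$ through its image. Exactness of the original sequence gives $\ker(\varphi_i) = \im(\varphi_{i+1})$ and $\im(\varphi_i) = \ker(\varphi_{i-1})$, so this factorisation produces, for every $i$, a short exact sequence
$$1 \to \im(\varphi_{i+1}) \to G_{i+1} \to \im(\varphi_i) \to 1.$$

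Before applying \cref{thm:Hilton-Roitberg}, I need all three terms to lie in $\mathrm{Nil}\Grp^{fg}$. Nilpotency is inherited by subgroups and quotients, so the only real content is that the subgroup $\im(\varphi_i) \subseteq G_i$ is again finitely generated. This is the classical fact that finitely generated nilpotent groups are polycyclic, hence Noetherian, so that every subgroup is finitely generated. With this in hand, \cref{thm:Hilton-Roitberg} applies and yields a short exact sequence
$$1 \to \Phi^g \im(\varphi_{i+1}) \to \Phi^g G_{i+1} \to \Phi^g \im(\varphi_i) \to 1$$
for every $i$.

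To conclude, I would observe that $\Phi^g \varphi_i$ itself factors as the surjection $\Phi^g G_{i+1} \twoheadrightarrow \Phi^g \im(\varphi_i)$ from the short exact sequence produced above, followed by the inclusion $\Phi^g \im(\varphi_i) = \Phi^g \ker(\varphi_{i-1}) \hookrightarrow \Phi^g G_i$ coming from the short exact sequence associated to $\varphi_{i-1}$. Exactness of the resulting sequence at $\Phi^g G_i$ then reduces to the tautology that $\im(\Phi^g \varphi_i)$ and $\ker(\Phi^g \varphi_{i-1})$ are both identified with the same subgroup $\Phi^g \im(\varphi_i) \leq \Phi^g G_i$ via the inclusion just described.

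The only genuinely non-formal step is the closure of $\mathrm{Nil}\Grp^{fg}$ under subgroups, which as noted above rests on the Noetherian property of polycyclic groups; everything else is bookkeeping. In particular, I do not need $\mathrm{Nil}\Grp^{fg}$ to be an abelian category: the splicing argument only uses the image factorisation of each $\varphi_i$ together with \cref{thm:Hilton-Roitberg}.
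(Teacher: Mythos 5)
Your splicing strategy is the right idea and is essentially what the paper does in the final step of its own proof. There is, however, a genuine gap at the right-hand boundary. The splicing requires that each $\Phi^g j_i$ be injective, where $j_i \colon \im(\varphi_i) \hookrightarrow G_i$. For $i \geq 1$ this comes for free from \cref{thm:Hilton-Roitberg}: exactness of the original sequence gives $\im(\varphi_i) = \ker(\varphi_{i-1})$, so $\im(\varphi_i)$ is normal in $G_i$ and $j_i$ is the left arrow of the short exact sequence $1 \to \im(\varphi_i) \to G_i \to \im(\varphi_{i-1}) \to 1$, whence $\Phi^g j_i$ is injective. But for $i = 0$ there is no $\varphi_{-1}$, the subgroup $\im(\varphi_0) \leq G_0$ need not be normal, and no short exact sequence of the required form is available. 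Concretely, exactness of the completed sequence at $\Phi^g G_1$ amounts to $\ker(\Phi^g\varphi_0) = \im(\Phi^g\varphi_1)$; writing $\Phi^g\varphi_0 = \Phi^g j_0 \circ \Phi^g p_0$ with $\Phi^g p_0$ surjective, this holds if and only if $\Phi^g j_0$ is injective, and your argument does not establish this.

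What is missing is precisely the assertion, proved separately in the paper, that $\Phi^g$ preserves \emph{arbitrary} injections between finitely generated nilpotent groups, not just those arising as kernels of short exact sequences. The paper establishes this by an induction on the number of central extensions needed to build the ambient group out of finitely generated abelian groups, and only then runs the splicing argument on a two-sided extension of the sequence (where every image is a kernel, hence normal). So your parenthetical remark that everything beyond Noetherianity of $\mathrm{Nil}\Grp^{fg}$ is bookkeeping is not quite right: the injection-preservation lemma is a genuine additional input, and without it the boundary case of your splicing fails.
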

\begin{proof}
    We begin by showing that $\Phi^g$ preserves injections on the category of finitely generated, nilpotent groups. This is clear when restricted to finitely generated abelian groups. Fix now finitely generated nilpotent groups $H$ and $G$, and an injective group morphism $1 \to H \xrightarrow[]{\varphi} G$. As $G$ is a finitely generated nilpotent group, it can be obtained by a finite iteration of central extensions, starting from finitely generated abelian groups; in other words, there exists a finite collection $\{A_0, A_0'\}\cup\{A_i\}_{1 \leq i\leq n}$ of finitely generated abelian groups and finitely generated groups $\{G_i\}_{0 \leq i \leq n}$ that fit into central extensions
    $$1 \to A_0' \to G_0 \to A_0 \to 1$$
    and for all $1 \leq i \leq n-1$, central extensions
    $$1 \to A_i \to G_{i+1} \to G_i \to 1$$
    and finally, such that $G_n \cong G$. We proceed by induction. We say that each $G_i$ is $(i+1)$-centrally extended from abelian groups: i.e. $G_0$ is once centrally extended from abelian groups, $G_1$ is twice centrally extended from abelian groups, and so on. We show that if $\Phi^g$ preserves injections when mapping into any group that is $i$-centrally extended from abelian groups, then it also does for mapping into $(i+1)$-centrally extended from abelian groups. To see this, we consider the following diagram
    \begin{center}
        \begin{tikzcd}
            1 \arrow{r} & \ker(f \circ \varphi) \arrow{r}\arrow[hook]{d} & H \arrow{r}\arrow{d}{\varphi} & \mathrm{Im}(f \circ \varphi) \arrow[hook]{d} \arrow{r} & 1
            \\
            1 \arrow{r} & A_i \arrow{r}[swap]{g} & G_{i+1} \arrow{r}[swap]{f} & G_i \arrow{r} & 1
        \end{tikzcd}
    \end{center}
    where $A_i$ is finitely generated abelian, $G_i$ is $i$-centrally extended from abelian groups and $G_{i+1}$ is consequently $(i+1)$-centrally extended from abelian groups. The two outermost vertical maps are injective by definition; thus, the group $\ker(f \circ \varphi)$ is finitely generated abelian. By \cref{thm:Hilton-Roitberg}, it follows that we obtain a morphism of exact sequences after applying $\Phi^g$. The outermost maps remain injective, by the induction hypothesis for the rihtmost vertical morphism, and since $\Phi^g$ preserves injections on finitely generated abelian groups for the leftmost morphism. As a consequence, the middle vertical map $\Phi^g  \varphi\colon \Phi^g H \to \Phi^g G_{i+1}$ is injective; consequently, the claim follows by induction.
    \\

    We now turn our attention to the following question. Given finitely generated nilpotent groups $G$, $H$ and a morphism $\varphi\colon G \to H$, how does $\Phi^g\left(\mathrm{Im}(\varphi)\right)$ compare to $\mathrm{Im}(\Phi^g \varphi)$, and $\Phi^g \ker(\varphi)$ to $\ker(\Phi^g \varphi)$? We show that the natural comparison maps are indeed isomorphisms in the setting of finitely generated nilpotent groups. We begin by noting that the image of the continuous group morphism $\Phi^{tg}\varphi\colon \Phi^{tg}G \to \Phi^{tg}H$ is a closed subgroup of the profinite topological group $\Phi^{tg}H$, and its kernel a closed subgroup of $\Phi^{tg}G_1$; thus, both the kernel and image carry a profinite topological group structure, and by the universal property of profinite completions, we obtain the following commutative diagram
    \begin{center}
        \begin{tikzcd}
            1 \arrow{r} &\Phi^g \ker(\varphi) \arrow{r} \arrow{d} & \Phi^g G \arrow[equal]{d}\arrow{r} & \Phi^g \mathrm{Im}(\varphi) \arrow{d} \arrow{r} & 1
            \\
            1 \arrow{r} & \ker \Phi^g \varphi \arrow{r} & \Phi^g G \arrow{r} & \mathrm{Im}(\Phi^g \varphi) \arrow{r} & 1
        \end{tikzcd}
    \end{center}
    We observe that the lower row in the diagram is an exact sequence by definition. As for the upper row, we note that subgroups of finitely generated nilpotent groups are again finitely generated nilpotent, and exactness follows from \cref{thm:Hilton-Roitberg}. As a consequence, it follows that $\Phi^g \mathrm{Im}(\varphi) \to \mathrm{Im}(\Phi^g \varphi)$ is surjective. We argue that it is injective; once done, it also follows that $\Phi^g \ker(\varphi) \cong \ker(\Phi^g \varphi)$. We have a commutative triangle
    \begin{center}
        \begin{tikzcd}
           \Phi^g  \mathrm{Im}(\varphi) \arrow{d} \arrow{r} & \mathrm{Im}(\Phi^g \varphi) \arrow[hook]{ld}
           \\
           \Phi^g H
        \end{tikzcd}
    \end{center}
    Since $\Phi^g$ preserves injections on finitely generated nilpotent groups, it follows that $\Phi^g \mathrm{Im}(\varphi) \to \Phi^g H$ is again injective, and thus it follows that $\Phi^g\mathrm{Im}(\varphi) \to \mathrm{Im}(\Phi^g \varphi)$ is injective, hence an isomorphism.
    \\

    Consider now a long exact sequence
    $$\cdots \to G_2 \xrightarrow{\varphi_1} G_1 \xrightarrow{\varphi_0} G_0$$
    of finitely generated, nilpotent groups. To show that the sequence obtained by applying $\Phi^g$ to the above is exact, we use the above observation that $\Phi^g\mathrm{Im}(\varphi_n) \cong \mathrm{Im}(\Phi^g \varphi_n)$, and $\Phi^g \ker(\varphi) \cong \ker(\Phi^g \varphi)$, for all $n \geq 0$. Thus, we may extend the above long exact sequence to the right, as follows
    $$\cdots G_2 \xrightarrow[]{\varphi_1} G_1 \xrightarrow[]{\varphi_0} \mathrm{Im}(\varphi_0) \to 1 \to 1 \cdots$$
    and we would like to show that applying $\Phi^g$ yields a long exact sequence. The claim follows from the following, more general statement. Let 
    $$\cdots \to H_{-2} \xrightarrow[]{\varphi_{-1}} H_{-1} \xrightarrow[]{\varphi_0} H_0 \xrightarrow[]{\varphi_1} H_1 \xrightarrow[]{\varphi_2} H_2 \to \cdots$$
    be a long exact sequence of finitely generated, nilpotent groups, that is infinite in both directions. The claim is that \cref{thm:Hilton-Roitberg} implies that such two-sided long exact sequences are sent to long exact sequences. To see this, consider the node $H_{k-1} \xrightarrow[]{\varphi_k} H_k \xrightarrow[]{\varphi_{k+1}} H_{k+1}$. We consider the diagram
    \begin{center}
        \begin{tikzcd}
            \ker \varphi_k \arrow{dr} & & & & \mathrm{Im}(\varphi_{k+1}) \arrow{dr}
            \\
            & H_{k-1} \arrow{rr}{\varphi_k} \arrow{dr} & & H_k \arrow{ur} \arrow{rr}{\varphi_{k+1}} & & H_{k+1} \arrow{dr}
            \\
            & & \mathrm{Im}\varphi_k \arrow{ur} & & & & \faktor{H_{k+1}}{\mathrm{Im}(\varphi_{k+1})}
        \end{tikzcd}
    \end{center}
    where all the diagonals are short exact sequences. We observe that the quotient $H_{k+1}/\mathrm{Im}(\varphi_{k+1})$ is again a group, since $\mathrm{Im}(\varphi_{k+1})$ is a normal subgroup of $H_{k+1}$, as being equal to the kernel of the successive map; this was the key reason to extend the exact sequence to the right. Furthermore, all groups involved in the above diagram are finitely generated, nilpotent. Applying $\Phi^g$ to the above diagram, and using \cref{thm:Hilton-Roitberg}, it follows that $\Phi^g$ preserves two-sided long exact sequences, and the claim follows.
\end{proof}

\bigskip

\subsection{Profinite completion of spaces}
\subsubsection{Definitions and main properties}
In \cite{Sullivan-genetics}, Sullivan considers an analogous situation of profinite completion as above, but this time in the setting of spaces. The idea is to approximate a space $X$ by certain spaces with a finiteness assumption, namely

\begin{definition}[$\pi$-finite spaces]
    A space $X$ is said to be $\pi$-finite if
    \begin{itemize}
        \item it has a finite set of path components;
        \item there exists $k \in \mathbb{N}$, such that for all $n\geq k$ and all $x \in X$, $\pi_n (X,x)$ is trivial;
        \item all homotopy groups are finite groups.
    \end{itemize}
\end{definition}
Roughly speaking, Sullivan considers the profinite completion of $X$ as the limit of $\pi$-finite spaces $F$ equipped with all possible maps from $X$; we briefly recall this construction, following \cite[Appendix E]{SAG}. The following is by no means a complete exposition, and we refer to Lurie's Appendix E for more details. Let $\mathcal{S}_\pi \subset \mathcal{S}$ be the full subcategory of spaces consisting of $\pi$-finite spaces, and consider the category $\mathrm{Pro}(\mathcal{S}_\pi)$ of pro-objects in $\mathcal{S}_{\pi}$, i.e. the full subcategory of $\Fun(\mathcal{S}_\pi,\mathcal{S})^{\op}$ of functors preserving finite limits; objects in that category are called \emph{profinite spaces}, and we emphasize that such an object is \emph{not} a space, but merely a formal limit of spaces. Given a space $X \in \mathcal{S}$, we may associate to it the functor $\mathcal{S}_{\pi} \to \mathcal{S}$, $F \mapsto \Map(X,F)$. This association defines a functor, which we call the \emph{pro-$\pi$-finite completion}, and denote by $\widehat{(-)}\colon \mathcal{S} \to \Pro(\mathcal{S}_\pi)$. For a space $X$, the pro-object $\widehat{X}$ can be identified with the diagram $\{X_\alpha\}$ indexed by maps $X \to X_\alpha$, where $X_\alpha$ is a $\pi$-finite space. The \emph{materialisation} functor is then the functor $\mathrm{Mat}\colon \Pro(\mathcal{S}_\pi) \to \mathcal{S}$, sending a pro-object $\{X_\alpha\}$ to its limit in spaces. The above defines an adjoint pair $\widehat{(-)} \dashv \mathrm{Mat}$. The composite 
$$\mathcal{S} \xrightarrow[]{\widehat{(-)}} \Pro(\mathcal{S}_\pi) \xrightarrow[]{\mathrm{Mat}}\mathcal{S}$$
is denoted by $\Phi^s$, and called \emph{finite completion}, following Sullivan. The unit of the adjunction gives rise a to natural transformation $\mathrm{id} \Rightarrow \Phi^s$; for a space $X$, we denote by $\eta_X$ the associated map $\eta_X \colon X \to \Phi^s X$.
\\

The first relation between profinite completion of spaces and that of groups is given as follows. Following Lurie, given a profinite space $X$ and a point $x \in X$, we define $\pi_n(X,x) \defeq \pi_n (\mathrm{Mat}(X),x)$. By \cite[Corollary E.5.2.4, Remark E.5.2.5]{SAG}, the observation is then that $\pi_n (X,x)$ carries a canonical structure of a profinite group. Thus, if we start with a space $X$ and choose a basepoint, let $\eta_X\colon X \to \Phi^s X$ be the canonical map given by the counit of the adjunction, $\mathrm{id} \Rightarrow \Phi^s$. Then, $\pi_n (\Phi^sX,\eta_X(x))$ carries a canonical structure of profinite group; in particular, via the universal property of group profinite completion, we obtain a commutative diagram
\begin{center}
    \begin{tikzcd}
        \pi_n (X,x) \arrow{r} \arrow{dr}  &   \pi_n (\Phi^s X,\eta_X (x))
        \\
        & \Phi^g \pi_n (X,x) \arrow[dashed]{u}[swap]{\exists!}
    \end{tikzcd}
\end{center}
It is natural to then ask: for which spaces $X$ is the comparison map $\Phi^g \pi_n (X,x) \to \pi_n (\Phi^s X,\eta_X (x))$ an isomorphism? This is the content of \cite[thm 3.1]{Sullivan-genetics}. 
\begin{definition}
    Let $\mathcal{C}$ be a subclass of the class of groups. A connected space $X$ is said to be $\pi_1$-$\mathcal{C}$ of finite type, if, for all choices of basepoints $x \in X$, $\pi_1 (X,x)$ is a $\mathcal{C}$-group, and $\pi_n (X,x)$ is a finitely generated group for all $n \in \mathbb{N}$. An arbitrary space $X$ is said to be componentwise $\pi_1$-$\mathcal{C}$ of finite type, if each of its connected components is $\pi_1$-$\mathcal{C}$ of finite type.
\end{definition}
Let $\mathcal{S}^{\pi_1\text{-gd,ft}}$ be the full subcategory of spaces on those objects which are componentwise $\pi_1$-$\mathcal{C}_{gd}$ of finite type, where $\mathcal{C}_{gd}$ is the class of "good" groups in the sense of Sullivan, namely all groups commensurable to a solvable group, with all subgroups finitely generated. We emphasize that we require no further condition on the action of $\pi_1$ on the higher homotopy groups. 
\begin{theorem}[Sullivan]\label{thm:Sullivan3.1}
    Let $X \in \mathcal{S}^{\pi_1\text{-gd,ft}}$ and fix $x \in X$. Then, the comparison map $\Phi^g \pi_n (X,x) \to \pi_n (\Phi^s X,\eta_X(x))$ is an isomorphism, for all $n \geq 1$.
\end{theorem}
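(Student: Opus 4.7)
The plan is to reduce the claim to Eilenberg--MacLane spaces via the Postnikov tower and apply the coassembly theorem (stated as \cref{thm:finite-limit-finite-completion} above) as the main technical engine. Without loss of generality we may assume $X$ is connected and pointed. Set $G \defeq \pi_1(X,x)$ and let $p\colon \widetilde{X} \to X$ be the universal cover; by hypothesis $\widetilde{X}$ is simply connected with finitely generated homotopy groups in every degree, hence a nilpotent space of finite type to which the coassembly theorem is applicable. The overall strategy is to separately establish the claim for $\widetilde{X}$ and for $K(G,1)$, then combine them through the universal cover fibration $\widetilde{X} \to X \to K(G,1)$.

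For $\widetilde{X}$, I would proceed by induction along its Postnikov tower $\widetilde{X} \simeq \lim_n \widetilde{X}_n$. The anchor for the induction is the computation of $\Phi^s K(A,m)$ for $A$ a finitely generated abelian group and $m \geq 1$: since $\widehat{K(A,m)}$ identifies with the pro-system $\{K(A/N, m)\}$ indexed by finite-index subgroups $N \subseteq A$, a direct argument (using the exactness of $\Phi^g$ on finitely generated abelian groups, which is the abelian special case of \cref{lemma:long-exact}) gives $\pi_m \Phi^s K(A,m) \cong \Phi^g A$ with all other homotopy vanishing. The inductive step uses the principal fibration $\widetilde{X}_n \to \widetilde{X}_{n-1} \to K(\pi_n \widetilde{X}, n+1)$, to which the coassembly theorem is applied in order to identify $\Phi^s \widetilde{X}_n$ with the pullback of the finite completions of the three corners; the resulting long exact sequence on homotopy, combined with the inductive hypothesis and \cref{lemma:long-exact}, propagates the desired isomorphism up the tower. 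Passage to the limit $\widetilde{X} = \lim_n \widetilde{X}_n$ is harmless because finite generation of the homotopy groups kills the relevant Milnor $\lim\nolimits^{1}$-terms.

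For $K(G,1)$ with $G$ good, the goodness condition is engineered precisely so that $H^\ast(G; M) \to H^\ast(\Phi^{tg} G; M)$ is an isomorphism for every finite $G$-module $M$; from this and a cohomological identification of the finite completion of $K(G,1)$ one deduces that $\pi_1 \Phi^s K(G,1) \cong \Phi^g G$ and that higher homotopy groups vanish. To combine the two cases, I would then apply the coassembly theorem (or a direct argument on the fiber sequence) to $\widetilde{X} \to X \to K(G,1)$, and compare the long exact sequence on homotopy of its finite completion with the long exact sequence of the original fibration after applying $\Phi^g$; \cref{lemma:long-exact} guarantees exactness of the latter, and the previous two steps identify the outer terms. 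A diagram chase via the five lemma then produces the isomorphism $\Phi^g \pi_n(X,x) \cong \pi_n(\Phi^s X, \eta_X(x))$ for every $n \geq 1$.

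The hardest step is the $K(G,1)$ case: while the nilpotent/finite-type regime is tightly controlled by coassembly, handling a merely good (and possibly non-nilpotent) $\pi_1$ forces an essential appeal to Sullivan's definition of \emph{good}, via a cohomological argument that is not subsumed by the earlier machinery. A secondary subtlety is ensuring that coassembly can be applied cleanly to the universal-cover pullback square even though $X$ itself is not assumed nilpotent; this requires verifying that the base and fiber individually lie in the subcategory where the coassembly theorem has the needed form, and that the corrections it introduces vanish after passing to homotopy groups.
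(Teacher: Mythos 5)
The paper does not prove this statement; it is taken verbatim from \cite[thm 3.1]{Sullivan-genetics} and is used as a black box. So the relevant question is whether your proposal, on its own terms, would constitute a valid proof — and there is a genuine circularity that blocks it.

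Your ``main technical engine'' is \cref{thm:finite-limit-finite-completion}, the coassembly theorem, together with its fibration-sequence precursor \cref{lemma:fibration-profinite-completion}. But look at the proof of \cref{lemma:fibration-profinite-completion} in the text: its very first substantive step reads ``Since $F$ is a nilpotent space of finite type, it follows that $\pi_n(\Phi^s F, \eta_F(x)) \cong \Phi^g \pi_n(F,x)$'' — that is, it invokes \cref{thm:Sullivan3.1}, the theorem you are trying to prove, for $F$, $E$ and $B$. The coassembly theorem is downstream of Sullivan's identification, not independent of it, so invoking it in the inductive step of your Postnikov argument is circular. What you should do instead is carry out a \emph{simultaneous} induction: prove the homotopy-group identification and a ``coassembly for principal fibrations'' statement in parallel up the Postnikov tower, with the fibration lemma reformulated so that it only assumes the conclusion of the theorem for the base and fibre appearing at that stage (which the inductive hypothesis supplies), rather than quoting the global statement for all nilpotent finite-type spaces. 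Your base case at $K(A,m)$ is fine and is indeed the right anchor.

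The two difficulties you flag at the end are genuine and remain unresolved in your proposal. For the $K(G,1)$ case you observe that goodness must enter through a cohomological argument not subsumed by the earlier machinery, but you do not supply that argument; this is where the real content of Sullivan's theorem for non-nilpotent $\pi_1$ lives, and a sketch is needed to claim a proof. And in the combining step you propose to apply the coassembly theorem to the universal-cover fibration $\widetilde{X} \to X \to K(G,1)$, but \cref{thm:finite-limit-finite-completion} is stated for cospans in $\mathcal{S}^{\text{nil,ft}}$: a good group $G$ need not be nilpotent, so $K(G,1)$ need not lie in that subcategory and the theorem does not apply as stated. Some separate argument (again ultimately relying on the cohomological characterisation of goodness and exactness of $\Phi^g$ on the long exact sequence) is required to splice the base and fibre cases together.
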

In other words, the map $\eta_X\colon X \to \Phi^s X$ exhibits the underlying group of the profinite completion of the homotopy groups of $X$ as the homotopy groups of the space $\Phi^s X$, whenever $X$ is a reasonable enough space. This class of spaces includes, among others, componentwise nilpotent spaces of finite type; these will play a crucial role in what follows.

\begin{remark}\label{remar:Sullivan3.1Morphisms}
    Given spaces $X,Y \in \mathcal{S}^{\pi_1\text{gd,ft}}$, and a map $f \colon X \to Y$, the natural transformation $\mathrm{id} \Rightarrow \Phi^s$ yields a commutative square
    \begin{center}
    \begin{tikzcd}
        X \arrow{r}{\eta_X} \arrow{d}[swap]{f} & \Phi^s X \arrow{d}{\Phi^s f}
        \\
        Y \arrow{r}[swap]{\eta_Y} & \Phi^s Y
    \end{tikzcd}
    \end{center}
    For all basepoints $x \in X$ and $y \defeq f(x) \in Y$, \cref{thm:Sullivan3.1} implies that for all $n \geq 1$, the square of groups obtained by applying $\pi_n$ is isomorphic, in the category of squares of groups, to the square
    \begin{center}
        \begin{tikzcd}
            \pi_n(X,x) \arrow{d}[swap]{\pi_n(f)} \arrow{r} & \Phi^g \pi_n(X,x) \cong \pi_n(\Phi^s X,\eta_X(x)) \arrow{d}{\pi_n(\Phi^s f)}
            \\
            \pi_n(Y,y) \arrow{r} & \Phi^g \pi_n(Y,y) \cong \pi_n(\Phi^s Y, \eta_Y(y))
        \end{tikzcd}
    \end{center}
    A careful reading of Sullivan's proof shows that the group morphism $\pi_n(\Phi^s f) \colon \pi_n(\Phi^s X,\eta_X(x)) \to \pi_n(\Phi^s Y,\eta_Y(y))$ agrees with the morphism $\Phi^g (\pi_n f) \colon \Phi^g \pi_n(X,x) \to \Phi^g\pi_n(Y,y)$. This could also be seen as a corollary of \cite[thm 1.1]{Nikolov-Segal}.
\end{remark}

\subsubsection{Finite completion and finite limits}
In the following, we study how close the functor $\Phi^s$ comes to preserving finite limits. Observe that $\Phi^s$ depends on the functor $\Map(-,X)$, so that preservation of any kinds of limits will not follow from a formal argument. Recall that preserving finite limits is equivalent to preserving the terminal object and pullbacks; we study how $\Phi^s$ behaves with pullbacks. Let $X \rightarrow Z \leftarrow Y$ be a cospan in $\mathcal{S}^{\text{nil,ft}}$, the category of finite type nilpotent spaces, and let $P$ be its pullback. We can apply $\Phi^s$ to the cospan and obtain $\Phi^s X \rightarrow \Phi^s Z \leftarrow \Phi^s Y$, and let $P'$ denote its pullback. We have a commutative diagram
\begin{center}
    \begin{tikzcd}
        & P \arrow{dr}\arrow{dl}
        \\
        \Phi^s P \arrow{rr}  &&  P'
    \end{tikzcd}
\end{center}
The functor preserving finite limits is then equivalent to the map $\Phi^s P \to P'$ being an equivalence, for all such $P$ and $P'$. We show that the functor comes close to preserving finite limits, when restricted to $\mathcal{S}^{\text{nil,ft}}$. The aim of this section is to show the following

\begin{theorem}\label{thm:finite-limit-finite-completion}
    Consider a cospan $X \rightarrow Z \leftarrow Y$ of componentwise nilpotent, finite type spaces, and let $P$ denote the pullback. Let $P'$ denote the pullback of the cospan $\Phi^s X \rightarrow \Phi^s Z \leftarrow \Phi^s Y$, so that we have a commutative triangle
    \begin{center}
    \begin{tikzcd}
        & P \arrow{dr}\arrow{dl}
        \\
        \Phi^s P \arrow{rr}  &&  P'
    \end{tikzcd}
\end{center}
Then, for all basepoints coming from $P$ via the above diagram, the map $\Phi^s P \to P'$ induces an isomorphism on $\pi_n$, $n\geq 1$.
\end{theorem}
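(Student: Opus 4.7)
The plan is to compare the Mayer--Vietoris long exact sequence of homotopy groups of the homotopy pullback $\Phi^s P$ with that of $P'$, identify the two via Sullivan's theorem, and conclude using the five lemma. Fix $p \in P$, with images $x \in X$, $y \in Y$, $z \in Z$, and let $p'$ denote the image of $p$ under the composition $P \to \Phi^s P \to P'$.

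First, I would verify that pullbacks of componentwise nilpotent spaces of finite type remain componentwise nilpotent of finite type at the chosen basepoint. Since all pullbacks in $\mathcal{S}$ are homotopy pullbacks, this is the classical fact (as in Hilton--Mislin--Roitberg or May--Ponto) that the connected component of a point in the homotopy fiber of a map between connected nilpotent spaces of finite type is again nilpotent of finite type; iterating this yields that the component of $p$ in $P$ is nilpotent of finite type, and in particular all homotopy groups $\pi_n(P,p)$ are finitely generated nilpotent.

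Next, the homotopy pullback $P$ yields a long exact sequence
$$\cdots \to \pi_{n+1}(Z,z) \to \pi_n(P,p) \to \pi_n(X,x) \times \pi_n(Y,y) \to \pi_n(Z,z) \to \cdots$$
of groups for $n \geq 1$, with all terms finitely generated nilpotent. By \cref{lemma:long-exact}, applying $\Phi^g$ yields another long exact sequence; by \cref{thm:Sullivan3.1} together with \cref{remar:Sullivan3.1Morphisms}, this is canonically identified with the sequence obtained by applying $\pi_\ast$ to the pullback diagram of finite completions, based at $\eta_P(p)$, etc. On the other hand, $P'$ fits as a homotopy pullback of the cospan $\Phi^s X \to \Phi^s Z \leftarrow \Phi^s Y$, and therefore sits in its own Mayer--Vietoris long exact sequence
$$\cdots \to \pi_{n+1}(\Phi^s Z) \to \pi_n(P',p') \to \pi_n(\Phi^s X) \times \pi_n(\Phi^s Y) \to \pi_n(\Phi^s Z) \to \cdots$$
based at $p'$ and the induced basepoints of $\Phi^s X, \Phi^s Y, \Phi^s Z$.

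The natural map $\Phi^s P \to P'$ induces a map of long exact sequences which is the identity on every term of the form $\pi_n(\Phi^s X), \pi_n(\Phi^s Y), \pi_n(\Phi^s Z)$ by construction. Applying the five lemma then forces $\pi_n(\Phi^s P, \eta_P(p)) \to \pi_n(P', p')$ to be an isomorphism for all $n \geq 1$, at all the $\pi_1$'s nonabelian positions the five-lemma-for-groups version suffices. The main obstacle is the first step: carefully verifying that the relevant components of $P$ are nilpotent of finite type so that \cref{lemma:long-exact} and \cref{thm:Sullivan3.1} are applicable; this should follow from a principal-refinement argument using Postnikov towers and the fact that homotopy fibers of maps between nilpotent finite type spaces are componentwise nilpotent of finite type, combined with writing $P$ as the pullback of a fibration.
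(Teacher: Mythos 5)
Your approach (reduce to the Mayer--Vietoris long exact sequence for the pullback $P$, profinitely complete it using \cref{lemma:long-exact} and \cref{thm:Sullivan3.1}, then conclude via the five lemma) is a genuinely different strategy from the paper's published proof, which first establishes the special case of a fibre sequence (\cref{lemma:fibration-profinite-completion}) and then uses the principal refinement of the Postnikov tower of $Z$ to write $P = \lim_n P_n$ as an iterated pullback over Eilenberg--MacLane spaces. Your route is more direct and avoids the Postnikov gymnastics, and it does yield the isomorphism on $\pi_n$ for $n \geq 2$ essentially as you say. The preliminary step (checking that the relevant component of $P$ is nilpotent of finite type so that \cref{thm:Sullivan3.1} and \cref{lemma:long-exact} apply) is indeed necessary and your sketch of it is fine.

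However, there is a genuine gap at $n = 1$, and it is not the one you flagged. The Mayer--Vietoris sequence of the homotopy pullback is the long exact sequence of the fibration $\Omega Z \to P \to X \times Y$, and while this is a sequence of groups and homomorphisms through the segment
\[
\pi_2(X \times Y) \to \pi_2(Z) \to \pi_1(P) \to \pi_1(X) \times \pi_1(Y),
\]
the next connecting map $\pi_1(X) \times \pi_1(Y) \to \pi_0(\Omega Z) = \pi_1(Z)$ is the set-theoretic map $(a,b) \mapsto f_*(a)\, g_*(b)^{-1}$, which is \emph{not} a group homomorphism unless $\pi_1(Z)$ is abelian. Since nilpotent groups need not be abelian, this term cannot be appended to the exact sequence of groups, $\Phi^g$ cannot be applied to it, and the five-lemma ladder you write down is therefore only a four-term ladder: it gives injectivity of $\Phi^g\pi_1(P) \to \pi_1(P')$, but not surjectivity. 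Your parenthetical remark that ``the five-lemma-for-groups version suffices'' at the nonabelian $\pi_1$ positions does not address this, because the issue is precisely that the fifth map fails to be a homomorphism. This is exactly the point at which the paper's published proof departs from the Mayer--Vietoris route: by peeling off the principal Postnikov refinement of $Z$, it arranges that each stage $P_{n+1} \to P_n$ is the fibre of a map to an Eilenberg--MacLane space $K_n$, so \cref{lemma:fibration-profinite-completion} (whose five-term ladder ends at $\pi_1$ of the base, which \emph{is} a homomorphism target in the fibration setting) applies cleanly at every stage, including $\pi_1$. To repair your proof you would either need to restrict to $\pi_1$-abelian $Z$, or supply a separate argument showing $\Phi^g$ carries the fibre product of groups $\pi_1 X \times_{\pi_1 Z} \pi_1 Y$ to the corresponding fibre product of profinite completions, which is not covered by \cref{lemma:long-exact} as stated.
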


We begin by showing a special case of the above, for fibre sequences.

\begin{lemma}\label{lemma:fibration-profinite-completion}
    Let $F \to E \to B$ be a fibre sequence, where $E$ and $B$ are connected, nilpotent spaces of finite type. Denote by $F'$ the fibre of the map $\Phi^s E \to \Phi^s B$. Then, the natural map $\Phi^s F \to F'$ induces an isomorphism on $\pi_n$ for all $n \geq 1$, with basepoints coming from $F$.
\end{lemma}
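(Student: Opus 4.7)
The plan is to compare the long exact sequence of the given fibration with that of the completed fibration, term by term, via Sullivan's comparison theorem (\cref{thm:Sullivan3.1}), and conclude with a five-lemma argument. The crucial structural input is \cref{lemma:long-exact}, which guarantees that the long exact sequence of the fibration remains exact after applying $\Phi^g$, provided it consists of finitely generated nilpotent groups.

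First I would verify that the fibre $F$ is itself componentwise nilpotent and of finite type. Since $E$ and $B$ are connected nilpotent of finite type, a standard result in the theory of nilpotent spaces (e.g. via the principal refinement of the Postnikov tower of $E \to B$) yields that each component of $F$ is nilpotent, and the finite generation of the homotopy groups follows by an induction up the long exact sequence of the fibration, using that $\pi_n E, \pi_n B$ are finitely generated and the class of finitely generated nilpotent groups is closed under extensions, subgroups, and quotients. This is what allows us to invoke \cref{thm:Sullivan3.1} for all three of $F$, $E$, and $B$, and therefore to identify $\pi_n \Phi^s X \cong \Phi^g \pi_n X$ naturally in $X \in \{F, E, B\}$.

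Next I would assemble the commutative ladder of long exact sequences. The fibration $F \to E \to B$ yields a long exact sequence of finitely generated nilpotent groups (in positive degrees), to which \cref{lemma:long-exact} applies to give a long exact sequence
\[
\cdots \to \Phi^g \pi_{n+1} B \to \Phi^g \pi_n F \to \Phi^g \pi_n E \to \Phi^g \pi_n B \to \cdots
\]
On the other side, the fibration $F' \to \Phi^s E \to \Phi^s B$ produces the long exact sequence of its homotopy groups. The naturality of the unit $\eta$ with respect to the maps in the fibration, together with the universal property of group profinite completion applied to the boundary maps (whose targets are profinite by \cite[Remark E.5.2.5]{SAG}, as in \cref{remar:Sullivan3.1Morphisms}), produces a commutative ladder between these two long exact sequences whose columns at $E$ and $B$ are the isomorphisms of \cref{thm:Sullivan3.1}, and whose column at $F$ factors as $\Phi^g \pi_n F \xrightarrow{\sim} \pi_n \Phi^s F \to \pi_n F'$.

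The five lemma applied to this ladder then forces $\Phi^g \pi_n F \to \pi_n F'$ to be an isomorphism, and composing with the inverse of the Sullivan comparison isomorphism $\Phi^g \pi_n F \xrightarrow{\sim} \pi_n \Phi^s F$ yields the desired conclusion for $n \geq 2$, where all groups are abelian and the standard five lemma applies. The main obstacle is the low-degree case $n=1$, since the groups $\pi_1$ are only nilpotent (not necessarily abelian) and the long exact sequence degenerates to a sequence of pointed sets at $\pi_0$. This is handled by invoking the version of the five lemma valid for exact sequences of groups terminating in pointed sets (the relevant diagram chase goes through as all the groups involved are nilpotent and the boundary maps respect the group structure on all terms strictly above $\pi_0$), so the isomorphism on $\pi_1$ follows as well, completing the proof.
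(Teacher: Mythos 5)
Your proposal is correct and follows essentially the same approach as the paper: verify that $F$ is componentwise nilpotent of finite type, invoke \cref{lemma:long-exact} to establish that the top row (with $\Phi^s$ applied pointwise) still yields a long exact sequence on homotopy groups, use \cref{thm:Sullivan3.1} to identify the vertical maps at $E$ and $B$ as isomorphisms, and conclude via the five lemma. The only inessential difference is that you devote extra attention to the $n=1$ case worrying about $\pi_0$ being only a pointed set; in fact the five-lemma window for isolating $\pi_1 \Phi^s F \to \pi_1 F'$ is $\pi_2 \Phi^s E \to \pi_2 \Phi^s B \to \pi_1(-) \to \pi_1 \Phi^s E \to \pi_1 \Phi^s B$, which consists entirely of groups and group homomorphisms, so the standard (not-necessarily-abelian) five lemma applies without needing the pointed-set refinement.
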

\begin{proof}
    We begin by observing that $F$ is itself a componentwise nilpotent space of finite type. We have a commutative diagram
    \begin{center}
        \begin{tikzcd}
            \Phi^s F \arrow{r} \arrow{d} &  \Phi^s E \arrow[equal]{d} \arrow{r} &  \Phi^s B \arrow[equal]{d}
            \\
            F' \arrow{r} & \Phi^s E \arrow{r} & \Phi^s B 
        \end{tikzcd}
    \end{center}
    Since $F$ is a nilpotent space of finite type, it follows that $\pi_n (\Phi^s F, \eta_F(x)) \cong \Phi^g \pi_n(F,x)$, where $x \in F$, and this exhibits the induced map on $\pi_n$ of the map $\eta_F\colon F \to \Phi^s F$ as the (group) profinite completion of $\pi_n(F,x)$; the same holds for the other two spaces, $E$ and $B$. By \cref{lemma:long-exact}, it follows that applying $\Phi^g$ to the long exact sequence of homotopy groups of the fibre sequence $F \to E \to B$ yields again a long exact sequence of homotopy groups; thus, even though the top row of the above diagram is not a fibre sequence, we do have a long exact sequence of homotopy groups based at points coming from $F$, $E$ and $B$ respectively. We thus obtain a map of long exact sequences
    \small\begin{center}
        \begin{tikzcd}
            \cdots \arrow{r} & \pi_{n+1}(\Phi^s B, \eta_B(b)) \arrow{r} \arrow[equal]{d} & \pi_n (\Phi^s F,\eta_F(f)) \arrow{r} \arrow{d} & \pi_n(\Phi^s E,\eta_E(e)) \arrow{r} \arrow[equal]{d} & \pi_n(\Phi^s B, \eta_B(b)) \arrow[equal]{d} \arrow{r} & \cdots
            \\
            \cdots \arrow{r} & \pi_{n+1}(\Phi^s B, \eta_B(b)) \arrow{r} & \pi_n (F',\eta'(f)) \arrow{r} & \pi_n(\Phi^s E,\eta_E(e)) \arrow{r} & \pi_n(\Phi^s B, \eta_B(b)) \arrow{r} & \cdots
        \end{tikzcd}
    \end{center}
    where $\eta'\colon F \to F'$ is the natural map between the pullbacks. The claim now follows from the five lemma.
\end{proof}
\bigskip
With the above lemma at hand, we can prove \cref{thm:finite-limit-finite-completion}.
\begin{proof}[Proof of \cref{thm:finite-limit-finite-completion}]

The strategy is to now deduce the general claim about pullbacks from \cref{lemma:fibration-profinite-completion}. Consider
\begin{center}
    \begin{tikzcd}
        P \arrow{d}\arrow{r} & X \arrow{d}
        \\
        Y \arrow{r} & Z
    \end{tikzcd}
\end{center}
where $X$, $Y$ and $Z$ are connected, nilpotent spaces of finite type. Let $P'$ be the pullback of the associated cospan obtained after applying $\Phi^s$. We aim to show that $\Phi^s P \to P'$ induces isomorphisms on homotopy groups, based at points from $P$, using \cref{lemma:fibration-profinite-completion}. Denote by $\{Z_n \}_{n\in \mathbb{N}}$ the principal refinement of the Postnikov tower of $Z$. In particular, we note that $Z_1 \simeq K(A,1)$, for some finitely generated abelian group. Furthermore, there is a sequence $K_n$ of Eilenberg-Maclane spaces associated to a finitely generated abelian group $A_n$ (we omit declaring the degree of the Eilenberg-Maclane space, for clarity), fitting into pullback squares

\begin{center}
    \begin{tikzcd}
        Z_{n} \arrow{d} \arrow{r} & \ast \arrow{d}
        \\
        Z_{n-1} \arrow{r} & \Omega K_n
    \end{tikzcd}
\end{center}

For each $n \in \mathbb{N}$, we consider the pullback $P_n \coloneqq X \times_{Z_n}Y$ of the cospan 
$$X \rightarrow Z_n \leftarrow Y$$
obtained by composition with the canonical map $Z \to Z_n$, and observe that since the pullback functor $\mathrm{cospan}(\mathcal{S}) \to \mathcal{S}$ preserves limits, it follows that $P \simeq \lim_n P_n$. Furthermore, we observe that we have a pullback square
\begin{center}
    \begin{tikzcd}
        P_{n+1}\arrow{d} \arrow{r} & \ast \arrow{d}
        \\
        P_n \arrow{r} & K_n
    \end{tikzcd}
\end{center}
for each $n\in \mathbb{N}$. In particular, for each $k \in \mathbb{N}$, $\pi_k P$ can be read at a finite stage of the inverse system $\{P_n\}$, namely there is a $N_k \in \mathbb{N}$ such that $P \to P_{N_k}$ induces an isomorphism on all homotopy groups below degree $k$. As $P$ is a componetnwise nilpotent space of finite type, \cref{lemma:Principal-Postnikov-profinite} implies that $\{\Phi^s P_n\}_{n \in \mathbb{N}}$ is again the principal refinement of the Postnikov system of $\Phi^s P$, on the components coming from the image of the map $\eta_P \colon P \to \Phi^s P$, \cref{lemma:Principal-Postnikov-profinite}. Consequently, the same analysis as above applies, and we obtain an inverse system $\{P_n'\}_{n \in\mathbb{N}}$ via
$$P_n' \coloneqq \Phi^{s}X \times_{\Phi^s Z_n} \Phi^s Y$$
and observe that $P' \to \lim_n P_n'$ is again an equivalence. We similarly have pullback squares
\begin{center}
    \begin{tikzcd}
        P_{n+1}' \arrow{r} \arrow{d} & \ast \arrow{d}
        \\
        P_n' \arrow{r} & \Phi^s K_n
    \end{tikzcd}
\end{center}
for all $n \in \mathbb{N}$, where we recall that $\Phi^s K_n$ is equivalent to the Eilnberg-Maclane space obtained by applying $\Phi^g$ to the non-trivial homotopy group of $K_n$.
\\

Fix $k \in \mathbb{N}$, and let $N_k \in \mathbb{N}$ be the integer after which both the inverse systems $\{P_n\}$ and $\{P_n'\}$ induce isomorphisms on $\pi_k$. We now consider the following commutative diagram
\begin{center}
    \begin{tikzcd}
        \Phi^s P \arrow{r} \arrow{ddd} & \Phi^s (X \times_{Z_{N_k}}Y) \simeq \Phi^s(\ast \times_{K_{N_k}}(X \times_{Z_{N_{k}-1}}Y)) \arrow{d}
        \\
          & \ast \times_{\Phi^s K_{N_k}}(\Phi^s(\ast \times_{K_{N_k-1}}(X \times_{Z_{N_{k-2}}}Y)) \arrow{d}
          \\
          &  \vdots \arrow{d}
          \\
          P' \arrow{r} &  \ast  \times_{\Phi^s K_{N_k}}(\ast  \times_{\Phi^s K_{N_k-1}}( \ast \cdots \ast  \times_{\Phi^s K_{1}}(\Phi^s X \times \Phi^s Y)))
    \end{tikzcd}
\end{center}
Without further reference, all homotopy groups under consideration are based at points coming from the image of $P$ via the canonical maps. By assumption, the lower horizontal map induces isomorphisms on all $\pi_n$, for $n\leq k$. Using \cref{lemma:fibration-profinite-completion}, the same holds for the top horizontal map, and that furthermore, all the rightmost vertical maps are isomorphisms on all homotopy groups. Thus, the left vertical map induces an isomorphism on $\pi_n$, for all $n \leq k$. This being true for all $k$, the claim follows.
\end{proof}

From \cref{thm:finite-limit-finite-completion}, the claim about finite limit follows. Recall that a finite limit is a limit over a category which is the nerve of a finite simplicial set, i.e. consisting of finitely many degeneracies. Given a pushout for the indexing category
\begin{center}
    \begin{tikzcd}
        S \arrow{r} \arrow{d} & D \arrow{d}
        \\
        X \arrow{r} & Y
    \end{tikzcd}
\end{center}
the limit of the functor over this pushout is precisely the pullback of the evaluations. By finite induction, we obtain the following

\begin{corollary}\label{cor:finite-limits-finite-completion}
    Let $F\colon \mathcal{D} \to \mathcal{S}^{\text{nil,ft}}$ be a functor from a finite diagram of componentwise nilpotent spaces of finite type. Then, the canonical map
    $$\Phi^s(\lim_\mathcal{D}F) \to \lim_\mathcal{D} \Phi^s F$$
    induces an isomorphism on $\pi_n$ for $n \geq 1$, based at points coming from $\lim_{\mathcal{D}}F$ (via the canonical maps), where all the limits are taken in $\mathcal{S}$.
\end{corollary}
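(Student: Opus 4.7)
The plan is to proceed by induction on the cell structure of the indexing category, reducing the general finite limit to an iterated pullback and applying \cref{thm:finite-limit-finite-completion} at each stage. Explicitly, I model $\mathcal{D}$ by a finite simplicial set and induct on its number of non-degenerate simplices. The base case, where $\mathcal{D}$ is the terminal category, is trivial: $\lim_{\mathcal{D}}F \simeq F(\ast) \in \mathcal{S}^{\text{nil,ft}}$ and the comparison map is the identity.

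For the inductive step, I write $\mathcal{D} \simeq \mathcal{D}' \cup_{\partial \Delta^k} \Delta^k$ where $\mathcal{D}'$ has strictly fewer non-degenerate simplices. Since limits turn colimits of indexing categories into limits of values, this gives a pullback square
\begin{equation*}
    \lim_{\mathcal{D}} F \simeq \lim_{\mathcal{D}'}F|_{\mathcal{D}'} \times_{\lim_{\partial \Delta^k} F|_{\partial \Delta^k}} \lim_{\Delta^k} F|_{\Delta^k}
\end{equation*}
and an entirely analogous pullback square for $\Phi^s F$. The limit over $\Delta^k$ is equivalent to $F$ evaluated at the cone point, so it automatically lies in $\mathcal{S}^{\text{nil,ft}}$; by the inductive hypothesis (together with the auxiliary fact that $\mathcal{S}^{\text{nil,ft}}$ is closed under pullbacks, which follows from the long exact sequence of a fibration and closure of finitely generated nilpotent groups under central extensions by finitely generated abelian groups), the other two corner terms also lie in $\mathcal{S}^{\text{nil,ft}}$.

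Now the natural map $\Phi^s \lim_{\mathcal{D}}F \to \lim_{\mathcal{D}}\Phi^s F$ factors through the pullback of the three $\Phi^s$-completed corners. The first leg of this factorisation, $\Phi^s \lim_{\mathcal{D}} F \to \Phi^s \lim_{\mathcal{D}'}F \times_{\Phi^s \lim_{\partial \Delta^k} F} \Phi^s F(v)$, is a $\pi_n$-isomorphism for $n \geq 1$ at basepoints coming from $\lim_{\mathcal{D}} F$ by \cref{thm:finite-limit-finite-completion}. The second leg is obtained by applying the inductive hypothesis to $\mathcal{D}'$ and to $\partial \Delta^k$ on the two non-trivial corners; a pullback square diagram chase (together with the five lemma applied to the long exact sequences of the two fibrations, which exist on homotopy groups at the relevant basepoints thanks to \cref{lemma:long-exact} and \cref{thm:Sullivan3.1}) shows that this leg is also a $\pi_n$-isomorphism.

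The main obstacle will be the careful bookkeeping of basepoints: \cref{thm:finite-limit-finite-completion} and the inductive hypothesis each only give $\pi_n$-isomorphisms at basepoints originating from the corresponding honest limit, so one must verify at each stage that the basepoints needed to invoke the next application of the theorem arise from $\lim_{\mathcal{D}} F$ through the relevant canonical maps. A secondary technical point, which should be dispatched at the beginning, is the auxiliary closure result that $\mathcal{S}^{\text{nil,ft}}$ is preserved under pullbacks, so that the induction stays within the class of spaces where Theorem \ref{thm:finite-limit-finite-completion} applies.
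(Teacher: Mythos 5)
Your proposal is correct and follows the same strategy the paper uses: decompose the finite simplicial set indexing $\mathcal{D}$ into a pushout of simplices, observe that limits turn this into an iterated pullback, and apply \cref{thm:finite-limit-finite-completion} by finite induction. The paper's proof is terser (it just states the pushout-to-pullback observation and invokes "finite induction"), whereas you usefully make explicit the auxiliary closure of $\mathcal{S}^{\mathrm{nil,ft}}$ under pullbacks and the basepoint bookkeeping, both of which the paper leaves implicit.
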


\bigskip

Rather informally speaking, the above is a categorical reformulation of the following classical argument, that is used by Sullivan in \cite[p. 28]{Sullivan-genetics}. Let $X$ be a finite CW-complex, and let $Y$ be a nilpotent, finite type space. We would like to show that $\pi_n \Map(X,\Phi^s Y)$, based at the a map $\eta_Y \circ f \colon X \to \Phi^s Y$, obtained as the composition
$$\eta_Y \circ f \colon X \xrightarrow[]{f} Y \xrightarrow[]{\eta_Y} \Phi^s Y$$
for some map $f \colon X \to Y$, can be canonically identified with $\pi_n \Phi^s \Map(X,Y)$, based at $\eta_{\Map(X,Y)}(f)$. This is shown by induction on the finite cells of $X$, as follows. We consider the situation where $X= X' \cup e_n$, for some $n$-cell. Then, we have a fibration
$$F \to \Map(X,Y) \xrightarrow[]{\mathrm{res}}\Map(X',Y)$$
where the fibre $F \coloneqq \fib_{f|X'}(\mathrm{res})$ can be identified with $\Omega^n Y$, after choice of a nullhomotopy of $f|\partial e_n$. Thus, we obtain an exact sequence
$$\cdots \to\pi_2 Y^{X'} \to \pi_{n+1} Y \to \pi_1 Y^X \to \pi_1 Y^{X'} \to \pi_n Y$$
where the basepoints of the mapping spaces are taken to be $f$ and its restriction to $X'$, correspondingly. The induction hypothesis is then that $\pi_1 Y^{X'}$ is a finitely generated, nilpotent group, and that its higher homotopy groups are finitely generated, abelian groups; that is, $Y^{X'}$ is a nilpotent space of finite type. It can further be shown that the extension above on $\pi_1$ is central; thus, it follows that $\pi_1 Y^X$ is again a finitely generated nilpotent group (as being centrally extended from a finitely generated nilpotent group by a finitely generated abelian group), and that all the higher homotopy groups are finitely generated. Exactness of the group profinite completion functor on that class of groups yields the identification $\pi_n \Map(X,\Phi^s Y) \cong \Phi^g \pi_n\Map(X,Y)$ on the basepoints specified above.

\subsubsection{Diagrams of finite completions}
The following theorem is crucial for the proof of residual finiteness of the automorphism groups under consideration in this work.
\begin{theorem}\label{thm:profinite}
    Let $\mathcal{C}$ be an arbitrary $\infty$-category, and consider a space valued functor $\mathcal{F}\colon \mathcal{C} \to \mathcal{S}_{>1}^{\mathrm{fin}}$ landing in the full subcategory of 1-connected finite spaces (i.e. simply connected finite CW complexes), and denote by $\widehat{\mathcal{F}}$ the composition of $\mathcal{F}$ with the profinite completion functor. Then, the group
    $$\pi_0 \Map^{\simeq}_{\Fun(\mathcal{C},\Pro(\mathcal{S}_\pi))}(\widehat{\mathcal{F}}, \widehat{\mathcal{F}})$$
    can be promoted to a profinite topological group.
\end{theorem}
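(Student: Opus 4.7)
\emph{Proof plan.} The plan is to realize $G := \pi_0 \Map^\simeq_{\Fun(\mathcal{C}, \Pro(\mathcal{S}_\pi))}(\widehat{\mathcal{F}}, \widehat{\mathcal{F}})$ as a closed subgroup of a profinite topological group, obtained as a product of pointwise automorphism groups cut down by a naturality condition, and then to transfer the resulting topology along the evaluation-at-objects map.

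First, I would dispense with the pointwise situation. For each $c \in \mathcal{C}$, since $\mathcal{F}(c)$ is a simply connected finite CW complex, $\widehat{\mathcal{F}(c)}$ admits a cofiltered presentation by $\pi$-finite spaces (for instance, by truncating its Postnikov tower and replacing each finitely generated homotopy group by the cofiltered system of its finite quotients). Since mapping spaces between $\pi$-finite spaces are themselves $\pi$-finite, the self-equivalence groups at each stage are finite, and their inverse limit exhibits $G_c := \pi_0 \Map^\simeq_{\Pro(\mathcal{S}_\pi)}(\widehat{\mathcal{F}(c)}, \widehat{\mathcal{F}(c)})$ as a profinite topological group. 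The same argument shows that for any $c, c' \in \mathcal{C}$, the set $\pi_0 \Map_{\Pro(\mathcal{S}_\pi)}(\widehat{\mathcal{F}(c)}, \widehat{\mathcal{F}(c')})$ is a profinite set and composition is continuous in both variables.

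Next, the candidate topology on $G$ comes from the evaluation-at-objects group homomorphism $\Psi \colon G \to \prod_{c \in \mathcal{C}} G_c$. Its image lies in the subset $G^{\mathrm{nat}}$ consisting of those tuples $(\phi_c)_c$ satisfying
$$\widehat{\mathcal{F}}(f) \circ \phi_c = \phi_{c'} \circ \widehat{\mathcal{F}}(f) \quad\text{in}\quad \pi_0 \Map_{\Pro(\mathcal{S}_\pi)}\bigl(\widehat{\mathcal{F}(c)}, \widehat{\mathcal{F}(c')}\bigr)$$
for each morphism $f\colon c \to c'$ of $\mathcal{C}$. Each such equation is a closed condition, as it is an equality of two continuous maps into a Hausdorff (in fact profinite) set; hence $G^{\mathrm{nat}}$ is a closed subgroup of the profinite group $\prod_c G_c$, and is itself profinite in the subspace topology. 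I would then equip $G$ with the topology pulled back along $\Psi$.

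The main obstacle is to show that this topology genuinely makes $G$ profinite, which amounts to checking that $\Psi$ is injective with closed image. Injectivity is the delicate point: in the $\infty$-functor category a natural transformation carries higher coherence data and is not determined by its components alone in general. To control this, I would use the adjunction $\widehat{(-)} \dashv \Mat$ to identify $\Map_{\Fun(\mathcal{C},\Pro(\mathcal{S}_\pi))}(\widehat{\mathcal{F}},\widehat{\mathcal{F}})$ with $\Map_{\Fun(\mathcal{C},\mathcal{S})}(\mathcal{F}, \Phi^s \mathcal{F})$, reducing the question to one about mapping spaces in $\mathcal{S}$, and then apply \cref{thm:finite-limit-finite-completion} and \cref{cor:finite-limits-finite-completion}: the near-preservation of finite limits by $\Phi^s$, combined with the fact that each $\mathcal{F}(c)$ is finite and simply connected, should force the higher coherence discrepancies to vanish after finite completion, and hence $\Psi$ to be injective. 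Closedness of the image should then follow by writing $\widehat{\mathcal{F}}$ as a cofiltered limit of functors $\mathcal{F}_\alpha \colon \mathcal{C} \to \mathcal{S}_\pi$ to $\pi$-finite valued diagrams, and observing that $G$ is the intersection of the closed subsets of $G^{\mathrm{nat}}$ defined by compatibility with each $\mathcal{F}_\alpha$ at each stage.
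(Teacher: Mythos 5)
There is a genuine gap, and it is at the point you yourself flagged. The evaluation-at-objects map $\Psi \colon G \to \prod_{c \in \mathcal{C}} G_c$ is \emph{not} injective in general, and no amount of profinite completion fixes this. Consider the simplest nontrivial test case $\mathcal{C} = B\mathbb{Z}$ and $\mathcal{F}$ constant at a simply connected finite space $X$: then
$$\Map_{\Fun(B\mathbb{Z},\Pro(\mathcal{S}_\pi))}(\widehat{\mathcal{F}},\widehat{\mathcal{F}}) \;\simeq\; \Map\bigl(S^1, \Map_{\Pro(\mathcal{S}_\pi)}(\widehat X, \widehat X)\bigr),$$
a free loop space, so $\pi_0$ receives a contribution from $\pi_1 \Map_{\Pro(\mathcal{S}_\pi)}(\widehat X,\widehat X)$ that $\Psi$ cannot see. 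The cited results \cref{thm:finite-limit-finite-completion} and \cref{cor:finite-limits-finite-completion} concern $\Phi^s$ near-commuting with finite limits, which is a different kind of statement (and $\mathcal{C}$ here is an arbitrary $\infty$-category, not a finite diagram), so the gesture that ``higher coherence discrepancies should vanish after finite completion'' has no mechanism behind it. Without injectivity of $\Psi$, pulling back the topology does not produce a Hausdorff (let alone profinite) group.

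The paper routes around this entirely. Instead of a product over objects, it uses the twisted-arrow presentation of mapping spaces in functor categories,
$$\Map_{\Fun(\mathcal{C},\Pro(\mathcal{S}_\pi))}(\widehat{\mathcal{F}}, \widehat{\mathcal{F}}) \;\simeq\; \lim_{(a \to b) \in \mathrm{Tw}(\mathcal{C})} \Map_{\Pro(\mathcal{S}_\pi)}\bigl(\widehat{\mathcal{F}(a)}, \widehat{\mathcal{F}(b)}\bigr),$$
which genuinely encodes all coherence data, and realizes each factor as the materialisation of a cotensor $\widehat{\mathcal{F}(b)}^{\mathcal{F}(a)}$ in $\Pro(\mathcal{S}_\pi)$. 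Because $\Mat$ preserves limits, the whole mapping space is $\Mat$ of an honest profinite space $\widehat{\mathcal{F}}^{\mathcal{F}}$. The remaining work is to show $\widehat{\mathcal{F}}^{\mathcal{F}}$ is an endomorphism object in Lurie's sense (\cref{lemma:endomorphism_object}), hence an $E_1$-algebra in $\Pro(\mathcal{S}_\pi)$, and that upon materialisation the $E_1$-multiplication matches composition on $\mathrm{End}(\widehat{\mathcal{F}})$ (\cref{lemma:composition}); then $\pi_0$ is a profinite monoid and one passes to the group of units via \cref{lemma:group-of-units-profinite}. The key technical input for the cotensoring step is \cref{lemma:tensoring-finite-spaces}, that for a simply connected finite $K$ one has $K \otimes X \simeq \widehat{K} \times X$ in $\Pro(\mathcal{S}_\pi)$; this is where the hypotheses on $\mathcal{F}$ are actually used. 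Your pointwise observation that each $G_c$ is profinite is correct and consonant with the paper's ingredients, but a product of the $G_c$'s is the wrong indexing diagram --- you need the twisted-arrow limit (or some equivalent coherent construction) to recover the true mapping space.
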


The proof of the above theorem is of categorical nature, and consequently we delay it to \cref{appendix}. Let us first highlight how the above theorem is crucial for the later sections. The following corrollary is an immediate application of the universal property of profinite completion of groups (\cref{thm:group-finite-completion-universal-ppty}).
\begin{corollary}\label{cor:profinite}
    Let $\mathcal{F}$ be as in \cref{thm:profinite}. Then, we have the following commutative diagram
    \begin{center}
        \begin{tikzcd}
            \pi_0 \Map^{\simeq}_{\Fun(\mathcal{C},\mathcal{S})}(\mathcal{F},\mathcal{F}) \arrow{r}\arrow{d} \arrow{dr} &  \Phi^g\pi_0 \Map^{\simeq}_{\Fun(\mathcal{C},\mathcal{S})}(\mathcal{F},\mathcal{F})\arrow{d}{\exists!}
            \\
            \pi_0 \Map^{\simeq}_{\Fun(\mathcal{C},\mathcal{S})}(\Phi^s \circ \mathcal{F},\Phi^s \circ \mathcal{F})  &  \pi_0 \Map^{\simeq}_{\mathrm{Fun}(\mathcal{C},\mathrm{Pro}(\mathcal{S}_\pi))}(\widehat{\mathcal{F}}, \widehat{\mathcal{F}} ) \arrow{l}
        \end{tikzcd}
    \end{center}
\end{corollary}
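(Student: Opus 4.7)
The plan is to deduce this corollary formally from \cref{thm:profinite} together with the universal property of group finite completion (\cref{thm:group-finite-completion-universal-ppty}); once the profinite topological group structure on the bottom-right corner is in place, the rest is a diagram chase.

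First I would construct the diagonal arrow by post-composing with the profinite completion functor $\widehat{(-)}\colon \mathcal{S} \to \Pro(\mathcal{S}_\pi)$ applied pointwise. Functoriality of $\widehat{(-)}$ shows that for any natural self-equivalence $\varphi \colon \mathcal{F} \Rightarrow \mathcal{F}$ there is a natural self-equivalence $\widehat{\varphi}\colon \widehat{\mathcal{F}} \Rightarrow \widehat{\mathcal{F}}$, and that this assignment is a group homomorphism
\[
\pi_0 \Map^{\simeq}_{\Fun(\mathcal{C},\mathcal{S})}(\mathcal{F},\mathcal{F}) \longrightarrow \pi_0 \Map^{\simeq}_{\Fun(\mathcal{C},\Pro(\mathcal{S}_\pi))}(\widehat{\mathcal{F}},\widehat{\mathcal{F}}).
\]
The top horizontal arrow is the canonical unit $\eta_G\colon G \to \Phi^g G$ for $G \defeq \pi_0 \Map^{\simeq}_{\Fun(\mathcal{C},\mathcal{S})}(\mathcal{F},\mathcal{F})$. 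By \cref{thm:profinite}, the codomain of the diagonal is a profinite topological group, so the universal property of \cref{thm:group-finite-completion-universal-ppty} produces the unique dashed continuous group homomorphism $\Phi^g G \to \pi_0 \Map^{\simeq}_{\Fun(\mathcal{C},\Pro(\mathcal{S}_\pi))}(\widehat{\mathcal{F}},\widehat{\mathcal{F}})$ making the upper triangle commute.

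Next I would build the bottom horizontal arrow by applying the materialisation functor $\mathrm{Mat}\colon \Pro(\mathcal{S}_\pi) \to \mathcal{S}$ pointwise, sending a natural self-equivalence of $\widehat{\mathcal{F}}$ to a natural self-equivalence of $\mathrm{Mat} \circ \widehat{\mathcal{F}} = \Phi^s \circ \mathcal{F}$, and the left vertical arrow as post-composition with the unit $\eta_{\mathcal{F}(-)}\colon \mathcal{F}(-) \to \Phi^s\mathcal{F}(-)$. Commutativity of the lower triangle is then tautological: the diagonal sends $\varphi$ to $\widehat{\varphi}$ and the bottom arrow then materialises to $\mathrm{Mat}(\widehat{\varphi}) = \Phi^s \varphi$, which is exactly what the left vertical arrow produces, by the very definition $\Phi^s = \mathrm{Mat} \circ \widehat{(-)}$.

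I expect the only substantive input to be entirely absorbed into \cref{thm:profinite}, whose role is to endow $\pi_0 \Map^{\simeq}_{\Fun(\mathcal{C},\Pro(\mathcal{S}_\pi))}(\widehat{\mathcal{F}},\widehat{\mathcal{F}})$ with a profinite topological group structure; absent that, there is no target against which to invoke the universal property. Granting this input, the corollary is indeed immediate and amounts to formally unpacking the definitions and the adjunction $\widehat{(-)} \dashv \mathrm{Mat}$.
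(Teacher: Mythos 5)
Your proposal is correct and follows exactly the paper's intended (unwritten) argument: the paper itself labels the corollary an ``immediate application of the universal property of profinite completion of groups,'' and your unpacking --- diagonal via pointwise $\widehat{(-)}$, dashed vertical via the universal property of $\Phi^{tg}$ applied to the profinite structure supplied by \cref{thm:profinite}, bottom arrow via $\mathrm{Mat}$, and the lower triangle from $\Phi^s = \mathrm{Mat}\circ\widehat{(-)}$ --- is the right reading. One small slip in wording: you describe the left vertical arrow as ``post-composition with the unit $\eta_{\mathcal{F}(-)}$,'' which would land in $\Map(\mathcal{F},\Phi^s\mathcal{F})$ rather than $\Map(\Phi^s\mathcal{F},\Phi^s\mathcal{F})$; your next sentence then treats it (correctly) as $\varphi\mapsto\Phi^s\varphi$, i.e.\ functoriality of $\Phi^s$, which is what the diagram requires.
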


\cref{cor:profinite} serves as a blueprint to showing residual finiteness: in order to show that the top horizontal map is injective (which is equivalent to the top left group being residually finite), it suffices to show that the left vertical map is injective. The latter is amenable to more homotopy theoretic methods. In fact, injectivity of that map in the setting of embedding calculus follows from obstruction theory, and this is the topic for the later sections.
\\

\subsection{Postnikov decomposition}\label{sect:Functorial-Postnikov-square}
The main idea of \emph{Postnikov theory} is to study a space $X$ via a particular sequence of truncations: for a given $n\in\mathbb{N}$, one can define a space $\tau_{\leq n}X$ receiving a map from $X$, satisfying the following two properties
\begin{enumerate}[(i)]
    \item The map $X \to \tau_{\leq n}X$ induces an isomorphism on $\pi_i$, for $0\leq i \leq n$
    \item $\pi_i (\tau_{\leq n}X,x) =0$ for all $i>n$ and $x \in X$.
\end{enumerate}

The collection $\{\tau_{\leq n}X\}_{n \in \N}$ forms an inverse system approximating $X$, in the sense that the induced map $X \to \lim_n \tau_{\leq n}X$ is an equivalence. A key property of the Postnikov decomposition of a space $X$ is that $\tau_{\leq n+1}X$ is built out of $\tau_{\leq n}X$ and some cohomological datum, the \emph{Postnikov $k$-invariant}. We now study how to do this construction functorially in $X$. Classically, this cohomology class is constructed by transgression on the Leray-Serre spectral sequence for the fibration $K(\pi_{n+1}X,n+1) \to \tau_{\leq n+1}X \to \tau_{\leq n}X$. An $\infty$-categorical setup can be found in \cite[§2]{Piotr}, which we follow.

\subsubsection{Non-principal case}
A crucial ingredient is the \emph{Grothendieck construction}: for a space $X$, there is an equivalence of $\infty$-categories
$$\mathcal{S}_{/X} \xrightarrow[]{\simeq}\Fun(X,\mathcal{S})$$
which informally sends a map $f:Y \to X$ to the functor $x \mapsto \fib_{x}(f)$. Furthermore, for $n \geq 1$, let $\mathcal{EM}_n^{\simeq} \subset \mathcal{S}^{\simeq}$ be the subcategory consisting of spaces that have the homotopy type of a $K(G,n)$ where $G$ is a group (abelian when $n\geq 2$). Let $\pi_n (X,x) \colon \tau_{\leq 1}X \to \Grp^{\cong}$ be the classifying map for the $\pi_1$-action on $\pi_n$, where $\Grp^{\cong}$ is the 1-category of groups and group isomorphisms. We then define $K_{n}(X)$ as the pullback in $\mathcal{S}$ (where we identify the $\infty$-category of spaces with the $\infty$-category of $\infty$-groupoids)

\begin{center}
    \begin{tikzcd}
        K_n (X) \arrow{r} \arrow{d}  &  \mathcal{EM}_{n+1}^{\simeq}\arrow{d}{\pi_{n+1}}
        \\
        \tau_{\leq 1}X \arrow{r}[swap]{\pi_{n+1} (X,-)} \arrow[dashed, bend left]{u}{\mathcal{s}_n}  &  \mathcal{Ab}^{\cong} \arrow[dashed, bend left]{u}{B^{n+1}}
    \end{tikzcd}
\end{center}
where $\mathcal{Ab}^{\cong}$ is the full subcategory of $\Grp^{\cong}$ of abelian groups, and where we note that the right vertical map is well-defined, since an Eilenberg-Maclane space of type $K(G,n+1)$ is simply connected. The map admits a section given by the delooping functor $B^{n+1}\colon \mathcal{Ab}^{\cong} \to \mathcal{EM}_{n+1}^{\simeq}$, which thus gives a section $\mathcal{s}_n \colon \tau_{\leq 1}X \to K_n (X)$. The map $\tau_{\leq n}X \to \mathcal{EM}_{n+1}^{\simeq}$, obtained as the Grothendieck construction on the fibration $\tau_{\leq n+1}X \to \tau_{\leq n}X$, and the truncation map $\tau_{\leq n}X \to \tau_{\leq 1}X$ commute after passing to $\mathcal{Ab}^{\cong}$, and thus give rise to a map $\kappa_n : \tau_{\leq n}X \to K_{n}(X)$. The key observation is then the following

\begin{lemma}
    For $X \in \mathcal{S}$ and $n\geq1$, we have a pullback square
    \begin{center}
        \begin{tikzcd}
            \tau_{\leq n+1}X \arrow{d}[swap]{p_{n+1}} \arrow{r}  &  \tau_{\leq 1}X \arrow{d}{\mathcal{s}_n}
            \\
            \tau_{\leq n}X \arrow{r}[swap]{\kappa_n} & K_n (X)
        \end{tikzcd}
    \end{center}
\end{lemma}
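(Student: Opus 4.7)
The plan is to reduce the pullback claim to a fibrewise statement over $\tau_{\leq 1}X$ via the Grothendieck construction, and then invoke the classical principal Postnikov decomposition for simply connected spaces on each fibre.

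First, I would observe that every vertex of the square admits a canonical map to $\tau_{\leq 1}X$: the iterated truncations for $\tau_{\leq n+1}X$ and $\tau_{\leq n}X$, the projection from the defining pullback for $K_n(X)$, and the identity on $\tau_{\leq 1}X$. All four arrows of the square respect these structure maps, so the whole square lies in $\mathcal{S}_{/\tau_{\leq 1}X}$. Since the Grothendieck equivalence $\mathcal{S}_{/\tau_{\leq 1}X} \simeq \Fun(\tau_{\leq 1}X, \mathcal{S})$ preserves limits, it suffices to check that the square is a pullback after passing to fibres over each point $x \in \tau_{\leq 1}X$.

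Next, fix $x \in \tau_{\leq 1}X$, let $\widetilde{X}_x$ denote the universal cover of the component of $X$ containing $x$, and set $G := \pi_{n+1}(X,x)$. The fibres of $\tau_{\leq k}X \to \tau_{\leq 1}X$ over $x$ are the simply connected truncations $\tau_{\leq k}\widetilde{X}_x$. From the defining pullback of $K_n(X)$, the fibre of $K_n(X) \to \tau_{\leq 1}X$ over $x$ is the fibre of $\pi_{n+1}\colon \mathcal{EM}_{n+1}^{\simeq} \to \mathcal{Ab}^{\cong}$ over $G$, and the section $\mathcal{s}_n$ meets this fibre at the basepoint $B^{n+1}G$. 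By the very construction of $\kappa_n$ as a Grothendieck straightening (which is compatible with base change), the fibrewise square reduces to
\begin{center}
\begin{tikzcd}
\tau_{\leq n+1}\widetilde{X}_x \arrow{r}\arrow{d} & \{B^{n+1}G\} \arrow{d}
\\
\tau_{\leq n}\widetilde{X}_x \arrow{r}[swap]{\kappa_n^x} & \mathcal{EM}_{n+1}^{\simeq}
\end{tikzcd}
\end{center}
where $\kappa_n^x$ is the classifying map, under the Grothendieck construction, of the fibration $\tau_{\leq n+1}\widetilde{X}_x \to \tau_{\leq n}\widetilde{X}_x$. That this is a pullback is the classical content of the principal Postnikov decomposition of simply connected spaces: a principal $K(G,n+1)$-fibration is recovered as the fibre of its classifying map over the standard basepoint.

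The main obstacle is making precise the last identification, namely that the Grothendieck construction commutes with passage to the fibre over $x \in \tau_{\leq 1}X$ (so that the classifying map of $\tau_{\leq n+1}X \to \tau_{\leq n}X$ restricts to the classifying map of $\tau_{\leq n+1}\widetilde{X}_x \to \tau_{\leq n}\widetilde{X}_x$), and that the basepoint $B^{n+1}G$ selected by $\mathcal{s}_n$ corresponds, under this straightening, to the universal $K(G,n+1)$-fibration whose pullback classically realises the next Postnikov stage. Both compatibilities are formal consequences of the naturality of the Grothendieck equivalence, but checking them carefully is the technical content the argument must supply.
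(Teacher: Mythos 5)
Your high-level strategy is sound and is in fact a genuinely different route from the paper's: you propose to check the pullback after passing to fibres over $\tau_{\leq 1}X$ (via the Grothendieck equivalence $\mathcal{S}_{/\tau_{\leq 1}X} \simeq \Fun(\tau_{\leq 1}X,\mathcal{S})$ and the fact that the forgetful functor preserves pullbacks), thereby reducing to the classical principal Postnikov theory for simply connected spaces. The paper instead takes \emph{vertical} fibres of $p_{n+1}$ and of the section $\mathcal{s}_n$, identifies both as $K(\pi_{n+1}(X,x),n+1)$ (the latter as $\Omega$ of the horizontal fibre $K(G,n+2)$), and concludes by checking that the induced map on these fibres is an equivalence. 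Both are legitimate, and your fibrewise reduction is arguably the cleaner conceptual picture.

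However, your displayed fibrewise square and its justification contain a genuine error. The bottom-right corner should not be $\mathcal{EM}_{n+1}^{\simeq}$: the fibre of $K_n(X) \to \tau_{\leq 1}X$ over $x$ is $\fib_G(\pi_{n+1}\colon \mathcal{EM}_{n+1}^{\simeq} \to \mathcal{Ab}^{\cong}) \simeq K(G,n+2)$, which you state correctly in prose but then drop from the diagram. More importantly, the justifying claim that ``a principal $K(G,n+1)$-fibration is recovered as the fibre of its classifying map over the standard basepoint'' is false as stated. The fibre of the Grothendieck classifying map $\kappa_n^x\colon \tau_{\leq n}\widetilde{X}_x \to \mathcal{EM}_{n+1}^{\simeq}$ over the point $\{K(G,n+1)\}$ is the \emph{frame bundle} of the fibration, i.e.\ the space of trivialisations of the fibre, which is an $\hoAut(K(G,n+1)) \simeq K(G,n+1) \rtimes \Aut(G)$-torsor over $\tau_{\leq n}\widetilde{X}_x$, and this is strictly larger than the total space $\tau_{\leq n+1}\widetilde{X}_x$ whenever $\Aut(G) \neq 1$. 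A sanity check: take $\widetilde{X}_x = K(G,n+1)$, so $\tau_{\leq n}\widetilde{X}_x \simeq \ast$ and the classifying map is constant; the fibre of a constant map to $\mathcal{EM}_{n+1}^{\simeq}$ over its value is $\Omega_{K(G,n+1)}\mathcal{EM}_{n+1}^{\simeq} \simeq \hoAut(K(G,n+1))$, not $K(G,n+1)$. Recovering the total space from a classifying map to $\mathcal{S}$ requires pulling back the universal fibration $\mathcal{S}_{\ast/} \to \mathcal{S}$, not taking the fibre over a point — these are different operations. The fix is to observe that, because $\tau_{\leq n}\widetilde{X}_x$ is simply connected, the composite $\pi_{n+1}\circ \kappa_n^x\colon \tau_{\leq n}\widetilde{X}_x \to \mathcal{Ab}^{\cong}$ is nullhomotopic, so $\kappa_n^x$ factors through the fibre $K(G,n+2)$, and the resulting map is the $k$-invariant; the pullback of $\mathcal{S}_{\ast/}$ to $K(G,n+2)$ is contractible (unlike over all of $\mathcal{EM}_{n+1}^{\simeq}$), so the fibre of the $k$-invariant over the basepoint is indeed $\tau_{\leq n+1}\widetilde{X}_x$. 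With that correction, your reduction goes through.
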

\begin{proof}
    We first observe that the fibre of the functor $\pi_{n+1}\colon\mathcal{EM}_{n+1}^{\simeq} \to \mathcal{Ab}^{\cong}$ at $\pi_{n+1}(X,x) \in\mathcal{Ab}^{\cong}$ is $BK(\pi_{n+1}(X,x),n+1)\simeq K(\pi_{n+1}X,n+2)$. Now, as $\mathcal{s}_n \colon \tau_{\leq 1}X \to K_n (X)$ is a section to the map $K_n (X) \to \tau_{\leq 1}X$, the homotopy type of the fibre at any basepoint (since $K_n (X)$ is connected) is therefore $\Omega K(\pi_{n+1}(X,x),n+2) \simeq K(\pi_{n+1}(X,x),n+1)$. We thus obtain the commutative diagram
    \begin{center}
        \begin{tikzcd}
            K(\pi_{n+1}(X,x),n+1) \arrow{d} &  K(\pi_{n+1}(X,x),n+1)\arrow{d}
            \\
            \tau_{\leq n+1}X \arrow{r} \arrow{d} & \tau_{\leq 1}X \arrow{d}{\mathcal{s}_n}
            \\
            \tau_{\leq n}X \arrow{r}[swap]{\kappa_n}  &  K_n (X)
        \end{tikzcd}
    \end{center}
    which induces an equivalence on the fibres, as the induced map is an isomorphism on $\pi_{n+1}$ by construction.
\end{proof}

It follows readily that the above construction is functorial in equivalences, and we obtain a functor $\mathcal{S}^\simeq \to \Fun(\Delta^1 \times \Delta^1,\mathcal{S})$, i.e. a functor into the category of commutative squares.

\subsubsection{Principal case}
The passage from $\tau_{\leq n}X$ to $\tau_{\leq n+1}X$ involved a cohomology class
$$[\kappa_n] \in H^{n+2}(\tau_{\leq n}X; \underline{\pi_{n+1}X})$$
where the local coefficient system is induced from the action of $\pi_1 X$ on $\pi_n X$, and such a group is usually unwieldy for computations. However, when the action of $\pi_1$ is trivial, it becomes significantly easier to grasp these cohomology groups. We first begin in the case of a 1-connected space $X$; in this setting, one has $\tau_{\leq 1}X \simeq \ast$, and thus the map $\tau_{\leq n+1}X \to \tau_{\leq n}X$ is a principal fibration fitting into the following pullback diagram
\begin{center}
    \begin{tikzcd} 
        \tau_{\leq n+1}X \arrow{d} \arrow{r}  & \ast \arrow{d}
        \\
        \tau_{\leq n}X \arrow{r}[swap]{\kappa_n} & K(\pi_{n+1}X,n+2)
    \end{tikzcd}
\end{center}
Similarly, if $\pi_1 (X,x)$ is non-trivial but acts trivially on $\pi_n (X,x)$ for all $x \in X$, one obtains a decomposition $K_n (X) \simeq K(\pi_{n+1}(X,x),n+2)\times \tau_{\leq 1}X$ and a trivialization of the fibration $\tau_{\leq 1}X \to K_n (X)$, and we recover the same pullback diagram for the simply connected case by gluing the two pullback diagrams
\begin{center}
    \begin{tikzcd}
        \tau_{\leq n+1}X  \arrow{r}\arrow{d}  &  \tau_{\leq 1}X  \arrow{d}{\ast \times \mathrm{id}} \arrow{r} & \ast \arrow{d}
        \\
        \tau_{\leq n}X \arrow{r}[swap]{\kappa_n}  & K(\pi_{n+1}X,n+2)\times \tau_{\leq 1}X  \arrow{r}[swap]{\mathrm{proj}_1} &   K(\pi_{n+1}X,n+2)
    \end{tikzcd}
\end{center}

This defines a functor from the category of simple spaces to the category of towers consisting of principal fibrations (we note that the $k$-invariants form additional data).

\subsubsection{Nilpotent case}
Among spaces whose Postnikov tower is not principal, one particularly stands out for being close enough: \emph{nilpotent spaces}. We give a brief reminder of this notion. Fix a group $\pi$ and a $\mathbb{Z}\pi$-module $A$, i.e. an abelian group endowed with an action of $\pi$.

\begin{definition}[Lower central series of action]
    The lower central series of a $\mathbb{Z}\pi$-module $A$ is a sequence of submodules
    $$\cdots \subset \Gamma_{\pi}^{r}(A) \subset \Gamma_{\pi}^{r-1}(A) \subset \cdots \subset \Gamma_{\pi}^{2}(A) \subset A$$
    where $\Gamma_{\pi}^{2}(A)$ is generated by elements of the form $g\cdot x-x$, with $g \in \pi$ and $x \in A$, and where $\Gamma_{\pi}^{r}(A)$ is inductively defined by
    $$\Gamma_{\pi}^{r}(A) \defeq \Gamma_{\pi}^{2}(\Gamma_{\pi}^{r-1}(A))$$
\end{definition}

The action of $\pi$ on $A$ is \emph{nilpotent} if its lower central series is eventually trivial, i.e. if there exists some $r \in \mathbb{N}$ so that $\Gamma_{\pi}^r (A)$ is the trivial module.

\begin{definition}[Nilpotent space]
    A space $X$ is nilpotent if for all $x \in X$, the group $\pi_1(X,x)$ is a nilpotent group, and if the action of $\pi_1 (X,x)$ on $\pi_n (X,x)$ is nilpotent for all $n \geq 2$. 
\end{definition}
Let $X$ be a nilpotent space. Then, for all $n \in \mathbb{N}$, the maps 
$$\tau_{\leq n}X \to \tau_{\leq n-1}X$$
occuring in the Postnikov tower can be rewritten as a composition of finitely many maps, each of which is a principal fibration. In fact, the converse is true: a space $X$ is nilpotent if and only if each map $\tau_{\leq n}X \to \tau_{\leq n-1} X$ can be decomposed as above. We call the tower thus obtained a \emph{principal refinement} of the Postnikov tower.
\begin{remark}
    The above construction can be made functorial in the $\infty$-category of spaces; however, this will not be needed for our purposes, and we will be treating the principal refinement following classical homotopy theory.
\end{remark}

We now record a lemma highlighting the behaviour of (refined) Postnikov towers under finite completions, for nilpotent spaces of finite type; such a technical result will prove useful in the proofs of the following sections.

\begin{lemma}\label{lemma:Principal-Postnikov-profinite}
    Let $X$ be a connected, nilpotent space of finite type, and let $\{X_n\}_{n \in \mathbb{N}}$ be a refinement of its Postnikov tower. Then, $\{\Phi^s X_n\}$ is a refined Postnikov tower for $\Phi^s X$, where the fibre of the map $X_{n+1} \to X_n$ is an Eilenberg-Maclane space, obtained from the fibre of $X_{n+1} \to X_n$ by group profinite completion.
\end{lemma}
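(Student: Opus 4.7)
The plan is to proceed one stage at a time, using the pullback characterisation of principal fibrations. A principal refinement of the Postnikov tower of $X$ presents each map $X_{n+1} \to X_n$ as a principal fibration with fibre an Eilenberg-Maclane space $K(A_{n+1}, k_{n+1})$, for some finitely generated abelian group $A_{n+1}$, classified by a pullback square
\begin{center}
\begin{tikzcd}
X_{n+1} \arrow{r} \arrow{d} & \ast \arrow{d} \\
X_n \arrow{r}{\kappa_n} & K(A_{n+1}, k_{n+1}+1)
\end{tikzcd}
\end{center}
All spaces involved are connected, nilpotent, and of finite type, so \cref{thm:Sullivan3.1} and \cref{thm:finite-limit-finite-completion} apply throughout.

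First I would identify the finite completion of an Eilenberg-Maclane space. For $A$ a finitely generated abelian group and $k \geq 1$, the space $K(A,k)$ is nilpotent of finite type, so \cref{thm:Sullivan3.1} yields $\pi_m(\Phi^s K(A,k)) \cong \Phi^g \pi_m(K(A,k))$, which vanishes for $m \neq k$ and equals $\Phi^g A$ for $m=k$. Hence $\Phi^s K(A,k) \simeq K(\Phi^g A, k)$ as spaces.

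Next I would apply $\Phi^s$ to the pullback square above. By \cref{thm:finite-limit-finite-completion} (whose key input is \cref{lemma:fibration-profinite-completion} on fibre sequences), the natural comparison from $\Phi^s X_{n+1}$ to the pullback $Y$ of the cospan $\Phi^s X_n \to K(\Phi^g A_{n+1}, k_{n+1}+1) \leftarrow \ast$ induces isomorphisms on $\pi_m$ for all $m \geq 1$ at basepoints coming from $X_{n+1}$. As both spaces are connected, this comparison is an equivalence. This exhibits $\Phi^s X_{n+1} \to \Phi^s X_n$ as the principal fibration classified by $\Phi^s \kappa_n$ under the identification above, with fibre $K(\Phi^g A_{n+1}, k_{n+1})$.

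Finally I would verify that the tower $\{\Phi^s X_n\}$ computes $\Phi^s X$. Since $X_n \to X$ induces an isomorphism on $\pi_m$ for $m \leq n$, \cref{thm:Sullivan3.1} yields $\pi_m(\Phi^s X) \cong \Phi^g \pi_m(X) \cong \Phi^g \pi_m(X_N) \cong \pi_m(\Phi^s X_N)$ for $N \geq m$; combined with the observation that the principal-fibration tower constructed above also truncates in each degree, the natural map $\Phi^s X \to \lim_n \Phi^s X_n$ is an isomorphism on all homotopy groups between connected spaces, hence an equivalence. The main technical subtlety will be the bookkeeping of basepoints, since the $\pi_m$-isomorphisms from \cref{lemma:fibration-profinite-completion} and \cref{thm:finite-limit-finite-completion} hold only at basepoints originating from the uncompleted spaces via $\eta$; this is manageable because a fixed basepoint of $X$ propagates coherently along both towers by naturality of $\eta$.
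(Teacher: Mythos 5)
Your proposal is essentially the paper's proof fleshed out: the paper's own argument is a one-line appeal to \cref{lemma:fibration-profinite-completion}, and you correctly identify this as the real engine. You also correctly compute $\Phi^s K(A,k) \simeq K(\Phi^g A, k)$ from \cref{thm:Sullivan3.1} and explain why $\Phi^s$ carries the classifying pullback square to a pullback square, which is exactly what the paper leaves implicit.

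One issue should be flagged: you formally invoke \cref{thm:finite-limit-finite-completion} to carry the pullback square across $\Phi^s$, but in the paper's logical structure this would be circular, since the proof of \cref{thm:finite-limit-finite-completion} itself uses \cref{lemma:Principal-Postnikov-profinite} (to pass to the principal refinement $\{\Phi^s P_n\}$ of the pullback $P$). The actual dependency order is $\cref{lemma:fibration-profinite-completion} \Rightarrow \cref{lemma:Principal-Postnikov-profinite} \Rightarrow \cref{thm:finite-limit-finite-completion}$, despite the textual ordering. Fortunately your use is the special case of a pullback over $\ast$, i.e.\ a fibre of a map, which is \emph{precisely} the content of \cref{lemma:fibration-profinite-completion}; replacing the citation of the theorem by a direct appeal to the lemma (the $K$-space is connected and simply connected, so the fibre $F'$ of $\Phi^s X_n \to \Phi^s K$ is connected and $\Phi^s X_{n+1} \to F'$ is a $\pi_*$-isomorphism between connected spaces, hence an equivalence) removes the circularity and gives a correct argument identical in spirit to the paper's. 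Your final step on $\lim_n \Phi^s X_n \simeq \Phi^s X$ is fine; the needed truncatedness of each $\Phi^s X_n$ follows from \cref{thm:Sullivan3.1}, and since homotopy groups stabilize stagewise there is no $\lim^1$ obstruction.
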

\begin{proof}
    This follows immediately from \cref{lemma:fibration-profinite-completion}.
\end{proof}

\bigskip

\subsubsection{Moore-Postnikov decomposition}\label{Moore-Postnikov-functor}
Given connected spaces $X,Y$ and a map $f\colon X \to Y$, the \emph{Moore-Postnikov decomposition of $f$} is a sequence of spaces $\{Z_n\}_{n \in \mathbb{N}}$ along with maps $g_n: X \to Z_n$, $h_n:Z_n \to Y$ and $p_n :Z_{n+1} \to Z_n$ fitting into the following commutative diagram
\begin{center}
    \begin{tikzcd}
     X \arrow[bend right]{dd}[swap]{f} \arrow{d}{g_n} \arrow[bend left]{dr}{g_{n+1}}
     \\
     Z_n \arrow{d}{h_n} & Z_{n+1}\arrow[bend left]{dl}{h_{n+1}} \arrow{l}{p_n}
     \\
     Y
    \end{tikzcd}
\end{center}
uniquely characterized by the following properties
\begin{enumerate}[(i)]
    \item $g_n$ is $n$-connected: it induces an isomorphism on homotopy groups $\pi_k$ for $k<n$, and a surjection for $k=n$;
    \item $h_n$ is $n$-truncated (sometimes also referred to as $n$-co-connected in geometric contexts): it induces an isomorphism on $\pi_k$ for $k> n$, and an injection on $k=n$;
\end{enumerate}

Given a morphism between two maps $f: X \to Y$, $f':X' \to Y'$ in $\mathrm{Ar}(\mathcal{S})$, namely a commutative square
\begin{center}
    \begin{tikzcd}
        X \arrow{d}[swap]{f} \arrow{r} & X' \arrow{d}{f'}
        \\
        Y \arrow{r} & Y'
    \end{tikzcd}
\end{center}
we obtain maps $Z_{n} \to Z_{n}'$ between their Moore-Postnikov truncations, fitting into the above square. This suggests that the Moore-Postnikov decomposition defines a system of functors
$$\mathrm{MP}_n:\mathrm{Ar}(\mathcal{S}) \to \mathcal{S}$$
which to an object in the arrow category of spaces, namely a map $X \to Y$, associates its Moore-Postnikov truncation in degree $n$. To see that this describes a functor, we observe that by \cite[Example 5.2.8.16]{HTT} that (i), (ii) in the above describe a factorisation system.
\subsection{Homotopy mapping class groups}
We are now ready to show \cref{thm:Serre-Sullivan} of Serre and Sullivan. Let $X$ be a simply connected finite CW-complex. Consider the following diagram
\begin{center}
    \begin{tikzcd}
        \pi_0 \hoAut(X) \arrow{r} \arrow{dr} \arrow{d} & \Phi^g \pi_0 \hoAut(X) \arrow[dashed]{d}{\exists !}
        \\
        \pi_0 \hoAut(\Phi^sX) & \pi_0 \Map^{\simeq}_{\mathrm{Pro}(\mathcal{S}_\pi)}(\widehat X, \widehat X) \arrow{l}
    \end{tikzcd}
\end{center}

The existence (and uniqueness) of the rightmost vertical morphism follows from the universal property of profinite completion of groups, and \cref{thm:profinite}. The purpose of the above diagram is that injectivity of $\pi_0 \hoAut(X) \to \Phi^g \pi_0 \hoAut(X)$ follows from injectivity of $\pi_0 \hoAut(X) \to \pi_0 \hoAut(\Phi^sX)$; the latter follows from a more general theorem of Sullivan, namely \cite[thm 3.2]{Sullivan-genetics} (\cref{thm:Sullivan3.2}), stating that for reasonable spaces $X,Y$, the map $\Map(X,Y) \to \Map(X,\Phi^s Y)$, obtained by composition with the map $\eta_Y \colon Y \to \Phi^s Y$, is $\pi_0$-injective; we rewrite the proof in what follows, as we will be using a similar strategy in later proofs. 

\begin{theorem}[Sullivan] \label{thm:Sullivan3.2}
    Let $Y$ be a finite CW complex, and let $B$ be a nilpotent space of finite type. Then, the map $\Map(Y,B) \to \Map(Y,\Phi^sB)$, given by composition with the finite completion $B \to \Phi^sB$, induces an injective map on $\pi_0$.
\end{theorem}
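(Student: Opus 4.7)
The plan is to proceed by induction on the cell structure of $Y$, maintaining three statements simultaneously for each finite subcomplex $Y'$ built so far: \textbf{(a)} $\Map(Y', B)$ is a componentwise nilpotent space of finite type; \textbf{(b)} for every $f \colon Y' \to B$ and every $n \geq 1$, the canonical comparison $\pi_n \Phi^s \Map(Y', B)_f \to \pi_n \Map(Y', \Phi^s B)_{\eta_B \circ f}$ is an isomorphism, identified via \cref{thm:Sullivan3.1} with the profinite completion map on $\pi_n \Map(Y', B)_f$; and \textbf{(c)} the map $\pi_0 \Map(Y', B) \to \pi_0 \Map(Y', \Phi^s B)$ is injective. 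Only (c) at the final stage is the content of the theorem, but (a) and (b) must be carried along to make the induction run. The base case $Y' = \varnothing$ is trivial.

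The inductive step attaches a single $k$-cell $Y = Y' \cup_\alpha e^k$, giving, for each $f \colon Y \to B$, a fibre sequence $\Omega^k B \to \Map(Y, B)_f \to \Map(Y', B)_{f|_{Y'}}$ after a choice of nullhomotopy of $f \circ \alpha$. The bookkeeping for (a) and (b) is essentially the argument sketched in the paragraph preceding the theorem: (a) follows from (a) for $Y'$, the associated long exact sequence of homotopy groups, and the closure of finitely generated nilpotent groups under central extensions by finitely generated abelian groups; (b) then follows by applying \cref{lemma:fibration-profinite-completion} to this fibre sequence (whose hypotheses are met thanks to (a)), comparing the resulting long exact sequences via the five lemma, and invoking \cref{thm:Sullivan3.1}.

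For (c), suppose $f_0, f_1 \colon Y \to B$ satisfy $\eta_B \circ f_0 \simeq \eta_B \circ f_1$. Restricting to $Y'$ and applying (c) for $Y'$, followed by the homotopy extension property, we may arrange $f_0|_{Y'} = f_1|_{Y'} =: g$. The two extensions then determine an obstruction $\delta \in \pi_k(B, b_0)$ to $f_0 \simeq f_1$ rel $Y'$, and it suffices to show $\delta = 0$. The restriction of the given free homotopy to $Y' \times I$ gives a loop in $\pi_1 \Map(Y', \Phi^s B)_{\eta_B \circ g}$; by (b) for $Y'$ this group is $\Phi^g \pi_1 \Map(Y', B)_g$, and using the compatibility of the long exact sequences for the fibre sequence and its $\Phi^s$-image, together with residual finiteness of the finitely generated nilpotent groups that appear, one may adjust the homotopy to be constant on $Y'$. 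Then $\eta_{B*}(\delta)$ vanishes in $\pi_k(\Phi^s B, \eta_B b_0) \cong \Phi^g \pi_k(B, b_0)$, and residual finiteness of the finitely generated nilpotent group $\pi_k(B, b_0)$ forces $\delta = 0$.

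The main obstacle is threading (a), (b), (c) together through the cellular induction; within the proof of (c), the delicate step is the promotion of a free homotopy to one constant on $Y'$, which crucially relies on (b) for $Y'$ and thereby on the entire prior induction. The group-theoretic inputs are classical: residual finiteness of finitely generated nilpotent groups, and exactness of $\Phi^g$ on this class (\cref{thm:Hilton-Roitberg}).
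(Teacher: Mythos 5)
Your proposal takes a genuinely different route from the paper's proof. You run an induction on the cells of $Y$ using the mapping-space fibration $\Omega^k B \to \Map(Y, B) \to \Map(Y', B)$, whereas the paper runs an induction on a principal refinement of the Postnikov tower of $B$, packaging the finite-limit comparison via the coassembly map $\Phi^s \lim_Y(-) \to \lim_Y \Phi^s(-)$ and appealing to \cref{thm:finite-limit-finite-completion} and \cref{cor:finite-limits-finite-completion}. Your cell-wise argument is essentially Sullivan's original one, which the paper sketches informally in the paragraph immediately preceding the theorem but does not carry out. Both proofs rest on the same group-theoretic inputs: right exactness of $\Phi^g$ on finitely generated nilpotent groups (\cref{thm:Hilton-Roitberg}), residual finiteness of the finitely generated abelian cokernels produced by the obstruction theory, and \cref{thm:Sullivan3.1}. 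Your statements (a) and (b), proved inductively via \cref{lemma:fibration-profinite-completion} and the five lemma, play the role the paper assigns to \cref{thm:finite-limit-finite-completion}.

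There is, however, an error in the final step of (c). Having arranged $f_0|_{Y'} = f_1|_{Y'} = g$, the given homotopy $\eta_B f_0 \simeq \eta_B f_1$ restricts over $Y'$ to a loop $\lambda \in \pi_1 \Map(Y', \Phi^s B)_{\eta_B g} \cong \Phi^g \pi_1 \Map(Y', B)_g$. You claim ``one may adjust the homotopy to be constant on $Y'$'' and deduce $\eta_{B*}(\delta) = 0$. This adjustment is not available in general: straightening the homotopy amounts to finding $\mu \in \pi_1 \Map(Y', B)_g$ with $\eta(\mu) = \lambda$, and the profinite completion map $\pi_1 \Map(Y', B)_g \to \Phi^g \pi_1 \Map(Y', B)_g$ is injective but not surjective, so $\lambda$ need not lift. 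What the long exact sequence of the fibration actually yields is the weaker statement $\eta_B(\delta) \in \mathrm{Im}\left(\partial_{\Phi^s}\colon \pi_1 \Map(Y', \Phi^s B)_{\eta_B g} \to \pi_k \Phi^s B\right)$. The fix is the same cokernel comparison the paper performs at each Postnikov stage: using (b), \cref{lemma:long-exact}, and right exactness of $\Phi^g$, the map $\coker(\partial) \to \coker(\partial_{\Phi^s})$ is identified with the profinite completion map on the finitely generated abelian group $\coker(\partial)$, hence is injective. Since the image of $\delta$ in $\coker(\partial_{\Phi^s})$ vanishes, its image in $\coker(\partial)$ vanishes as well, that is, $\delta \in \mathrm{Im}(\partial)$, which is precisely the condition for $f_0 \simeq f_1$ in $\Map(Y,B)$. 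Replace the ``adjustment'' step with this comparison and the argument closes.
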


\begin{proof}
    Let $\{B_n\}_{n \in \mathbb{N}}$ be a principal refinement of the Postnikov decomposition of $B$, and observe that $\{ \Phi^sB_n \}_{n \in \mathbb{N}}$ yields a principal Postnikov decomposition of $\Phi^sB$, by \cref{lemma:Principal-Postnikov-profinite}. As $Y$ is a finite CW complex (hence its cohomology is eventually trivial), any map $f \colon Y \to B$ is uniquely determined by the composition $f_{N} \colon Y \to B \to \tau_{\leq N}B$ for some $N \in \mathbb{N}$, in the sense that for all $k \geq N$, one has unique lifts up to homotopy:
    \begin{center}
        \begin{tikzcd}
            & \tau_{\leq k+1}B \arrow{d}
            \\
            Y \arrow{r}{f_k} \arrow{ur}{\exists !} & \tau_{\leq k}B 
        \end{tikzcd}
    \end{center}
    Consequently, given $f \not \simeq g \colon Y \to B$, we may consider $f$ and $g$ as maps to $\tau_{\leq N}B$ for some large $N$; the proof of injectivity then follows by working inductively on the Postnikov tower. Consider the following situation:

    \begin{center}
        \begin{tikzcd}
              &  \Omega K \arrow{d}
            \\
            & B_{n+1} \arrow{d} \arrow{r} & \ast \arrow{d}
            \\
            Y \arrow{r}[swap]{\varphi} \arrow[bend left]{ur}[swap]{g} \arrow[bend right]{ur}{f} \arrow[dashed, bend left]{uur}{\Delta(f,g)}&  B_n \arrow{r}[swap]{\kappa_n}  & K_n\defeq K(\pi_{n+1},n+2)
        \end{tikzcd}
    \end{center}
    We first describe the above diagram. The right hand square is a pullback square. We fix a map $\varphi:Y \to B_n$, and take two lifts $f,g$ of $\varphi$ over $B_{n+1}$. Each of the previous two lifts yields a nullhomotopy of the composition $\kappa_n \circ \varphi$, which is equivalent to the data of two maps $N(f),N(g)\colon CY \to K$ from the cone of $Y$. Gluing the above two maps from the cone, we obtain a map from the suspension $\Sigma Y \to K$, which by adjunction yields the map $\Delta(f,g) \colon Y \to \Omega K$ (which is rightly thought of as the difference between $f$ and $g$). The cohomology class represented by this map detects when $f$ is homotopic to $g$ as lifts: $f \simeq g$ as lifts if and only if $[\Delta(f,g)]=0$. However, one can also obtain information on when $f$ and $g$ are homotopic, but not necessarily through lifts, from the cohomology class $[\Delta(f,g)]$. Indeed, we obtain the following criterion:
    $$f \simeq g \iff [\Delta(f,g)] \in \operatorname{Im}\left(\pi_0 \Omega_{\varphi}\Map(Y,B_n) \xrightarrow[]{\Omega\kappa_n} \pi_0\Omega_{\kappa_n \circ \varphi}\Map(Y,K)\right)$$
    Additionally, $\pi_0 \Omega_{\varphi}\Map(Y,B_n)$ and $\pi_0\Omega_{\kappa_n \circ \varphi}\Map(Y,K)$ carry group structures by concatenation of loops, and the map $\Omega\kappa_n$ induces a group homomorphism between them. By the Brown representability theorem, we also see that $\pi_0\Omega_{\kappa_n \circ \varphi}\Map(Y,K) \cong H^{n+1}(Y,\pi_{n+1})$.
    \\

    We can repeat the same diagram after finite completion $\Phi^s B$ on the target, and composition with the natural map $\eta_B \colon B \to \Phi^s B$. We obtain the same diagram above, where $B_i$ is replaced by $\Phi^s B_i$, and $K$ by $\Phi^s K$, and where \cref{lemma:Principal-Postnikov-profinite} provides the required pullback square. Our aim is now to show that if $f \not\simeq g$, then $\eta_B \circ f \not\simeq \eta_B \circ g$.
    \\

    Consider the following diagram

    \begin{equation}\label{diagram}
        \begin{tikzcd}
            G\defeq\pi_0 \Omega_{\varphi}\Map(Y,B_n) \arrow{d}[swap]{\Omega\kappa_n} \arrow{r} &  G'\defeq \pi_0 \Omega_{\eta_B \circ \varphi}\Map(Y,\Phi^s B_n) \arrow{d}{\Omega\eta_B \circ\kappa_n}
            \\
            H^{n+1}(Y,\pi_{n+1}) \arrow{r} \arrow{d} &  H^{n+1}(Y,\Phi^g\pi_{n+1})  \arrow{d}
            \\
            \coker\Omega\kappa_n \arrow{r} &  \coker\Omega\eta_B \circ \kappa_n
        \end{tikzcd}
    \end{equation}
    Note that $f \not\simeq g$ is equivalent to $\overline{[\Delta(f,g)]} \neq 0$ in $\coker \Omega\kappa_n$ (and similarly for $\eta_B \circ f, \eta_B \circ g$), and that $\Delta(f,g)$ maps to $\Delta(\eta_B \circ f,\eta_B \circ g)$ via the middle horizontal morphism of the diagram. Consequently, our aim reduces to showing that the group homomorphism $\coker\Omega\kappa_n \to \coker\Omega\eta_B\kappa_n$ is injective. To this end, we consider the following
    \begin{itemize}
        \item \textit{Claim}: the upper square in diagram \ref{diagram} is isomorphic, in the category $\mathrm{Sq}(\Grp)$ of squares in groups, to the square
        \begin{center}
            \begin{tikzcd}
                G \arrow{r}{\eta_G} \arrow{d}[swap]{\Omega \kappa_n} & \Phi^g G \arrow{d}{\Phi^g (\Omega \kappa_n)}
                \\
                H^{n+1}(Y,\pi_{n+1}) \arrow{r}[swap]{\eta_{H^{n+1}}} & \Phi^g H^{n+1}(Y,\pi_{n+1})
            \end{tikzcd}
        \end{center}
    \end{itemize}
    We delay the proof of the claim to the next paragraph, and show how this already implies that the map $\coker \Omega\kappa_n \to \coker \Omega\Phi^s\kappa_n$ is injective, as desired. We begin by observing that we have an isomorphism in the arrow category in groups $\mathrm{Ar}(\Grp)$ between the arrow $(\coker \Omega \kappa_n \to \coker \Omega \eta_B \circ \kappa_n)$, and the arrow $(\coker \Omega \kappa_n \to \coker \Phi^g(\Omega \kappa_n))$. The profinite completion functor (remembering the topology) $\Phi^{tg}\colon \Grp \to \mathrm{Prof}\Grp$ is left adjoint to the forgetful functor $\mathrm{Prof}\Grp \to \Grp$, and is therefore right exact. Furthermore, as profinite groups are compact, Hausdorff totally disconnected topological groups, the forgetful functor $\mathrm{Prof}\Grp \to \Grp$ is exact, so that the functor $\Phi^g\colon \Grp \to \Grp$ remains right exact. Consequently, the latter arrow is itself isomorphic to the arrow $\coker\Omega\kappa_n \xrightarrow[]{\eta} \Phi^g \coker \Omega \kappa_n$, where $\eta$ is the finite completion morphism. Now, as $\coker \Omega \kappa_n$ is a finitely generated abelian group (as being the quotient of the cohomology group, itself a finitely generated abelian group), it is in particular residually finite, whence injectivity of all the three previous arrows. It therefore follows that if $f \not \simeq g$, then $\eta \circ f \not \simeq \eta \circ g$, and the proof follows by finite induction.
    \\

    We now prove the claim. We begin by observing that the upper square diagram \ref{diagram} is obtained by applying $\pi_1$ to the following square
    \begin{center}
        \begin{tikzcd}
            \Map(Y,B_n) \arrow{r}{\eta_B \circ -} \arrow{d}[swap]{\kappa_n \circ -} & \Map(Y, \Phi^s B_n) \arrow{d}{\Phi^s \kappa_n \circ -}
            \\
            \Map(Y,K_n) \arrow{r}[swap]{\eta_K \circ -} & \Map(Y,\Phi^s K_n)
        \end{tikzcd}
    \end{center}
    where the basepoint at the top left corner is chosen to be $\varphi \in \Map(Y,B_n)$ (and all other basepoints are taken to be the images of $\varphi$ by the corresponding maps). Given spaces $X,Z$, we write the mapping space $\Map(X,Z)$ as the following limit
    $$\Map(X,Z) \simeq \lim_{X}Z$$
    and note that in the case $X$ is a finite CW complex, the above is a finite limit. Rephrased in these terms, the above comutative squares spells out to the following
    \begin{center}
        \begin{tikzcd}
            \lim_Y B_n \arrow{r}{\lim \eta_B} \arrow{d}[swap]{\lim \kappa_n} & \lim_Y \Phi^s B_n \arrow{d}{\lim \Phi^s \kappa_n}
            \\
            \lim_Y K_n \arrow{r}[swap]{\lim \eta_K} & \lim_Y \Phi^s K_n
        \end{tikzcd}
    \end{center}
    Coassembly yields a natural transformation $\mathrm{coass} \colon \Phi^s \lim_Y \Rightarrow \lim_Y \Phi^s$, which consequently yields the following commutative diagram
\[\begin{tikzcd}
	&& {\Phi^s \lim_Y B_n} \\
	{\lim_Y B_n} &&&& {\lim_Y \Phi^s B_n} \\
	\\
	&& {\Phi^s \lim_Y K_n} \\
	{\lim_Y K_n} &&&& {\lim_Y \Phi^s K_n}
	\arrow["{\mathrm{coass}}", from=1-3, to=2-5]
	\arrow["{\Phi^s \lim\kappa_n}", dotted, from=1-3, to=4-3]
	\arrow["\eta", from=2-1, to=1-3]
	\arrow["{\lim\eta_B}"{description}, from=2-1, to=2-5]
	\arrow["{\lim \kappa_n }"', from=2-1, to=5-1]
	\arrow["{\lim \Phi^s\kappa_n}", from=2-5, to=5-5]
	\arrow["{\mathrm{coass}}"{description}, from=4-3, to=5-5]
	\arrow["\eta"', from=5-1, to=4-3]
	\arrow["{\lim \eta_K}"', from=5-1, to=5-5]
\end{tikzcd}\]
By \cref{thm:finite-limit-finite-completion,cor:finite-limits-finite-completion}, both the coassembly maps in the above diagram induce isomorphisms on $\pi_n$, for all $n \geq 1$, and in particular on $\pi_1$, at the basepoint corresponding to the image of $\varphi \in \Map(Y,B_n)$; this follows since $B_n$ and $K_n$ are nilpotent spaces of finite types. \cref{thm:Sullivan3.1} identifies the arrow in groups $\left(\pi_1(\lim_Y B_n,\varphi) \to \pi_1 (\Phi^s \lim_Y B_n,\eta \varphi)\right)$ as $\eta \colon \pi_1 (\lim_Y B_n, \varphi) \to \Phi^g \pi_1 (\lim_Y B_n, \varphi)$, the group finite completion map (and the analogous statement for $K_n$), while \cref{remar:Sullivan3.1Morphisms} identifies the induced morphism $\pi_1(\Phi^s \lim \kappa_n)$ as $\Phi^g \pi_1(\lim \kappa_n)$. This consequently shows the claim, and the proof follows by finite induction.
\end{proof}

\begin{remark}
    Although not relevant for our purposes, the above theorem can, without much trouble, be seen as a special case of a more general statement about section spaces of bundles. Indeed, given a bundle $\xi\colon E \to B$ over a finite CW complex $B$, with fibre $F$ a componentwise nilpotent space of finite type, the map on section spaces
    $$\Gamma(\xi) \to \Gamma(\Phi^s_{/B}\xi)$$
    can be shown to be $\pi_0$-injective, where $\Phi^s_{/B}$ is the fibrewise finite completion: via the Grothendieck construction, $\xi$ can be identified with a functor in $\Fun(B,\mathcal{S})$, where $B$ is viewed as an $\infty$-groupoid, and $\Phi^s_{/B}$ is given by composing the above functor with the functor $\Phi^s\colon\mathcal{S} \to \mathcal{S}$. The above theorem is then the special case when $\xi$ is trivialisable.
\end{remark}

\bigskip

\section{$T_k$-mapping class groups}\label{section:T_k-mapping-class-groups}
In this chapter, we generalize Sullivan's result to the setting of mapping spaces in categories of presheaves on discs. Given a smooth $d$-dimensional manifold $M$, and $k \in \mathbb{N} \cup \{\infty\}$, we define 
$$T_k \Diff(M) \defeq \Map_{\Psh(\Disk_{d}^{\leq k})}^{\simeq}(E_M,E_M)$$
where $E_M$ is the disc-presheaf $\sqcup_{\ell} \mathbb{R}^d \mapsto \Emb(\sqcup_{\ell}\mathbb{R}^d,M)$. The goal of this section is to use techniques similar to those of the previous section in order to show that the group $\pi_0 T_k \Diff(M)$ is residually finite, when $k < \infty$, and $M$ is closed and 2-connected.

\subsection{Embedding calculus with tangential structures}
Let $\Manf_d$ be the topologically enriched category of smooth $d$-dimensional manifolds (not necessarily compact) with empty boundary, and smooth embeddings, endowed with the $C^\infty$-topology; via the coherent nerve, we view this category as an $\infty$-category. As a full subcategory, we can consider $\Disk_d \subset \Manf_d$, where objects are $\sqcup_\ell \mathbb{R}^d$, for all $\ell \in \mathbb{N}$; we denote by $\iota_\infty$ the inclusion $\Disk_d \hookrightarrow \Manf_d$. This category has an obvious filtration: for each $k \in \mathbb{N}$ we can further consider the full subcategory $\Disk_d^{\leq k}$ consisting of objects of the form $\sqcup_k \mathbb{R}^d$, where $\ell \leq k$; we denote the inclusions $\Disk_d^{\leq k} \hookrightarrow \Disk_d$ by $\iota_k$. We define a functor $E \colon \Manf_d \to \Psh(\Disk_d)$ as
$$E \colon \Manf_d \xrightarrow[]{h} \Psh(\Manf_d) \xrightarrow[]{\iota_{\infty}^{\ast}} \Psh(\Disk_d) $$
where $h$ is the Yoneda embedding. Concretely, given $M \in \Manf_d$, $E_M$ is the disc-presheaf $\sqcup_\ell \mathbb{R}^d \mapsto \Emb(\sqcup_\ell \mathbb{R}^d,M)$. We can further compose $E$ by the restriction functor $\iota_k^\ast$, and obtain a functor $\Manf_d \to \Psh(\Disk_d^{\leq k})$. We thus obtain a tower of functors
\begin{center}
    \begin{tikzcd}
         \Psh(\Disk_d) \arrow{r} & \cdots \arrow{r} & \Psh(\Disk_d^{\leq k}) \arrow{r} & \cdots \arrow{r} & \Psh(\Disk_d^{\leq 1})
        \\
        &&&& \Manf_d \arrow{u}[swap]{\iota_1^\ast E} \arrow{ull}{\iota_k^\ast E} \arrow[bend left]{ullll}{E}
    \end{tikzcd}
\end{center}
Given a smooth $d$-manifold $M$, and a smooth manifold $N$ of dimension $\geq d$, we define
$$T_k \Emb(M,N) \defeq \Map_{\Psh(\Disk_d^{\leq k})}(\iota_k^\ast E_M, \iota_k^\ast E^d_N)$$
for all $k \in \mathbb{N} \cup \{\infty\}$, where $E_N^d$ is the disc-preseheaf sending $\sqcup_\ell \mathbb{R}^d \mapsto \Emb(\sqcup_\ell \mathbb{R}^d,N)$. On mapping spaces, the above tower of functors gives a tower of spaces, the \emph{embedding calculus tower}, $\{T_k \Emb(M,N)\}_{k \in \mathbb{N}}$, receiving a map from $\Emb(M,N)$. We say the tower \emph{converges} if the map
$$\Emb(M,N) \to T_\infty \Emb(M,N)$$
is an equivalence. The following theorem stands as a fundamental theorem of embedding calculus
\begin{theorem}[Goodwillie-Weiss]
    Let $M$, $N$ be two smooth manifolds of dimension $d$. If $h\dim M \leq d-3$, then $\Emb(M,N) \to T_\infty \Emb(M,N)$ is an equivalence, where $h\dim M$ is the handle dimension of $M$.
\end{theorem}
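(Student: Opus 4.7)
The plan is to prove this by induction on a handle decomposition of $M$, exploiting the fact that both sides are contravariantly functorial in open submanifolds of $M$, with $T_k$ realising the universal "polynomial of degree $\leq k$" approximation of the embedding functor. First, I would extend $U \mapsto \Emb(U,N)$ and $U \mapsto T_k\Emb(U,N)$ to contravariant functors on the poset of all open subsets of $M$; the latter is, by construction, determined by its values on disjoint unions of at most $k$ copies of $\mathbb{R}^d$. In particular, when $U$ is itself diffeomorphic to $\sqcup_\ell \mathbb{R}^d$, the comparison map $\Emb(U,N) \to T_\infty \Emb(U,N)$ is tautologically an equivalence, which will serve as the base case of the induction.

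For the inductive step, fix a handle decomposition $\varnothing = M_0 \subset M_1 \subset \cdots \subset M_r = M$ with each $M_j = M_{j-1} \cup H_j$ and $H_j$ a handle of index $i_j \leq d-3$, so that the attaching region $M_{j-1} \cap H_j$ deformation retracts onto $S^{i_j - 1} \times \mathbb{R}^{d-i_j}$. The aim is to show, at each stage, that parametrised isotopy extension yields a homotopy cartesian square
\begin{center}
\begin{tikzcd}
\Emb(M_j,N) \arrow{r}\arrow{d} & \Emb(H_j,N) \arrow{d}
\\
\Emb(M_{j-1},N) \arrow{r} & \Emb(M_{j-1} \cap H_j,N)
\end{tikzcd}
\end{center}
together with the analogous square for $T_\infty$, which holds formally since $T_\infty$ commutes with the relevant restriction systems of presheaves on discs. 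The inductive hypothesis then handles the lower-left corner, and the base case applies to the entire right-hand column, as both $H_j$ and its attaching collar are diffeomorphic to (disjoint unions of) Euclidean spaces after suitably thickening the cores. A five-lemma argument on the long exact sequences of the two fibrations then propagates the equivalence from stage $j-1$ to stage $j$.

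The main obstacle, and the reason for the codimension hypothesis $i_j \leq d-3$, is the connectivity estimate required to guarantee that these restriction diagrams are genuinely homotopy cartesian with fibres whose failure to be trivial is controlled uniformly as $k$ grows. This is essentially the content of the Goodwillie-Klein multiple disjunction theorem: general-position arguments show that, given an embedding of $M_{j-1}$ and a compatible germ near the attaching sphere, the space of extensions over $H_j$ is both non-empty and $(d-i_j-2)$-connected, with a corresponding strongly cocartesian cubical statement for cubes of handles. Feeding these estimates into a Blakers--Massey argument on the resulting cubical diagrams bounds the connectivity of the fibre of $\Emb(M,N) \to T_k\Emb(M,N)$ by a function of $k$ tending to infinity; passing to the limit in $k$ then yields the theorem. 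I expect this cubical connectivity bookkeeping, rather than the categorical setup or the handle induction itself, to be the technical heart of the argument.
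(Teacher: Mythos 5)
The paper does not prove this theorem at all; it simply cites \cite[Corollary 2.5]{Goodwillie-Weiss}, so there is no in-text proof to compare against. Judging your sketch against the argument in the cited literature, there is a genuine gap at the centre of your proposal.

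Your inductive step asserts that for a handle attachment $M_j = M_{j-1} \cup H_j$, parametrised isotopy extension yields a \emph{homotopy cartesian} square of embedding spaces of the form
\[
\begin{tikzcd}
\Emb(M_j,N) \arrow{r}\arrow{d} & \Emb(H_j,N) \arrow{d}
\\
\Emb(M_{j-1},N) \arrow{r} & \Emb(M_{j-1} \cap H_j,N)
\end{tikzcd}
\]
and that the corresponding square for $T_\infty\Emb$ is cartesian ``formally''. Neither half of this is true, and the failure is not a minor technicality but precisely the phenomenon that embedding calculus is built to analyse. Taking already the simplest case of a $0$-handle, so $M_1 = \mathbb{R}^d \sqcup \mathbb{R}^d$, $M_0 = \mathbb{R}^d = H_1$, $M_0 \cap H_1 = \varnothing$, the square would say $\Emb(\mathbb{R}^d \sqcup \mathbb{R}^d,N) \simeq \Emb(\mathbb{R}^d,N) \times \Emb(\mathbb{R}^d,N)$, which is the statement for immersions (equivalently for $T_1\Emb$), not for embeddings. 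Crucially, it fails for $T_k\Emb$ as soon as $k\geq 2$: indeed $T_k\Emb(\mathbb{R}^d\sqcup\mathbb{R}^d,N) \simeq \Emb(\mathbb{R}^d\sqcup\mathbb{R}^d,N)$ for $k\geq 2$, which is not the product. So the proposed five-lemma comparison between the two squares collapses on both sides. Being polynomial of degree $\leq k$ means sending strongly cocartesian \emph{$(k{+}1)$-cubes} to cartesian ones; it does not produce cartesian squares from single handle attachments, and taking $k\to\infty$ does not rescue this.

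A second, smaller gap is the claim that the attaching region $M_{j-1}\cap H_j \simeq S^{i_j-1}\times\mathbb{R}^{d-i_j}$ ``is diffeomorphic to (disjoint unions of) Euclidean spaces''. That is only true for $i_j\in\{0,1\}$; for handle index $\geq 2$ it is not a disjoint union of discs, so your base case does not apply to the right-hand column of the square.

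The actual proof structure in Goodwillie--Weiss / Goodwillie--Klein does use handle decompositions and multiple disjunction, as you guess, but organised very differently: one does not establish cartesian squares. Instead one proves a connectivity estimate of the form ``$\Emb(M,N)\to T_k\Emb(M,N)$ is roughly $(k(d-2-p)+1-p)$-connected when $h\dim M = p$'', obtained by a double induction on handle dimension and on $k$, feeding multiple disjunction (which gives the degree of \emph{failure} of cartesianness of higher cubes of embedding spaces, not its vanishing) into a generalized Blakers--Massey theorem for cubical diagrams. The assumption $p\leq d-3$ then makes the connectivity tend to $\infty$ with $k$, giving the equivalence in the limit. You do name multiple disjunction and Blakers--Massey at the end, and those are indeed the engine; but the framing as an induction producing cartesian squares is not a simplification of that argument, it is an argument that the theory itself shows to be false.
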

The advantage of the above theorem is to give more homotopy theoretic methods to study spaces of embeddings. In particular, given a fixed embedding $f \colon M \hookrightarrow N$, we can describe the fibre of the map 
$$T_{k+1} \Emb(M,N) \to T_k \Emb(M,N)$$
as a certain space of sections of a bundle over the unordered configuration space $U\Conf_k(M)$ of $M$ relative a fixed section on the fat diagonal, where the fibres are built out of cubes of configuration spaces of $N$ (see, for instance, the final paragraph of \cite{Weiss}, or \cite[thm 4.9]{Manuel-Sander}). This fibre is denoted $L_k \Emb(M,N)_f$, and called the \emph{layer of the tower}.
\\
Given a smooth manifold $M$ of dimension $d$, and $k \in \mathbb{N} \cup \{\infty\}$, the main object under investigation in this work is
$$T_k \Diff(M) \defeq \Map_{\Psh(\Disk_d^{\leq k})}^{\simeq}(\iota_k^\ast E_M, \iota_k^\ast E_M)$$
We refer to the group of components of the above space the \emph{$T_k$-mapping class group} of $M$.
\\

It is possible to set up embedding calculus to include manifolds with boundary, and boundary conditions. We shall return to this once needed later.
\\

We now study a method of imposing tangential structures on manifolds, and applying embedding calculus, where now all embeddings are required to preserve those tangential structures. Fix a dimension $d \in \mathbb{N}$, and let $\Theta \in \Psh(BO(d))$ be a space with a Borel action of $O(d)$. Denote by $\xi_\Theta \colon B_\Theta \to BO(d)$ the associated Borel construction through the Grothendieck equivalence 
$$\Psh(BO(d)) \simeq \faktor{\mathcal{S}}{BO(d)}$$
We define $\mathrm{Manf}_d^\Theta$ as the following pullback in $\infty$-categories
\begin{center}
    \begin{tikzcd}
        \mathrm{Manf}_d^{\Theta} \arrow{d} \arrow{r} &  \faktor{\mathcal{S}}{B_\Theta} \arrow{d}{\xi_\Theta \circ -}
        \\
        \mathrm{Manf}_d \arrow{r}  &   \faktor{\mathcal{S}}{BO(d)}
    \end{tikzcd}
\end{center}
where the lower horizontal functor sends a manifold $M$ to the map $M \to BO(d)$ classifying the tangent bundle of $M$.

We may further restrict to discs. Given any tangential structure $\Theta$ and $k \in \mathbb{N}$, we define $\Disk_d^{\Theta,\leq k}$ as the following pullback in $\infty$-categories
\begin{center}
    \begin{tikzcd}
        \Disk_d^{\Theta,\leq k} \arrow{d}\arrow{r} & \faktor{\mathcal{S}}{B_\Theta} \arrow{d}{ \xi_\Theta \circ -}
        \\
        \Disk_d^{\leq k} \arrow{r} & \faktor{\mathcal{S}}{BO(d)}
    \end{tikzcd}
\end{center}
By the universal property of presheaf categories \cite[5.1.5.6]{HTT}, we obtain a colimit preserving functor  $\Psh(\Disk_d^{\leq k}) \to \mathcal{S}/BO(d)$, which sits in the following commuting diagram
\begin{center}
    \begin{tikzcd}
        \Disk_d^{\Theta,\leq k} \arrow{r}\arrow{d} & \faktor{\mathcal{S}}{B_\Theta} \arrow{d}
        \\
        \Psh(\Disk_d^{\leq k}) \arrow{r} &  \faktor{\mathcal{S}}{BO(d)}
    \end{tikzcd}
\end{center}
and which in turn yields a functor $\Disk_d^{\Theta,\leq k} \to \mathcal{C}$, where $\mathcal{C}$ is the pullback of the cospan
$$\Psh(\Disk_d^{\leq k}) \rightarrow \faktor{\mathcal{S}}{BO(d)} \leftarrow \faktor{\mathcal{S}}{B_\Theta}$$
Note that $\mathcal{C}$ again admits all colimits. Using once more the universal property of presheaf categories \cite[5.1.5.6]{HTT}, we obtain a colimit preserving functor $\Psh(\Disk_d^{\Theta,\leq k}) \to \mathcal{C}$, which thus yields a commutative diagram
\begin{center}
    \begin{tikzcd}
        \Psh(\Disk_d^{\Theta,\leq k}) \arrow{r}\arrow{d} &  \faktor{\mathcal{S}}{B_\Theta} \arrow{d}
        \\
        \Psh(\Disk_d^{\leq k}) \arrow{r} & \faktor{\mathcal{S}}{BO(d)}
    \end{tikzcd}
\end{center}
The left vertical functor is obtained by left Kan extension
\begin{center}
    \begin{tikzcd}
        \Disk_d^{\Theta,\leq k} \arrow{r}\arrow[hook]{d} &  \Disk_d^{\leq k} \arrow[hook]{r} & \Psh(\Disk_d^{\leq k})
        \\
        \Psh(\Disk_d^{\Theta,\leq k}) \arrow[dashed]{urr}
    \end{tikzcd}
\end{center}
We may further consider the full subcategory of $\Psh(\Disk_d^{\leq k})$ given by elements $E_M$ for $M$ a $d$-dimensional manifold with empty boundary, which we denote by $T_k \Manf_d$. We may also consider the full subcategory $T_k \Manf_d^{\Theta}$ given by pullback
\begin{center}
    \begin{tikzcd}
        T_k \Manf_d^{\Theta} \arrow{r}\arrow{d} & \Psh(\Disk_d^{\Theta,\leq k}) \arrow{d}
        \\
        T_k \Manf_d \arrow[hook]{r} & \Psh(\Disk_d^{\leq k})
    \end{tikzcd}
\end{center}
For a $d$-dimensional smooth manifold $M$ together with a fixed tangential $\Theta$-structure $\ell$ on it, we define
$$T_k \Diff^{\Theta}(M,\ell) \defeq \Map_{\Psh(\Disk_d^{\Theta,\leq k})}^{\simeq}(E_M^{\Theta},E_M^{\Theta})$$

The following result, due to Krannich and Kupers \cite[thm 4.15]{Manuel-Sander}, gives a smoothing theory description on the level of tangential structures.

\begin{theorem}[Krannich-Kupers]\label{thm:embedding-calculus-with-tangential-structure}
    Let $\Theta \in \Psh(BO(d))$. For each $k\in\mathbb{N}\cup\{\infty\}$, every square in the following commutative diagram is a pullback square of $\infty$-categories
    \begin{center}
        \begin{tikzcd}
            \Disk_d^{\Theta,\leq k} \arrow{r}\arrow{d} &  \mathrm{Manf}_d^{\Theta} \arrow{r}\arrow{d}  &   \Psh(\Disk_d^{\Theta,\leq k}) \arrow{r}\arrow{d} & \faktor{\mathcal{S}}{B_\Theta} \arrow{d}
            \\
            \Disk_d^{\leq k} \arrow[hook]{r}  &  \mathrm{Manf}_d \arrow{r}[swap]{\iota_k^\ast E}  &   \Psh(\Disk_d^{\leq k}) \arrow{r} & \faktor{\mathcal{S}}{BO(d)}
        \end{tikzcd}
    \end{center}
\end{theorem}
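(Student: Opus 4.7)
The plan is to deduce all three squares from the pullback pasting lemma, after identifying the rightmost square as the essential content requiring a direct argument.

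First, I would show that the outer rectangle obtained by pasting all three squares together is a pullback, and then that the leftmost square is a pullback. The bottom composite $\Disk_d^{\leq k} \hookrightarrow \Manf_d \xrightarrow{\iota_k^\ast E} \Psh(\Disk_d^{\leq k}) \to \mathcal{S}/BO(d)$ is canonically equivalent to the tangent classifier $\Disk_d^{\leq k} \to \mathcal{S}/BO(d)$, because the last functor is the unique colimit-preserving extension of the tangent classifier along the Yoneda embedding, so precomposing with Yoneda recovers the original; the same reasoning applies to the top row. Hence the outer rectangle coincides, up to canonical equivalence, with the defining pullback of $\Disk_d^{\Theta,\leq k}$ and is therefore a pullback. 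For the leftmost square, I would paste it with the defining pullback square of $\Manf_d^{\Theta}$: the result is precisely the outer rectangle just discussed, and since the defining pullback of $\Manf_d^{\Theta}$ is a pullback by fiat, the pasting lemma forces the leftmost square to be a pullback.

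The technical core is the rightmost square: the natural colimit-preserving functor
$$\Psh(\Disk_d^{\Theta,\leq k}) \longrightarrow \Psh(\Disk_d^{\leq k}) \times_{\mathcal{S}/BO(d)} \mathcal{S}/B_\Theta,$$
constructed via the universal property of presheaf categories, ought to be an equivalence. My approach would be to exhibit the right hand side as the free cocompletion of $\Disk_d^{\Theta,\leq k}$. I would exploit that $\mathcal{S}/B_\Theta \to \mathcal{S}/BO(d)$ is a right fibration — under the Grothendieck equivalence $\mathcal{S}/BO(d) \simeq \Psh(BO(d))$, it corresponds to the map $B_\Theta \to BO(d)$ viewed as an object. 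This ensures that the pullback on the right is a presentable $\infty$-category, and that the canonical Yoneda-like functor $\Disk_d^{\Theta,\leq k} \to \Psh(\Disk_d^{\leq k}) \times_{\mathcal{S}/BO(d)} \mathcal{S}/B_\Theta$ is fully faithful with image generating the target under colimits. The universal property of presheaves then yields the claimed equivalence.

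The middle square then follows by pasting. Knowing that the outer rectangle, the leftmost square, and the rightmost square are all pullbacks, the rectangle formed by pasting the leftmost and middle squares is a pullback (since, together with the rightmost, it pastes to the outer pullback); combined with the leftmost square being a pullback, this forces the middle square to be a pullback as well. The main obstacle is the rightmost square: verifying that the pullback of presheaf categories against a right fibration between slice categories again computes a presheaf category. This is an abstract $\infty$-categorical assertion, but rigorously executing it likely requires explicit use of straightening/unstraightening, or a careful verification by hand that the candidate inverse functor preserves colimits and restricts to an equivalence on representables.
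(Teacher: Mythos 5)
The paper does not prove this statement; it is cited directly as \cite[thm 4.15]{Manuel-Sander}, so there is no internal argument to compare against. Judging your proposal on its own terms: the overall reduction pattern is sensible, and you correctly identify the rightmost square as the technical core. But there are two concrete problems.

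First, your deduction of the middle square is a non-theorem. You establish that the leftmost square, the outer rectangle, and the rightmost square are pullbacks, and correctly conclude that the paste of the left and middle squares is a pullback. You then assert that the left square being a pullback, together with the left-plus-middle rectangle being a pullback, forces the middle square to be a pullback. This is false: the pasting lemma says that when the \emph{right} square of two adjacent squares is a pullback, the left square is a pullback if and only if the composite is; there is no converse in which knowledge of the left square and the composite determines the right square. (For a discrete counterexample: take $A' = \{1\} \hookrightarrow B' = \{0,1\}$, $C = \{0\} \hookrightarrow C' = \{0,1\}$, $B' \to C'$ the identity, and $B = \{0,0'\}$ mapping constantly to $0$ in both $C$ and $B'$; then $A = \varnothing$ makes both the outer and left squares pullbacks while the right square is not.) The correct route is to observe that the composite of the middle and right squares \emph{is}, up to canonical identification, the defining pullback square of $\Manf_d^\Theta$, and then apply the pasting lemma to the pair (middle, right).

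Second, that fix surfaces an ingredient your sketch omits. For the middle-plus-right rectangle to coincide with the defining pullback of $\Manf_d^\Theta$, one needs the composite $\Manf_d \xrightarrow{\iota_k^\ast E} \Psh(\Disk_d^{\leq k}) \to \mathcal{S}/BO(d)$ to agree with the tangent classifier $\tau\colon \Manf_d \to \mathcal{S}/BO(d)$. Your \emph{precompose with Yoneda} argument only establishes this after restriction along $\Disk_d^{\leq k} \hookrightarrow \Manf_d$, because $\iota_k^\ast E$ restricted to $\Disk_d^{\leq k}$ is Yoneda, but on $\Manf_d$ at large it is not. Identifying the two functors on all of $\Manf_d$ amounts to showing $\colim_{(\Disk_d^{\leq k})_{/\iota_k^\ast E_M}} \tau|_D \simeq \tau_M$ in $\mathcal{S}/BO(d)$, i.e.\ recovering a manifold's tangent classifier from the $\infty$-category of embedded discs — a genuine descent statement that needs its own proof. (Your argument for the leftmost square sidesteps this, since after pasting with the defining pullback of $\Manf_d^\Theta$ the relevant bottom map is pre-restricted to $\Disk_d^{\leq k}$; but the middle square cannot avoid it.) Finally, your treatment of the rightmost square — identifying $\Psh(\Disk_d^{\Theta,\leq k})$ with the pullback via the right-fibration $\mathcal{S}/B_\Theta \to \mathcal{S}/BO(d)$ and the universal property of presheaf categories — is a plausible strategy but is explicitly left unexecuted, which you acknowledge.
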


As a corollary, we obtain the following
\begin{corollary}\label{thm:pullback-moduli-spaces}
    Let $\Theta \in \Psh(BO(d))$ be a tangential structure, and fix a smooth, $d$-dimensional manifold with empty boundary, together with a fixed $\Theta$-stucture $\ell$ on it. The fibre of the map 
    $$BT_k\Diff^\Theta(M,\ell) \to BT_k\Diff(M) $$
    is a collection of components of the space of tangential $\Theta$-structures on $M$, i.e. the space
    $$\Map^{O(d)}(\Fr(M),\Theta)$$
\end{corollary}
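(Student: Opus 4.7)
The plan is to deduce the statement directly from the outer pullback square of $\infty$-categories established in \cref{thm:embedding-calculus-with-tangential-structure}, namely
\begin{center}
\begin{tikzcd}
\Psh(\Disk_d^{\Theta,\leq k}) \arrow{r}\arrow{d} & \faktor{\mathcal{S}}{B_\Theta} \arrow{d}{\xi_\Theta \circ -}
\\
\Psh(\Disk_d^{\leq k}) \arrow{r} & \faktor{\mathcal{S}}{BO(d)}
\end{tikzcd}
\end{center}
the bottom horizontal functor sending $E_M$ to the tangent classifier $\tau_M \colon M \to BO(d)$. The first step is to apply the core functor $(-)^{\simeq}\colon \Cat_\infty \to \mathcal{S}$; being right adjoint to the inclusion $\mathcal{S}\hookrightarrow\Cat_\infty$, it preserves the pullback and turns the square into a cartesian square of spaces.

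The second step is to identify the fibre of the right vertical map over $\tau_M$. Via the straightening equivalence $\faktor{\mathcal{S}}{BO(d)} \simeq \Fun(BO(d),\mathcal{S})$, the objects $\tau_M$ and $\xi_\Theta$ correspond respectively to $\Fr(M)$ and $\Theta$ with their canonical $O(d)$-actions. Hence the fibre, which parametrises lifts of $\tau_M$ along $\xi_\Theta$, becomes
$$\Map_{/BO(d)}(M, B_\Theta) \simeq \Map^{O(d)}(\Fr(M), \Theta),$$
i.e.\ the space of tangential $\Theta$-structures on $M$. By cartesianness of the square, the fibre of the left vertical map over $E_M$ is the same space.

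Finally, $BT_k\Diff(M)$ and $BT_k\Diff^\Theta(M,\ell)$ are, by definition, the path components of $E_M$ and $E_M^\Theta$ inside $\Psh(\Disk_d^{\leq k})^\simeq$ and $\Psh(\Disk_d^{\Theta,\leq k})^\simeq$, and the map in the statement is obtained by restricting the left vertical to these components. Its fibre over $E_M$ is therefore the preimage in $\Map^{O(d)}(\Fr(M),\Theta)$ of the single component $BT_k\Diff^\Theta(M,\ell)$, which is automatically a disjoint union of path components, as claimed. The step I expect to require the most care is exactly this final compatibility between cartesianness and restriction to components: one has to verify that cutting down both columns on the left to unions of path components, while keeping the right column unchanged, produces a new cartesian square. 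This is a formal consequence of the pasting law once one notes that the inclusion of a union of components is itself a base change, but it is the place where any $\infty$-categorical subtlety could enter and so deserves explicit bookkeeping.
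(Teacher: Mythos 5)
Your argument is correct and is the natural way to fill in the proof that the paper leaves implicit (the paper states this as an immediate corollary of \cref{thm:embedding-calculus-with-tangential-structure} without giving details). The key steps — applying the core functor $(-)^{\simeq}$, which is a right adjoint to $\mathcal{S}\hookrightarrow\Cat_\infty$ and hence preserves the pullback; unwinding the fibre of $\bigl(\faktor{\mathcal{S}}{B_\Theta}\bigr)^\simeq \to \bigl(\faktor{\mathcal{S}}{BO(d)}\bigr)^\simeq$ over the tangent classifier $\tau_M$ as the space of lifts $\Map_{/BO(d)}(M,B_\Theta) \simeq \Map^{O(d)}(\Fr(M),\Theta)$; and restricting to the components of $\iota_k^\ast E_M$ and $\iota_k^\ast E_M^\Theta$ — are all sound. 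Two small remarks. First, you call the relevant square the ``outer'' one; it is actually the rightmost inner square of the diagram in \cref{thm:embedding-calculus-with-tangential-structure}, but since the theorem asserts that every square is cartesian this is harmless. Second, the final cautionary paragraph is slightly overworked: it is not true (nor needed) that restricting the left column to components while leaving the right column alone yields a new cartesian square. What you actually use is the weaker and more elementary observation that the fibre of the restricted map over $\iota_k^\ast E_M$ is the preimage, inside the fibre of the unrestricted map, of the component $BT_k\Diff^\Theta(M,\ell)$; since $BT_k\Diff(M)$ is by definition the component of $\iota_k^\ast E_M$ in $\Psh(\Disk_d^{\leq k})^\simeq$, this is automatically a union of path components of $\Map^{O(d)}(\Fr(M),\Theta)$, which is exactly what the corollary claims.
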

\bigskip

\subsection{Spin $T_k$-mapping class groups}  
Consider a closed, smooth $2$-connected $d$-manifold, together with a choice $\mathfrak{s}$ of a spin structure on it, and define $E_{M}^{\Spin}\in\Psh(\Disk_{d}^{\Spin})$ by $\sqcup_{k} \mathbb{R}^d \mapsto \Emb^{\Spin}(\sqcup_k \mathbb{R}^d, M)$. The aim of this section is to show the following.

\begin{theorem}\label{thm:spin-T_k-residual-finite}
    Let $(M,\mathfrak{s})$ be a closed, smooth $2$-connected $d$-manifold, together with a choice of a spin structure on it. Then, for all $k \in \mathbb{N}$, 
    $$\pi_0 T_k \Diff^{\Spin}(M) \defeq \pi_0 \Map_{\Psh(\Disk_d^{\Spin,\leq k})}^{\simeq}(\iota_{k}^{\ast}E_{M}^{\Spin}, \iota_{k}^{\ast}E_{M}^{\Spin})$$
    is a residually finite group.
\end{theorem}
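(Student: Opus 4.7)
The plan is to adapt the strategy of the proof of Sullivan's theorem (\cref{thm:Sullivan3.2}) to the presheaf category $\Psh(\Disk_d^{\Spin,\leq k})$, following the blueprint provided by \cref{thm:profinite} and \cref{cor:profinite}. Set
$$\mathcal{F} \defeq \iota_k^\ast E_M^{\Spin} \colon (\Disk_d^{\Spin,\leq k})^{\op} \to \mathcal{S}, \qquad \sqcup_\ell \mathbb{R}^d \mapsto \Emb^{\Spin}(\sqcup_\ell \mathbb{R}^d, M).$$
Each value of $\mathcal{F}$ has the homotopy type of a simply connected finite CW complex: the fibration $\Spin(d)^\ell \to \Emb^{\Spin}(\sqcup_\ell \mathbb{R}^d, M) \to \Conf_\ell(M)$ combined with the Fadell--Neuwirth fibrations yields $1$-connectivity using $2$-connectedness of $M$ and $1$-connectedness of $\Spin(d)$ (for $d \geq 3$), and finite CW type follows from compactness of $M$. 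Hence \cref{thm:profinite} applies to $\mathcal{F}$, and \cref{cor:profinite} reduces residual finiteness of $\pi_0 T_k \Diff^{\Spin}(M) = \pi_0 \Map^{\simeq}_{\Psh}(\mathcal{F},\mathcal{F})$ to the $\pi_0$-injectivity of the postcomposition map
$$\Map_{\Psh(\Disk_d^{\Spin,\leq k})}(\mathcal{F}, \mathcal{F}) \longrightarrow \Map_{\Psh(\Disk_d^{\Spin,\leq k})}(\mathcal{F}, \Phi^s \circ \mathcal{F})$$
induced by the pointwise finite completion $\eta \colon \mathcal{F} \Rightarrow \Phi^s \mathcal{F}$.

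To prove this injectivity, I would run the same obstruction-theoretic induction as in the proof of \cref{thm:Sullivan3.2}, along the \emph{pointwise} principal Postnikov refinement $\{\mathcal{F}_n\}$ of the target disc-presheaf; by \cref{lemma:Principal-Postnikov-profinite}, $\{\Phi^s \mathcal{F}_n\}$ is the analogous refinement of $\Phi^s \mathcal{F}$. Given self-maps $f \not\simeq g$ of $\mathcal{F}$ which agree at stage $n$ with a common $\varphi$, their difference yields an obstruction class
$$[\Delta(f,g)] \in \pi_0 \, \Omega_{\kappa_n \circ \varphi} \Map_{\Psh}(\mathcal{F}, \underline{K}_n),$$
where $\underline{K}_n$ is the pointwise Eilenberg--MacLane disc-presheaf of type $(\pi_{n+1}\mathcal{F}, n+2)$, and $f \simeq g$ precisely when $[\Delta(f,g)]$ lies in the image of the analogue of $\Omega\kappa_n$ from $\pi_1 \Map_{\Psh}(\mathcal{F}, \mathcal{F}_n)_\varphi$. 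The analogue of the key claim in \cref{thm:Sullivan3.2} would then be that the square of groups comparing the source and target of $\Omega\kappa_n$ before and after postcomposition with $\eta$ is isomorphic, in the category of squares of groups, to the square obtained by applying the group profinite completion $\Phi^g$ to the original. Since the cokernel in question is a finitely generated abelian group, hence residually finite, this would force $[\Delta(\eta \circ f, \eta \circ g)] \neq 0$, closing the induction.

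The main technical obstacle is establishing this last identification. Unlike in the classical setting of \cref{thm:Sullivan3.2}, where the finite CW structure of the source $Y$ immediately presents $\Map(Y,-)$ as a finite limit of nilpotent finite-type spaces, here one must exhibit $\Map_{\Psh(\Disk_d^{\Spin,\leq k})}(\mathcal{F}, -)$ as a (homotopy-)finite limit of nilpotent finite-type spaces, so that \cref{thm:finite-limit-finite-completion} and \cref{cor:finite-limits-finite-completion} can identify the relevant obstruction square with its $\Phi^g$-completion. Concretely, I expect one should use that $M$ is closed — and therefore admits a finite Weiss cover by Euclidean discs — together with the cardinality restriction $\ell \leq k$ to reduce the indexing category of elements to one of finite homotopy type. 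With such a presentation in hand, and provided the Postnikov induction terminates at a finite stage (which should follow from cohomological finiteness of the limit presentation), the argument proceeds along the same lines as \cref{thm:Sullivan3.2}; carrying this out in detail is the content of the later \cref{thm:injective-pi_0} referenced in the outline.
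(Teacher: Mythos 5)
Your overall blueprint is correct and matches the paper's: invoke \cref{thm:profinite} and \cref{cor:profinite} to reduce residual finiteness of $\pi_0 T_k\Diff^{\Spin}(M)$ to the $\pi_0$-injectivity of postcomposition with $\eta\colon \iota_k^\ast E_M^{\Spin}\Rightarrow \Phi^s\iota_k^\ast E_M^{\Spin}$, and your observation that each value $\Emb^{\Spin}(\sqcup_\ell\mathbb{R}^d,M)$ is a simply connected finite space is exactly the content of \cref{lemma:pointwisefinite}. For $k=1$ your plan works essentially as stated, since $\Psh(\Disk_d^{\Spin,\leq 1})\simeq\Psh(B\Spin(d))$ and the spin frame bundle is a finite colimit $\colim_M\Spin(d)$, giving the finite-limit presentation directly.

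For $k\geq 2$, however, you have correctly located the obstacle but not resolved it, and your speculative route does not match the paper's and does not obviously close the gap. Running induction along the pointwise Postnikov tower of $\iota_k^\ast E_M^{\Spin}$ produces at each stage a lifting problem whose obstruction lives in $\pi_0\Omega\Map_{\Psh(\Disk_d^{\Spin,\leq k})}(\mathcal{F},\underline{K}_n)$, and one must exhibit this mapping space as a finite limit of nilpotent finite-type spaces to apply \cref{cor:finite-limits-finite-completion}. The twisted arrow category indexing that limit is not finite, and the ``finite Weiss cover'' idea does not, as far as I can see, cut it down to something of finite homotopy type; this is precisely why the naive pointwise Postnikov tower is inadequate when $k\geq 2$. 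The paper's actual resolution is the \emph{restricted Postnikov decomposition} (\cref{Construction:restricted-Postnikov}), which holds the presheaf fixed on $\Disk_d^{\leq k-1}$ and Moore--Postnikov-truncates only at cardinality $k$. Combined with the Reedy/recollement pullback square (\cref{thm:Reedy-pullback}) and the resulting equivalence of lifting spaces (\cref{Prop:Reedy-Lifts}), each stage of the induction is translated into a single equivariant lifting problem in $\Psh(BG_k)$ with $G_k=\mathfrak{S}_k\ltimes\Spin(d)^k$, where the source pair $\bigl(\mathcal{j}^\ast E_M^{\Spin}(D_k),\mathcal{j}^\ast(\iota_{k-1})_!\iota_{k-1}^\ast E_M^{\Spin}(D_k)\bigr)$ is a finite free $G_k$-CW pair by \cref{lemma:leftKan-is-boundary} and the Fulton--MacPherson compactification. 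That finite equivariant CW structure is the geometric input that supplies the finite-limit presentation and lets the $\Phi^g$-comparison go through; it is missing from your proposal, and deferring it to ``the later \cref{thm:injective-pi_0}'' is deferring the theorem itself.
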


We begin with some notation. Given a space valued diagram $X \in \Fun(\mathcal{C},\mathcal{S})$, we define its \emph{profinite completion} $\widehat X$ to be performed pointwise, namely as the following composition
$$\widehat X\colon\mathcal{C} \xrightarrow[]{X} \mathcal{S} \to \operatorname{Pro}(\mathcal{S}_\pi)$$
We may further compose with the materialisation functor $\Mat: \operatorname{Pro}(\mathcal{S}_\pi) \to \mathcal{S}$, and keeping with our conventions, we denote the composition as $\Phi^s X \in \Fun(\mathcal{C},\mathcal{S})$. Similarly, we denote by $\tau_{\leq n}X$ the composition
$$\tau_{\leq n}X\colon \mathcal{C} \xrightarrow[]{X} \mathcal{S} \xrightarrow{\tau_{\leq n}}\mathcal{S}$$
In analogy to the similar situation in spaces, we call $\{ \tau_{\leq n}X\}_{n \in \mathbb{N}}$ the \emph{Postnikov decomposition} of $X$. A natural transformation $\alpha:X \to Y$ is said to be \emph{principal} if it is a principal fibration pointwise. A Postnikov decomposition of $X$ is then said to be \emph{principal} if all transformations $\tau_{\leq n+1}X \to \tau_{\leq n}X$ are principal.
\\

\begin{lemma}\label{lemma:pointwisefinite}
    For all $k \in \mathbb{N}$, and for $(M,\mathfrak{s})$ as above, $\Emb^{\Spin}(\sqcup_k \mathbb{R}^d, M)$ has the homotopy type of a simply connected, finite CW complex.
\end{lemma}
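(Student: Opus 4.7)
The plan is to realise $\Emb^{\Spin}(\sqcup_k \mathbb{R}^d, M)$ as the total space of a principal $\Spin(d)^k$-bundle over the ordered configuration space $\Conf_k(M)$, and deduce both claims from this. First, I would use a standard tubular-neighbourhood argument to show that the map
\[
\Emb(\sqcup_k \mathbb{R}^d, M) \xrightarrow{\simeq} \Fr(M)^k\big|_{\Conf_k(M)},
\]
sending a smooth embedding to its $k$-tuple of origins together with the tangential frames there, is a homotopy equivalence. I would then refine this to the spin setting by additionally recording the spin lift of each frame, to obtain a homotopy equivalence
\[
\Emb^{\Spin}(\sqcup_k \mathbb{R}^d, M) \xrightarrow{\simeq} \Fr^{\Spin}(M)^k\big|_{\Conf_k(M)},
\]
exhibiting the embedding space as a principal $\Spin(d)^k$-bundle over $\Conf_k(M)$.

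For finite CW type, I would use that $\Conf_k(M)$ has the homotopy type of a compact manifold with corners --- for instance its Fulton--MacPherson compactification --- and so is of finite CW type. The fibre $\Spin(d)^k$ is a compact Lie group, hence is also of finite CW type. As the total space of a fibre bundle whose base and fibre are both finite CW complexes, $\Emb^{\Spin}(\sqcup_k \mathbb{R}^d, M)$ then has the homotopy type of a finite CW complex.

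For simple connectivity, I would apply the long exact sequence of the fibration $\Spin(d)^k \to \Emb^{\Spin}(\sqcup_k \mathbb{R}^d, M) \to \Conf_k(M)$. In the relevant range $d \geq 3$ (in fact $d \geq 5$ in the main applications), $\Spin(d)$ is connected and simply connected, hence so is $\Spin(d)^k$. Since $M$ is 2-connected, $M^k$ is simply connected; moreover, the fat diagonal $M^k \setminus \Conf_k(M)$ has codimension $d \geq 3$, so by transversality $\pi_1 \Conf_k(M) \cong \pi_1 M^k = 1$ and $\pi_0 \Conf_k(M) = \ast$. The long exact sequence then forces $\pi_0 \Emb^{\Spin}(\sqcup_k \mathbb{R}^d, M) = \ast$ and $\pi_1 \Emb^{\Spin}(\sqcup_k \mathbb{R}^d, M) = 1$.

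The main delicate point will be the spin-compatible tubular-neighbourhood identification in the first step: it parallels the classical non-spin version, but one must check compatibility with the spin lifts, which in turn reduces to the fact that $\Spin(d) \to SO(d)$ is a covering, so spin data extends uniquely along continuous deformations of a smooth embedding.
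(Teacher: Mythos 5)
Your proposal is correct and follows essentially the same strategy as the paper: identify $\Emb^{\Spin}(\sqcup_k\mathbb{R}^d,M)$ with the restriction of the $k$-fold spin frame bundle to $\Conf_k(M)$ (a principal $\Spin(d)^k$-bundle), deduce finite CW type from the Fulton--MacPherson compactification of $\Conf_k(M)$ together with compactness of $\Spin(d)$, and deduce simple connectivity from the long exact sequence of the fibration. The only (minor) deviation is in how $\pi_1\Conf_k(M)=0$ is established: you invoke transversality against the fat diagonal, which has codimension $d\geq 3$, whereas the paper runs an induction over the Fadell--Neuwirth fibrations $M\setminus\sqcup_{k-1}\ast\to\Conf_k(M)\to\Conf_{k-1}(M)$ using that puncturing preserves 2-connectivity in codimension $\geq 4$. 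Both are standard and equally valid here; the transversality route is slightly shorter, the Fadell--Neuwirth route has the advantage of giving 2-connectivity of $\Conf_k(M)$ as well, which is exactly the information the paper restates.
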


\begin{proof}
    Recall that $\Emb^{\Spin}(\sqcup_k \mathbb{R}^d, M)$ sits in the following homotopy pullback square
    \begin{center}
        \begin{tikzcd}
            \Emb^{\Spin}(\sqcup_k \mathbb{R}^d,M) \arrow{r} \arrow{d} & \Fr^{\Spin}(M)^k \arrow{d}
            \\
            \Conf_k (M) \arrow[hook]{r}  &  M^k
        \end{tikzcd}
    \end{center}
    The rightmost vertical map is the cartesian product of the spin frame bundle of $M$, which exists as $M$ is assumed to be 2-connected and thus admits a spin structure. Furthermore, we observe that the vertical fibre $\prod_{k} \Spin(d)$ in the above is homotopy equivalent to a finite CW complex; indeed, $\Spin(d)$ is the double cover of $SO(d)$ (universal for $d \geq 3$), a compact Lie group. Additionally, note that $M\setminus\sqcup_{k}\ast$ is again a 2-connected manifold, for all $k$. Induction on the Fadell-Neuwirth fibre bundle $M\setminus\sqcup_{k-1}\ast \to \Conf_k (M) \to \Conf_{k-1}(M)$ thus yields the result: one first observes that $\pi_1 \Conf_k (M)$ and $\pi_2 \Conf_k (M)$ are trivial for all $k\in\mathbb{N}$, so that the long exact sequence on homotopy groups applied to the fibration $\prod_k \Spin(d) \to \Emb^{\Spin}(\sqcup_k \mathbb{R}^d,M) \to \Conf_k (M)$ implies that the space in question is simply connected. That it has the homotopy type of a finite CW complex also follows from the fact that it sits as the total space of a fiber bundle whose fibre and base space have the homotopy type of finite CW complexes. Indeed, $\Conf_k (M)$ is homotopy equivalent to a finite CW complex for all $k$, since it is homeomorphic to the interior of a compact topological manifold with boundary via the Fulton-MacPherson compactification.
\end{proof}

The above analysis thus yields the following.

\begin{corollary} \label{Principal_Postnikov_Spin}
    The presheaf $E_{M}^{\Spin}\in \Psh(\Disk_d ^{\Spin})$ admits a principal Postnikov decomposition. In particular, so do all restrictions $\iota_{k}^{\ast}E_M \in \Psh(\Disk_{d}^{\Spin,\leq k})$, for $k\in\mathbb{N}$.
\end{corollary}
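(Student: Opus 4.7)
The plan is to reduce the statement to a pointwise check on spaces. By the conventions set up earlier in this section, the Postnikov decomposition of a presheaf $F \in \Psh(\Disk_d^{\Spin})$ is defined by post-composing $F$ pointwise with the truncation functors $\tau_{\leq n} \colon \mathcal{S} \to \mathcal{S}$, and a natural transformation $\alpha \colon F \to G$ is declared principal precisely when $\alpha_c$ is a principal fibration for every object $c$. It therefore suffices to show that for each $\ell \in \mathbb{N}$ the tower $\{\tau_{\leq n} \Emb^{\Spin}(\sqcup_\ell \mathbb{R}^d, M)\}_{n \in \mathbb{N}}$ is a principal Postnikov tower of spaces, and that this principal structure is natural in $\ell$.

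The first point is essentially formal once \cref{lemma:pointwisefinite} is invoked: each $\Emb^{\Spin}(\sqcup_\ell \mathbb{R}^d, M)$ is a simply connected finite CW complex, so $\tau_{\leq 1}$ of it is contractible, and the functorial principal construction recalled in §\ref{sect:Functorial-Postnikov-square} exhibits each map $\tau_{\leq n+1} X \to \tau_{\leq n} X$ as a pullback of the path-loop fibration over $K(\pi_{n+1} X, n+2)$ along the $k$-invariant $\kappa_n$, hence as a principal fibration with Eilenberg-Maclane fibre. Naturality of this construction in the simply connected case (which is the content of the functorial principal refinement) upgrades the assembled pointwise principal fibrations to a genuine principal natural transformation $\tau_{\leq n+1} E_M^{\Spin} \to \tau_{\leq n} E_M^{\Spin}$ in $\Psh(\Disk_d^{\Spin})$. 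The restriction assertion then follows because $\iota_k^\ast$ is computed pointwise and so commutes with the post-composition by $\tau_{\leq n}$, which means the principal Postnikov tower of $E_M^{\Spin}$ restricts verbatim to a principal Postnikov tower of $\iota_k^\ast E_M^{\Spin}$ in $\Psh(\Disk_d^{\Spin,\leq k})$.

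The main obstacle in this corollary is genuinely mild: the only substantive input is simple-connectivity of the values, which is already isolated in \cref{lemma:pointwisefinite}. Everything else is a bookkeeping check that "principal" has been defined in a way that is stable under pointwise application of a functorial principal construction, and under restriction along the inclusions $\iota_k$; no further coherence of $k$-invariants across the morphisms of $\Disk_d^{\Spin}$ needs to be addressed beyond what §\ref{sect:Functorial-Postnikov-square} already furnishes.
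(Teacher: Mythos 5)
Your argument is correct and matches the paper's (implicit) reasoning: reduce to pointwise principality, use \cref{lemma:pointwisefinite} for simple-connectivity of the values, and invoke the principal Postnikov decomposition for simply connected spaces from §\ref{sect:Functorial-Postnikov-square}. Since the paper defines a principal natural transformation purely pointwise and the tower $\tau_{\leq n}E_M^{\Spin}$ already exists by post-composition with the truncation endofunctors, the extra concern about the principal structure being natural in $\ell$ is unnecessary, though harmless.
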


Using the above corollary, we show the following proposition, which is a generalization of Sullivan's theorem (\cref{thm:Sullivan3.2}) to the case of certain diagrams of spaces.

\begin{theorem}\label{thm:injective-pi_0}
    Let $M$ be a closed, smooth $2$-connected $d$-manifold, where $d \geq 4$. Then, for every $k \in \mathbb{N}$, the composition map
    $$\Map_{\Psh(\Disk^{\Spin,\leq k}_{d})}(\iota^{\ast}_{k}E_M^{\Spin},\iota^{\ast}_{k}E_{M}^{\Spin}) \to \Map_{\Psh(\Disk^{\Spin,\leq k}_{d})}(\iota^{\ast}_{k}E_M^{\Spin},\iota^{\ast}_{k}\Phi^sE_{M}^{\Spin})$$
    induces an injection on $\pi_0$.
\end{theorem}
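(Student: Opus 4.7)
The plan is to mirror the proof of Sullivan's theorem (\cref{thm:Sullivan3.2}) in the $\infty$-category $\Psh(\Disk_d^{\Spin,\leq k})$, replacing the role of the finite CW complex $Y$ by $\iota_k^\ast E_M^{\Spin}$ and the role of the nilpotent, finite type target by $\iota_k^\ast E_M^{\Spin}$ itself. Let $\{B_n\}_{n\in\mathbb{N}}$ denote the pointwise principal refinement of the Postnikov tower of $\iota_k^\ast E_M^{\Spin}$, whose existence is guaranteed by \cref{Principal_Postnikov_Spin}. Applying $\Phi^s$ objectwise and using \cref{lemma:Principal-Postnikov-profinite}, the sequence $\{\iota_k^\ast\Phi^s B_n\}_{n\in\mathbb{N}}$ then yields a principal refinement of the Postnikov tower of $\iota_k^\ast \Phi^s E_M^{\Spin}$, with pointwise Eilenberg--MacLane layers obtained from those of $\{B_n\}$ by group finite completion. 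Since each value $E_M^{\Spin}(\sqcup_\ell\mathbb{R}^d)$ is a simply connected finite CW complex by \cref{lemma:pointwisefinite}, any two maps $f,g \colon \iota_k^\ast E_M^{\Spin}\to \iota_k^\ast E_M^{\Spin}$ with $f\not\simeq g$ already become distinct at some finite Postnikov stage $N$, which reduces the theorem to an inductive statement on $n$.

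The inductive step mirrors exactly the diagram-chase of \cref{thm:Sullivan3.2}. Assume $f,g\colon \iota_k^\ast E_M^{\Spin}\to B_{n+1}$ project to a common $\varphi\colon \iota_k^\ast E_M^{\Spin}\to B_n$ (else we are done by induction). Using the principal pullback square
\begin{center}
\begin{tikzcd}
B_{n+1} \arrow{r}\arrow{d} & \ast \arrow{d}
\\
B_n \arrow{r}[swap]{\kappa_n} & K_n
\end{tikzcd}
\end{center}
one extracts a difference class $\Delta(f,g)\in \pi_1\Map_{\Psh(\Disk_d^{\Spin,\leq k})}(\iota_k^\ast E_M^{\Spin},K_n)$, and the criterion $f\simeq g \iff [\Delta(f,g)]\in \operatorname{Im}(\Omega\kappa_n)$ holds as before, with the analogous statement after applying $\Phi^s$.

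The key claim, paralleling the one in \cref{thm:Sullivan3.2}, is that the commutative square
\begin{center}
\begin{tikzcd}
G \defeq \pi_1\Map(\iota_k^\ast E_M^{\Spin},B_n) \arrow{r}\arrow{d}[swap]{\Omega\kappa_n} & G' \defeq \pi_1\Map(\iota_k^\ast E_M^{\Spin},\Phi^s B_n) \arrow{d}{\Omega\Phi^s\kappa_n}
\\
H \defeq \pi_1\Map(\iota_k^\ast E_M^{\Spin},K_n) \arrow{r} & H' \defeq \pi_1\Map(\iota_k^\ast E_M^{\Spin},\Phi^s K_n)
\end{tikzcd}
\end{center}
is isomorphic in $\mathrm{Sq}(\Grp)$ to the square obtained by applying $\Phi^g$ to the left vertical map. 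To establish this, write each mapping space as a limit over the $\infty$-category of elements of $\iota_k^\ast E_M^{\Spin}$; by \cref{thm:finite-limit-finite-completion} and \cref{cor:finite-limits-finite-completion}, the coassembly map $\Phi^s\lim\Rightarrow\lim\Phi^s$ is then a $\pi_n$-isomorphism for $n\geq 1$ at basepoints coming from $G$, and \cref{thm:Sullivan3.1} together with \cref{remar:Sullivan3.1Morphisms} identifies the resulting vertical maps with $\eta_G$ and $\eta_H$ and the induced morphism with $\Phi^g(\Omega\kappa_n)$. With this identification in hand, the proof is finished exactly as in \cref{thm:Sullivan3.2}: since $\coker\Omega\kappa_n$ is a finitely generated abelian group (being a subquotient of the finitely generated $H$), it is residually finite, and right-exactness of $\Phi^g$ on finitely generated nilpotent groups (\cref{thm:Hilton-Roitberg}) then forces the map $\coker\Omega\kappa_n\to\coker\Omega\Phi^s\kappa_n$ to be injective, so $\Phi^sf\not\simeq \Phi^s g$.

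The main obstacle is the hypothesis underlying the coassembly step: namely that $\Map_{\Psh(\Disk_d^{\Spin,\leq k})}(\iota_k^\ast E_M^{\Spin},B_n)$ and its analogue for $K_n$ are componentwise nilpotent of finite type on the components hit by $\varphi$. The category of elements of $\iota_k^\ast E_M^{\Spin}$ is a priori not a finite diagram, but the presheaf is determined by the finitely many values $\{E_M^{\Spin}(\sqcup_\ell \mathbb{R}^d)\}_{\ell\leq k}$, each a simply connected finite CW complex by \cref{lemma:pointwisefinite}. The plan is to verify the finite-type claim by finite induction up the Postnikov tower of $B_n$, imitating the cellular Sullivan argument recalled at the end of §2.2.2 fibrewise over the values of $\iota_k^\ast E_M^{\Spin}$, and combining this with the exactness of $\Phi^g$ on finitely generated nilpotent groups. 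This centrality-of-extensions analysis is where the simple-connectivity of the fibres and the boundedness in $\ell\leq k$ are essential.
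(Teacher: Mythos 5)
Your proposal is a faithful translation of Sullivan's argument to $\Psh(\Disk_d^{\Spin,\leq k})$, and it matches the paper's proof in spirit for $k=1$. But for $k\geq 2$ the coassembly step has a genuine gap, and it is not the gap you flag. You propose to write $\Map_{\Psh(\Disk_d^{\Spin,\leq k})}(\iota_k^\ast E_M^{\Spin}, B_n)$ as a limit over the $\infty$-category of elements of $\iota_k^\ast E_M^{\Spin}$ and then appeal to \cref{cor:finite-limits-finite-completion}; however that corollary requires the indexing $\infty$-category to be \emph{finite}, and the $\infty$-category of elements of $\iota_k^\ast E_M^{\Spin}$ is not: the morphism spaces of $\Disk_d^{\Spin,\leq k}$ involve $\Spin(d)^\ell$-factors and symmetric group components, so the unstraightening has no finite presentation (already $B\Spin(d)$ is not a finite $\infty$-category). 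The fact that the presheaf has only finitely many values $E_M^{\Spin}(\sqcup_\ell\mathbb{R}^d)$, each a finite complex, does not make the indexing category finite. So the nilpotency/finite-type issue you isolate as the ``main obstacle'' is secondary; the coassembly theorem never gets off the ground.

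The paper avoids this precisely by not working directly with the presheaf mapping space. Two choices that are absent from your proposal do the work. First, the paper uses the \emph{restricted} Postnikov decomposition $\widetilde{\tau_{\leq n}}E_M^{\Spin}$ of Construction \ref{Construction:restricted-Postnikov}, which agrees with $E_M^{\Spin}$ on $\Disk_d^{\leq k-1}$ and only truncates the Moore--Postnikov tower of the top-disc map $E_M^{\Spin}(D_k)\to (\iota_{k-1})_\ast\iota_{k-1}^\ast E_M^{\Spin}(D_k)$; the pointwise Postnikov tower $\{B_n\}$ you use does not interact cleanly with the recollement. Second---and this is the crux---Theorem \ref{thm:Reedy-pullback} and Proposition \ref{Prop:Reedy-Lifts} translate each lifting problem in $\Psh(\Disk_d^{\Spin,\leq k})$ into a $G_k$-equivariant lifting problem for the pair $(E_M^{\Spin}(D_k),\partial E_M^{\Spin}(D_k))$, with $G_k=\mathfrak{S}_k\ltimes\Spin(d)^k$. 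By Lemma \ref{lemma:leftKan-is-boundary} this pair is a finite, free $G_k$-CW pair, so the equivariant mapping space \emph{is} a finite limit (over the finite cell structure of the free quotient, exactly as the $k=1$ case writes $\Map^\Spin(\Fr^\Spin(M),-)\simeq\lim_M(-)$). Only at that point do \cref{cor:finite-limits-finite-completion} and Lemma \ref{lemma:Profinite-after-postcomposition} apply. Without this reduction, your proof does not close.
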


We present a proof for $k=1$, and delay the case $k\geq2$ for the following section, as some more technology will have to go into a proof in that setting. We point out that the following proof is precisely an equivariant version of the proof of \cref{thm:Sullivan3.2}, and therefore we will be following the aforementioned proof quite closely.

\begin{proof}[Proof for $k=1$]
    Observe that $\iota_1 ^\ast E_M^{\Spin} \simeq \Fr^{\Spin}(M)$, the spin frame bundle of $M$. We furthermore observe that $\Psh(\Disk_d^{\Spin,\leq1}) \simeq \Psh(B\Spin(d))$, the category of spaces carrying an action by the Lie group $\Spin(d)$. The strategy to show injectivity is to work up the Postnikov ladder: given a $\Spin(d)$-equivariant map $\varphi \colon \Fr^\Spin (M) \to \tau_{\leq n}\Fr^\Spin (M)$, we consider two non-homotopic lifts to $\tau_{\leq n+1}\Fr^\Spin (M)$. The key claim is that if we compose with the finite completion on the target, the two lifts remain non-homotopic. As $\Fr^\Spin(M)$ is a finite $\Spin(d)$-CW complex, cohomological boundedness implies that for some $N \in \mathbb{N}$, there are unique lifts to $\tau_{\leq \ell}\Fr^\Spin (M)$, for $\ell \geq N$; thus, the claim follows by finite induction from the above key claim. Consider the situation of the following diagram in $\Psh(B\Spin(d))$
    \begin{center}
        \begin{tikzcd}
                &   \tau_{\leq n+1}\Fr^{\Spin}(M) \arrow{d}\arrow{r}   &    \ast \arrow{d}
            \\
            \Fr^{\Spin}(M) \arrow{r}[swap]{\varphi} \arrow[bend left, dashed]{ur}{f} \arrow[dashed]{ur}[swap]{g} &  \tau_{\leq n}\Fr^{\Spin}(M) \arrow{r}[swap]{\kappa_n}   &  K\defeq K(\pi_{n+1}\Fr^{\Spin}(M),n+2)
        \end{tikzcd}
    \end{center}
    where the square is a pullback square in $\Psh(B\Spin(d))$, and where the action of $\Spin(d)$ on the Eilenberg-Maclane space in the lower right corner is trivial. We fix a map $\varphi:\Fr^{\Spin}(M) \to \tau_{\leq n}\Fr^{\Spin}(M)$, and consider two lifts $f$ and $g$. The goal is to study how differen $f$ and $g$ can be. As the right hand side square is a pullback square, each of these two lifts is equivalent to a nullhomotopy of $\kappa_n \circ \varphi$. Using the Grothendieck construction $\Psh(B\Spin(d)) \simeq \mathcal{S}/B\Spin(d)$, and since the action of $\Spin(d)$ on the Eilenberg-Maclane space is trivial, we obtain  equivalences
    \begin{align*}
        \Map^{\Spin(d)}(\Fr^{\Spin}(M),K) &\simeq \Map_{\mathcal{S}/B\Spin(d)}\left(\Fr^{\Spin}(M)/\!/\Spin(d),K \times B\Spin(d)\right)\\
        &\simeq \Map(M,K)
    \end{align*}
    Consequently, it follows that a nulhomotopy of $\kappa_n \circ \varphi$ through $\Spin(d)$-equivariant maps, is equivalent to a nullhomotopy of the corresponding map between $M$ and $K$ in $\mathcal{S}$. For the lifts $f$, $g$, we get maps $N(f),N(g):CM \to K$ from the cone of $M$, which can be glued to a map $\Sigma M \to K$, resulting in a map $\Delta(f,g) \colon M \to \Omega K$, whose corresponding cohomology class $[\Delta(f,g)] \in H^{n+1}(M;\pi_{n+1}\Fr^{\Spin}(M))$ characterizes when $f$ and $g$ are homotopic through lifts:
    $$f \simeq g \text{ as lifts} \iff [\Delta(f,g)]=0 \in H^{n+1}(M;\pi_{n+1}\Fr^{\Spin}(M))$$
    The cohomology class $[\Delta(f,g)]$ can similarly characterize when $f$ and $g$ are homotopic:
    $$f \simeq g \iff \Delta(f,g)\in \mathrm{Im}\left(G \xrightarrow[]{\Omega\kappa_n} H^{n+1}(M;\pi_{n+1}\Fr^{\Spin}(M))\right)$$
    where $G \defeq \pi_0\Omega_{\varphi}\Map^{\Spin}(\Fr^{\Spin}(M),\tau_{\leq n}\Fr^{\Spin}(M))$. We now apply finite completion on the target, and observe that we obtain the same pullback square above, along with the lifting problem for the map $$\eta\circ\varphi:\Fr^{\Spin}(M) \xrightarrow[]{\varphi} \tau_{\leq n}\Fr^{\Spin}(M) \xrightarrow[]{\eta} \Phi^s\tau_{\leq n}\Fr^{\Spin}(M)$$
    Since $\pi_{n+1}\Fr^{\Spin}(M)$ is a finitely generated group, we know that 
    $$\Phi^sK(\pi_{n+1}\Fr^{\Spin}(M),n+2) \simeq K\left(\Phi^g\pi_{n+1}\Fr^{\Spin}(M),n+2\right)$$
    Given two lifts $f,g$ prior to finite completion, we obtain two corresponding lifts of $\eta\circ\varphi$ upon profinite completion, namely $\eta\circ f,\eta\circ g$ respectively. Composition with finite completion 
    $$H^{n+1}(M;\pi_{n+1}\Fr^{\Spin}(M))$$
    maps the obstruction $\Delta(f,g)$ to the corresponding obstruction $\Delta(\eta\circ f,\eta\circ g)$. We further denote $G'$ to be the group
    $$G'\defeq \pi_0 \Omega_{\eta\circ\varphi}\Map^{\Spin}(\Fr^{\Spin}(M),\Phi^s\tau_{\leq n}\Fr^{\Spin}(M))$$
    We now consider the diagram
    \begin{center}
        \begin{tikzcd}
            G  \arrow{r} \arrow{d}[swap]{\Omega\kappa_n}  &   G' \arrow{d}{\Omega\eta\circ\kappa_n}
            \\
            H^{n+1}(M;\pi_{n+1}\Fr^{\Spin}(M)) \arrow{r} \arrow[two heads]{d}  &  H^{n+1}(M;\Phi^g\pi_{n+1}\Fr^{\Spin}(M)) \arrow[two heads]{d}
            \\
            \coker{\Omega\kappa_n} \arrow{r}  &  \coker(\Omega\eta\circ\kappa_n)
        \end{tikzcd}
    \end{center}
    Similarly to the proof of \cref{thm:Sullivan3.2}, the argument boils down to showing the lower horizontal map is injective. As $\coker \Omega\kappa_n$ is a finitely generated abelian group - which we recall are residually finite - it would suffice to show that $\coker\Omega\Phi^s\kappa_n$ is isomorphic to the underlying group of the profinite completion of $\coker\Omega\kappa_n$, and that the map is given by profinite completion. This follows if we can show that the top square is isomorphic, in the category $\mathrm{Sq}(\Grp)$ of squares in groups, to the square
    \begin{center}
        \begin{tikzcd}
            G \arrow{r}{\eta} \arrow{d}[swap]{\Omega \kappa_n}  & \Phi^g G \arrow{d}{\Phi^g \Omega(\eta \circ \kappa_n)}
            \\
            H^{n+1}(M;\pi_{n+1}\Fr^\Spin (M)) \arrow{r}[swap]{\eta} & \Phi^g H^{n+1}(M;\pi_{n+1}\Fr^\Spin (M))
        \end{tikzcd}
    \end{center}
    obtained applying the natural transformation $\mathrm{id} \Rightarrow \Phi^g$ to the arrow
    $$G \xrightarrow[]{\Omega \kappa_n}H^{n+1}(M;\pi_{n+1}\Fr^\Spin(M))$$ we refer to the proof of \cref{thm:Sullivan3.2} for further details.
    \\
    
    We show how this works for the arrow $G \to G'$ first; the statement for the square works in the same way, as showcased in the proof of \cref{thm:Sullivan3.2}. By the Grothendieck equivalence $\Psh(B\Spin(d)) \simeq \mathcal{S}_{/B\Spin(d)}$, we note that $\Fr^\Spin (M)$ is mapped to $M \to B\Spin(d)$, classifying the spin tangent bundle. As $M \simeq \colim_M \ast$ in $\mathcal{S}$, this gives a description of $M \to B\Spin(d)$ as $\colim_M (\ast \to B\Spin(d))$, since colimits in overcategories are computed in the underlying category. Going back via the Grothendieck equivalence, we obtain
    $$\Fr^\Spin(M) \simeq \colim_M \Spin(d)$$
    where the diagram is given by sending $p \in M$ to $\Spin(T_p M)$. The above equivalence is the categorical analogue of the fact that $\Fr^\Spin (M)$ admits a free, finite $\Spin(d)$-CW-structure, obtained by pulling back a finite CW-structure on $M$ against the principal $\Spin(d)$-bundle $\Fr^\Spin(M) \to M$. We thus obtain the equivalences
    \begin{align*}
        \Map^\Spin (\Fr^\Spin(M),\Phi^s \tau_{\leq n}\Fr^\Spin (M)) & \simeq \Map^\Spin (\colim_M \Spin(d),\Phi^s \tau_{\leq n}\Fr^\Spin (M))
        \\
        &\simeq \lim_M \Map^\Spin(\Spin(d),\Phi^s \tau_{\leq n}\Fr^\Spin (M))
        \\
        &\simeq \lim_M \Map_\mathcal{S}(\ast,\Phi^s \tau_{\leq n}\Fr^\Spin (M))
        \\
        &\simeq \lim_M \Phi^s \tau_{\leq n}\Fr^\Spin (M) 
    \end{align*}
    We observe that $\tau_{\leq n}\Fr^\Spin(M)$ is a simply connected space of finite type, and that the above limit is a finite limit, as $M$ is a closed manifold; thus, \cref{cor:finite-limits-finite-completion} applies, and supplies an isomorphism on $\pi_1$
    $$\Phi^s\lim_M  \tau_{\leq n}\Fr^\Spin (M) \to \lim_M \Phi^s \tau_{\leq n}\Fr^\Spin (M) $$
    on choices of basepoints coming from the images of $\varphi \in \lim_M \tau_{\leq n} \Fr^\Spin (M)$ via the two canonical maps. \cref{thm:Sullivan3.1} now implies that $G' \cong \Phi^g G$, and that the map $G \to G' \xrightarrow[]{\cong} \Phi^g G$ agrees with the finite completion map. The remainder of the proof carries through as for the proof of \cref{thm:Sullivan3.2}.
\end{proof}

\subsection{Restricted Postnikov truncations}
We now aim for a proof of \cref{thm:injective-pi_0} for $k\geq 2$. In order to do this, we wish to construct a \emph{restricted Postnikov decomposition} for $X \in \Psh(\Disk_d^{\leq k})$, namely an inverse system $\{\widetilde{\tau_{\leq n}} X\}_{n \in \mathbb{N}}$, along with morphisms $X \to \widetilde{\tau_{\leq n}}X$ satisfying the following
\begin{enumerate}[(i)]
    \item The induced map
    $$X \to \lim_{n}\widetilde{\tau_{\leq n}}X$$
    is an equivalence
    \item For $\ell < k$, and for all $n \in \mathbb{N}$, the map
    $$X(\sqcup_{\ell}\mathbb{R}^d) \xrightarrow[]{\simeq}\widetilde{\tau_{\leq n}}X(\sqcup_{\ell}\mathbb{R}^d)$$
    is an equivalence
    \item The tower evaluated on $\sqcup_{k}\mathbb{R}^d$ is given by a sequence of fibrations with fibre Eilenberg-MacLane spaces.
\end{enumerate}

\begin{construction}\label{Construction:restricted-Postnikov}
    Any $X \in \Psh(\Disk_d^{\leq k})$ comes equipped with a canonical morphism $X \to (\iota_{k-1})_\ast\iota_{k-1}^{\ast}X$. From $X$, we define a functor
$$X/T_{k-1}X \colon (\Disk_d^{\leq k})^{\op} \to \mathrm{Ar}(\mathcal{S})$$
by $D \mapsto \left(X(D) \to (\iota_{k-1})_{\ast}\iota_{k-1}^{\ast}X(D)\right)$. For each $n \in \{-1,0\}\cup\mathbb{N}$, we define $\widetilde{\tau_{\leq n}}X$ to be the composition
$$\widetilde{\tau_{\leq n}}X \colon (\Disk_d^{\leq k})^{\op} \xrightarrow[]{X/T_{k-1} X}\mathrm{Ar}(\mathcal{S}) \xrightarrow[]{\mathrm{MP}_n}\mathcal{S}$$
where $\mathrm{MP}_n$ is the $n$-th stage Moore-Postnikov functor of §\ref{Moore-Postnikov-functor}.
\end{construction}
The following is immediate from the above construction.
\begin{lemma}
    The inverse system $\widetilde{\tau_{\leq n}}X$ is a restricted Postnikov decomposition of the presheaf $X \in \Psh(\Disk_d^{\leq k})$, i.e. satisfies (i), (ii) and (iii) from above.
\end{lemma}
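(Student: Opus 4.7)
The plan is to verify the three properties pointwise. Since limits in $\Psh(\Disk_d^{\leq k})$ are computed object-by-object and the functor $\widetilde{\tau_{\leq n}}$ is defined pointwise by applying $\mathrm{MP}_n$, every assertion reduces to a statement in $\mathcal{S}$ about the classical Moore-Postnikov factorization of the arrow $f_D \colon X(D) \to (\iota_{k-1})_{\ast}\iota_{k-1}^{\ast}X(D)$, for each $D = \sqcup_\ell \mathbb{R}^d$ in $\Disk_d^{\leq k}$.

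First I would verify (ii). The key input is that $\iota_{k-1}$ is a fully faithful inclusion of $\infty$-categories, so the unit $\mathrm{id}\Rightarrow (\iota_{k-1})_{\ast}\iota_{k-1}^{\ast}$ evaluates to an equivalence on every object of $\Disk_d^{\leq k-1}$, i.e.\ at every $D = \sqcup_\ell\mathbb{R}^d$ with $\ell<k$. At such a $D$ the arrow $f_D$ is an equivalence, and hence its Moore-Postnikov factorization collapses: every stage $\widetilde{\tau_{\leq n}}X(D)$ is canonically equivalent to $X(D)$.

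For (iii), I would evaluate at $D=\sqcup_k \mathbb{R}^d$, where by construction $\{\widetilde{\tau_{\leq n}}X(D)\}_n$ is literally the Moore-Postnikov tower of $f_D$. A standard computation using the homotopy fibre sequence $F_D \to X(D) \to (\iota_{k-1})_{\ast}\iota_{k-1}^{\ast}X(D)$ shows that each successive map $\widetilde{\tau_{\leq n+1}}X(D)\to\widetilde{\tau_{\leq n}}X(D)$ is a fibration with fibre the Eilenberg-MacLane space $K(\pi_{n+1}(F_D),n+1)$, as soon as $F_D$ is connected and carries a suitably nice $\pi_1$-action. In the main case of interest, $X=\iota_k^{\ast}E_M^{\Spin}$ for a $2$-connected spin manifold $M$, the required niceness of $F_D$ is supplied by \cref{lemma:pointwisefinite}, yielding in fact the principal refinement of \cref{Principal_Postnikov_Spin}.

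Finally, (i) follows pointwise: at $D=\sqcup_\ell\mathbb{R}^d$ with $\ell<k$ the maps $X(D)\to \widetilde{\tau_{\leq n}}X(D)$ are themselves equivalences by (ii), while at $D=\sqcup_k\mathbb{R}^d$ the $n$-connectivity of $X(D)\to \widetilde{\tau_{\leq n}}X(D)$ combined with (iii) gives canonical isomorphisms $\pi_j(\widetilde{\tau_{\leq n}}X(D))\cong \pi_j(X(D))$ for $n>j$, from which a standard Mittag-Leffler argument identifies $X(D)\xrightarrow{\simeq}\lim_n \widetilde{\tau_{\leq n}}X(D)$. The main obstacle throughout is the niceness hypothesis on $F_D$ needed in (iii), which is not automatic for a general presheaf but is guaranteed in the applications of this section by the explicit spin-embedding analysis already carried out.
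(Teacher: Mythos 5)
Your pointwise verification is essentially the argument the paper has in mind (the paper itself says only that the lemma is ``immediate from the above construction''), and it is correct in outline: limits in a presheaf category are computed objectwise, $\widetilde{\tau_{\leq n}}$ is defined objectwise via $\mathrm{MP}_n$, and your use of full faithfulness of $\iota_{k-1}$ to make $f_D$ an equivalence when $D\in\Disk_d^{\leq k-1}$ is exactly the right observation for~(ii); (i) and (iii) then reduce to standard facts about Moore--Postnikov towers in $\mathcal{S}$.

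Two small points. First, an indexing wrinkle in~(iii): with the conventions the paper fixes for $\mathrm{MP}_n$ (``$g_n$ induces an isomorphism on $\pi_k$ for $k<n$ and a surjection on $\pi_n$; $h_n$ induces an isomorphism on $\pi_k$ for $k>n$ and an injection on $\pi_n$''), one computes that the fibre of $h_n$ is $\tau_{\leq n-1}F_D$, and hence the fibre of $\widetilde{\tau_{\leq n+1}}X(D_k)\to \widetilde{\tau_{\leq n}}X(D_k)$ is $K(\pi_{n}F_D,n)$ rather than $K(\pi_{n+1}F_D,n+1)$; the paper's later obstruction-theoretic diagrams do use your indexing, so you are matching the paper's apparent intent, but strictly speaking this is an off-by-one relative to the stated $\mathrm{MP}_n$ conventions. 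In any case this is immaterial for the lemma as stated, since all that~(iii) requires is that the fibres be Eilenberg--MacLane spaces. Second, your caveat about $F_D$ needing to be ``connected and carry a suitably nice $\pi_1$-action'' is stronger than necessary for~(iii): the $\pi_1$-action is relevant only for \emph{principality} of the tower, which is what \cref{Principal_Postnikov_Spin} records separately and which is not part of the present lemma. Connectivity of $F_D$ is the only genuinely delicate point for a fully general $X$, and even that is usually handled by reading ``Eilenberg--MacLane fibre'' componentwise.
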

\begin{remark}

The above construction can be seen as a certain localization onto a full-subcategory of $\Psh(\Disk_{d}^{\leq k})$: for each $n\in\{-1,0\}\cup\mathbb{N}$, we say $X$ is \emph{$n$-truncated over $\Disk_d^{\leq k-1}$} if the map 
$X(\sqcup_{k}\mathbb{R}^d) \to (\iota_{k-1})_{\ast}\iota_{k-1}^{\ast}X(\sqcup_k \mathbb{R}^d)$ is $n$-truncated. $\mathcal{C}_n$ is then defined to be the full subcategory of $\Psh(\Disk_{d}^{\leq k})$ of those objects $X \in \Psh(\Disk_d^{\leq k})$ which are $n$-truncated over $\Disk_{d}^{\leq k}$. Observe that for any $X \in \Psh(\Disk_{d}^{\leq k})$, $\widetilde{\tau_{\leq n}}X \in \mathcal{C}_n$. We thus define a direct system of full subcategories $\mathcal{C}_n$ of $\Psh(\Disk_d^{\leq k})$ such that $\mathcal{C}_{-1} \simeq \Psh(\Disk_{d}^{\leq k-1})$, $\colim_{n} \mathcal{C}_n \simeq \Psh(\Disk_d^{\leq k})$, and the map $\mathcal{C}_{-1} \to \colim_n \mathcal{C}_n$ agrees with the inclusion functor $\Psh(\Disk_d^{\leq 1}) \hookrightarrow \Psh(\Disk_d^{\leq k})$. For each $n$, the inclusion $\mathcal{c}_n \colon \mathcal{C}_n \hookrightarrow \Psh(\Disk_d^{\leq k})$ admits a left adjoint $(\mathcal{c}_n)_{\ast} \colon \Psh(\Disk_d^{\leq k}) \to \mathcal{C}_{n}$, and it can be shown that $(\mathcal{c}_n)_\ast X \simeq \widetilde{\tau}_{\leq n}X$.
\end{remark}

\bigskip
Recollement/Reedy extensions, following \cite[Theorem 4.9 (i)]{Manuel-Sander} and \cite{Jan-Maxime}, allow us to show the following key step. Let $\Disk_d^{=k,\Spin}$ be the category consisting of a single object $\sqcup_k \mathbb{R}^d$, where the morphism space is $\Emb^{\Spin}_{\pi_0\text{-inj}}(\sqcup_k \mathbb{R}^d,\sqcup_k \mathbb{R}^d)$ of those embeddings that induce a bijection on $\pi_0$. We observe that $\Disk_{d}^{=k,\Spin}$ is equivalent to the category $BG_k$, where $G_k \defeq \mathfrak{S}_k \ltimes \Spin(d)^k$, the action given by permutation of factors. There is an inclusion functor $\mathcal{j}:\Disk_{d}^{=k,\Spin} \to \Disk_d^{\leq k,\Spin}$, which induces a restriction functor $\mathcal{j}^\ast \colon \Psh(\Disk_d^{\leq k,\Spin}) \to \Psh(\Disk_{d}^{=k,\Spin})$.
\begin{theorem}\label{thm:Reedy-pullback}
    We have the following pullback diagram of $\infty$-categories
    \begin{center}
        \begin{tikzcd}
            \Psh(\Disk_{d}^{\leq k,\Spin}) \arrow{r}{\circled{1}} \arrow{d}[swap]{\iota_{k-1}^\ast} & \Fun(\Delta^2,\Psh(\Disk_d^{=k,\Spin})) \arrow{d}{\circled{2}}
            \\
            \Psh(\Disk_{d}^{\leq k-1,\Spin}) \arrow{r}[swap]{\circled{3}} & \Fun(\Delta^1,\Psh(\Disk_d^{=k,\Spin}))
        \end{tikzcd}
    \end{center}
    where the three other functors are the following
    \begin{itemize}
        \item $\circled{1} \colon \Psh(\Disk_{d}^{\leq k,\Spin}) \to \Fun(\Delta^2,\Psh(\Disk_{d}^{=k,\Spin}))$ maps $X \in \Psh(\Disk_d^{\leq k,\Spin})$ to the triangle in $\Psh(\Disk_{d}^{=k,\Spin})$
        \begin{center}
            \begin{tikzcd}
                \mathcal{j}^\ast (\iota_{k-1})_{!}\iota_{k-1}^\ast X \arrow{r} \arrow{d} & \mathcal{j}^\ast X \arrow{ld}
                \\
                \mathcal{j}^\ast (\iota_{k-1})_\ast \iota_{k-1}^\ast X
            \end{tikzcd}
        \end{center}

        \item The functor $\circled{2}\colon\Fun(\Delta^2,\Psh(\Disk_d^{=k,\Spin})) \to \Fun(\Delta^1,\Psh(\Disk_d^{=k,\Spin}))$ is given by precomposition with the map $\Delta^1 \to \Delta^2$, $0 \mapsto 0$ and $1 \mapsto 2$.
        \item The functor $\circled{3}\colon\Psh(\Disk_{d}^{\leq k-1,\Spin}) \to \Fun(\Delta^1,\Psh(\Disk_{d}^{=k,\Spin}))$ maps $X$ to the arrow
        $$j^{\ast}(\iota_{k-1})_! X \to j^{\ast}(\iota_{k-1})_\ast X$$
        
    \end{itemize}
\end{theorem}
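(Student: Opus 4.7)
The plan is to identify this square as an instance of the general recollement for presheaf $\infty$-categories on Reedy-type $\infty$-categories, as developed in \cite{Jan-Maxime} and already applied in \cite[Theorem 4.9(i)]{Manuel-Sander}. I would begin by verifying commutativity of the square directly. For any $X \in \Psh(\Disk_d^{\leq k,\Spin})$, the counit and unit of the adjoint triple $(\iota_{k-1})_! \dashv \iota_{k-1}^\ast \dashv (\iota_{k-1})_\ast$ assemble into a canonical factorization
$$(\iota_{k-1})_! \iota_{k-1}^\ast X \longrightarrow X \longrightarrow (\iota_{k-1})_\ast \iota_{k-1}^\ast X$$
in $\Psh(\Disk_d^{\leq k,\Spin})$, whose outer arrow depends only on $\iota_{k-1}^\ast X$. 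Applying $j^\ast$ yields exactly the triangle in $\Fun(\Delta^2,\Psh(\Disk_d^{=k,\Spin}))$ which is $\circled{1}(X)$, and extracting its outer arrow via $\circled{2}$ recovers $\circled{3}(\iota_{k-1}^\ast X)$.

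Next I would construct a candidate inverse to the comparison functor. Given a datum $(Y, Z, \alpha, \beta)$ in the pullback --- consisting of $Y \in \Psh(\Disk_d^{\leq k-1,\Spin})$, $Z \in \Psh(\Disk_d^{=k,\Spin})$, and a factorization $j^\ast(\iota_{k-1})_! Y \xrightarrow{\alpha} Z \xrightarrow{\beta} j^\ast(\iota_{k-1})_\ast Y$ exhibiting $\circled{3}(Y)$ --- I would build $X$ by declaring $\iota_{k-1}^\ast X := Y$ and $j^\ast X := Z$ (with its $G_k$-action). The contravariant functoriality of $X$ along the remaining embeddings, i.e.\ those running between $\sqcup_k \mathbb{R}^d$ and smaller discs in either direction, is read off from $\alpha$ (which records the maps $Y(\sqcup_\ell) \to Z$ coming from embeddings $\sqcup_k \mathbb{R}^d \to \sqcup_\ell \mathbb{R}^d$) and from $\beta$ (which records the maps $Z \to Y(\sqcup_\ell)$ coming from embeddings $\sqcup_\ell \mathbb{R}^d \to \sqcup_k \mathbb{R}^d$), and the identity $\beta\alpha = \circled{3}(Y)$ provides exactly the coherence required for composition against morphisms in $\Disk_d^{\leq k-1,\Spin}$.

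The hardest part is verifying that these two operations assemble into mutually inverse equivalences of $\infty$-categories, with all higher homotopies tracked coherently. For this I would invoke the recollement framework of \cite{Jan-Maxime}. The essential structural input is that $\Disk_d^{\leq k,\Spin}$ is obtained from $\Disk_d^{\leq k-1,\Spin}$ by adjoining the single object $\sqcup_k \mathbb{R}^d$ with its automorphism groupoid $BG_k$, glued in along the bimodule of embeddings involving $\sqcup_k \mathbb{R}^d$. The key point is that every non-$\pi_0$-bijective self-embedding of $\sqcup_k \mathbb{R}^d$ has its image contained in $\sqcup_\ell \mathbb{R}^d \subset \sqcup_k \mathbb{R}^d$ for some $\ell < k$, and hence factors through $\Disk_d^{\leq k-1,\Spin}$; this is precisely why $\Disk_d^{=k,\Spin} \simeq BG_k$ is the correct piece to glue in at the top. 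The general recollement theorem of \cite{Jan-Maxime} then produces the claimed pullback square from this decomposition.
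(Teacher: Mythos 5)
Your proposal follows the paper's own (very terse) approach: the paper proves this theorem solely by appeal to the recollement/Reedy-extension framework of \cite{Jan-Maxime} together with \cite[Theorem 4.9 (i)]{Manuel-Sander}, and your outline correctly fills in the expected checks — commutativity via the observation that the outer arrow of the triangle is the transpose of $\mathrm{id}_{\iota_{k-1}^\ast X}$ and hence depends only on $\iota_{k-1}^\ast X$, the inverse reading off the extra functoriality from $\alpha$ and $\beta$, and the factorization of non-$\pi_0$-bijective self-embeddings of $\sqcup_k \mathbb{R}^d$ through lower degree as the structural hypothesis that makes the framework applicable. The elaboration is consistent with what the cited framework requires.
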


\begin{remark}
    We emphasize that the above theorem works in general for all tangential structures $\Theta \in \Psh(BO(d))$.
\end{remark}

We now combine the previous pullback square with \cref{Construction:restricted-Postnikov}. For $X \in \Psh(\Disk_d^{\leq k,\Spin})$, let $\{\widetilde{\tau_{\leq n}}X\}$ be its restricted Postnikov decomposition. In our aim of studying self-maps of $X$ in $\Psh(\Disk_{d}^{\leq k,\Spin})$, we observe that
\begin{align*}
    \Map_{\Psh(\Disk_d^{\leq k,\Spin})}(X,X) &\simeq \Map_{\Psh(\Disk_d^{\leq k,\Spin})}(X,\lim_{n}\widetilde{\tau_{\leq n}}X)
    \\
    &\simeq \lim_{n}\Map_{\Psh(\Disk_d^{\leq k,\Spin})}(X,\widetilde{\tau_{\leq n}}X)
\end{align*}
It would be ideal for our purposes if the consecutive stages of the above limit are governed, as for usual mapping spaces, by some cohomology group; the following result is a step in this direction, and follows readily from \cref{thm:Reedy-pullback}.

\begin{proposition}\label{Prop:Reedy-Lifts}
    For any $X \in \Psh(\Disk_{d}^{\leq k,\Spin})$, evaluation at $D_k \defeq \sqcup_k \mathbb{R}^d$ yields an equivalence
    $$\mathrm{Lift}_{\Psh(\Disk_{d}^{\leq k,\Spin})}\left( \begin{tikzcd}
        & \widetilde{\tau_{\leq n+1}}X \arrow{d}
        \\
        X \arrow[dashed]{ur} \arrow{r} & \widetilde{\tau_{\leq n}}X
    \end{tikzcd} \right) \xrightarrow[]{\simeq} \mathrm{Lift}_{G_k}\left( \begin{tikzcd}
        \mathcal{j}^\ast (\iota_1)_!\iota_1^\ast X(D_k) \arrow{d} \arrow{r} & \mathcal{j}^\ast\widetilde{\tau_{\leq n+1}}X(D_k) \arrow{d}
        \\
        \mathcal{j}^\ast X(D_k) \arrow[dashed]{ur}\arrow{r} & \mathcal{j}^\ast\widetilde{\tau_{\leq n}}X(D_k)
    \end{tikzcd} \right)$$
    where $G_k \defeq \mathfrak{S}_k\ltimes \Spin(d)^k \simeq \Emb^{\Spin}_{\pi_0\text{-inj}}( \sqcup_k \mathbb{R}^d , \sqcup_k \mathbb{R}^d)$, and where the target space denotes the space of $G_k$-equivariant lifts.
\end{proposition}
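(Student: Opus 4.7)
The plan is to deduce the equivalence from \cref{thm:Reedy-pullback} by exploiting the special features of the restricted Postnikov decomposition. Since $\Psh(\Disk_d^{\leq k,\Spin})$ is a pullback of $\infty$-categories, lift spaces in it are pullbacks of the corresponding lift spaces in the three other corners. I would therefore compute the lift space as the limit
\[
\mathrm{Lift}_{\Psh(\Disk_d^{\leq k-1,\Spin})}(\cdots) \times_{\mathrm{Lift}_{\Fun(\Delta^1,\Psh(\Disk_d^{=k,\Spin}))}(\cdots)} \mathrm{Lift}_{\Fun(\Delta^2,\Psh(\Disk_d^{=k,\Spin}))}(\cdots).
\]

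First, property (ii) of the restricted Postnikov decomposition from \cref{Construction:restricted-Postnikov} gives canonical equivalences $\iota_{k-1}^\ast X \simeq \iota_{k-1}^\ast \widetilde{\tau_{\leq n}}X \simeq \iota_{k-1}^\ast \widetilde{\tau_{\leq n+1}}X$, under which both structure maps $X \to \widetilde{\tau_{\leq n+1}}X \to \widetilde{\tau_{\leq n}}X$ become the identity after applying $\iota_{k-1}^\ast$. Hence the lift space in $\Psh(\Disk_d^{\leq k-1,\Spin})$ is that of the identity against itself, which is contractible with canonical basepoint. Applying the functor $\circled{3}$, the arrow $\iota_{k-1}^\ast X \to \iota_{k-1}^\ast X$ is sent to the arrow $\mathcal{j}^\ast(\iota_{k-1})_!\iota_{k-1}^\ast X \to \mathcal{j}^\ast(\iota_{k-1})_\ast\iota_{k-1}^\ast X$, which is the outer edge of the triangle $\circled{1}(Y)$ for each of $Y \in \{X, \widetilde{\tau_{\leq n+1}}X, \widetilde{\tau_{\leq n}}X\}$, so the $\Fun(\Delta^1,\Psh(\Disk_d^{=k,\Spin}))$-factor is also rigidly fixed.

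Under the identification $\Psh(\Disk_d^{=k,\Spin}) \simeq \Psh(BG_k)$ given by evaluation at $D_k$ (recording the $G_k$-action), the lift space therefore reduces to the space of lifts in $\Fun(\Delta^2,\Psh(BG_k))$ of a map of triangles whose initial vertex, terminal vertex, and long edge are the same across all three triangles. A standard manipulation with the Reedy model structure on $\Fun(\Delta^2,\Psh(BG_k))$ — or, equivalently, with the fiber sequence $\Fun(\Delta^2,\Psh(BG_k)) \to \Fun(\partial\Delta^2,\Psh(BG_k))$ — identifies such a lift space with the space of lifts of the middle vertex in $\Psh(BG_k)$ subject to the compatibility that composition with the left edge of the source triangle recovers the top edge. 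Unpacking this via the Grothendieck equivalence $\Psh(BG_k) \simeq G_k\text{-}\mathcal{S}$ produces exactly the $G_k$-equivariant lift diagram on the right-hand side of the claim.

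The main delicate point I expect is the last reduction: carefully identifying the space of lifts of triangles in $\Fun(\Delta^2,\Psh(BG_k))$ with fixed outer data as the claimed constrained $G_k$-equivariant lift space. This is a purely $\infty$-categorical manipulation with mapping spaces in arrow/diagram categories, and will amount to a bookkeeping exercise using the pullback square
\[
\Fun(\Delta^2,\mathcal{C}) \to \Fun(\Lambda_0^2,\mathcal{C}) \times_{\mathcal{C}} \Fun(\Delta^1,\mathcal{C})
\]
(for $\mathcal{C} = \Psh(BG_k)$) applied to the relevant triangles, but requires care to ensure naturality of the identification across the three objects $X$, $\widetilde{\tau_{\leq n+1}}X$, $\widetilde{\tau_{\leq n}}X$.
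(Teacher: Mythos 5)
Your proposal is correct and follows exactly the route the paper intends: the paper gives no explicit proof, saying only that the proposition "follows readily from" \cref{thm:Reedy-pullback}, and your argument spells out what "readily" means — lift spaces in a pullback of $\infty$-categories decompose as pullbacks, property (ii) of \cref{Construction:restricted-Postnikov} trivialises the contributions from the $\Psh(\Disk_d^{\leq k-1,\Spin})$ and $\Fun(\Delta^1,\Psh(\Disk_d^{=k,\Spin}))$ corners, and the residual $\Fun(\Delta^2,\Psh(BG_k))$ lift space unwinds to the claimed constrained $G_k$-equivariant lift space. You also correctly read $\iota_1$ in the statement as a typo for $\iota_{k-1}$, consistent with \cref{thm:Reedy-pullback} and \cref{lemma:leftKan-is-boundary}. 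The one place that deserves an explicit sentence is the passage from the $\Fun(\Delta^2,\Psh(BG_k))$ lift space to the two-constraint square on the right: the compatibility along the edge $1\to 2$ of the triangle (that is, with $\mathcal{j}^\ast(\iota_{k-1})_\ast\iota_{k-1}^\ast X(D_k)$) is not listed among the constraints of the target lift diagram, but it is forced, since $\widetilde{\tau_{\leq n+1}}X \to (\iota_{k-1})_\ast\iota_{k-1}^\ast X$ factors through $\widetilde{\tau_{\leq n}}X$ and the lift is already required to commute over $\widetilde{\tau_{\leq n}}X$; once you record that observation, the bookkeeping you flag as delicate closes.
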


We now specialize to the case of $X=E_M^{\Spin}$ for a 2-connected closed manifold. We first observe the following. For a closed manifold, we let $\overline{\Conf}_k (M)$ denote the Fulton-MacPherson compactification of $\Conf_k (M)$; this is a compact manifold with corners containing $\Conf_k (M)$ as its interior. Let $\partial\overline{\Conf}_k(M)$ denote its boundary.

\begin{lemma}\label{lemma:leftKan-is-boundary}
    For $M$ a 2-connected closed manifold, and for $D_k \defeq \sqcup_k \mathbb{R}^d$, $\mathcal{j}^{\ast}(\iota_{k-1})_!\iota_{k-1}^\ast E_M^{\Spin}(D_k)$ sits in the following pullback diagram
    \begin{center}
        \begin{tikzcd}
            \mathcal{j}^\ast (\iota_{k-1})_! \iota_{k-1}^\ast E_M^{\Spin}(D_k) \arrow{d} \arrow{r} & \Emb^{\Spin}(D_k,M) \arrow{d}{\mathrm{ev_0}}
            \\
            \partial\overline{\Conf}_k(M) \arrow{r}[swap]{\partial} & \Conf_k (M)
        \end{tikzcd}
    \end{center}
    where the bottom arrow is defined after choice of a boundary collar on $\overline{\Conf}_k (M) $.
\end{lemma}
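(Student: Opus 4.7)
The plan is to unfold the pointwise formula for the left Kan extension and identify the resulting homotopy colimit with the claimed pullback via the geometry of the Fulton--MacPherson compactification.

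By the pointwise formula for left Kan extensions,
$$\mathcal{j}^\ast (\iota_{k-1})_! \iota_{k-1}^\ast E_M^{\Spin}(D_k) \simeq \colim \Emb^{\Spin}(D', M),$$
where the homotopy colimit is indexed by the opposite of the slice category with objects $(D', \phi)$ for $D' = \sqcup_\ell \mathbb{R}^d \in \Disk_d^{\leq k-1, \Spin}$ and $\phi \colon D_k \hookrightarrow D'$ a spin embedding. Since $E_M^{\Spin}$ is contravariant, the counit of the $(\iota_{k-1})_! \dashv \iota_{k-1}^\ast$ adjunction supplies the top horizontal comparison map by sending a class $[(\phi, f)]$ to $f \circ \phi \in \Emb^{\Spin}(D_k, M)$.

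An object $\phi \colon D_k \hookrightarrow D'$ with $\ell \leq k-1$ components in $D'$ determines a non-trivial partition $P$ of $\{1, \ldots, k\}$, obtained by grouping source components according to which target component they embed into. Rescaling $\phi$ within each component of $D'$ to shrink toward its origin pushes the configuration $(f \circ \phi)(0) \in \Conf_k(M)$ arbitrarily close to the $P$-indexed stratum of $\overline{\Conf}_k(M)$. Classical Axelrod--Singer/Fulton--MacPherson theory identifies the codimension-$r$ boundary stratum of $\overline{\Conf}_k(M)$ indexed by a partition with $k-r$ blocks as a bundle over $\Conf_{k-r}(M)$ whose fibers parametrize infinitesimal relative configurations in each tangent space --- precisely the data encoded by the automorphisms (dilations and translations) of the components of $D'$ in the slice category.

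One obtains a comparison map from the colimit to the pullback by sending $[(\phi, f)]$ to $(f \circ \phi, \lambda) \in \Emb^{\Spin}(D_k, M) \times_{\Conf_k(M)} \partial\overline{\Conf}_k(M)$, where $\lambda$ is the limit in $\overline{\Conf}_k(M)$ obtained by rescaling the components of $D'$ to zero, projected onto $\partial\overline{\Conf}_k(M)$ via the chosen collar. The hardest step is verifying that this comparison map is a homotopy equivalence. Both sides fiber over $\partial\overline{\Conf}_k(M)$ with common homotopy fiber $\Spin(d)^k$ --- a product of spin frames coming from the frame-preserving automorphisms of the components of $D'$ on the one hand, and from $\Emb^{\Spin}(D_k, M) \to \Conf_k(M)$ on the other; the main obstacle is matching the two $\Spin(d)^k$-bundle structures stratum by stratum, which one carries out by induction on the number of blocks of $P$ using the local product structure of the corner stratification of $\overline{\Conf}_k(M)$.
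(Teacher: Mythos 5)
The paper's proof is a one-line citation: it simply invokes Proposition 5.12 of Krannich--Kupers (\cite{Manuel-Sander}), which establishes exactly this identification of the latching object $\mathcal{j}^\ast (\iota_{k-1})_! \iota_{k-1}^\ast E_M(D_k)$ with the pullback over the boundary of the Fulton--MacPherson compactification. You have instead attempted a self-contained geometric proof, which is a genuinely different route.

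Your geometric picture is the right one: the indexing slice category for the pointwise left Kan extension formula is correctly identified (objects are spin embeddings $\phi \colon D_k \hookrightarrow D'$ with $D'$ having $\leq k-1$ components), the comparison map to $\Emb^{\Spin}(D_k,M)$ via the counit is correct, and the intuition that the $\pi_0$-partition of $\phi$ picks out a stratum of $\partial\overline{\Conf}_k(M)$ matches the Axelrod--Singer/Fulton--MacPherson stratification. This is the content of the cited proposition.

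However, there is a real gap where you say ``the hardest step is verifying that this comparison map is a homotopy equivalence'' and then only gesture at a stratum-by-stratum induction. Two points need genuine work that the sketch does not supply. First, you need to show the comparison from the colimit to the pullback is well-defined and continuous: the assignment $[(\phi,f)] \mapsto \lambda \in \partial\overline{\Conf}_k(M)$ ``by rescaling'' is not literally a map on the colimit, because different representatives of the same class in the colimit give different literal rescalings, and one must check coherence along the maps of the slice category (both post-composition in $D'$ and pre-composition by automorphisms of $D_k$). Second, and more seriously, the claimed common fibre $\Spin(d)^k$ over a boundary point is not correct for the left-hand side without further argument: the slice category has a nontrivial homotopy type itself (it is not just a disjoint union of groups), and one must identify the colimit over it with the total space of a $\Spin(d)^k$-bundle via a cofinality or stratification argument that you do not carry out. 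The Krannich--Kupers proof handles precisely this by decomposing the slice category according to partitions and using the local collar structure of the manifold-with-corners $\overline{\Conf}_k(M)$; as written, your ``induction on the number of blocks of $P$'' names the right organizing principle but does not supply the argument. So the proposal identifies the correct statement and geometry but does not close the proof; either carry out the stratification/cofinality argument in detail, or cite the existing result as the paper does.
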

\begin{proof}
    This follows from \cite[Proposition 5.12]{Manuel-Sander}.
\end{proof}

It follows in particular that the pair $(\mathcal{j}^\ast E_M^{\Spin}(D_k),\mathcal{j}^\ast (\iota_{k-1})_! \iota_{k-1}^\ast E_M^{\Spin}(D_k))$, for $D_k=\sqcup_k \mathbb{R}^d$, is equivalent to a finite, free $G_k$-CW pair, which, for simplicity, we henceforth denote by
$$(E_M^{\Spin}(D_k),\partial E_M^{\Spin}(D_k))$$
\\

We now start with a warm-up to the proof of \cref{thm:injective-pi_0}. The main advantage of \cref{Prop:Reedy-Lifts} is that it enables us to study a space of lifts of maps in a complicated category in terms of obstruction theory of equivariant spaces. We now carry out this analysis for $X = E_M^{\Spin}$. For ease of notation, we let $D_k$ denote the disjoint union of $k$-many discs, and we again let $G_k \defeq \mathfrak{S}_k\ltimes\Spin(d)^k$. We first observe that $\pi_1 E_M^{\Spin}(D_k) \cong 1$, so that the fibration $\widetilde{\tau_{\leq n+1}}E_M^{\Spin}(D_k) \to \widetilde{\tau_{\leq n}}E_M^{\Spin}(D_k)$ may be delooped to a pullback diagram in $\Psh(BG_k)$
\begin{center}
    \begin{tikzcd}
        \widetilde{\tau_{\leq n+1}}E_M^{\Spin}(D_k) \arrow{r} \arrow{d}  & \ast \arrow{d}
        \\
        \widetilde{\tau_{\leq n}}E_M^{\Spin}(D_k) \arrow{r}[swap]{\kappa_n}& K(\pi_{n+1}F,n+2)
    \end{tikzcd}
\end{center}
where $F$ is the fibre of the map $E_M^{\Spin}(D_k) \to (\iota_{k-1})_\ast \iota_{k-1}^\ast E_M^{\Spin}(D_k)$, on which $G_k$ acts. Observe now that $\pi_{n+1}F$ is a finitely generated group, which admits a potentially non-trivial action of $G_k$, as $G_k$ is not connected. We now fix a pair of maps $(\varphi,\partial\varphi):(E_M^{\Spin}(D_k), \partial E_M^{\Spin}(D_k)) \to (\widetilde{\tau_{\leq n}}E_M^{\Spin}(D_k), \widetilde{\tau_{\leq n+1}}E_M^{\Spin}(D_k))$, where $\partial E_M^{\Spin}(D_k)$ is to be understood as in \cref{lemma:leftKan-is-boundary}. We are thus looking at the following lifting problem in $\Psh(BG_k)$
\begin{center}
    \begin{tikzcd}
        \partial E_M^{\Spin}(D_k) \arrow{r}{\partial\varphi} \arrow{d} &  \widetilde{\tau_{\leq n+1}}E_M^{\Spin}(D_k) \arrow{d} \arrow{r} & \ast \arrow{d}
        \\
        E_M^{\Spin}(D_k) \arrow{r}[swap]{\varphi} \arrow[dashed]{ur}{f} &  \widetilde{\tau_{\leq n}}E_M^{\Spin}(D_k) \arrow{r}[swap]{\kappa_n}& K(\pi_{n+1}F,n+2)
    \end{tikzcd}
\end{center}
As the right hand square is a pullback square, a lift $f$ is therefore equivalent to a nullhomotopy of $\kappa_n \circ \varphi$ relative to $\partial E_M^{\Spin}(D_k)$, or equivalently, a map from the relative cone of $(E_M^{\Spin}(D_k),\partial E_M^{\Spin}(D_k))$ to $K(\pi_{n+1}F,n+2)$. Given two lifts $f,g$, we may glue the two maps from the cone to obtain a map from the suspension, which by adjunction yields a relative map $\Delta(f,g)\colon(E_M^{\Spin}(D_k),\partial E_M^{\Spin}(D_k)) \to (K(\pi_{n+1}F,n+1),\ast)$. Considered up to homotopy, this gives a class $[\Delta(f,g)] \in H^{n+1}_{G_k}(E_M^{\Spin}(D_k),\partial E_M^{\Spin}(D_k);\pi_{n+1}F)$ in the relative $G_k$-equivariant cohomology group with coefficients in the $\mathbb{Z}G_k$-module $\pi_{n+1}F$. We note that the above cohomology group is a finitely generated abelian group, as the pair is equivalent to a finite, free $G_k$-CW pair, and $\pi_{n+1}F$ is finitely generated. The class $\Delta(f,g)$ satisfies, as usual, the following property
\begin{center}
    $[\Delta(f,g)]=0 \iff f\simeq g$ through equivariant lifts
\end{center}
As in the previous setting, we are after a subgroup $K$ of $ H^{n+1}_{G_k}(E_M^{\Spin}(D_k),\partial E_M^{\Spin}(D_k);\pi_{n+1}F)$, with the further property that
\begin{center}
    $f\simeq g \iff [\Delta(f,g)]\in K$
\end{center}
It can be readily seen that such a subgroup is given by the image of
$$\pi_0\Omega_{(\varphi,\partial\varphi)}\Map_{\mathrm{Ar}(\Psh(BG_k))}\left((E_M^{\Spin}(D_k),\partial E_M^{\Spin}(D_k)),(\widetilde{\tau_{\leq n}}E_M^{\Spin}(D_k),\widetilde{\tau_{\leq n+1}}E_M^{\Spin}(D_k))\right)$$
in the equivariant cohomology group by the obvious composition morphism. We can repeat the above after finite completion on the target. We first record some technicalities that prove useful.

We now treat the case of relative mapping spaces. 
\begin{lemma}\label{lemma:Profinite-after-postcomposition}
    The maps 
    \begin{center}
       \small $\Phi^s\Map^{G_k}\left(\begin{tikzcd}
            \partial E_M^{\Spin}(D_k) \arrow{d}
            \\
            E_M^{\Spin}(D_k)
        \end{tikzcd},\begin{tikzcd}
            \widetilde{\tau_{\leq n+1}}E_M^{\Spin}(D_k) \arrow{d}
            \\
            \widetilde{\tau_{\leq n+1}}E_M^{\Spin}
        \end{tikzcd}\right)\to \Map^{G_k}\left(\begin{tikzcd}
            \partial E_M^{\Spin}(D_k) \arrow{d}
            \\
            E_M^{\Spin}(D_k)
        \end{tikzcd},\begin{tikzcd}
            \Phi^s\widetilde{\tau_{\leq n+1}}E_M^{\Spin}(D_k) \arrow{d}
            \\
            \Phi^s\widetilde{\tau_{\leq n+1}}E_M^{\Spin}
        \end{tikzcd}\right)$
    \end{center}
    and
    \begin{center}
       \small $\Phi^s\Map^{G_k}\left(\begin{tikzcd}
            \partial E_M^{\Spin}(D_k) \arrow{d}
            \\
            E_M^{\Spin}(D_k)
        \end{tikzcd},\begin{tikzcd}
            \ast \arrow{d}
            \\
            K(\pi_{n+1}F,n+2)
        \end{tikzcd}\right)\to \Map^{G_k}\left(\begin{tikzcd}
            \partial E_M^{\Spin}(D_k) \arrow{d}
            \\
            E_M^{\Spin}(D_k)
        \end{tikzcd},\begin{tikzcd}
            \ast \arrow{d}
            \\
            \Phi^s K(\pi_{n+1}F,n+2)
        \end{tikzcd}\right)$
    \end{center}
    induce isomorphisms on all $\pi_n$, $n \geq 1$, on basepoints coming from the image of the corresponding spaces before finite completions. In the above, the mapping spaces are taken in the category $\Fun(\Delta^1,\Psh(BG_k))$, the category of arrows in spaces with an action of $G_k$.
\end{lemma}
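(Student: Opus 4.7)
The plan mirrors the proof of \cref{thm:Sullivan3.2} and the $k=1$ case of \cref{thm:injective-pi_0}: express both mapping spaces as finite limits in $\mathcal{S}$ of componentwise nilpotent spaces of finite type, and invoke \cref{cor:finite-limits-finite-completion}.

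By \cref{lemma:pointwisefinite} and \cref{lemma:leftKan-is-boundary}, the pair $(E_M^{\Spin}(D_k), \partial E_M^{\Spin}(D_k))$ is equivalent to a finite, free $G_k$-CW pair of simply connected finite CW complexes. Through the Grothendieck equivalence $\Psh(BG_k) \simeq \mathcal{S}/BG_k$, this yields presentations $E_M^{\Spin}(D_k) \simeq \colim_{\bar E} G_k$ and $\partial E_M^{\Spin}(D_k) \simeq \colim_{\partial \bar E} G_k$, where $\bar E$ and $\partial \bar E$ denote the finite CW orbit spaces. For any $G_k$-equivariant target $Y$, one then obtains $\Map^{G_k}(E_M^{\Spin}(D_k), Y) \simeq \lim_{\bar E} Y$ in $\mathcal{S}$, and similarly for $\partial E_M^{\Spin}(D_k)$. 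Since a mapping space in $\Fun(\Delta^1, \Psh(BG_k))$ is additionally a pullback of ordinary mapping spaces, each side of the two maps in the lemma becomes a finite limit in $\mathcal{S}$ of copies of $\widetilde{\tau_{\leq n+1}}E_M^{\Spin}(D_k)$, $\widetilde{\tau_{\leq n}}E_M^{\Spin}(D_k)$, $K(\pi_{n+1}F, n+2)$, or $\ast$, evaluated before or after applying $\Phi^s$.

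Granting that each of these targets is componentwise nilpotent of finite type, the map stated in the lemma is identified with the coassembly comparison
$$\mathrm{coass}\colon\Phi^s \lim_\mathcal{D} \mathcal{F} \to \lim_\mathcal{D} \Phi^s \mathcal{F}$$
of \cref{cor:finite-limits-finite-completion}, which induces an isomorphism on $\pi_n$ for all $n \geq 1$ at basepoints originating from $\lim_\mathcal{D} \mathcal{F}$. This matches the basepoint convention in the statement and concludes the proof.

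The main technical obstacle is verifying the hypothesis that the targets are componentwise nilpotent of finite type. For $K(\pi_{n+1}F, n+2)$ one reduces to checking that $\pi_{n+1}F$ is finitely generated abelian, which follows from the long exact sequence of the fibre sequence $F \to E_M^{\Spin}(D_k) \to (\iota_{k-1})_\ast \iota_{k-1}^\ast E_M^{\Spin}(D_k)$ once the base is known to have finitely generated homotopy groups; this latter point holds inductively in $k$ via the finite-limit description of the target in terms of the simply connected finite CW complexes $\Emb^{\Spin}(\sqcup_j \mathbb{R}^d, M)$ for $j < k$ of \cref{lemma:pointwisefinite}. The analogous inductive argument, applied to the Moore--Postnikov factorisation of the map $E_M^{\Spin}(D_k) \to (\iota_{k-1})_\ast \iota_{k-1}^\ast E_M^{\Spin}(D_k)$, handles $\widetilde{\tau_{\leq n}}E_M^{\Spin}(D_k)$.
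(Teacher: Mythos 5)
Your proposal is correct and takes essentially the same approach as the paper: describe the mapping space in the arrow category $\Fun(\Delta^1,\Psh(BG_k))$ as a pullback of ordinary $G_k$-equivariant mapping spaces, write each of these as a finite limit in $\mathcal{S}$ over the finite free $G_k$-CW pair via the Grothendieck equivalence, and apply \cref{cor:finite-limits-finite-completion} (equivalently, iterated \cref{thm:finite-limit-finite-completion}). The paper's proof is quite terse; you have simply spelled out the reduction $\Map^{G_k}(E_M^{\Spin}(D_k),Y) \simeq \lim_{\bar E} Y$ and the verification that the targets are nilpotent of finite type, both of which the paper leaves implicit.
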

\begin{proof}
    We prove it for one of the above spaces, as the next one works in the exact same way. By the description of mapping spaces in arrow categories, the mapping space in consideration can be seen as the pullback of the following cospan
    \begin{center}
        \begin{tikzcd}
             &  \Map^{G_k}(\partial E_M^{\Spin}(D_k), \Phi^s\widetilde{\tau_{\leq n+1}}E_M^{\Spin}(D_k)) \arrow{d}
             \\
             \Map^{G_k}(E_M^{\Spin}(D_k),\Phi^s\widetilde{\tau_{\leq n}}E_M^{\Spin}(D_k)) \arrow{r} & \Map^{G_k}(\partial E_M^{\Spin}(D_k),\Phi^s\widetilde{\tau_{\leq n}}E_M^{\Spin}(D_k))
        \end{tikzcd}
    \end{center}
    where the maps are the obvious post/pre-composition, and where we observe that all the spaces in the above diagram can be written as finite limits of the finite completion of finite type, simply connected spaces. The result follows from iterated application of \cref{thm:finite-limit-finite-completion}.
\end{proof}

We now proceed to show \cref{thm:injective-pi_0}.
\begin{proof}[Proof of \cref{thm:injective-pi_0}]
   The proof proceeds by induction, and follows a strategy similar to that of Sullivan's proof. Let $n \in \mathbb{N}$, and consider the following lifting problem
   \begin{center}
       \begin{tikzcd}
            & \widetilde{\tau_{\leq n+1}}E_M^{\Spin} \arrow{d}
            \\
            E_M^{\Spin} \arrow{r}[swap]{\varphi} \arrow[dashed]{ur} & \widetilde{\tau_{\leq n}}E_M^{\Spin}
       \end{tikzcd}
   \end{center}
   By \cref{Prop:Reedy-Lifts} and \cref{lemma:leftKan-is-boundary}, this translates to a lifting problem in $\Psh(BG_k)$, where $G_k \defeq \mathfrak{S}_k \ltimes \Spin(d)^k$:
   \begin{center}
       \begin{tikzcd}
           \partial E_M^{\Spin}(D_k) \arrow{r}{\partial\varphi}\arrow{d} & \widetilde{\tau_{\leq n+1}}E_M^{\Spin}(D_k) \arrow{r}\arrow{d} & \ast \arrow{d}
           \\
           E_M^{\Spin}(D_k) \arrow{r}[swap]{\varphi} \arrow[dashed]{ur} & \widetilde{\tau_{\leq n}}E_M^{\Spin}(D_k) \arrow{r} & K(\pi_{n+1}F,n+2)
       \end{tikzcd}
   \end{center}
   Given two lifts $f,g$, the above analysis yields a cohomology class in the $G_k$-equivariant cohomology group
   $$[\Delta(f,g)] \in H^{n+1}_{G_k}(E_M^{\Spin}(D_k),\partial E_M^{\Spin}(D_k);\pi_{n+1}F)$$
   whose vanishing implies the equivalence of $f$ and $g$ as lifts (we refer to the discussion after \cref{lemma:leftKan-is-boundary} for more details). For the sake of ease of notation, we denote the above equivariant cohomology group by $H$ throughout the proof. We also consider the group
   $$G \defeq \pi_0\Omega_{(\varphi,\partial\varphi)}\Map_{\mathrm{Ar}(\Psh(BG_k))}\left((E_M^{\Spin}(D_k),\partial E_M^{\Spin}(D_k)),(\widetilde{\tau_{\leq n}}E_M^{\Spin}(D_k),\widetilde{\tau_{\leq n+1}}E_M^{\Spin}(D_k))\right)$$
   which maps to $H$ by post-composition with the right hand square of the above diagram; we call the group morphism $\alpha \colon G \to H$. We observe that
   $$f \simeq g \iff [\Delta(f,g)] \in \mathrm{Im}(G \to H) $$
   We may repeat the same analysis after composing with finite completion on the target. Namely, given the above lifts $f,g$, we get lifts $\eta \circ f, \eta \circ g$ fitting in the following diagram
   
   \begin{center}
       \begin{tikzcd}
           \partial E_M^{\Spin}(D_k) \arrow{r}{\partial\eta\circ\varphi}\arrow{d} & \Phi^s \widetilde{\tau_{\leq n+1}}E_M^{\Spin}(D_k) \arrow{r}\arrow{d} & \ast \arrow{d}
           \\
           E_M^{\Spin}(D_k) \arrow{r}[swap]{\Phi\varphi} \arrow[dashed]{ur} & \Phi^s\widetilde{\tau_{\leq n}}E_M^{\Spin}(D_k) \arrow{r} & \Phi^sK(\pi_{n+1}F,n+2)
       \end{tikzcd}
   \end{center}
   The rightmost square is again a pullback square, and since $\pi_{n+1}F$ is a finitely generated abelian group, we furthermore obtain an equivalence 
   $$\Phi^s K(\pi_{n+1}F,n+2) \simeq K((\Phi^g\pi_{n+1}F),n+2)$$
   We thus obtain a cohomology class 
   $$[\Delta(\eta \circ f, \eta \circ g)] \in H^{n+1}_{G_k}\left(E_M^{\Spin}(D_k),\partial E_M^{\Spin}(D_k);\Phi^g\pi_{n+1}F)\right)$$
   whose vanishing implies that $\eta \circ f,\eta \circ g$ are equivalent as equivariant lifts. Again for ease of notation, we denote the above cohomology group by $H'$. We define $G'$ analogously to $G$, by finite completion on the target:
   $$G' \defeq \pi_0\Omega_{(\eta\varphi,\partial\eta\varphi)}\Map_{\mathrm{Ar}(\Psh(BG_k))}\left((E_M^{\Spin}(D_k),\partial E_M^{\Spin}(D_k)),(\Phi^s \widetilde{\tau_{\leq n}}E_M^{\Spin}(D_k),\Phi^s \widetilde{\tau_{\leq n+1}}E_M^{\Spin}(D_k))\right)$$
   The above group maps to $H'$ by composition with the pullback square on the target, and we call the group homomorphism $\alpha'$. Again, obstruction theory tells us that
   $$\Phi^sf \simeq \Phi^sg \iff [\Delta(\eta \circ f,\eta \circ g)] \in \mathrm{Im}(G' \to H') $$

   The theorem follows by induction once we show that if $f \not\simeq g$, then $\eta \circ f \not\simeq \eta \circ g$. To that end, we observe that we have a commutative diagram
   \begin{equation}\label{diagram5}
       \begin{tikzcd}
           G \arrow{r}\arrow{d}[swap]{\alpha} & G' \arrow{d}{\alpha'}
           \\
           H \arrow{r} \arrow{d}[swap]{p} & H' \arrow{d}{p'}
           \\
           \coker(\alpha) \arrow{r} & \coker(\alpha')
       \end{tikzcd}
   \end{equation}
   such that $[\Delta(f,g)]$ is mapped to $[\Delta(\eta \circ f,\eta \circ g)]$ via the middle horizontal map. The claim follows if we can show that the lower horizontal map is injective. Indeed, $f \not \simeq g$ is equivalent to $p([\Delta(f,g)]) \neq 0$ in $\coker(\alpha)$, and hence if the map $\coker(\alpha) \to \coker(\alpha')$ is injective, we obtain that $p'([\Delta(\eta \circ f, \eta \circ g)])\neq 0$ in $\coker(\alpha')$, which in turn is equivalent to $\eta\circ f \not \simeq \eta \circ g$. As $H$ is a finitely generated abelian group, so is $\coker(\alpha)$. In particular, as finitely generated abelian groups are residually finite, the map $\coker(\alpha) \to \coker(\alpha')$ would be injective if we can show that the arrow $\coker(\alpha) \to \coker(\alpha')$ is isomorphic in the category $\Fun(\Delta^1,\Grp)$ of arrows in groups, to the arrow $\coker(\alpha) \xrightarrow[]{\eta}\Phi^g \coker(\alpha)$, which is injective by residual finiteness of finitely generated abelian groups. To that end, we show that the top square of diagram \ref{diagram5} is isomorphic, in the category $\mathrm{Sq}(\Grp)$ of squares in groups, to the square
   \begin{center}
       \begin{tikzcd}
           G \arrow{d}[swap]{\alpha}  \arrow{r}{\eta_G}  & \Phi^g G \arrow{d}{\Phi^g \alpha}
           \\
           H \arrow{r}[swap]{\eta_H} & \Phi^g H
       \end{tikzcd}
   \end{center}

   By \cref{lemma:Profinite-after-postcomposition}, we observe that $G'$ and $H'$ are the fundamental groups (based at a point coming from $G$ and $H$, through the canonical map) of a finite limit of the finite completion of some (componentwise) finite type, nilpotent space. Then, \cref{cor:finite-limits-finite-completion} implies that the map $G \to G'$ and $H \to H'$ are identified with the group profinite completion maps.  Consequently, the remainder of the proof of \cref{thm:Sullivan3.2} carries through, and we obtain the desired result.

\end{proof}

\bigskip
Having finished the technical heart of the argument, we can now assemble the pieces and show that $\pi_0 T_k \Diff^{\Spin}(M)$ is a residually finite group, for every $k$.
\bigskip

\begin{proof}[Proof of \cref{thm:spin-T_k-residual-finite}]
    Consider the following diagram
    \begin{center}
    \begin{tikzcd}
        \pi_0 \Map^{\simeq}_{\Psh(\Disk_{d}^{\Spin,\leq k})}(\iota_k^{\ast}E_{M}^{\Spin},\iota_k^{\ast}E_{M}^{\Spin}) \arrow{r} \arrow{d} \arrow{dr} & \Phi^g \pi_0 \Map^{\simeq}_{\Psh(\Disk_{d}^{\Spin,\leq k})}(\iota_k^{\ast}E_{M}^{\Spin},\iota_k^{\ast}E_{M}^{\Spin})
        \arrow[dashed]{d}{\exists !}
        \\
         \pi_0 \Map^{\simeq}_{\Psh(\Disk_{d}^{\Spin,\leq k})}(\Phi^s\iota_k^{\ast}E_{M}^{\Spin},\Phi^s\iota_k^{\ast}E_{M}^{\Spin})  & \arrow{l} \pi_0 \Map^{\simeq}_{\Fun(\Disk_{d}^{\Spin,\leq k},\operatorname{Pro}(\mathcal{S}_\pi))}( \iota_k^{\ast}\widehat E_{M}^{\Spin}, \iota_k^{\ast}\widehat E_{M}^{\Spin})
    \end{tikzcd}
\end{center}
The existence (and uniqueness) of the right vertical map follows from \cref{thm:profinite}, and the universal property of profinite completion of groups. Consequently, the injectivity of the top horizontal morphism follows from the injectivity of the leftmost vertical morphism. Furthermore, we observe that from the canonical morphism $E_{M}^{\Spin} \to \Phi^sE_{M}^{\Spin}$, we obtain the following commuting diagram

\begin{center}
    \begin{tikzcd}
        \Map_{\Psh(\Disk_{d}^{\Spin,\leq k})}(\iota_k^{\ast}E_{M}^{\Spin},\iota_k^{\ast}E_{M}^{\Spin}) \arrow{r} \arrow{dr} &  \Map_{\Psh(\Disk_{d}^{\Spin,\leq k})}(\Phi\iota_k^{\ast} E_{M}^{\Spin},\Phi\iota_k^{\ast} E_{M}^{\Spin}) \arrow{d}
        \\
        & \Map_{\Psh(\Disk_{d}^{\Spin,\leq k})}(\iota_k^{\ast}E_{M}^{\Spin},\Phi\iota_k^{\ast} E_{M}^{\Spin})
    \end{tikzcd}
\end{center}
We deduce the injectivity of the horizontal map on $\pi_0$ from the injectivity of the diagonal one, as given by \cref{thm:injective-pi_0}. Finally, the injectivty on the invertible parts of $T_k \Emb^{\Spin}(M,M)$ follows from the following commutative diagram

\begin{center}
    \begin{tikzcd}
        \pi_0 \Map_{\Psh(\Disk_{d}^{\Spin,\leq k})}^{\simeq}(\iota_k^{\ast}E_{M}^{\Spin},\iota_{k}^{\ast}E_{M}^{\Spin}) \arrow{r} \arrow[hook]{d}  &   \pi_{0}\Map_{\Psh(\Disk_{d}^{\Spin,\leq k})}^{\simeq}(\Phi^s\iota_{k}^{\ast}E_{M}^{\Spin},\Phi^s\iota_{k}^{\ast}E_{M}^{\Spin}) \arrow[hook]{d}
        \\
        \pi_0 \Map_{\Psh(\Disk_{d}^{\Spin,\leq k})}(\iota_k^{\ast}E_{M}^{\Spin},\iota_{k}^{\ast}E_{M}^{\Spin}) \arrow[hook]{r}  &   \pi_{0}\Map_{\Psh(\Disk_{d}^{\Spin,\leq k})}(\Phi^s\iota_{k}^{\ast}E_{M}^{\Spin},\Phi^s\iota_{k}^{\ast}E_{M}^{\Spin})
    \end{tikzcd}
\end{center}

\end{proof}

\subsection{Main result: residual finiteness}
We are now ready to prove the main theorem of the section.

\begin{theorem}\label{thm:main-result-resfinite}
    Let $M$ be a smooth, 2-connected closed manifold of dimension $d\geq5$. Then, $\pi_0 T_k \Diff^\theta (M)$ is a residually finite group, for $\theta$ the spin, oriented or non-oriented tangential structure.
\end{theorem}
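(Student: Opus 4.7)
The plan is to bootstrap from the spin case, which is \cref{thm:spin-T_k-residual-finite}, to the oriented and unoriented cases, by exploiting smoothing theory for tangential structures (\cref{thm:embedding-calculus-with-tangential-structure} and its consequence \cref{thm:pullback-moduli-spaces}) together with the permanence lemmas \cref{lemma:quotient-resfinite,lemma:finite-index-resfin}.

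For the oriented case, I would relate $T_k \Diff^+$ to $T_k \Diff^{\Spin}$ as follows. Since $M$ is 2-connected, $H^1(M;\mathbb{Z}/2) = H^2(M;\mathbb{Z}/2) = 0$, so any orientation $\ell^+$ on $M$ admits an essentially unique spin refinement $\mathfrak{s}$. Iterating the pullback of $\infty$-categories of \cref{thm:embedding-calculus-with-tangential-structure}, applied to the tower of tangential structures $B\Spin(d) \to BSO(d) \to BO(d)$, produces a fibre sequence
$$F \longrightarrow BT_k \Diff^{\Spin}(M,\mathfrak{s}) \longrightarrow BT_k \Diff^+(M,\ell^+),$$
in which $F$ is a union of components of the space of spin structures on $M$ refining $\ell^+$. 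Since the homotopy fibre of $B\Spin(d) \to BSO(d)$ is $K(\mathbb{Z}/2,1)$, this space of lifts is equivalent to $\Map(M, K(\mathbb{Z}/2,1))$, which under the 2-connectedness hypothesis satisfies $\pi_0 = H^1(M;\mathbb{Z}/2) = 0$ and $\pi_1 = H^0(M;\mathbb{Z}/2) = \mathbb{Z}/2$. The resulting long exact sequence then reads
$$\mathbb{Z}/2 \longrightarrow \pi_0 T_k \Diff^{\Spin}(M,\mathfrak{s}) \twoheadrightarrow \pi_0 T_k \Diff^+(M,\ell^+),$$
exhibiting $\pi_0 T_k \Diff^+(M,\ell^+)$ as the quotient of $\pi_0 T_k \Diff^{\Spin}(M,\mathfrak{s})$ by a finite normal subgroup of order at most $2$. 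Combining this with \cref{thm:spin-T_k-residual-finite} and \cref{lemma:quotient-resfinite} yields residual finiteness of $\pi_0 T_k \Diff^+(M,\ell^+)$.

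For the unoriented case, \cref{thm:pullback-moduli-spaces} applied directly to $\Theta = BSO(d)$ furnishes a fibre sequence
$$\mathrm{Or}(M) \longrightarrow BT_k \Diff^+(M,\ell^+) \longrightarrow BT_k \Diff(M),$$
where $\mathrm{Or}(M)$ is a component of the two-point discrete space of orientations on $M$. The associated long exact sequence identifies $\pi_0 T_k \Diff^+(M,\ell^+)$ with a subgroup of index at most $2$ inside $\pi_0 T_k \Diff(M)$, whence \cref{lemma:finite-index-resfin} concludes the argument.

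The serious technical input is already in place through \cref{thm:spin-T_k-residual-finite}, so what remains is largely book-keeping. The only mildly non-formal step is the iteration of \cref{thm:embedding-calculus-with-tangential-structure} needed to produce the claimed fibre sequence on $T_k$-classifying spaces for the tangential refinement $B\Spin(d) \to BSO(d)$, together with the identification of its fibre in terms of the homotopy fibre $K(\mathbb{Z}/2,1)$ of that refinement; this is the point at which the 2-connectedness of $M$, and hence the triviality of $H^1(M;\mathbb{Z}/2)$, becomes crucial in guaranteeing that the obstruction coming from $\pi_1$ of the space of spin lifts is finite.
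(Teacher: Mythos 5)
Your argument is correct and is essentially the same as the paper's, with a small cosmetic difference in how the book-keeping is organized. The paper treats the spin-to-oriented and spin-to-unoriented comparisons in a single step, fixing a spin structure $\mathfrak{s}$ with underlying $\Theta$-structure $\ell$, invoking \cref{thm:pullback-moduli-spaces} to identify the fibre $F$ of $BT_k\Diff^{\Spin}(M,\mathfrak{s}) \to BT_k\Diff^{\theta}(M,\ell)$ with components of the lift space of $\ell$ along $B\Spin(d) \to B\Theta(d)$, observing that $\pi_0 F$ and $\pi_1 F$ are finite without making them explicit, and then applying both \cref{lemma:quotient-resfinite} and \cref{lemma:finite-index-resfin} simultaneously via the exact sequence $\pi_1 F \to \pi_0 T_k\Diff^{\Spin} \to \pi_0 T_k\Diff^{\theta} \to \pi_0 F$. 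You instead factor the comparison through $BSO(d)$, making the homotopy of the fibre completely explicit at each stage ($\pi_0 = H^1(M;\mathbb{Z}/2) = 0$, $\pi_1 = H^0(M;\mathbb{Z}/2) = \mathbb{Z}/2$ for the spin-to-oriented step, and a finite discrete fibre for the oriented-to-unoriented step), so that each step cleanly invokes exactly one permanence lemma — the quotient lemma in the first step and the finite-index lemma in the second. The only genuinely non-formal point in your version, which you correctly flag, is producing the relative fibre sequence $F \to BT_k\Diff^{\Spin} \to BT_k\Diff^+$ from the absolute statement of \cref{thm:pullback-moduli-spaces}; this follows from the fibre-sequence identity $\fib(A \to B) \to \fib(A \to C) \to \fib(B \to C)$ applied to $BT_k\Diff^{\Spin} \to BT_k\Diff^+ \to BT_k\Diff$, which identifies $F$ with (components of) the space of spin refinements of the fixed orientation, as you assert. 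The ingredients, and the role played by 2-connectedness, are the same in both accounts.
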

\begin{proof}
We fix $\mathfrak{s}$ a $\Spin$-structure on $M$, and let $\ell$ be the associated $\Theta$-structure. By \cref{thm:pullback-moduli-spaces},  the fibre $F$ of the map
$$BT_k\Diff^{\Spin}(M,\mathfrak{s}) \to BT_k \Diff^\theta(M,\ell)$$
over the identity is a collection of components of the space of lifts
$$\mathrm{Lift}\left(\begin{tikzcd}
    & B\Spin(d) \arrow{d}
    \\
    M \arrow{r}[swap]{\ell} \arrow[dashed]{ur} & B\Theta(d)
\end{tikzcd}\right) $$
and it is fairly straigthforward to see that both $\pi_0$ and $\pi_1$ (at each basepoint) of the above space are finite. We thus have an exact sequence
$$\pi_1 F \to \pi_1 BT_k \Diff^\Spin (M) \xrightarrow[]{f} \pi_1 BT_k \Diff^{\Theta(d)}(M) \to \pi_0 F$$
By \cref{thm:spin-T_k-residual-finite} and \cref{lemma:quotient-resfinite}, we know that $\pi_1 BT_k \Diff^{\Theta(d)} / \mathrm{Im}(f)$ is a residually finite group whenever $M$ is 2-connected, and can furthermore be identified with a finite index subgroup of $\pi_1 BT_k\Diff^{\Theta(d)}(M)$. Thus, by \cref{lemma:finite-index-resfin}, it also follows that $\pi_1 BT_k\Diff^{\Theta(d)}(M)$ is residually finite. The latter is isomorphic to $\pi_0 T_k \Diff^{\Theta(d)}(M)$, and the result follows.

\begin{remark}
    Via smoothing theory for embedding calculus, as in \cite[Theorem 4.15]{Manuel-Sander}, one concludes that $\pi_0 T_k \mathrm{Homeo}(M)$ is residually finite, under the same assumptions on $M$, where $T_k \Homeo(M)$ is the space of automorphisms of $\iota_k^\ast E_M$ in the category of $\Psh(\Disk_d^{\mathrm{Top},\leq k})$.
\end{remark}

\end{proof}

\subsection{Remarks on mapping class groups of $W_g^n$ and disc-structure spaces}
In \cite{discstructurespace}, Krannich and Kupers study the fibre of the embedding calculus map
$$\mathcal{S}^{\Disk}(M) \defeq \fib_{E_M}\left( \Manf_d^\cong \to \Psh(\Disk_d)^{\cong}\right)$$
where $M$ is a smooth, closed manifold, and where $\Manf_d^\cong$ denotes the groupoid core of the $\infty$-category of smooth, $d$-dimensional manifold with empty boundary and embeddings as spaces of 1-morphisms. The fibre over $E_M$ is called the \emph{disc-structure space of $M$}. Two important properties of the above space are
\begin{itemize}
    \item $\mathcal{S}^{\Disk}(M)$ is not contractible, when $M$ admits a spin structure, and $\dim M \geq 8$.
    \item $\mathcal{S}^{\Disk}(M)$ only depends on the tangential 2-type of $M$.
\end{itemize}

In particular, in the case $M = W_g^n \defeq \#_g(S^n \times S^n)$ where $n>3$, we know that $\Emb(W_g^n,W_g^n) \to T_\infty \Emb(W_g^n,W_g^n)$ is not an equivalence. Using \cref{thm:main-result-resfinite}, we observe that the non-equivalence can be already detected on $\pi_0$.

\begin{proposition}
    For $n=5\pmod8$ and $g \geq 5$, the group homomorphism $\pi_0\Diff^\theta (W^{n}_g) \to \pi_0 T_\infty \Diff^\theta (W^{n}_g)$ induced by the embedding calculus tower is not an isomorphism, for $\theta$ either the unoriented or oriented tangential structure.
\end{proposition}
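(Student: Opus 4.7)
The plan is to argue by contradiction: suppose $\phi \colon \pi_0 \Diff^\theta(W_g^n) \to \pi_0 T_\infty \Diff^\theta(W_g^n)$ is an isomorphism, and derive a contradiction by showing the target must then be residually finite while the source is not (under the hypotheses $n \equiv 5 \pmod 8$ and $g \geq 5$).

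For residual finiteness of $\pi_0 T_\infty \Diff^\theta(W_g^n)$, I would start from \cref{thm:main-result-resfinite}, which supplies residual finiteness of $\pi_0 T_k \Diff^\theta(W_g^n)$ for every finite $k$. Since $T_\infty \Diff^\theta(W_g^n) \simeq \lim_k T_k \Diff^\theta(W_g^n)$ and each $T_k \Diff^\theta(W_g^n)$ is a grouplike $E_1$-space (so $\pi_1 T_k \Diff^\theta(W_g^n)$ is abelian by Eckmann--Hilton), the Milnor exact sequence assembles into a short exact sequence of groups
\[ 1 \to \lim_k\nolimits^1 \pi_1 T_k \Diff^\theta(W_g^n) \to \pi_0 T_\infty \Diff^\theta(W_g^n) \to \lim_k \pi_0 T_k \Diff^\theta(W_g^n) \to 1. \]
The quotient is residually finite as an inverse limit of residually finite groups; the $\lim^1$ kernel I would dispatch by a Mittag--Leffler argument on the tower $\{\pi_1 T_k \Diff^\theta(W_g^n)\}$, yielding residual finiteness of the middle term.

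For the failure of residual finiteness on the source, I would invoke the Krannich--Randal-Williams result recalled in the introduction: $\pi_0 \Diff(W_g^n)$ is not residually finite for $n \equiv 5 \pmod 8$ and $g \geq 5$, with $\mathrm{bP}_{2n+2}$ detecting a non-trivial finite subgroup of $\mathrm{fr}(\pi_0 \Diff(W_g^n))$. Since $\pi_0 \Diff^+(W_g^n)$ has index at most $2$ in $\pi_0 \Diff(W_g^n)$, \cref{lemma:finite-index-resfin} implies $\pi_0 \Diff^+(W_g^n)$ is also not residually finite. As residual finiteness is preserved under isomorphism, this contradicts the assumed isomorphism $\phi$, completing the proof.

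The hard part will be controlling $\lim_k\nolimits^1 \pi_1 T_k \Diff^\theta(W_g^n)$: residual finiteness is not closed under arbitrary central extensions (cf.\ the Deligne example recalled earlier), so the extension produced by the Milnor sequence is not automatically residually finite. An alternative that sidesteps this obstacle would be to observe directly that for each $k$, the homomorphism $\pi_0 \Diff^\theta(W_g^n) \to \pi_0 T_k \Diff^\theta(W_g^n)$ carries the finite residual into the finite residual of the target and hence kills $\mathrm{bP}_{2n+2}$; under the hypothetical $\phi$ this places $\phi(\mathrm{bP}_{2n+2})$ in the kernel of $\pi_0 T_\infty \Diff^\theta(W_g^n) \to \lim_k \pi_0 T_k \Diff^\theta(W_g^n)$, reducing the problem to ruling out this $\lim^1$-kernel containing a non-trivial finite subgroup.
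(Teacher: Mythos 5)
Your overall strategy is the right one — argue by contradiction, use \cref{thm:main-result-resfinite} and the Milnor exact sequence, and invoke Krannich--Randal-Williams on the source — and you have correctly located the sticking point: controlling the $\lim^1$-term. However, neither of your two proposed ways of handling it goes through as stated, and the paper's actual argument uses a different idea that you do not anticipate.

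Your first suggestion, a Mittag--Leffler argument on $\{\pi_1 T_k \Diff^\theta(W_g^n)\}$, cannot be expected to work here. The paper does run such an argument in \cref{prop:embedding-rel-bdry-resfinite}, but there it relies on increasing connectivity of the layers of the tower, which in turn hinges on convergence of embedding calculus (applicable for $M^\circ$ rel.\ half boundary since the handle dimension bound holds). For the closed manifold $W_g^n$ the tower is precisely \emph{not} known (and is being shown not) to converge — that is what this proposition is about — so one has no control on the connectivity of the layers and Mittag--Leffler is not available. Your second suggestion, showing $\phi(\mathrm{bP}_{2n+2})$ lands in the $\lim^1$-kernel and then ruling out non-trivial finite subgroups of $\lim^1$, is a reasonable idea but is left incomplete: divisibility of $\lim^1$ (which does hold, cf.\ the corollary following this proposition) does not by itself exclude torsion, so a further structural input would be required.

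The paper's route avoids both issues by splitting into two cases via a dichotomy theorem: after first establishing that each $\pi_1 T_k \Diff^\theta(W_g^n)$ is finitely generated (a step your proposal omits but which is needed), one applies \cite[thm 2.3.3]{MayPonto} to conclude $\lim^1_k \pi_1 T_k \Diff^\theta(W_g^n)$ is either trivial or uncountable. If trivial, $\pi_0 T_\infty \Diff^\theta(W_g^n) \cong \lim_k \pi_0 T_k \Diff^\theta(W_g^n)$ is residually finite and your contradiction goes through. If uncountable, $\pi_0 T_\infty \Diff^\theta(W_g^n)$ is uncountable, whereas $\pi_0 \Diff^\theta(W_g^n)$ is finitely generated by \cite[thm 13.3]{Sullivan-infinitesimal}, again a contradiction. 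This second branch of the argument — trading residual finiteness for a cardinality obstruction — is the key idea missing from your proposal.
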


\begin{proof}
We begin by showing that $\pi_1 T_k \Diff^\theta (M)$ is finitely generated for all $k\in\N$, where the basepoint here is taken to be the identity. Indeed, we know that $T_1 \Emb^\theta (M,M) \simeq \mathrm{Bun}^\theta (TM,TM)$, which sits in a fibration
$$\Map(M,G_\theta (d)) \to \mathrm{Bun}^\theta (TM,TM) \to \Map(M,M)$$
where the fibre is taken over the identity, and where $G_\theta (d)$ is $O(d)$ in the unoriented case, $SO(d)$ in the oriented case. By \cite[Lemma 2.21]{Sander-finiteness}, the base and the fibre of the above fibration are have finitely generated homotopy groups on each component (we regard the mapping space as a space of sections of the trivial bundle). Consequently, the fundamental group of the total space is also finitely generated. Furthermore, we have a fibration 
$$L_k \Emb^\theta (M,M) \to T_k \Emb^\theta (M,M) \to T_{k-1}\Emb^\theta (M,M)$$
where $L_k$ is the layer of the embedding tower, obtained as a space of relative sections of a fibration over $\Conf_k (M)$ relative the fat diagonal; its homotopy groups can be seen to be finitely generated, as follows again from \cite[Lemma 2.21]{Sander-finiteness}. By finite induction, we conclude that $\pi_1 T_k \Emb^\theta (M,M)$ is indeed finitely generated, and hence so is $\pi_1 T_k \Diff^\theta (M)$. The same holds for the other path components, as $T_k \Diff^{\theta} (M)$ is a grouplike $E_1$-space. Now, we observe that the Milnor exact sequence
$$1 \to \mathrm{lim}^1 _k \pi_1 T_k\Diff^\Theta (M) \to \pi_0 T_\infty \Diff^\theta (M) \to \lim_k \pi_0 T_k \Diff^\theta (M) \to 1 $$
which is a priori an exact sequence of pointed sets, is indeed an exact sequence of groups. Indeed, as $T_\infty \Diff^\theta (M)$ and $\lim_k T_k \Diff^\theta (M)$ are grouplike $E_1$-spaces, and the map $T_\infty \Diff^\theta (M) \to \lim_k T_k \Diff^\theta(M)$ is an $E_1$-map, the above exact sequence occurs as the Milnor exact sequence computing the fundamental groups of their deloopings $BT_\infty \Diff^\theta (M)$ and $BT_k \Diff^\theta (M)$, based at the identity; furthermore, the $\lim^1$ term now consists of abelian groups. By \cite[thm 2.3.3]{MayPonto}, $\lim_{k}\nolimits^1 \pi_1 T_k \Diff^\theta (M)$ is either trivial or uncountable. In the case it is trivial, then $\pi_0 T_{\infty} \Diff^\theta (M) \cong \lim_k \pi_0 T_k \Diff^\theta (M)$. By \cref{thm:main-result-resfinite}, the latter is again residually finite. Observe that $\Emb^\theta (W^{n}_g)=\Diff^\theta (M)$, as any self-embedding of a closed manifold is necessarily surjective. If the map $\Emb^\theta (W^{n}_g) \to T_\infty \Emb^{\theta} (W^{n}_g)$ was indeed an equivalence, it induces an equivalence $\Diff^\theta (M) \to T_\infty \Diff^\theta (M)$ which in turn induces an isomorphism of groups on $\pi_0$. In particular, this would imply that $\pi_0 \Diff^\theta (W^{n}_g)$ is residually finite, which is not the case as follows from \cite{Manuel-Oscar-arithmetic}.
\\

In the case $\lim_k \nolimits^1 \pi_1 T_k \Diff^\theta (M)$ is non-trivial, it follows from the Milnor exact sequence that $\pi_0 T_\infty \Diff^\theta (M)$ is uncountable, and hence cannot be isomorphic to $\pi_0 \Diff^\theta (M)$, a finitely generated group, as can be seen from \cite[thm 13.3]{Sullivan-infinitesimal}.
\end{proof}

We observe that in the above proof, $\pi_1 T_k \Diff^\theta (M)$ is shown to be a finitely generated group, and by \cite[thm 2.3.3]{MayPonto}, it follows that $\lim^1 \pi_1 T_k \Diff^\theta (M)$ is a divisible group. In particular, we obtain the following

\begin{corollary}
    In the above setting, $\lim^1 \pi_1 T_k \Diff^\theta (M)$ embeds, via the Milnor exact sequence, into the finite residual of $\pi_0 T_\infty \Diff^\theta (M)$.
\end{corollary}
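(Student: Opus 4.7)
The plan is to combine the Milnor exact sequence recalled in the preceding proof with the divisibility conclusion of \cite[thm 2.3.3]{MayPonto}, and then invoke a general group-theoretic observation: every divisible subgroup of any group lies in its finite residual.

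First, I would recall that the tower $\{\pi_1 T_k \Diff^\theta(M)\}_{k \in \mathbb{N}}$ is a tower of abelian groups. This is because $\pi_1 T_k \Diff^\theta(M) \cong \pi_2 BT_k\Diff^\theta(M)$, and second homotopy groups are always abelian (alternatively, $T_k \Diff^\theta(M)$ is a grouplike $E_1$-space, hence its fundamental group at the identity is abelian via the Eckmann–Hilton argument). Combined with the finite generation of each $\pi_1 T_k \Diff^\theta(M)$ established in the preceding proof, \cite[thm 2.3.3]{MayPonto} yields that $\lim^1_k \pi_1 T_k \Diff^\theta(M)$ is a divisible abelian group.

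Next, exactness of the Milnor sequence
$$1 \to \lim_{k}\nolimits^1 \pi_1 T_k \Diff^\theta (M) \to \pi_0 T_\infty \Diff^\theta (M) \to \lim_k \pi_0 T_k \Diff^\theta (M) \to 1$$
identifies $D \defeq \lim^1_k \pi_1 T_k\Diff^\theta(M)$ with a subgroup of $G \defeq \pi_0 T_\infty \Diff^\theta(M)$. It therefore suffices to verify the following general fact: any divisible subgroup $D$ of a group $G$ is contained in $\mathrm{fr}(G)$. Fix a finite index normal subgroup $N \trianglelefteq G$ with quotient map $q \colon G \twoheadrightarrow G/N$. The image $q(D) \subset G/N$ is a quotient of $D$, hence divisible (quotients of divisible abelian groups are divisible). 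Since $G/N$ is finite, $q(D)$ is a finite divisible abelian group, and the only such group is trivial; therefore $D \subseteq N$. Taking the intersection over all finite index normal subgroups $N$ of $G$ gives $D \subseteq \mathrm{fr}(G)$, as required.

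I do not anticipate a real obstacle here, since every ingredient is already in place: finite generation of the $\pi_1$'s, the cited divisibility result, and the elementary fact that divisible subgroups land in the finite residual. The only point worth emphasising is that the Milnor sequence is an honest sequence of groups (which was established at the end of the previous proof by identifying it with the sequence for the deloopings), so that the embedding $D \hookrightarrow G$ is a genuine subgroup inclusion rather than merely a map of pointed sets.
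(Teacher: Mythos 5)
Your proof is correct and takes essentially the same route as the paper: finite generation of the $\pi_1$'s combined with \cite[thm 2.3.3]{MayPonto} gives divisibility, and the Milnor sequence embeds the $\lim^1$ into $\pi_0 T_\infty\Diff^\theta(M)$. The paper states this corollary immediately after observing divisibility without spelling out the final group-theoretic step, which you supply explicitly (a divisible subgroup dies in every finite quotient, hence lies in the finite residual); that elaboration is welcome and entirely consonant with the paper's intent.
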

In the case the above $\lim^1$ vanishes, one obtains an interesting application to the disc-structure space $\mathcal{S}^{\Disk}_\partial(D^{2n})$. We restrict ourselves to the case $n=5\pmod{8}$, and consider the following diagram
\begin{center}
    \begin{tikzcd}
        && \mathrm{bP}_{2n+2} \arrow[hook]{d} \arrow{ddr}{0} \arrow[dashed]{ddl}[swap]{\exists}
        \\
        && \pi_0 \Diff(W_g^n) \arrow{d}
        \\
       1 \arrow{r} & \lim^1 \pi_1 T_k \Diff(W_g^n) \arrow{r} & \pi_0 T_\infty \Diff(W_g^n) \arrow{r} & \lim_k \pi_0 T_k \Diff(W_g^n) \arrow{r} & 1
    \end{tikzcd}
\end{center}
We observe that the map $\mathrm{bP}_{2n+2} \to \lim_k \pi_0 T_k \Diff(M)$, obtained by restricting the embedding calculus map, is trivial. This follows from the fact that $\mathrm{bP}_{2n+2}$ is embedded as a subgroup of the finite residual of $\pi_0 \Diff(W_g^n)$. In case $\mathrm{bP}_{2n+2} \to \lim_k \pi_0 T_k \Diff(W_g^n)$ is non-trivial, and since $\lim_k \pi_0 T_k \Diff(W_g^n)$ is itself residually finite, then some non-trivial $\mathrm{bP}$-sphere will be detected by some map $\pi_0 \Diff(W_g^n)  \to F$ where $F$ is a finite group, contradicting the fact that $\mathrm{bP}_{2n+2}$ is indeed in the finite residual of $\pi_0 \Diff(W_g^n)$. Thus, we obtain a lift $\mathrm{bP}_{2n+2} \to \lim^1 \pi_1 T_k \Diff(W_g^n)$; if the latter is trivial, then $\mathrm{bP}_{2n+2} \leq \ker(\pi_0 \Diff(W_g^n) \to \pi_0 T_{\infty} \Diff(W_g^n)) = \mathrm{Im} (\pi_1 \mathcal{S}^{\Disk}(W_g^n) \to \pi_0 \Diff(W_g^n))$, i.e. every $\mathrm{bP}$-sphere can be hit by an element in $\pi_1 \mathcal{S}^{\Disk}(W_g^n)$, and in particular, $\mathcal{S}^{\Disk}(W_g^n) \simeq \mathcal{S}^{\Disk}_\partial (D^{2n})$ is not simply connected.

\bigskip

\section{Topological mapping class groups}
Combining the Weiss fibre sequence and convergence results for embedding calculus relative half the boundary, \cref{thm:main-result-resfinite} implies that $\pi_0 \Homeo(M)$ is residually finite, whenever $M$ is a smoothable closed manifold of dimension $d \geq 6$. This is the content of the current section. We begin by setting up the Wess fibre sequence, both in the smooth and the topological categories; embedding calculus allows us to infer some consequences on embeddings relative boundary conditions. We then use smoothing theory in order to describe the difference between the smooth and topological Weiss fibre sequences.

\subsection{The Weiss fibre sequence}\label{section:Weiss-fibre-sequence}

Let $M$ be a smooth, closed 2-connected manifold of dimension $d \geq 6$, and fix a smooth embedding $D^d \hookrightarrow M$. In what follows, we denote by $M^{\circ}$ as the manifold $M \setminus \Int(D)$, with boundary diffeomorphic to $S^{d-1}$. We decompose the boundary sphere into upper and lower hemispheres as
$$S^{d-1} = D^{d-1}_{+} \cup D^{d-1}_{-}$$
We fix a collar $D^{d-1}_+ \times I \hookrightarrow M^{\circ}$, and denote $P \defeq M^{\circ} \setminus (\mathrm{int}(D^{d-1}_+ \times I))$. We furtermore denote $D^{d-1}_{-}$ by $\partial/2$. For $\Cat \in \{O,\mathrm{Top}\}$, the parametrized isotopy extension theorem yields a fibre sequence
$$\Aut^{\Cat}_{\partial}(M^{\circ} \setminus \mathrm{int}(P)) \to \Aut^{\Cat}_{\partial}(M^\circ) \to \Emb_{\partial/2}^{\Cat,\cong}(P,M^{\circ})$$
where the base is the space of $\Cat$-embeddings of $P$ into $M$, relative $\partial/2$, which extend to a diffeomorphism of $M^\circ$, and where $\Aut^\Cat$ denotes the space of automorphisms in the corresponding category. We observe that $M^\circ \setminus \mathrm{int}(P) \cong D^{d-1}_{-} \times I \cong D^d$. Finally, another application of parametrized isotopy extension and contractibility of spaces of neighborhood collars implies that the map
$$\Emb_{\partial/2}^{\cong,\Cat}(M^\circ,M^\circ) \to \Emb_{\partial/2}^{\cong,\Cat}(P,M^\circ)$$
is an equivalence, where the target is the space of $\Cat$-embeddings, relative half the boundary, that extend to an automorphism of $M^\circ$. We thus obtain a fibre sequence of $E_1$-spaces
$$\Aut_\partial^{\Cat}(D^d) \to \Aut_\partial^\Cat(M^\circ) \to \Emb_{\partial/2}^{\cong,\Cat}(M^\circ, M^\circ)$$
which upon delooping, yields the \emph{Weiss fibre sequence}. It is perhaps noteworthy to mention that the Weiss fibre sequence can be delooped once, as in \cite{Sander-finiteness}; this will however not be necessary for our purposes. Forgetting smooth structures yields a map of fibre sequences
\begin{center}
    \begin{tikzcd}
        B\Diff_\partial (D^d) \arrow{r} \arrow{d} & B\Diff_\partial (M^{\circ}) \arrow{r}\arrow{d} & B\Emb_{\partial/2}^{\cong}(M^\circ,M^\circ) \arrow{d}{\Psi}
        \\
         \ast \simeq B\Homeo_\partial (D^d) \arrow{r} & B\Homeo_\partial (M^{\circ}) \arrow{r}{\simeq} & B\Emb_{\partial/2}^{\cong,\mathrm{Top}}(M^\circ,M^\circ)
    \end{tikzcd}
\end{center}
By the Alexander trick, we observe that the map $B\Homeo_\partial (M^{\circ}) \xrightarrow[]{\simeq} B\Emb_{\partial/2}^{\cong,\mathrm{Top}}(M^{\circ},M^\circ)$ is a homotopy equivalence, which is a crucial observation to deduce results on homeomorphism groups via embedding calculus. We next study the fibre of the map $\Psi$; this is the subject of parametrized smoothing theory, relative half the boundary.

\subsection{Parameterized smoothing theory rel. half boundary} Smoothing theory is the study of the space of smooth structures on a topological manifold. Given a smoothable manifold $M$ with empty boundary, smoothing theory describes the fibre (over the identity) of the map $B\Diff(M) \to B\Homeo(M)$ as a space of sections of a bundle over $M$ with fibres $\mathrm{Top}(d)/\mathrm{O}(d)$, and this space is amenable to computations using homotopical methods. The proof of the following can be extracted from the proof of \cite[Corollary 5.15]{Sander-finiteness}.

\begin{lemma}\label{lemma:smoothing-theory-rel-bdry}
    Let $W$ be a smooth compact manifold with boundary $\partial W \cong S^{d-1}$, and let $D^{d-1} \subset S^{d-1}$ denote the lower hemisphere. Let $\varphi:B\Emb_{\partial/2}^{\cong}(W) \to B\Emb_{\partial/2}^{\cong,\mathrm{Top}}(W)$ be the forgetful map. Then, the fibre over the identity is given by
    $$\fib_{\mathrm{id}}(\varphi) \simeq \Gamma_{\partial/2}(E \to W) $$
    where the right hand side is a space of sections of a bundle over $W$ with fibres equivalent to $\mathrm{Top}(d)/\mathrm{O}(d)$, relative a fixed section on the lower hemisphere.
\end{lemma}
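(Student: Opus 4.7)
The plan is to compare the smooth and topological Weiss fibre sequences via the vertical forgetful map, apply classical parametrized smoothing theory relative the full boundary to identify the fibres of the middle and left vertical maps, and then recognize the resulting fibre sequence as an instance of a restriction-to-upper-hemisphere fibration of section spaces.

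First, I would set up the commutative diagram of fibre sequences furnished by §\ref{section:Weiss-fibre-sequence},
\begin{center}
\begin{tikzcd}
B\Diff_\partial(D^d) \arrow{r}\arrow{d} & B\Diff_\partial(W) \arrow{r}\arrow{d} & B\Emb_{\partial/2}^\cong(W,W) \arrow{d}{\varphi}
\\
\ast \simeq B\Homeo_\partial(D^d) \arrow{r} & B\Homeo_\partial(W) \arrow{r}{\simeq} & B\Emb_{\partial/2}^{\cong,\mathrm{Top}}(W,W)
\end{tikzcd}
\end{center}
where contractibility of $B\Homeo_\partial(D^d)$ and the bottom right equivalence both come from the Alexander trick. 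Taking vertical fibres over the identity produces a new fibre sequence
$$B\Diff_\partial(D^d) \longrightarrow \fib_{\mathrm{id}}\bigl(B\Diff_\partial(W) \to B\Homeo_\partial(W)\bigr) \longrightarrow \fib_{\mathrm{id}}(\varphi).$$

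Second, I would invoke the classical parametrized smoothing theorem of Kirby--Siebenmann: for a smooth compact $d$-manifold $N$ with boundary, the fibre of $B\Diff_\partial(N) \to B\Homeo_\partial(N)$ over the identity is a collection of components of $\Gamma_\partial(E_N \to N)$, where $E_N \to N$ is the bundle with fibre $\mathrm{Top}(d)/O(d)$ classified by the topological tangent microbundle. Applied to $N=W$ this identifies the middle term. Applied to $N=D^d$, and using that the bundle is trivialisable over the contractible collar, it identifies $B\Diff_\partial(D^d)$ with a union of components of $\Omega^d(\mathrm{Top}(d)/O(d))$.

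Third, I would run a parallel argument on the sections side: restricting a section fixed on $\partial/2=D^{d-1}_-$ to the upper hemisphere $D^{d-1}_+$ yields a fibration
$$\Gamma_\partial(E_W \to W) \longrightarrow \Gamma_{\partial/2}(E_W \to W) \longrightarrow \Gamma_\partial\bigl(E_W|_{D^{d-1}_+} \to D^{d-1}_+\bigr),$$
with target equivalent (up to components) to $\Omega^{d-1}(\mathrm{Top}(d)/O(d))$, hence extending to the left as
$$\Omega^d(\mathrm{Top}(d)/O(d)) \longrightarrow \Gamma_\partial(E_W \to W) \longrightarrow \Gamma_{\partial/2}(E_W \to W).$$
I would then compare: this second sequence has the same first two terms, up to components, as the fibre sequence extracted from the Weiss diagram. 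Identifying the two connecting maps yields $\fib_{\mathrm{id}}(\varphi) \simeq \Gamma_{\partial/2}(E_W \to W)$ at the components hit by the identity, which is the statement of the lemma.

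The main obstacle will be the compatibility claim in the final step, namely, verifying that the smoothing theory equivalence $B\Diff_\partial(D^d) \simeq \Omega^d(\mathrm{Top}(d)/O(d))$ matches, under the inclusion of the collar $D^{d-1}_+ \times I \hookrightarrow W$ of §\ref{section:Weiss-fibre-sequence}, the boundary map of the restriction fibration. The cleanest way to do this is to set up smoothing theory functorially with respect to codimension-zero inclusions of compact submanifolds with prescribed boundary behaviour, as in \cite[Essay V]{Kirby-Siebenmann} or its modern reformulations, and then to note that the Weiss collar sits inside $W$ precisely as the preimage of $D^{d-1}_+$ under the collar projection, so that restriction of sections corresponds to evaluation of the smoothing theory equivalence on the collar. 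Once naturality is in hand, the five lemma on the map of long exact sequences induced by the two fibrations forces $\fib_{\mathrm{id}}(\varphi)$ to match $\Gamma_{\partial/2}(E_W \to W)$ on the relevant components.
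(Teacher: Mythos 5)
Your route is genuinely different from the paper's. The paper's proof, extracted from the argument for \cite[Corollary 5.15]{Sander-finiteness}, is essentially two steps: \cite[Corollary 2]{Lashof} shows the homotopy fibre of $\Emb^{\cong}_{\partial/2}(W,W) \to \Emb^{\cong,\mathrm{Top}}_{\partial/2}(W,W)$ agrees with that of the corresponding comparison of immersion spaces, and Smale--Hirsch immersion theory then rewrites that fibre as the fibre of $\Gamma_{\partial/2}(E^O \to W) \to \Gamma_{\partial/2}(E^{\mathrm{Top}} \to W)$, a map of section spaces of bundles with fibres $O(d)$ and $\mathrm{Top}(d)$. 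This is a direct computation of $\fib_{\mathrm{id}}(\varphi)$ involving no decomposition of $W$ or of the section space, so there is nothing to match up afterwards.

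Your argument is instead a comparison between the vertical fibre sequence of the Weiss square and the restriction fibration of section spaces, and the ``compatibility claim'' you flag at the end is not a routine loose end but the whole content of the lemma under this strategy. Two things are missing. First, to run the five lemma you need an actual map of fibre sequences, which in particular requires identifying the Weiss extension-by-identity map $B\Diff_\partial(D^d) \to \fib_{\mathrm{id}}(B\Diff_\partial(W) \to B\Homeo_\partial(W))$, after applying Kirby--Siebenmann, with the Puppe boundary map $\Omega\Gamma_\partial(E_W|_{D^{d-1}_+}) \to \Gamma_\partial(E_W)$ of the restriction fibration. These two maps have opposite flavours (extension of a diffeomorphism by the identity on the one hand, wrapping a loop of boundary data into a section on the other), so the naturality you want is not simply ``restriction to a codimension-zero submanifold commutes with smoothing theory'' but a statement about extension across the collar, which has to be set up with care. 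Second, Kirby--Siebenmann identifies $\fib_{\mathrm{id}}(B\Diff_\partial(N) \to B\Homeo_\partial(N))$ only with a \emph{collection of components} of $\Gamma_\partial(E_N \to N)$, so the five-lemma argument must be localized to matching unions of path components on both sides, including after looping the restriction fibration. Neither point is fatal --- the strategy can be made to work --- but both would require substantially more space than the paper's one-step appeal to Lashof's theorem, which sidesteps the compatibility entirely.
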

\begin{proof}
    By \cite[Corollary 2]{Lashof}, the natural inclusion maps of $\Cat$-embeddings spaces into $\Cat$-immersion spaces, for $\Cat \in\{\mathrm{Top},O\}$, induce an equivalence on fibres
    $$\fib_{\mathrm{id}}\left(\Emb^{\cong}_{\partial/2}(M^\circ, M^\circ) \to \Emb^{\cong,\mathrm{Top}}_{\partial/2}(M^\circ, M^\circ)\right)  \simeq \fib_{\mathrm{id}}\left(\mathrm{Imm}^{\cong}_{\partial/2}(M^\circ,M^\circ) \to \mathrm{Imm}^{\cong,\mathrm{Top}}_{\partial/2}(M^\circ,M^\circ) \right)$$
    where the fibre is taken over the identity. By immersion theory, the fibre on the right hand side can be identified with the fibre of a map
    $$\Gamma_{\partial/2}(E^O \to M^\circ) \to \Gamma_{\partial/2}(E^{\mathrm{Top}} \to M^\circ)$$
    where $E^\Cat \to M^\circ$ is a bundle with fibres $\Cat(d)$, over some fixed section in the base. The result thus readily follows.
\end{proof}

As observed previously, one can show that the homotopy groups of these section spaces are finitely generated, at all choices of basepoints. However, one further thing can be said about its set of connected components and the fundamental group over each of them: they are indeed finite. First, we recall that by \cite[p.246]{Kirby-Siebenmann}, we have the following isomorphisms
\begin{equation*}
    \pi_k \left(\mathrm{Top}/\mathrm{O}\right) \cong \begin{cases}
    \mathbb{Z}/2\mathbb{Z} & k=3
    \\
    \Theta_k & \text{else}
    \end{cases}
\end{equation*}
where $\Theta_k$ are the groups of exotic spheres. We also note that the map
$$\mathrm{Top}(d)/\mathrm{O}(d) \to \mathrm{Top}/\mathrm{O}$$
is $(d+2)$-connected, for $d \geq 5$, again by \cite[p.246]{Kirby-Siebenmann}. In particular, $\pi_k \mathrm{Top}(d)/\mathrm{O}(d)$ is finite for $k \leq d+1$. By obstruction theory, we have the following.
\begin{proposition}\label{prop:section-space-finite-pi1}
    Let $W$ be a compact $d$-manifold with boundary $\partial W \cong S^{d-1}$, and let $\partial/2 \defeq D^{d-1}_{-} \subset S^{d-1}$ be the lower hemisphere. Then, the space $\Gamma_{\partial/2}(E \to W)$ has finitely many path components, and for any choice of basepoint, its fundamental group is finite.
\end{proposition}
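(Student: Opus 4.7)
The plan is to combine a Postnikov truncation of the fibre with classical obstruction theory on the finite CW pair $(W, \partial/2)$. Write $F \defeq \mathrm{Top}(d)/\mathrm{O}(d)$; the discussion preceding the proposition records that $\pi_k F$ is a finite group for $0 \leq k \leq d+1$ (and that $F$ is path-connected in this range), using the $(d+2)$-connectedness of $F \to \mathrm{Top}/\mathrm{O}$ together with the finiteness of the exotic sphere groups $\Theta_k$. Since $W$ is a compact smooth manifold and $\partial/2$ a smoothly embedded disc in its boundary, the pair $(W, \partial/2)$ has the homotopy type of a finite CW pair of dimension at most $d$.

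I would first truncate the fibre. The fibre sequence $\tau_{>d+1}F \to F \to \tau_{\leq d+1}F$, applied fibrewise to the classifying map $W \to B\Aut(F)$, produces a fibre sequence of bundles $E_{>d+1} \to E \to E_{\leq d+1}$ over $W$, each carrying a compatible section over $\partial/2$; taking section spaces rel $\partial/2$ yields
$$\Gamma_{\partial/2}(E_{>d+1}\to W) \to \Gamma_{\partial/2}(E\to W) \to \Gamma_{\partial/2}(E_{\leq d+1}\to W).$$
Because $\tau_{>d+1}F$ is $(d+1)$-connected and $(W, \partial/2)$ has cell dimension at most $d$, obstruction theory shows $\Gamma_{\partial/2}(E_{>d+1}\to W)$ is at least $1$-connected: the obstructions to constructing or homotoping such sections rel $\partial/2$ live in relative cohomology with coefficients in $\pi_k(\tau_{>d+1}F) = 0$ for $k \leq d+1$. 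Hence $\Gamma_{\partial/2}(E\to W) \to \Gamma_{\partial/2}(E_{\leq d+1}\to W)$ induces bijections on $\pi_0$ and on $\pi_1$ at every basepoint, reducing the problem to the truncated bundle, whose fibre is $\pi$-finite.

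The remainder is a finite induction up a principal refinement of the Postnikov tower of $\tau_{\leq d+1}F$. Starting from the trivial bundle with fibre $\tau_{\leq 0}F = \ast$ (whose section space is a point), each successive Postnikov layer is a principal $K(A,n)$-fibration with $A$ a finite abelian group, and passing to section spaces yields a principal fibration of section spaces whose fibre is the section space of a $K(A,n)$-bundle rel $\partial/2$; its homotopy groups are identified with twisted cohomology groups $H^{n-i}(W, \partial/2; A_{\mathrm{loc}})$, all finite because $A$ is finite and $(W,\partial/2)$ is a finite CW pair. A finite induction through the $(d+1)$-st stage thus yields a section space with finite $\pi_0$ and finite $\pi_1$. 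The main technical subtlety I anticipate is the construction of the principal Postnikov refinement of the fibre-bundle in the presence of a possibly nontrivial action of $\pi_1$ of the total space on higher homotopy groups of the fibre; in our range this is addressed by the nilpotence of the action, in the spirit of §\ref{sect:Functorial-Postnikov-square} above, and does not affect the finiteness conclusion.
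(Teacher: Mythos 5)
Your argument is correct in outline, but it takes a genuinely different route from the paper's, and one point in your closing remark deserves a correction.

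The paper's proof is a finite induction on the \emph{cells of the base}: it fixes a finite CW structure on the pair $(W,\partial/2)$ of dimension $\leq d$, and observes that attaching an $n$-cell produces a restriction fibration of section spaces whose fibre is $\Omega^n(\mathrm{Top}(d)/O(d))$; since $\pi_0\Omega^n F \cong \pi_n F$ and $\pi_1\Omega^n F \cong \pi_{n+1}F$ are finite for $n \leq d$, the long exact sequence immediately exhibits $\pi_1$ of each successive section space as an extension of finite groups. This is entirely elementary and never needs any Postnikov or truncation machinery. Your route instead fixes the base and works up a Postnikov tower of the \emph{fibre}: first peel off the high-connectivity part $\tau_{>d+1}F$ (correctly observing that the resulting section space is $1$-connected since the base has cell dimension $\leq d$), then climb a principal refinement of the Postnikov tower of the $\pi$-finite truncation $\tau_{\leq d+1}F$. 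Both approaches lead to the same conclusion, but the paper's avoids the bookkeeping surrounding layers of the fibrewise Postnikov tower and the monodromy of $\pi_1 W$ on the fibre.

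On that last point: your final sentence appeals to ``nilpotence of the action'' of $\pi_1$ of the total space on the higher homotopy groups of the fibre to justify a principal refinement, but this is not available in general. The proposition places no constraint on $\pi_1 W$, and even a finite abelian $\pi_n F$ can carry a non-nilpotent $\pi_1 W$-action; the $\pi$-finiteness of the fibre does not force nilpotence. (In the paper's actual application $W = M^\circ$ is $2$-connected, so there is no monodromy at all, but the proposition is stated in greater generality.) Fortunately, nilpotence is also unnecessary: the fibrewise (non-principal) Postnikov tower always exists, each layer can be twisted-delooped as in the non-principal case of \S\ref{sect:Functorial-Postnikov-square}, and the homotopy groups of a section space rel $\partial/2$ of a $K(A,n)$-bundle are given by relative cohomology of $(W,\partial/2)$ with \emph{local} coefficients in $A$. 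These groups are finite whenever $A$ is finite and the pair is a finite CW pair, regardless of the action. Replacing your appeal to nilpotence by this observation closes the gap and makes your route fully rigorous.
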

\begin{proof}
    The claim regarding path components follows readily from obstruction theory. For $\pi_1$, we fix a section $\mathcal{s} \in \Gamma_{\partial/2}(E \to W)$ as a basepoint. We begin by observing that $(W,\partial/2)$ is equivalent, as a pair, to a finite, $d$-dimensional CW-pair, i.e. a finite relative CW-complex where all cells have dimension $\leq d$. Given $(X,\partial/2)$ a skeleton of $(W,\partial/2)$, attaching an $n$-cell yields the following fibre sequence by restriction
    $$\Gamma_{\partial D^n}(E|D^n \to D^n) \to \Gamma_{\partial/2}(E|X\cup D^n \to X \cup D^n) \xrightarrow[]{res} \Gamma_{\partial/2}(E|X \to X)$$
    where the fibre is taken over the restriction of the section $\mathcal{s}$ to $ \Gamma_{\partial/2}(E|X \to X)$. The fibre of the above fibre sequence can be identified with $\Omega^n (\mathrm{Top}(d)/O(d))$, where $n \leq d$; thus, its fundamental group is again finite, on all possible choices of basepoints. The statement thus follows from finite induction, as on fundamental groups, the above fibre sequence exhibits $\pi_1 \Gamma_{\partial/2}(E|X\cup D^n \to X \cup D^n)$ as an extension of a finite group by a finite group.
\end{proof}

\subsection{Embedding calculus rel. half boundary}
We set up embedding calculus to study embedding spaces of manifolds, fixing boundary conditions. This section follows closely the account of \cite[§3.3.2]{Sander-finiteness}.
\\

Fix $P$, a closed $(d-1)$-dimensional smooth manifold. We consider the topological category $\Manf_{d,P}$, consisting of smooth $d$-dimensional manifolds (not necessarily compact) with boundary diffeomorphic to $P$, and where morphisms are taken to be embeddings preserving the boundary identifications; we also view $\Manf_{d,P}$ as an $\infty$-category via coherent nerve. Inside $\Manf_{d,P}$, we can consider the collection of full subcategories $\Disk_{d,P}^{\leq k}$, for $k \in \mathbb{N} \cup \{\infty\}$, on objects of the form $(P \times [0,\infty)) \sqcup (\sqcup_{\ell}\mathbb{R}^d)$, for $\ell \leq k$. Yoneda followed by restriction defines a tower of functors
$$\Manf_{d,P} \to \Psh(\Disk_{d,P}^{\leq k})$$
which, on morphism spaces, yields the \emph{rel $\partial$ embedding tower}, $\Emb_{\partial}(-,-) \to T_k \Emb_{\partial}(-,-)$. The same convergence theorem relative boundary also holds with the same codimension estimates, \cite[Chapter 5]{Goodwillie-Weiss} and \cite[§3.3.2]{Sander-finiteness}.
\\

The above setting is in fact well-suited for our purposes. Let $M$ be a closed, smooth $d$-dimensional manifold, and as above, denote $M^\circ$ to be the manifold obtained from $M$ by removing the interior of some embedded disc. We note that $\partial M^\circ \cong S^{d-1}$, which decomposes into $S^{d-1} = D_{-}^{d-1} \cup D_{+}^{d-1}$, where we let $\partial/2$ stand for the lower hemisphere. The space we are interested in is precisely $\Emb_{\partial/2}(M^\circ , M^\circ)$ of those self embeddings that are the identity on $\partial/2$. We let $M^\ast \coloneqq M^\circ \setminus D^{d-1}_+$.

\begin{lemma}
    In the above setting, $M^\circ$ is isotopy equivalent to $M^\ast$, relative $\partial/2$. We consequently obtain an equivalence $$\Emb_{\partial/2}(M^\circ,M^\circ) \simeq \Emb_{\partial}(M^\ast, M^\ast)$$
\end{lemma}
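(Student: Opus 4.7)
The plan is to show that $M^\ast \hookrightarrow M^\circ$ is an isotopy equivalence rel $\partial/2$, i.e. admits a homotopy inverse $M^\circ \to M^\ast$ whose composites with the inclusion are isotopic to the respective identities through maps fixing $D^{d-1}_-$ pointwise. From this, functoriality of the embedding-space constructions under such isotopy equivalences will yield the claimed equivalence $\Emb_{\partial/2}(M^\circ, M^\circ) \simeq \Emb_\partial(M^\ast, M^\ast)$.

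The key geometric input will be a collar of the boundary. I will fix a smooth collar $c \colon S^{d-1} \times [0,1) \hookrightarrow M^\circ$ together with a smooth function $\psi \colon S^{d-1} \to [0, 1/2]$ vanishing on an open neighborhood of $D^{d-1}_-$ and equal to $1/2$ on $D^{d-1}_+$ off an arbitrarily small neighborhood of the equator $S^{d-2}$. The smooth family $\phi_s \colon M^\circ \to M^\circ$ for $s \in [0,1]$, defined by $\phi_s(c(x,t)) = c(x, t + s\psi(x))$ on the collar (with standard smoothing near the cutoff) and by the identity elsewhere, will satisfy $\phi_0 = \mathrm{id}_{M^\circ}$, will be fixed on $\partial/2$ for every $s$, and will take $M^\circ$ into $M^\ast$ at $s=1$. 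This produces a deformation retraction of $M^\circ$ onto $M^\ast$ rel $\partial/2$, establishing the claimed isotopy equivalence.

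To deduce the equivalence of embedding spaces, I will use $\phi_1$ and a compatible extension procedure. Post-composition with $\phi_1$ followed by restriction to $M^\ast \subseteq M^\circ$ will produce a map $\Emb_{\partial/2}(M^\circ, M^\circ) \to \Emb_\partial(M^\ast, M^\ast)$, while an embedding of $M^\ast$ rel $\partial$ will be extended to an embedding of $M^\circ$ rel $\partial/2$ by identifying the deleted slab near $D^{d-1}_+$ with its canonical image under the collar. The isotopy $\phi_s$, together with a parametrised version of the extension, will witness that both composites are homotopic to the identity functors.

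The main obstacle I anticipate is technical bookkeeping at the corner $S^{d-2} = \partial D^{d-1}_- \cap \partial D^{d-1}_+$, where pushing in $D^{d-1}_+$ produces a manifold with corners rather than smooth boundary. I will handle this by invoking standard corner-smoothing together with contractibility of the relevant space of collars of $D^{d-1}_+$ relative to $S^{d-2}$, so that the homotopy type of the embedding space is unaffected, and so that the boundary condition \emph{rel $\partial$} on $M^\ast$ translates precisely to \emph{rel $\partial/2$} on $M^\circ$ under the identification. The verifications here are analogous to those already used to reduce $\Emb_{\partial/2}^{\cong,\Cat}(M^\circ, M^\circ) \to \Emb_{\partial/2}^{\cong,\Cat}(P, M^\circ)$ to an equivalence in §\ref{section:Weiss-fibre-sequence}.
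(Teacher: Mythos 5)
The paper offers no proof of this lemma; it is stated as a routine observation. Your overall plan (push in the upper hemisphere and compare embedding spaces via restriction/extension and collar contractibility) has the right flavour, but the central construction does not work as written. No deformation retraction of $M^\circ$ onto $M^\ast$ \emph{relative} $\partial/2 = D^{d-1}_-$ can exist: the equator $S^{d-2} = \partial D^{d-1}_- = \partial D^{d-1}_+$ lies in $D^{d-1}_-$, hence is fixed by anything rel $\partial/2$, yet $S^{d-2} \subset D^{d-1}_+$ is excluded from $M^\ast = M^\circ \setminus D^{d-1}_+$. In your construction $\psi$ must vanish on an open neighbourhood of $D^{d-1}_-$, hence of the equator, so $\phi_1$ fixes a neighbourhood of $S^{d-2}$ in $\partial M^\circ$ and $\phi_1(M^\circ) \not\subseteq M^\ast$. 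The inverse direction is also glossed over: a general $g \in \Emb_\partial(M^\ast,M^\ast)$ need not extend continuously over $D^{d-1}_+$, so the ``canonical'' extension is not actually available without further argument.

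A cleaner route, closer to what the paper elsewhere does: restriction $f \mapsto f|_{M^\ast}$ already defines a map $\Emb_{\partial/2}(M^\circ,M^\circ) \to \Emb_\partial(M^\ast,M^\ast)$. This is well-defined because smooth embeddings send interiors to interiors, and $f=\mathrm{id}$ near $D^{d-1}_-$ forces $f(\mathrm{int}(D^{d-1}_-)) = \mathrm{int}(D^{d-1}_-)$, hence $f(M^\ast) = f(\mathrm{int}(M^\circ) \cup \mathrm{int}(D^{d-1}_-)) \subseteq M^\ast$. One then shows this restriction is an equivalence via parametrised isotopy extension together with contractibility of collar spaces, exactly as in the reduction $\Emb_{\partial/2}^{\cong}(M^\circ,M^\circ) \xrightarrow{\simeq} \Emb_{\partial/2}^{\cong}(P,M^\circ)$ from §\ref{section:Weiss-fibre-sequence} that you already invoke. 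If you want a genuine deformation retraction, it should be taken rel a germ of $\partial/2$ (equivalently rel a slightly shrunken lower hemisphere inside $\mathrm{int}(D^{d-1}_-)$), which is what the rel-$\partial$ boundary conventions in embedding calculus actually enforce; with that modification your $\phi_s$ is fine.
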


We observe that with the above equivalence, we have a method of attacking the embedding space by means of embedding calculus relative to the full boundary of $M^\ast$. We consequently obtain a tower $\{T_k \Emb_{\partial/2}(M^\circ,M^\circ)\}_{k \in \mathbb{N}}$ receiving maps from $\Emb_{\partial/2}(M^\circ,M^\circ)$.

\begin{proposition}\label{prop:emb-calc-convergence-rel-bdry}
    If $M$ is a 2-connected closed manifold, then the embedding calculus tower for $\Emb_{\partial/2}(M^\circ,M^\circ)$ converges.
\end{proposition}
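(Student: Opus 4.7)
The plan is to reduce directly to the Goodwillie--Weiss convergence theorem for embedding spaces relative the full boundary, once we verify the handle dimension hypothesis. Using the equivalence $\Emb_{\partial/2}(M^\circ,M^\circ) \simeq \Emb_\partial(M^\ast,M^\ast)$ established in the preceding lemma, it suffices to show that the tower $\{T_k\Emb_\partial(M^\ast,M^\ast)\}_{k \in \mathbb{N}}$ converges. By the relative version of Goodwillie--Weiss (see \cite[Chapter 5]{Goodwillie-Weiss}, \cite[§3.3.2]{Sander-finiteness}), this is guaranteed as soon as the handle dimension of $M^\ast$ relative to its boundary is at most $d-3$.

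The main step, therefore, is to exhibit a handle decomposition of $M^\ast$, built on a collar of $\partial M^\ast$, whose handles all have index $\leq d-3$. This is a standard Morse theoretic consequence of the 2-connectedness of $M$ and the assumption $d\geq 6$: starting from any handle decomposition of $M$, one first trades $0$-, $1$- and $2$-handles using simple connectedness and the vanishing of $\pi_2$, which is possible since $d \geq 5$ allows the Whitney trick; then, by turning the decomposition upside down and applying Poincaré--Lefschetz duality, one similarly eliminates the dual $(d-2)$-, $(d-1)$- and $d$-handles. Excising the interior of an embedded $d$-disc and the upper hemisphere $D^{d-1}_+$ from the resulting decomposition yields a relative handle structure on $(M^\ast,\partial M^\ast)$ with handles of index in $\{3,4,\ldots,d-3\}$.

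With the handle dimension bound established, the relative Goodwillie--Weiss theorem immediately produces the desired equivalence $\Emb_\partial(M^\ast,M^\ast) \xrightarrow{\simeq} T_\infty \Emb_\partial(M^\ast,M^\ast) \simeq \lim_k T_k\Emb_\partial(M^\ast,M^\ast)$, and transporting along the isotopy equivalence $M^\circ \simeq M^\ast$ relative $\partial/2$ yields the convergence of $\{T_k\Emb_{\partial/2}(M^\circ,M^\circ)\}_{k \in \mathbb{N}}$.

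I expect the only real technical point is the handle trading argument, which is standard but requires the hypothesis $d \geq 6$ to perform the two symmetric steps of trading low-index and high-index handles via the Whitney trick; everything else is a formal invocation of Goodwillie--Weiss convergence and the isotopy equivalence $\Emb_{\partial/2}(M^\circ,M^\circ) \simeq \Emb_\partial(M^\ast,M^\ast)$ from the previous lemma.
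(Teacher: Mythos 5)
Your approach is essentially the same as the paper's: verify that $(M^\circ,\partial/2)$ has handle dimension $\leq d-3$ via Morse theory, then invoke convergence of embedding calculus relative boundary (the paper cites Kupers' Lemma 3.14 for the handle bound and Krannich--Kupers Theorem 6.3 / Remark 6.11(ii) for the convergence; your citations to Goodwillie--Weiss Chapter 5 and Kupers §3.3.2 point to the same results and are the ones the paper itself uses in the introduction). You also correctly observe that the implicit hypothesis $d \geq 6$ is what makes the handle trading go through, even though the stated proposition omits it.

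One remark on the Morse-theoretic sketch: the step where you ``excise the interior of an embedded $d$-disc and the upper hemisphere $D^{d-1}_+$ from the resulting decomposition'' to produce a relative handle structure on $(M^\ast,\partial M^\ast)$ does not literally work; deleting a disc from a handle decomposition of $M$ does not hand you a handle decomposition built on a collar of $\partial/2$. The cleaner (and standard, cf.\ Kupers' Lemma 3.14) route is to argue directly with a handle decomposition of $(M^\circ,\partial/2)$ built on $\partial/2 \times I$: since the inclusion $\partial/2 \hookrightarrow M^\circ$ is $2$-connected one trades away handles of index $\leq 2$ using the Whitney trick ($d \geq 5$), and since Poincar\'e--Lefschetz duality identifies $H_i(M^\circ,\partial/2)$ with $H^{d-i}(M^\circ,D^{d-1}_+)$, which vanishes for $i \geq d-2$ by $2$-connectivity of $(M^\circ,D^{d-1}_+)$, the dual handle trading eliminates handles of index $\geq d-2$. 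This yields the bound $h\dim(M^\circ,\partial/2) \leq d-3$ without ever passing through a handle decomposition of the closed manifold $M$. The conclusion and the rest of your argument are unaffected.
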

This is proved as follows: using a Morse theoretic argument, one can show that $M$ admits a handle decomposition of dimension $\leq d-3$, relative $\partial/2$ \cite[Lemma 3.14]{Sander-finiteness}. Convergence of embedding calculus for embeddings relative half the boundary can be deduced from \cite[thm 6.3]{Manuel-Sander}, and the increasing connectivity of the layers of the tower can be deduced from Remark 6.11 (ii), \textit{loc. cit.}
\\

With the above observation in mind, we work towards showing \cref{prop:resfinite-boundary} below.
\begin{proposition}\label{prop:resfinite-boundary}
    For $M$ a closed, 2-connected smooth manifold, and $k \in \mathbb{N}$, the group
    $$\pi_0 T_k \Emb^{\cong}_{\partial/2}(M^\circ, M^\circ)$$
    is residually finite.
\end{proposition}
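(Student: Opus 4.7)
The plan is to mirror the proof of \cref{thm:main-result-resfinite}, adapting each step to the setting with half-boundary condition. The starting point is to pass, via the equivalence $\Emb_{\partial/2}^{\cong}(M^\circ,M^\circ)\simeq\Emb_{\partial}^{\cong}(M^\ast,M^\ast)$, to the embedding calculus for the manifold $M^\ast$ relative its full boundary, as set up just before the statement. We will thus work with the presheaf category $\Psh(\Disk_{d,\partial}^{\leq k})$ and its spin-decorated analogue $\Psh(\Disk_{d,\partial}^{\Spin,\leq k})$, and with the presheaf $E^{\Spin}_{M^\ast}$ sending $(P\times[0,\infty))\sqcup\sqcup_\ell\mathbb{R}^d$ to the corresponding space of boundary-preserving spin embeddings into $M^\ast$.

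First, I would prove the spin analogue of \cref{thm:spin-T_k-residual-finite}: that $\pi_0 T_k \Emb_{\partial}^{\Spin,\cong}(M^\ast,M^\ast)$ is residually finite. This requires a relative boundary version of \cref{lemma:pointwisefinite}, asserting that the evaluation of $E^{\Spin}_{M^\ast}$ on any object of $\Disk_{d,\partial}^{\Spin,\leq k}$ has the homotopy type of a simply connected finite CW complex. This follows from the same Fadell--Neuwirth argument as in \cref{lemma:pointwisefinite}: since $M$ is 2-connected, the configuration spaces of $M^\ast$ relative the collar $D^{d-1}_+\times I$ are again simply connected (the boundary collar is contractible, so nothing changes on $\pi_1$, $\pi_2$), and the spin frame bundle fibers are compact Lie, so the total space inherits the required homotopy type after induction on the number of disc components.

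With pointwise finiteness in place, the principal Postnikov decomposition of $E^{\Spin}_{M^\ast}$ of \cref{Principal_Postnikov_Spin} goes through verbatim, as does the restricted Postnikov decomposition of \cref{Construction:restricted-Postnikov}. The key step is then the analogue of the obstruction theoretic \cref{thm:injective-pi_0} for $E^{\Spin}_{M^\ast}$; the argument uses \cref{Prop:Reedy-Lifts} to reduce to a $G_k$-equivariant lifting problem, and the analogue of \cref{lemma:leftKan-is-boundary} to identify the left Kan extension with a boundary of the Fulton--MacPherson compactification of a suitable configuration space relative to the collar. With these identifications, the cohomological argument comparing the obstruction class $[\Delta(f,g)]$ before and after finite completion (using \cref{thm:finite-limit-finite-completion} and \cref{cor:finite-limits-finite-completion}) is unchanged, giving $\pi_0$-injectivity of the composition map into the finite-completed target. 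Combined with \cref{thm:profinite} and the universal property of group finite completion, this yields residual finiteness of $\pi_0 T_k \Emb_{\partial}^{\Spin,\cong}(M^\ast,M^\ast)$.

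To remove the spin structure and obtain the statement for the unadorned $\pi_0 T_k \Emb^{\cong}_{\partial/2}(M^\circ,M^\circ)$, I would invoke the relative boundary version of smoothing theory at the level of embedding calculus (\cref{thm:embedding-calculus-with-tangential-structure} and its corollary \cref{thm:pullback-moduli-spaces}): the fibre of the forgetful map from the spin $T_k$-moduli space to the unstructured one is a (union of components of) a space of tangential $\Spin$-structures relative a fixed one on $\partial/2$. By obstruction theory, exactly as in \cref{prop:section-space-finite-pi1}, this section space has finitely many components and finite fundamental group on each component. Feeding this into the homotopy long exact sequence and applying \cref{lemma:quotient-resfinite} and \cref{lemma:finite-index-resfin} deduces residual finiteness of $\pi_0 T_k \Emb^{\cong}_{\partial/2}(M^\circ,M^\circ)$ from the spin case. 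The main obstacle I anticipate is verifying the relative boundary analogue of \cref{lemma:leftKan-is-boundary}, i.e.\ pinning down that the appropriate left Kan extension of $E^{\Spin}_{M^\ast}$ along $\iota_{k-1}\hookrightarrow\iota_k$ is modeled by the boundary stratum of a Fulton--MacPherson-type compactification of $\Conf_k(M^\ast)$ that accounts for both point collisions and points approaching $\partial M^\ast$; once this geometric input is in hand, the rest of the argument runs entirely in parallel with the closed case.
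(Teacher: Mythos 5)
Your proposal follows essentially the same route as the paper: pass to $\Emb_\partial(M^\ast,M^\ast)$, first treat the spin case via the restricted Postnikov decomposition and the Reedy/recollement pullback square, run the equivariant obstruction-theoretic comparison between $[\Delta(f,g)]$ and its image under finite completion, then forget the spin structure using the tangential-structure smoothing-theory square together with \cref{lemma:quotient-resfinite,lemma:finite-index-resfin}. Two small remarks: your Fadell--Neuwirth detour for pointwise simple-connectedness is slightly more elaborate than needed---the paper instead observes that restriction along the boundary collar $\Emb_\partial^{\Spin}(\partial\times[0,\infty)\sqcup\sqcup_\ell\mathbb{R}^d,M^\ast)\to\Emb^{\Spin}(\sqcup_\ell\mathbb{R}^d,\Int M^\ast)$ is an equivalence (contractibility of collar spaces), reducing directly to \cref{lemma:pointwisefinite}---and the ``main obstacle'' you flag, the boundary analogue of \cref{lemma:leftKan-is-boundary} identifying the left Kan extension with a boundary stratum of a Fulton--MacPherson-type compactification, is indeed the key geometric input, but is already available in the literature (the paper cites \cite[Proposition 5.15]{Manuel-Sander} for exactly this).
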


\begin{proof}
The following proof follows from the proofs of \cref{thm:injective-pi_0} and \cref{thm:main-result-resfinite} above, with the modifications that we present here. We recall that $T_k \Emb_{\partial/2}(M^{\circ},M^{\circ})$ was defined as the space $T_k \Emb_{\partial}(M^\ast, M^\ast)$, where $M^\ast$ is the manifold $M \setminus D^{d-1}_+$ with boundary $S^{d-1} \setminus D^{d-1}_+$, i.e. the endomorphism space in the category $\Psh(\Disk_{d,\partial}^{\leq k})$ of the object $\iota_k^\ast E_{M^\ast}$. Following the proof of \cref{thm:main-result-resfinite}, it suffices to consider residual finiteness in the case where we remember spin structures. In that setting, we fix once and for all a spin structure on a collar of $S^{d-1} \setminus D^{d-1}_+$, and endow $M^\ast$ a spin structure extending the fixed spin structure. We now claim that pointwise, $\iota_k^\ast E_{M^\ast}^{\Spin}$ is a 1-connected finite space. To see this, we recall that objects of $\Disk_{d,\partial}^{\leq k}$ are of the form $(\partial \times [0,\infty)) \sqcup (\sqcup_{\ell}\mathbb{R}^d)$, where $\ell \leq k$. The restriction map
$$\Emb_{\partial}^{\Spin}(\partial\times[0,\infty) \sqcup (\sqcup_{\ell}\mathbb{R}^d),M^\ast) \to \Emb^{\Spin}(\sqcup_{\ell}\mathbb{R}^d, \Int(M^\ast))$$
is an equivalence, by the contractibility of the space of boundary collars, and the space on the right hand side is a simply connected, finite space as follows from \cref{lemma:pointwisefinite}. Furthermore, we consider the following diagram
\begin{equation}\label{diagram:boundary}
    \begin{tikzcd}
        \Psh(\Disk_{d,\partial}^{\Spin,\leq k}) \arrow{d}[swap]{\iota_{k-1}^\ast} \arrow{r}{\circled{1}} & \Fun([2],\Psh(\Disk_{d,\partial}^{\Spin,=k})) \arrow{d}{\circled{3}}
        \\
        \Psh(\Disk_{d,\partial}^{\Spin,\leq k-1}) \arrow{r}[swap]{\circled{2}} & \Fun([1],\Psh(\Disk_{d,\partial}^{\Spin,=k}))
    \end{tikzcd}
\end{equation}
where
\begin{itemize}
    \item $\iota_{k-1}^\ast$ is induced by the inclusion $\iota_{k-1}\colon \Disk_{d,\partial}^{\Spin,\leq k-1} \hookrightarrow \Disk_{d,\partial}^{\leq k}$;
    \item The category $\Disk_{d,\partial}^{\Spin,=k}$ consits of a single object $D_k^{\partial} \defeq (\partial \times [0,\infty)) \sqcup(\sqcup_k \mathbb{R}^d)$, and where the space of morphisms is $\Emb_\partial^{\Spin,\pi_0\text{-inj}}(D_k^\partial,D_k^\partial)$, which is equivalent to the group $\mathfrak{S}_k \ltimes \Spin(d)^k$.
    \item The functor $\circled{1}$ sends $X \in \Psh(\Disk_{d,\partial}^{\Spin,\leq k})$ to the commutative triangle
    \begin{center}
        \begin{tikzcd}
            \mathcal{j}^\ast(\iota_{k-1})_! \iota_{k-1}^\ast X(D_k^\partial) \arrow{r} \arrow{dr}  &   \mathcal{j}^\ast X(D_k^\partial) \arrow{d}
            \\
            & \mathcal{j}^\ast (\iota_{k-1})_\ast \iota_{k-1}^\ast X(D_k^\partial)
        \end{tikzcd}
    \end{center}
    and where $\mathcal{j} \colon \Disk_{d,\partial}^{\Spin=k} \to \Disk_{d,\partial}^{\Spin,\leq k}$ is the obvious inclusion functor.
    \item The functor $\circled{2}$ sends $X \in \Psh(\Disk_{d,\partial}^{\Spin,\leq k-1})$ to the arrow 
    $$\mathcal{j}^\ast (\iota_{k-1})_! X(D_k^\partial) \to \mathcal{j}^\ast (\iota_{k-1})_\ast X(D_k^\partial)$$
    \item The funtor $\circled{3}$ is induced by the map $[1] \to [2]$, $0 \mapsto 0$ and $1 \mapsto 2$.
\end{itemize}
By \cite[thm 4.15]{Manuel-Sander}, we conclude that diagram (\ref{diagram:boundary}) is a pullback square. Furthermore, for $n \in \mathbb{N}$, we consider the restricted Postnikov truncation of $X \in \Psh(\Disk_{d,\partial}^{\Spin,\leq k})$, as constructed in \cref{Construction:restricted-Postnikov}, namely as the composite
$$\widetilde{\tau_{\leq n}}X \colon \left(\Disk_{d,\partial}^{\Spin,\leq k}\right)^{\op} \xrightarrow[]{X/T_{k-1}X} \mathrm{Ar}(\mathcal{S}) \xrightarrow[]{\mathrm{MP}_n} \mathcal{S}$$
where the first functor sends $D$ to the arrow $X(D) \to (\iota_{k-1})_\ast \iota_{k-1}^\ast X(D)$, and where the second functor is the $n$-th stage Moore-Postnikov truncation. As per the proof of \cref{thm:main-result-resfinite}, residual finiteness of the above group follows from the $\pi_0$-injectivity of the map
$$\Map_{\Psh(\Disk_{d,\partial}^{\Spin, \leq k})}(E_{M^\ast},E_{M^\ast}) \to \Map_{\Psh(\Disk_{d,\partial}^{\Spin, \leq k})}(E_{M^\ast},\Phi^s \circ E_{M^\ast})$$
induced by composition with the map $\eta \colon E_{M^\ast} \to \Phi^s \circ E_{M^\ast}$. By diagram (\ref{diagram:boundary}), we have an equivalence
$$\mathrm{Lift}_{\Psh(\Disk_{d,\partial}^{\Spin,\leq k})}\left(\begin{tikzcd}
    & \widetilde{\tau_{n+1}}E_{M^\ast} \arrow{d}
    \\
    E_{M^\ast} \arrow{r} \arrow[dashed]{ur} & \widetilde{\tau_{\leq n}}E_{M^\ast}
\end{tikzcd}\right)  \xrightarrow[]{\simeq} \Map^{G_k}\left(\begin{tikzcd}
    \mathcal{j}^\ast (\iota_{k-1})_! \iota_{k-1}^\ast E_{M^\ast} \arrow{d} \arrow[dashed]{r} & \mathcal{j}^\ast \widetilde{\tau_{\leq n+1}}E_{M^\ast} \arrow{d}
    \\
    \mathcal{j}^\ast E_{M^\ast} \arrow[dashed]{r} & \mathcal{j}^\ast \widehat{\tau_{\leq n}}E_{M^\ast}
    \end{tikzcd} \right)$$
    where $G_k \defeq \mathfrak{S}_k \ltimes \Spin(d)^k$, and where the mapping space on the right hand side is the mapping space in the arrow category $\Fun(\Delta^1,\Psh(BG_k))$ of spaces with an action of $G_k$. We observe that the pair $(\mathcal{j}^\ast E_{M^\ast}, \mathcal{j}^\ast (\iota_{k-1})_! \iota_{k-1}^\ast E_{M^\ast})$ is a finite, free $G_k$-CW pair, as can be seen from \cite[Proposition 5.15]{Manuel-Sander}. The remainder of the obstruction theoretic proof of \cref{thm:injective-pi_0} carries through. 

\end{proof}

As a consequence of \cref{prop:resfinite-boundary}, we obtain the following residual finiteness result

\begin{proposition}\label{prop:embedding-rel-bdry-resfinite}
    The group $\pi_0 \Emb_{\partial/2}^{\cong}(M^{\circ},M^\circ)$ is residually finite.
\end{proposition}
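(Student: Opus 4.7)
The plan is to deduce this proposition directly from the two main inputs assembled so far: convergence of the embedding calculus tower relative half the boundary (\cref{prop:emb-calc-convergence-rel-bdry}), together with residual finiteness at each finite stage of the tower (\cref{prop:resfinite-boundary}). Since the class of residually finite groups is closed under taking subgroups and arbitrary products, and hence under arbitrary inverse limits, it will be enough to exhibit $\pi_0 \Emb_{\partial/2}^{\cong}(M^\circ,M^\circ)$ as a subgroup of an inverse limit of residually finite groups.

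I would first invoke \cref{prop:emb-calc-convergence-rel-bdry} to obtain the equivalence
\[
\Emb_{\partial/2}(M^\circ,M^\circ) \xrightarrow{\ \simeq\ } T_\infty \Emb_{\partial/2}(M^\circ,M^\circ) \simeq \lim_{k} T_k \Emb_{\partial/2}(M^\circ,M^\circ).
\]
The key additional point from Goodwillie--Weiss convergence (under the handle dimension hypothesis on $M^\ast$) is that the layers $L_k \Emb_{\partial/2}(M^\circ,M^\circ)$ are increasingly connected in $k$, so that the map $\Emb_{\partial/2}(M^\circ,M^\circ) \to T_k \Emb_{\partial/2}(M^\circ,M^\circ)$ becomes an isomorphism on $\pi_0$ once $k$ is sufficiently large. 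Restricting to those components that arise from diffeomorphisms, I then get, for such $k$, an injection
\[
\pi_0 \Emb_{\partial/2}^{\cong}(M^\circ,M^\circ) \hookrightarrow \pi_0 T_k \Emb_{\partial/2}^{\cong}(M^\circ,M^\circ).
\]

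By \cref{prop:resfinite-boundary}, the target of this injection is residually finite, hence so is the source. Alternatively, and perhaps more uniformly, one can simply observe that the natural map
\[
\pi_0 \Emb_{\partial/2}^{\cong}(M^\circ,M^\circ) \longrightarrow \lim_{k} \pi_0 T_k \Emb_{\partial/2}^{\cong}(M^\circ,M^\circ)
\]
is injective (any non-identity class is already detected at some finite stage by the connectivity statement above), and the right-hand side is an inverse limit of residually finite groups, hence residually finite. The only mildly delicate point I anticipate is purely bookkeeping: ensuring that the $\cong$-decoration is compatible along the tower, i.e.\ that $\pi_0 T_k \Emb_{\partial/2}^{\cong}$ is indeed the subset of $\pi_0 T_k \Emb_{\partial/2}$ hit by $\pi_0 \Emb_{\partial/2}^{\cong}$ via the tower map, so that the RF statement of \cref{prop:resfinite-boundary} applies verbatim to the relevant subgroup. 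Beyond that, there is no further obstruction; the substance of the argument has already been carried by \cref{prop:resfinite-boundary} and \cref{prop:emb-calc-convergence-rel-bdry}.
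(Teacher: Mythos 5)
Your argument is correct and follows the paper's route: convergence of the tower (\cref{prop:emb-calc-convergence-rel-bdry}), increasing connectivity of the layers, and residual finiteness of each $\pi_0 T_k \Emb_{\partial/2}^{\cong}$ via \cref{prop:resfinite-boundary}, combined with closure of residual finiteness under inverse limits and subgroups. The one step worth making explicit — which the paper does via the Milnor exact sequence and the Mittag--Leffler condition — is that your assertion that $\pi_0 \Emb_{\partial/2} \to \pi_0 T_k \Emb_{\partial/2}$ is an isomorphism for large $k$ implicitly requires killing $\lim^1 \pi_1 T_k$, since layer connectivity alone does not directly control $\pi_0$ of a sequential homotopy limit.
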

\begin{proof}
    By \cref{prop:emb-calc-convergence-rel-bdry}, the embedding calculus map is an equivalence, and in particular induces a group isomorphism
    $$\pi_0 \Emb^{\cong}_{\partial/2}(M^{\circ},M^{\circ}) \xrightarrow[]{\cong} \pi_0 T_{\infty}\Emb^{\cong}_{\partial/2}(M^{\circ},M^{\circ})$$
    We identify $T_\infty \Emb^{\cong}_{\partial/2}(M^\circ, M^\circ)$ with $\lim_{k\in\mathbb{N}}T_{k} \Emb^{\cong}_{\partial/2}(M^\circ, M^\circ)$. By the Milnor exact sequence, we obtain a short exact sequence of groups
    $$1 \to \mathrm{lim}_{k}^1 \pi_1 T_k \Emb^{\cong}_{\partial/2}(M^\circ, M^\circ) \to \pi_0 T_{\infty} \Emb^{\cong}_{\partial/2}(M^\circ, M^\circ) \to \lim_{k\in\mathbb{N}}\pi_0 T_k  \Emb^{\cong}_{\partial/2}(M^\circ, M^\circ) \to 1$$
    By the proof of the convergence of the embedding tower, we know that the connectivity of the fibres $L_k$ over the identity of the maps strictly increases with $k$ (see for instance, \cite[Remark 6.11 (ii)]{Manuel-Sander}). Consequently, the system $\{\pi_1 T_k \Emb^{\cong}(M^\circ,M^\circ)\}_{k \in \mathbb{N}}$ eventually consists of isomorphisms, so that in particular it satisfies the Mittag-Leffler condition. As a consequence, $\lim^1$ vanishes, and the above short exact sequence yields an isomorphism
    $$\pi_0 T_\infty  \Emb^{\cong}_{\partial/2}(M^\circ, M^\circ) \xrightarrow[]{\cong} \lim_{k \in \mathbb{N}}T_k  \Emb^{\cong}_{\partial/2}(M^\circ, M^\circ)$$
    As the class of residually finite groups is closed under inverse limits, the claim follows.
\end{proof}
\begin{remark}
    The above proof generalizes to the situation where we consider a smooth, compact 2-connected manifold $W$ of dimension $d \geq 5$, where $\partial W \neq \varnothing$, with the following technical condition. Let $\partial_0 W \subset \partial W$ be a compact, codimension 0 submanifold of the boundary, such that $\partial_1 W \hookrightarrow W$ is 2-connected, where $\partial_1 W \defeq \partial W \setminus \partial_0 W$. In that case, the same proof of \cref{prop:embedding-rel-bdry-resfinite} implies that $\pi_0 T_k \Emb_{\partial_0}^{\cong}(W,W)$ is residually finite, and \cite[Remark 6.11 (ii)]{Manuel-Sander} yields convergence of embedding calculus rel $\partial_1$. In particular, we conclude that $\pi_0 \Emb_{\partial_0}^{\cong}(W,W)$ is residually finite. This is developped in more details in the proof of \cref{thm:relbdryresfinite} below. 
\end{remark}

\subsection{Residual finiteness of topological mapping class groups}
We now combine the three previous sections to show that topological mapping class groups are indeed residually finite.

\begin{theorem}
    
    For any 2-connected smoothable closed topological manifold $M$ of dimension $d \geq 6$, the group $\pi_0 \Homeo_\partial (M^{\circ}) \cong \pi_1 B\Emb_{\partial/2}^{\cong,\mathrm{Top}}(M^{\circ})$ is residually finite.
\end{theorem}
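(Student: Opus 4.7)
The plan is to transfer residual finiteness from the smooth setting, where it has already been established by \cref{prop:embedding-rel-bdry-resfinite}, to the topological setting by means of parametrised smoothing theory relative half the boundary. First, I would invoke the Alexander trick identification $B\Homeo_\partial(M^\circ)\simeq B\Emb_{\partial/2}^{\cong,\mathrm{Top}}(M^\circ,M^\circ)$ from \S\ref{section:Weiss-fibre-sequence}, so that it suffices to prove that $\pi_1 B\Emb_{\partial/2}^{\cong,\mathrm{Top}}(M^\circ,M^\circ)$ is residually finite. Since $M$ is smoothable, we may fix a smooth structure and consider the forgetful map
$$\Psi\colon B\Emb_{\partial/2}^{\cong}(M^\circ,M^\circ)\longrightarrow B\Emb_{\partial/2}^{\cong,\mathrm{Top}}(M^\circ,M^\circ).$$

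By \cref{lemma:smoothing-theory-rel-bdry}, the homotopy fibre of $\Psi$ over the identity is equivalent to a collection of components of the section space $\Gamma_{\partial/2}(E\to M^\circ)$ of a bundle over $M^\circ$ with fibres $\mathrm{Top}(d)/O(d)$, relative a fixed section on $\partial/2$. By \cref{prop:section-space-finite-pi1}, this section space has finitely many path components, and its fundamental group at every basepoint is finite. Extracting the tail of the long exact sequence of homotopy groups associated to the fibration $\Gamma_{\partial/2}(E\to M^\circ) \to B\Emb_{\partial/2}^{\cong}(M^\circ,M^\circ) \to B\Emb_{\partial/2}^{\cong,\mathrm{Top}}(M^\circ,M^\circ)$ yields an exact sequence
$$\pi_1\Gamma_{\partial/2}(E\to M^\circ) \xrightarrow{\;\alpha\;} \pi_0\Emb_{\partial/2}^{\cong}(M^\circ,M^\circ) \xrightarrow{\;\beta\;} \pi_0\Emb_{\partial/2}^{\cong,\mathrm{Top}}(M^\circ,M^\circ) \xrightarrow{\;\gamma\;} \pi_0\Gamma_{\partial/2}(E\to M^\circ),$$
in which the first and last terms are finite (the last being finite as a pointed set, with exactness there understood set-theoretically).

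Finally, I would conclude by the two elementary group-theoretic lemmas recorded in \S 2. Let $N\defeq \mathrm{Im}(\alpha)$, a finite normal subgroup of $\pi_0\Emb_{\partial/2}^{\cong}(M^\circ,M^\circ)$. By \cref{prop:embedding-rel-bdry-resfinite}, the ambient group is residually finite, so \cref{lemma:quotient-resfinite} yields that the quotient $\pi_0\Emb_{\partial/2}^{\cong}(M^\circ,M^\circ)/N \cong \mathrm{Im}(\beta)$ is residually finite. But $\mathrm{Im}(\beta) = \ker(\gamma)$ has finite index in $\pi_0\Emb_{\partial/2}^{\cong,\mathrm{Top}}(M^\circ,M^\circ)$, since $\pi_0\Gamma_{\partial/2}(E\to M^\circ)$ is a finite set. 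Applying \cref{lemma:finite-index-resfin} we deduce that $\pi_0\Emb_{\partial/2}^{\cong,\mathrm{Top}}(M^\circ,M^\circ)$ is residually finite, and combined with the Alexander trick this is precisely the claim about $\pi_0\Homeo_\partial(M^\circ)$.

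The main obstacle has in fact already been overcome, namely residual finiteness of the smooth group $\pi_0\Emb_{\partial/2}^{\cong}(M^\circ,M^\circ)$, which is where the convergence of embedding calculus and the obstruction-theoretic analysis of the $T_k$-mapping class groups do the real work. The passage from smooth to topological in the present statement is a purely formal consequence of smoothing theory together with the two finiteness inputs on $\Gamma_{\partial/2}(E\to M^\circ)$; the only mild subtlety is ensuring that $\mathrm{Im}(\beta)$ really is a finite-index subgroup, which boils down to the finiteness of $\pi_0\Gamma_{\partial/2}(E\to M^\circ)$ afforded by the dimension hypothesis $d\geq 6$ and obstruction theory.
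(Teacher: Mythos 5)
Your proposal follows the paper's own proof essentially verbatim: identify the fibre of the forgetful map $\Psi$ via \cref{lemma:smoothing-theory-rel-bdry}, invoke the finiteness of $\pi_0$ and $\pi_1$ of the section space from \cref{prop:section-space-finite-pi1}, run the long exact sequence, and conclude with \cref{prop:embedding-rel-bdry-resfinite} together with \cref{lemma:quotient-resfinite} and \cref{lemma:finite-index-resfin}. The argument and the justification for finite index (cosets of $\ker(\gamma)$ inject into the finite pointed set $\pi_0\Gamma_{\partial/2}$) are both correct.
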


\begin{proof}
    We consider the smooth and topological Weiss fibre sequences, and the forgetful map between them
    \begin{center}
        \begin{tikzcd}
            & & \fib(\Psi) \simeq \Gamma_{\partial/2}(\xi_M \to M) \arrow{d}{\iota}
            \\
            B\Diff_{\partial}(D^d) \arrow{r} \arrow{d} & B\Diff_{\partial}(M^{\circ}) \arrow{d} \arrow{r} & B\Emb^{\cong}_{\partial/2}(M^{\circ},M^{\circ}) \arrow{d}{\Psi}
            \\
            \ast \simeq B\Homeo_{\partial}(D^d) \arrow{r} & B\Homeo_{\partial}(M^\circ) \arrow{r}[swap]{\simeq} & B\Emb^{\cong,\mathrm{Top}}_{\partial/2}(M^\circ,M^\circ)
        \end{tikzcd}
    \end{center}
    By \cref{lemma:smoothing-theory-rel-bdry} and \cref{prop:section-space-finite-pi1}, the fibre of the map $\Psi$ has a finite set of components, and on each of them, $\pi_1$ is a finite group. The long exact sequence on homotopy groups thus yields an exact sequence
    $$\pi_1 \Gamma_{\partial}(\xi_M) \xrightarrow[]{\iota_\ast} \pi_1 B\Emb^{\cong}_{\partial/2}(M^\circ , M^\circ) \xrightarrow[]{\Psi_\ast} \pi_1 B\Emb^{\cong,\mathrm{Top}}_{\partial/2}(M^\circ,M^\circ) \xrightarrow[]{\partial} \pi_0 \Gamma_{\partial/2}(\xi_M)$$
    where exactness on $\partial$ is exactness as pointed sets. We note that $\pi_1 B\Emb^{\cong}_{\partial/2}(M^\circ,M^\circ)$ is residually finite by \cref{prop:embedding-rel-bdry-resfinite}. Furthermore, $\mathrm{Im}(\iota_\ast) = \ker \Psi_\ast$ is a finite, normal subgroup of $\pi_1 B\Emb^{\cong}_{\partial/2}(M^\circ,M^\circ)$. Thus, by \cref{lemma:quotient-resfinite}, it follows that
    $$\faktor{\pi_1 B\Emb^{\cong}_{\partial/2}(M^\circ,M^\circ)}{\mathrm{Im}(\iota_\ast)}$$
    is residually finite. $\Psi_\ast$ embedds this group into a finite index subgroup of $\pi_1 B\Emb^{\cong,\mathrm{Top}}_{\partial/2}(M^\circ,M^\circ)$. Thus, by \cref{lemma:finite-index-resfin}, it follows that $\pi_1 B \Emb_{\partial/2}^{\cong,\mathrm{Top}} (M^\circ,M^\circ)$ is residually finite. The Alexander trick yields an equivalence 
    $$B\Homeo_{\partial}(M^\circ) \xrightarrow[]{\simeq} B\Emb^{\cong,\mathrm{Top}}_{\partial/2}(M^\circ,M^\circ)$$
    and thus identifies their fundamental groups, implying the desired result.
\end{proof}

In order to now study $\pi_0 \Homeo(M)$ from $\pi_0 \Homeo_{\partial}(M^\circ)$, we use isotopy extension to obtain the fibre sequence
$$\Homeo_\partial (M^{\circ}) \to \Homeo(M)  \to \Emb^{\mathrm{Top}}(D^d,M)$$
We note that $\Emb^{\mathrm{Top}}(D^d,M) \simeq \Fr^{\mathrm{Top}}(M)$; consequently, $\pi_0 \Emb^{\mathrm{Top}}(D^d,M) \cong \mathbb{Z}/2\mathbb{Z}$, and on each basepoint, $\pi_1 \Emb(D^d,M) \cong \mathbb{Z}/2\mathbb{Z}$. Running the long exact sequence on homotopy groups, we obtain an exact sequence
$$\mathbb{Z}/2\mathbb{Z} \to \pi_0 \Homeo_{\partial}(M^{\circ}) \to \pi_0 \Homeo(M) \to \mathbb{Z}/2\mathbb{Z}$$

As a corollary of \cref{lemma:finite-index-resfin,lemma:quotient-resfinite}, we obtain the following

\begin{theorem}\label{thm:homeo-res-finite}
    For $M$ a 2-connected closed, smooth manifold, the group $\pi_0 \Homeo(M)$ is residually finite.
\end{theorem}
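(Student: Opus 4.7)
The plan is to deduce this from the preceding theorem on $\pi_0 \Homeo_\partial(M^\circ)$ via a two-step group-theoretic argument, using precisely the fibre sequence indicated immediately before the statement and the two elementary closure properties of residual finiteness (Lemmas \ref{lemma:quotient-resfinite} and \ref{lemma:finite-index-resfin}). Concretely, the parametrised isotopy extension theorem gives the fibration
$$\Homeo_\partial(M^\circ) \to \Homeo(M) \to \Emb^{\mathrm{Top}}(D^d,M),$$
and the base is equivalent to the topological frame bundle $\Fr^{\mathrm{Top}}(M)$, whose $\pi_0$ and $\pi_1$ (at any basepoint) are both $\mathbb{Z}/2\mathbb{Z}$.

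First I would extract from the long exact sequence of homotopy groups the four-term exact sequence
$$\mathbb{Z}/2\mathbb{Z} \xrightarrow{\alpha} \pi_0 \Homeo_\partial(M^\circ) \xrightarrow{\beta} \pi_0 \Homeo(M) \xrightarrow{\gamma} \mathbb{Z}/2\mathbb{Z},$$
where exactness at the rightmost term is as pointed sets. The image $\mathrm{Im}(\beta)$ is a subgroup of $\pi_0 \Homeo(M)$ whose index divides $|\mathbb{Z}/2\mathbb{Z}| = 2$, hence is finite; and $\mathrm{Im}(\beta) \cong \pi_0 \Homeo_\partial(M^\circ) / \mathrm{Im}(\alpha)$, a quotient of a residually finite group (by the previous theorem) by a subgroup of a finite group, thus by a finite normal subgroup.

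Second, I would apply Lemma \ref{lemma:quotient-resfinite} to conclude that $\mathrm{Im}(\beta)$ is residually finite, and then Lemma \ref{lemma:finite-index-resfin} to conclude that $\pi_0 \Homeo(M)$, which contains the residually finite finite-index subgroup $\mathrm{Im}(\beta)$, is itself residually finite. There is no real obstacle here; the only minor point to verify is that $\mathrm{Im}(\alpha)$ is honestly a normal subgroup of $\pi_0 \Homeo_\partial(M^\circ)$, but this is automatic since it sits as the kernel of the group homomorphism $\beta$ to $\mathrm{Im}(\beta) \leq \pi_0 \Homeo(M)$. The bulk of the work has already been done upstream in establishing residual finiteness of $\pi_0 \Homeo_\partial(M^\circ)$; the present statement is essentially a corollary, packaged via the disc-gluing fibration.
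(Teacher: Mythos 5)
Your proposal is correct and follows essentially the same argument as the paper: the paper cites exactly the fibration $\Homeo_\partial(M^\circ) \to \Homeo(M) \to \Emb^{\mathrm{Top}}(D^d,M)$, notes $\pi_0$ and $\pi_1$ of the base are $\mathbb{Z}/2\mathbb{Z}$, and then invokes Lemmas \ref{lemma:quotient-resfinite} and \ref{lemma:finite-index-resfin}. You have merely spelled out the intermediate bookkeeping (normality of $\mathrm{Im}(\alpha)$, the quotient identification of $\mathrm{Im}(\beta)$, the finite-index observation) that the paper leaves implicit.
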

\begin{remark}
    It readily follows that the same holds for $\pi_0 \Homeo^+(M)$.
\end{remark}

\subsection{Arithmeticity of topological mapping class groups}\label{section:arithmeticity} We follow \cite[§2]{Sander-boundary} for all conventions regarding arithmetic groups. We consider two equivalence relations on the class of groups. The first is generated by
\begin{enumerate}[(i)]
    \item isomorphisms;
    \item passing to finite index subgroups;
    \item taking quotients by finite, normal subgroups
\end{enumerate}
Two groups in the same equivalence class of the above relation are said to be \emph{commensurable up to finite kernel}. Dropping condition (iii), the equivalence relation thus obtained is the classical notion of \emph{commensurability}.  In \cite[thm 13.3]{Sullivan-infinitesimal}, Sullivan shows that smooth mapping class groups of closed, simply connected smooth manifolds of dimension $d\geq 5$ are commensurable up to finite kernel to an arithmetic group.

\begin{theorem}[Sullivan]
    For a closed, smooth simply connected manifold $M$ of dimension $d \geq 5$, the group $\pi_0 \Diff(M)$ is commensurable up to finite kernel to an arithmetic group.
\end{theorem}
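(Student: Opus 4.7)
The plan is to combine surgery theory, which relates smooth mapping class groups to homotopy automorphisms up to controlled error, with rational homotopy theory, which furnishes the arithmeticity of $\pi_0 \hoAut(M)$. More precisely, I would pass through the block diffeomorphism group $\widetilde{\Diff}(M)$ as an intermediate object: there is a natural comparison map $\Diff(M) \to \widetilde{\Diff}(M)$ on one side, and a surgery-theoretic fibration relating $\widetilde{\Diff}(M)$ to $\hoAut(M)$ on the other, and the strategy is to show that at each comparison step the kernel/cokernel is arithmetic or finite, and that commensurability up to finite kernel is preserved by all such extensions.

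First I would handle the homotopy automorphism group. For $M$ a simply connected finite CW complex, rational homotopy theory assigns to $M$ a minimal Sullivan model, a finite-dimensional commutative differential graded algebra over $\mathbb{Q}$; the group $\pi_0 \hoAut(M_{\mathbb{Q}})$ is isomorphic to the group of $\mathbb{Q}$-points of a $\mathbb{Q}$-algebraic group $\mathbf{G}$ of automorphisms of this model, obtained by cutting down the general linear group of the generators by polynomial equations coming from compatibility with the differential. The key input from Sullivan's theory is then that $\pi_0 \hoAut(M)$ maps with finite kernel into $\mathbf{G}(\mathbb{Q})$, and its image is commensurable with an arithmetic subgroup of $\mathbf{G}$ (one picks a $\mathbb{Z}$-form from an integral lattice in cohomology and shows that the image stabilizes it up to finite index).

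Next I would set up the surgery exact sequence for block automorphisms. Using the Browder--Quinn / Wall surgery machine, the map $\widetilde{\Diff}(M) \to \hoAut(M)$ has homotopy fibre the block structure space $\widetilde{\mathcal{S}}(M)$, which fits into Wall's long exact sequence
\[
L_{d+1}(\mathbb{Z}[\pi_1 M]) \to \pi_0 \widetilde{\mathcal{S}}(M) \to [M, G/O] \to L_d(\mathbb{Z}[\pi_1 M]).
\]
For simply connected $M$, the $L$-groups are finitely generated abelian and $[M, G/O]$ is finitely generated abelian, so $\pi_0 \widetilde{\mathcal{S}}(M)$ is finitely generated abelian. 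Combined with the first step, this makes $\pi_0 \widetilde{\Diff}(M)$ sit in an extension of a subgroup of the arithmetic group $\pi_0 \hoAut(M)$ by a finitely generated abelian group, and a short argument (passing to finite index and quotienting by the torsion of the abelian extension) shows this is again commensurable up to finite kernel to an arithmetic group.

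Finally I would compare $\Diff(M)$ with $\widetilde{\Diff}(M)$. The homotopy fibre of $B\Diff(M) \to B\widetilde{\Diff}(M)$ is, by pseudoisotopy / concordance theory, controlled by the higher homotopy of the concordance space $\mathcal{C}(M)$ and ultimately by Waldhausen's $A$-theory; for simply connected $M$ of dimension $\geq 5$ these relevant groups are finitely generated, and the piece acting on $\pi_0$ is a finite abelian group. Thus $\pi_0 \Diff(M) \to \pi_0 \widetilde{\Diff}(M)$ has finite kernel and its image has finite index, and commensurability up to finite kernel transports from the target to the source. The hard part, and the one that consumed most of Sullivan's original argument, will be the first step: producing an honest $\mathbb{Q}$-algebraic model for $\hoAut$ of a simply connected finite complex and identifying an arithmetic subgroup inside it via an integral cohomological lattice, since this is what promotes a purely homotopical object to an object of arithmetic geometry and is the source of all the arithmeticity in the chain.
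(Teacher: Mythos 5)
The paper offers no proof of this statement: it cites it directly as \cite[thm 13.3]{Sullivan-infinitesimal}, so there is no internal argument for you to be measured against. Your sketch does reproduce the correct broad architecture of Sullivan's proof --- (1) arithmeticity of $\pi_0\hoAut(M)$ via minimal models and an integral lattice in cohomology and homotopy, (2) the block surgery exact sequence to pass from $\hoAut$ to $\widetilde{\Diff}$, and (3) Cerf's pseudoisotopy theorem to pass from $\widetilde{\Diff}$ to $\Diff$ on $\pi_0$.

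There is, however, a genuine gap at the end of your second step. You assert that because $\pi_0\widetilde{\Diff}(M)$ sits in an extension of (a subgroup of) the arithmetic group $\pi_0\hoAut(M)$ by a finitely generated abelian group, ``a short argument (passing to finite index and quotienting by the torsion of the abelian extension)'' finishes the job. It does not. After killing torsion and passing to finite index you are left with a genuinely nonsplit extension $1\to\ZZ^n\to G\to\Gamma\to 1$ with $\Gamma$ arithmetic, and such extensions are \emph{not} automatically commensurable up to finite kernel to an arithmetic group. The Deligne example quoted in this very paper --- the central extension $1\to\ZZ\to\widetilde{\mathrm{Sp}}_{2g}(\ZZ)\to\mathrm{Sp}_{2g}(\ZZ)\to1$ --- is exactly such an extension; its finite residual is $2\ZZ$, it has no nontrivial finite normal subgroups, and passing to finite index subgroups keeps the extension class nontorsion, so it is neither arithmetic nor residually finite nor commensurable up to finite kernel to an arithmetic group. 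The honest content of Sullivan's argument here is that the \emph{particular} extension arising from the surgery sequence is algebraic: it is controlled by the action of $\hoAut$ on the structure set and normal invariants, and Sullivan identifies the whole thing (up to commensurability) with the $\ZZ$-points of a single $\QQ$-algebraic group. That identification is substantive and cannot be replaced by the torsion-and-finite-index manipulation you propose. Relatedly, you call the image of $\pi_0\widetilde{\Diff}(M)\to\pi_0\hoAut(M)$ merely ``a subgroup''; for the conclusion you need it to be of finite index, which requires showing that homotopy automorphisms preserving the relevant tangential data (rational Pontryagin classes, normal invariant) form a finite index subgroup --- another nontrivial input that your outline takes for granted. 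You flag step (1) as the hard part, but steps (2)--(3) are where the arithmeticity is actually in danger of being lost, and those are precisely the steps your sketch elides.
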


To go from the smooth category to the topological, we use smoothing theory as follows

\begin{lemma}
    For a smoothable, closed topological manifold $M$ of dimension $d \geq 5$, $\pi_0 \Homeo(M)$ and $\pi_0 \Diff(M)$ are commensurable up to finite kernel.
\end{lemma}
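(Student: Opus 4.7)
The plan is to compare $\pi_0 \Diff(M)$ and $\pi_0 \Homeo(M)$ by running the long exact sequence of homotopy groups associated to the forgetful fibration $B\Diff(M) \to B\Homeo(M)$, and controlling the homotopy groups of its fibre via smoothing theory. Concretely, standard parametrised smoothing theory (as in \cite[Essay V]{Kirby-Siebenmann}) identifies the fibre $F$ over the identity as a collection of path components of a space of sections $\Gamma(\xi_M)$, where $\xi_M \to M$ is a bundle with fibre $\mathrm{Top}(d)/O(d)$. Since $M$ is a closed manifold, it has the homotopy type of a finite CW complex, and obstruction theory combined with the finiteness of $\pi_k(\mathrm{Top}(d)/O(d))$ in degrees $k\leq d+1$ (which in turn follows from the groups of exotic spheres being finite, via $\pi_k(\mathrm{Top}/O)$ and the stabilisation result of \cite[p.~246]{Kirby-Siebenmann}) will show that $\pi_0 \Gamma(\xi_M)$ is a finite set and that $\pi_1 \Gamma(\xi_M)$, at any basepoint, is a finite group. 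The latter proceeds by induction over a finite CW decomposition of $M$, exactly as in the proof of \cref{prop:section-space-finite-pi1}.

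With this input in hand, the long exact sequence associated to the fibre sequence $F \to B\Diff(M) \to B\Homeo(M)$ yields an exact sequence
\[
\pi_1 F \xrightarrow{\iota_*} \pi_0 \Diff(M) \xrightarrow{\Psi_*} \pi_0 \Homeo(M) \xrightarrow{\partial} \pi_0 F,
\]
where exactness at the final term is that of pointed sets. The finiteness of $\pi_1 F$ implies that $K \defeq \ker(\Psi_*) = \mathrm{Im}(\iota_*)$ is a finite normal subgroup of $\pi_0 \Diff(M)$, while the finiteness of $\pi_0 F$ implies that $H \defeq \mathrm{Im}(\Psi_*)$ is a subgroup of finite index in $\pi_0 \Homeo(M)$ (since its cosets inject into the finite pointed set $\pi_0 F$).

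Assembling these two pieces gives the desired commensurability up to finite kernel: passing from $\pi_0 \Diff(M)$ to the quotient $\pi_0 \Diff(M)/K \cong H$ is a move of type (iii), and the inclusion $H \hookrightarrow \pi_0 \Homeo(M)$ is a move of type (ii). Chaining these equivalences shows $\pi_0 \Diff(M)$ and $\pi_0 \Homeo(M)$ lie in the same equivalence class under commensurability up to finite kernel, as claimed. The only non-formal step is the finiteness of $\pi_0 F$ and of $\pi_1 F$ at each basepoint; this will be the main point requiring justification, but it is by now a routine obstruction-theoretic argument (essentially identical to \cref{prop:section-space-finite-pi1}), so no genuine obstacle is expected.
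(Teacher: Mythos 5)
Your proof is correct and follows essentially the same strategy as the paper: identify the fibre of $B\Diff(M) \to B\Homeo(M)$ via Kirby--Siebenmann as components of a section space with $\mathrm{Top}(d)/O(d)$ fibres, extract finiteness of $\pi_0$ and $\pi_1$ of this fibre by obstruction theory, and then read off commensurability up to finite kernel from the long exact sequence. The only cosmetic difference is that the paper introduces the notation $\Homeo^{\mathrm{sm}}(M)$ for the subgroup of homeomorphisms isotopic to a diffeomorphism, which is precisely your $H = \mathrm{Im}(\Psi_*)$.
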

\begin{proof}
    Using Kirby-Siebenmann's bundle theorem \cite[Essay V, §3]{Kirby-Siebenmann}, we have a fibre sequence
    $$F \to B\Diff(M) \to B\Homeo(M)$$
    where the fibre $F$ is given as a collection of components of the space $\Gamma(\xi_M)$ of sections of a bundle over $M$ with fibre $\mathrm{Top}(d)/O(d)$. By obstruction theory, and using the fact that $\pi_k \left(\mathrm{Top}(d)/O(d)\right)$ is a finite group for $k \leq d+1$, it follows that $\pi_0 \Gamma(\xi_M)$ is finite, and for all basepoints, $\pi_1 \Gamma(\xi_M)$ is a finite group. Let $\Homeo^{\mathrm{sm}}(M)$ be the subgroup of $\Homeo(M)$ of those homeomorphisms isotopic to a diffeomorphism. Using the long exact sequence on homotopy groups associated to the above fibre sequence, we have a short exact sequence
    $$1 \to \ker p \to \pi_0 \Diff(M) \xrightarrow{p} \pi_0 \Homeo^{\mathrm{sm}}(M) \to 1$$
    where $\ker p \leq \pi_1 F$ is a finite normal subgroup of $\pi_0 \Diff(M)$. Thus,
    $$\pi_0 \Homeo^{\mathrm{sm}}(M) \cong \faktor{\pi_0 \Diff(M)}{\ker p}$$
    and consequently, $\pi_0 \Homeo^{\mathrm{sm}}(M)$ is commensurable up to finite kernel to $\pi_0 \Diff(M)$. Again using the same long exact sequence as above, we notice that the quotient
    $$\faktor{\pi_0 \Homeo(M)}{\pi_0 \Homeo^{\mathrm{sm}}(M)} \subset \pi_0 \Gamma(\xi_M)$$
    is a finite set; thus, $\pi_0 \Homeo^{\mathrm{sm}}(M)$ is a finite index subgroup of $\pi_0 \Homeo(M)$. We thus conclude that $\pi_0 \Diff(M)$ and $\pi_0 \Homeo(M)$ are commensurable up to finite kernel.
\end{proof}

As a corollary, we obtain

\begin{corollary}\label{cor:homeo-commensurable-arithmetic}
    Let $M$ be a smoothable closed manifold of dimension $d \geq 5$. Then, $\pi_0 \Homeo(M)$ is commensurable up to finite kernel to an arithmetic group.
\end{corollary}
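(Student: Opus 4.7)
The proof is a direct concatenation of the two results immediately preceding the corollary, together with the observation that commensurability up to finite kernel is by construction an equivalence relation on the class of groups, hence in particular transitive.

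First I would invoke Sullivan's theorem (\cite[Theorem 13.3]{Sullivan-infinitesimal}, stated just above), which gives that $\pi_0 \Diff(M)$ is commensurable up to finite kernel to an arithmetic group, under the standing assumption that $M$ is a closed, smooth, simply connected manifold of dimension $d \geq 5$. (The hypothesis of simple connectivity is inherited from Sullivan's theorem and is implicit in the statement of the corollary in the context of this paper's main results.)

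Next, the preceding lemma asserts that $\pi_0 \Homeo(M)$ is commensurable up to finite kernel to $\pi_0 \Diff(M)$, provided $M$ is smoothable — which by assumption it is — and of dimension at least $5$. Concretely, this lemma produced an intermediate subgroup $\pi_0 \Homeo^{\mathrm{sm}}(M) \subseteq \pi_0 \Homeo(M)$ of finite index, fitting into a short exact sequence
\[ 1 \to \ker p \to \pi_0 \Diff(M) \xrightarrow{p} \pi_0 \Homeo^{\mathrm{sm}}(M) \to 1 \]
with $\ker p$ finite, so that the quotient by a finite normal subgroup followed by passage to a finite-index supergroup connects $\pi_0 \Diff(M)$ to $\pi_0 \Homeo(M)$ through the generating operations of the equivalence relation.

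Finally, chaining the two equivalences together by transitivity produces a chain of isomorphisms, finite-index inclusions, and quotients by finite normal subgroups linking $\pi_0 \Homeo(M)$ to an arithmetic group, which is exactly the conclusion. There is no obstacle to this argument — it is a purely formal combination, and the only subtlety worth flagging is confirming that the standing hypotheses (simple connectivity for Sullivan, smoothability and $d\geq 5$ for the smoothing-theoretic lemma) are both in force under the assumptions of the corollary.
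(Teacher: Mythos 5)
Your proof is correct and matches the paper's own argument exactly: the corollary follows by transitivity of the equivalence relation from Sullivan's theorem together with the smoothing-theoretic lemma immediately preceding it. You are also right to flag that the corollary's statement omits the simple-connectivity hypothesis needed for Sullivan's theorem; this is implicit from the paper's standing assumptions but is indeed required.
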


However, as observed in \cite[Remark, p.470]{Manuel-Oscar-arithmetic}, commensurability and commensurability up to finite kernel agree among residually finite groups. Indeed, if $G$ is a residually finite group, and $K$ is a finite, normal subgroup, then $G$ and $G/K$ are commensurable. To see this, we observe that for any $g \neq 1$ in $G$, there exists a finite group $F_g$ and a group morphism $\varphi_g \colon G \to F_g$ such that $\varphi_g(g) \neq 1 \in F_g$. In particular, for a finite normal subgroup $K \leq G$, we can find a finite group $F$ and a group morphism $\varphi\colon G \to F$ such that the product map $p \times \varphi \colon G \to (G/K) \times F$ is injective. The image of the above group morphism is isomorphic to $G$, and is a finite index subgroup of $(G/K) \times F$. Finally, $G/K$ is evidently a finite index subgroup of $(G/K) \times F$, and as a consequence, $G$ is commensurable to $G/K$. Combining \cref{thm:homeo-res-finite} and \cref{cor:homeo-commensurable-arithmetic}, we obtain the following

\begin{theorem}\label{thm:homeo-arithmetic}
    For $M$ a smoothable, 2-connected closed topological manifold of dimension $d \geq 6$, $\pi_0 \Homeo(M)$ is an arithmetic group.
\end{theorem}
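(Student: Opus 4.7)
The plan is to combine the three principal ingredients already established in the excerpt: residual finiteness of $\pi_0 \Homeo(M)$ (\cref{thm:homeo-res-finite}), commensurability up to finite kernel with an arithmetic group (\cref{cor:homeo-commensurable-arithmetic}), and the general fact, already observed in the paragraph preceding the theorem, that commensurability up to finite kernel coincides with classical commensurability on the class of residually finite groups. The conclusion will then follow from the classical fact that arithmeticity is preserved under (classical) commensurability.

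First, I would appeal to \cref{cor:homeo-commensurable-arithmetic} to produce an arithmetic group $\Gamma$ that is commensurable up to finite kernel to $G \defeq \pi_0 \Homeo(M)$. By definition of the equivalence relation, there is then a finite chain of groups
\[
G = H_0, H_1, \ldots, H_n = \Gamma
\]
in which each consecutive pair is related either by an isomorphism, by passage to a finite index subgroup, or by a quotient by a finite normal subgroup. Since arithmetic groups are themselves residually finite, and residual finiteness is inherited by both finite index subgroups and (as verified in the remark preceding the theorem, using the residual finiteness of $G$) by quotients of a residually finite group by a finite normal subgroup, each intermediate $H_i$ is residually finite; here I would use \cref{thm:homeo-res-finite} for the seed of residual finiteness at $G$, and propagate along the chain in whichever direction is needed (using the standard fact that a group with a residually finite finite-index subgroup is residually finite, as in \cref{lemma:finite-index-resfin}).

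Second, I would invoke the observation, recorded in the excerpt, that for a residually finite group $G$ and a finite normal subgroup $K \leq G$, the groups $G$ and $G/K$ are in fact commensurable in the classical sense: the diagonal of the quotient and a sufficiently fine finite quotient detecting $K$ embeds $G$ as a finite index subgroup of $(G/K) \times F$, inside which $G/K$ is also finite index. Applying this step at every occurrence of rule (iii) in the chain above replaces each finite-kernel quotient by a classical commensurability. Consequently $G$ and $\Gamma$ are commensurable in the classical sense.

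Finally, arithmeticity is preserved under classical commensurability (this is part of the standard definition recalled in \cite{Sander-boundary} and used in \cite{Manuel-Oscar-arithmetic}), so $G = \pi_0 \Homeo(M)$ is an arithmetic group. The analogous argument, restricting to the orientation preserving subgroup (which is of index at most two), yields the statement for $\pi_0 \Homeo^+(M)$ as recorded in the remark after \cref{thm:homeo-res-finite}. The only substantive input beyond bookkeeping is the equivalence of the two commensurability notions on residually finite groups; everything else is a direct appeal to results already proved in the paper.
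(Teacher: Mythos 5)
Your outline is the same as the paper's: combine \cref{cor:homeo-commensurable-arithmetic}, \cref{thm:homeo-res-finite}, and the observation that commensurability and commensurability up to finite kernel agree on residually finite groups. However, the step where you "propagate residual finiteness along the chain in whichever direction is needed" conceals a genuine gap, not merely an informality. Residual finiteness does pass to subgroups, from a residually finite finite-index subgroup up to the ambient group (\cref{lemma:finite-index-resfin}), and to quotients by finite normal subgroups (\cref{lemma:quotient-resfinite}); it does \emph{not} pass from a finite-kernel quotient back up to the extension. Indeed, the paper records exactly such a failure in the discussion of $W_g^n$, where one exhibits an extension of a residually finite group by a finite group that is not residually finite. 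So if the abstract chain from $G$ to $\Gamma$ happens to contain a node $H_{i+1}$ that is an extension of $H_i$ by a finite group, knowing that $H_i$ is residually finite tells you nothing about $H_{i+1}$, and your induction stalls in the middle of the chain.

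The repair is to replace the chain-propagation by the canonical form of the equivalence relation: $G_1$ and $G_2$ are commensurable up to finite kernel precisely when there exist finite-index subgroups $H_i \leq G_i$ and finite normal subgroups $N_i \triangleleft H_i$ with $H_1/N_1 \cong H_2/N_2$. (This is a routine but nontrivial reorganization of the generating chain; the content is in transitivity, which one checks by intersecting finite-index subgroups and passing to a common further finite quotient.) In this form, $H_1$ is residually finite as a subgroup of $G = \pi_0 \Homeo(M)$ and $H_2$ is residually finite as a subgroup of the arithmetic group $\Gamma$, so the paper's key lemma — that a residually finite $H$ is classically commensurable to $H/N$ for any finite normal $N$ — applies at both ends and never asks you to climb upward through an extension. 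You then get the chain of classical commensurabilities $G \sim H_1 \sim H_1/N_1 \cong H_2/N_2 \sim H_2 \sim \Gamma$ and conclude $G$ is arithmetic. This structural reorganization is precisely what the citation to \cite{Manuel-Oscar-arithmetic} is shouldering; the paper only spells out the lemma relating $H$ to $H/N$, so your reconstruction is right in spirit, but the "whichever direction is needed" phrasing papers over the one direction that genuinely fails.
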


\begin{remark}
    Similar considerations also show that $\pi_0 \Homeo^+ (M)$ is arithmetic.
\end{remark}

\subsection{Compact manifolds with boundary}
All of our results have so far focused on the case of closed manifolds: the strategy was to delete the interior of a disc inside a closed manifold $M$, and study embedding calculus relative boundary for this manifold. We study here generalizations to the case of compact manifolds with non-empty boundaries. Let $W$ be a compact, $d$-dimensional manifold of dimension $d \geq 5$, such that $\partial W \neq \varnothing$. We furthermore assume that $W$ is 2-connected. The aim of this remark is to show the following
\begin{theorem}\label{thm:relbdryresfinite}
    Let $W$ be a smoothable, 2-connected compact manifold of dimension $d \geq 5$, with $\partial W \neq \varnothing$. Then, $\pi_0 \Homeo_\partial(W)$ is residually finite.
\end{theorem}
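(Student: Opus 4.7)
The plan is to adapt the argument for the closed case (\cref{thm:homeo-res-finite}) by running the Weiss-type fibre sequence around a disc in $\partial W$ rather than around an embedded disc in the interior of a closed manifold.

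First, fix an embedded disc $D^{d-1} \subset \partial W$, and write $\partial W = \partial_0 W \cup \partial_1 W$ with $\partial_0 W \defeq \partial W \setminus \mathrm{int}(D^{d-1})$ and $\partial_1 W \defeq D^{d-1}$. Since $\partial_1 W$ is contractible and $W$ is 2-connected, the inclusion $\partial_1 W \hookrightarrow W$ is 2-connected, so the remark following \cref{prop:embedding-rel-bdry-resfinite} applies and directly yields that $\pi_0 \Emb^{\cong}_{\partial_0 W}(W,W)$ is residually finite.

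Next, I would pass from the smooth to the topological category via smoothing theory relative to $\partial_0 W$. The fibre of
\[
B\Emb^{\cong}_{\partial_0 W}(W,W) \to B\Emb^{\cong,\mathrm{Top}}_{\partial_0 W}(W,W)
\]
over the identity is, by the argument of \cref{lemma:smoothing-theory-rel-bdry}, a collection of components of the section space $\Gamma_{\partial_0 W}(\xi_W)$ of a bundle over $W$ with fibre $\mathrm{Top}(d)/O(d)$, relative a fixed section over $\partial_0 W$. Obstruction theory as in \cref{prop:section-space-finite-pi1} shows that this section space has finitely many components, each with finite fundamental group. Running the long exact sequence on homotopy groups and applying \cref{lemma:quotient-resfinite,lemma:finite-index-resfin} in the same way as in the proof of \cref{thm:homeo-res-finite} yields residual finiteness of $\pi_0 \Emb^{\cong,\mathrm{Top}}_{\partial_0 W}(W,W)$.

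Finally, I would identify $\pi_0 \Homeo_\partial(W)$ with $\pi_0 \Emb^{\cong,\mathrm{Top}}_{\partial_0 W}(W,W)$ via a topological Weiss-type fibre sequence, mirroring §\ref{section:Weiss-fibre-sequence}. Fix a collar $\partial_1 W \times [0,1] \hookrightarrow W$ and set $P \defeq W \setminus \mathrm{int}(\partial_1 W \times [0,1])$, so that $W \setminus \mathrm{int}(P) \cong D^{d-1} \times [0,1] \cong D^d$. Parametrized topological isotopy extension then yields the fibre sequence
\[
\Homeo_\partial(D^d) \to \Homeo_\partial(W) \to \Emb^{\cong,\mathrm{Top}}_{\partial_0 W}(P,W).
\]
The fibre is contractible by the Alexander trick, and the base is equivalent to $\Emb^{\cong,\mathrm{Top}}_{\partial_0 W}(W,W)$ by a second application of parametrized isotopy extension together with contractibility of spaces of neighborhood collars (as in §\ref{section:Weiss-fibre-sequence}). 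Hence $\pi_0 \Homeo_\partial(W) \cong \pi_0 \Emb^{\cong,\mathrm{Top}}_{\partial_0 W}(W,W)$ is residually finite. The main technical point is simply checking that the smoothing-theoretic and Weiss-type inputs established in §\ref{section:Weiss-fibre-sequence} for $M^\circ$ go through verbatim in this rel-$\partial_0 W$ setting; this is expected to be routine, since the relevant geometric inputs (contractibility of collar spaces, finiteness of the smoothing section space, Alexander trick on $D^d$) are all local and unchanged.
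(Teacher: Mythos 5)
Your proof takes essentially the same route as the paper's: fix a boundary disc, run the Weiss fibre sequence and smoothing theory relative to its complement, and transport residual finiteness from the smooth side to $\pi_0\Homeo_\partial(W)$ via \cref{lemma:quotient-resfinite,lemma:finite-index-resfin}. The only structural difference is that you cite the remark after \cref{prop:embedding-rel-bdry-resfinite} for residual finiteness of $\pi_0 \Emb^{\cong}_{\partial_0 W}(W,W)$, whereas the paper's proof of this very theorem is largely devoted to carrying out the embedding-calculus argument (restricted Postnikov tower, Reedy decomposition, $G_k$-equivariant obstruction theory) that justifies that remark in the rel-$\partial_0$ setting.
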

\begin{proof}
    We fix a smooth embedding $D^{d-1} \hookrightarrow \partial W$, and denote $\partial_1 W \defeq \partial W \setminus D^{d-1}$. The aim is to show that $\pi_0 \Emb_{\partial_1}^{\cong}(W,W)$, the group of path components of the space of embeddings fixing $\partial_1$ that are isotopic to diffeormophisms, is residually finite. Indeed, the smooth and topological Weiss fibre sequences yields a commutative diagram
    \begin{center}
        \begin{tikzcd}
            B\Diff_\partial (D^d) \arrow{r}\arrow{d} & B\Diff_{\partial}(W) \arrow{r}\arrow{d} & B\Emb_{\partial_1}^{\cong}(W,W) \arrow{d}
            \\
            B\Homeo_\partial (D^d)\simeq\ast \arrow{r} & B\Homeo_{\partial}(W) \arrow{r}[swap]{\simeq} & B\Emb_{\partial_1}^{\cong,\mathrm{Top}}(W,W)
        \end{tikzcd}
    \end{center}
    The fibre of the rightmost vertical map is understood by smoothing theory, and has finite $\pi_0$, and finite fundamental groups over each component (\cref{lemma:smoothing-theory-rel-bdry,prop:section-space-finite-pi1}). Thus, residual finiteness of $\pi_0 \Emb_{\partial_1}^{\cong}(W,W)$ implies the residual finiteness of $\pi_0 \Emb_{\partial_1}^{\cong,\mathrm{Top}}(W,W) \cong \pi_0 \Homeo_\partial (W)$, by \cref{lemma:quotient-resfinite,lemma:finite-index-resfin}.
    \\
    
    We now focus on showing that $\pi_0 \Emb_{\partial_1}^{\cong}(W,W)$ is residually finite. This follows the same embedding calculus strategy, and we first consider the case where we remember spin structures. Fix a $\Spin(d)$ structure on a collar of $\partial_1$, and consider embedding calculus rel. boundary $\Psh(\Disk_{d,\partial_1}^{\Spin,\leq k})$ where $\Disk_{d,\partial_1}^{\Spin,\leq k}$ is the category consisiting of objects of the form $\partial_1 \times [0,\infty) \times \sqcup_{\ell}\mathbb{R}^d$, for $\ell \leq k$, along with choices of spin structures extending the fixed spin structure on $\partial_1 \times [0,\infty)$, and embeddings rel. $\partial_1$ respecting the spin structures. By \cite[thm 4.20]{Manuel-Sander}, we obtain a pullback square of $\infty$-categories
    \begin{equation}\label{diagram:boundary2}
    \begin{tikzcd}
        \Psh(\Disk_{d,\partial_1}^{\Spin,\leq k}) \arrow{d}[swap]{\iota_{k-1}^\ast} \arrow{r}{\circled{1}} & \Fun([2],\Psh(\Disk_{d,\partial_1}^{\Spin,=k})) \arrow{d}{\circled{3}}
        \\
        \Psh(\Disk_{d,\partial_1}^{\Spin,\leq k-1}) \arrow{r}[swap]{\circled{2}} & \Fun([1],\Psh(\Disk_{d,\partial_1}^{\Spin,=k}))
    \end{tikzcd}
\end{equation}
where $\circled{1},\circled{2},\circled{3}$ are as in the proof of \cref{prop:resfinite-boundary}. We denote $T_k\Emb_{\partial_1}^{\Spin,\cong}(W,W)$ as the space of automorphisms of $\iota_k^\ast E_{W^\ast}$ in the category $\Psh(\Disk_{d,\partial_1}^{\Spin,\leq k})$, where $W^\ast \defeq W \setminus D^{d-1}$. Pointwise, this presheaf has the homotopy type of a simply connected, finite CW-complex (\cref{lemma:pointwisefinite}). In order to run the same obstruction theoretic proof as in \cref{thm:injective-pi_0}, we observe that diagram (\ref{diagram:boundary2}) yields an equivalence
$$\mathrm{Lift}_{\Psh(\Disk_{d,\partial}^{\Spin,\leq k})}\left(\begin{tikzcd}
    & \widetilde{\tau_{n+1}}E_{W^\ast} \arrow{d}
    \\
    E_{W^\ast} \arrow{r} \arrow[dashed]{ur} & \widetilde{\tau_{\leq n}}E_{W^\ast}
\end{tikzcd}\right)  \xrightarrow[]{\simeq} \Map^{G_k}\left(\begin{tikzcd}
    \mathcal{j}^\ast (\iota_{k-1})_! \iota_{k-1}^\ast E_{W^\ast} \arrow{d} \arrow[dashed]{r} & \mathcal{j}^\ast \widetilde{\tau_{\leq n+1}}E_{W^\ast} \arrow{d}
    \\
    \mathcal{j}^\ast E_{W^\ast} \arrow[dashed]{r} & \mathcal{j}^\ast \widehat{\tau_{\leq n}}E_{W^\ast}
    \end{tikzcd} \right)$$
$G_k$ in the above denotes the group $\mathfrak{S}_k \ltimes \Spin(d)$, and the space on the right hand side is the space of morphism in the arrow category $\Fun(\Delta^1,\Psh(BG_k))$ of spaces with an action of $G_k$, where $\mathcal{j}^\ast$ is the restriction functor along the inclusion $BG_k \to \Disk_{d,\partial_1}^{\Spin,\leq k}$. The pair $(\mathcal{j}^\ast E_{W^\ast},\mathcal{j}^\ast (\iota_{k-1})E_{W^\ast})$ is equivalent to a finite, $G_k$-CW pair, as follows from \cite[Proposition 5.15]{Manuel-Sander}. The remainder of the obstruction theoretic proof of \cref{thm:injective-pi_0} follows, and implies that $\pi_0 T_k\Emb_{\partial_1}^{\Spin,\cong}(W^\ast,W^\ast)$ is residually finite, for all $k \in \mathbb{N}$. Convergence of embedding calculus, along with increasing connectivity of the layer (\cite[Remark 6.11(ii)]{Manuel-Sander}) implies the same for $\pi_0 \Emb_{\partial_1}^{\Spin}(W^\ast,W^\ast)$. The proof of \cref{thm:main-result-resfinite} allows us to forget the spin structures, and implies that $\pi_0 \Emb_{\partial_1}^{\cong}(W^\ast,W^\ast)$ is residually finite, as required.
\end{proof}

\subsection{Finite residuals of smooth mapping class groups}
As mentioned previously, the smooth mapping class group $\pi_0 \Diff(M)$ of a smooth manifold $M$ may fail to be residually finite, as was shown in \cite{Manuel-Oscar-arithmetic}. In the situation of the counterexample in \emph{loc. cit.}, we fix an embedding $D^{2n} \hookrightarrow W_g^n \defeq \#_g (S^n \times S^n)$, where $n = 5\pmod{8}$, for $g \geq 5$. Extension by the identity yields a morphism
$$\pi_0 \Diff_\partial (D^{2n}) \to \pi_0 \Diff(W_g^n)$$
which embeds the subgroup $\mathrm{bP}_{2n+2} \leq \Theta_{2n+1}\cong\pi_0 \Diff_\partial(D^{2n})$ (non-trivial for these values of $n$ by \cite{Kervaire-Milnor}) to the finite residual of $\pi_0 \Diff(W_g^n)$. The following two results imply that this is the only phenomenon preventing residual finiteness. We split it into two statements, one regarding smooth mapping class groups relative boundary, and one for smooth mapping class groups of closed manifolds.
\begin{lemma}\label{lemma:fin-res-smooth}
    Let $M$ be a closed, 2-connected smooth manifold of dimension $d \geq 6$, and denote $M^{\circ}\defeq M \setminus\Int(D^d)$, for some disc $D^d \subset M$. Fix another embedding $D^d \hookrightarrow \Int(M^{\circ})$. Then, $\mathrm{fr}(\pi_0 \Diff_{\partial}(M^{\circ})) \subseteq \mathrm{Im}(\pi_0 \Diff_\partial(D^d) \to \pi_0 \Diff_{\partial}(M^{\circ}))$, where $\mathrm{fr}(\pi_0 \Diff_{\partial}(M^{\circ}))$ is the finite residual, and where the map
    $\pi_0 \Diff_{\partial}(D^d) \to \pi_0 \Diff_{\partial}(M^{\circ})$ is extension by identity.
\end{lemma}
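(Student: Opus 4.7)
The plan is to combine the smooth Weiss fibre sequence of Section~\ref{section:Weiss-fibre-sequence} with the residual finiteness of $\pi_0 \Emb_{\partial/2}^{\cong}(M^\circ,M^\circ)$ established in Proposition~\ref{prop:embedding-rel-bdry-resfinite}. Recall that the fibre sequence
\[
B\Diff_\partial(D^d) \xrightarrow{B\iota} B\Diff_\partial(M^\circ) \xrightarrow{B r} B\Emb_{\partial/2}^{\cong}(M^\circ,M^\circ),
\]
obtained after delooping the Weiss fibration of grouplike $E_1$-spaces, produces on taking $\pi_1$ an \emph{exact sequence of groups}
\[
\pi_0 \Diff_\partial(D^d) \xrightarrow{\iota_\ast} \pi_0 \Diff_\partial(M^\circ) \xrightarrow{r_\ast} \pi_0 \Emb_{\partial/2}^{\cong}(M^\circ,M^\circ),
\]
where $\iota_\ast$ is the extension-by-identity map associated to the disc $D_+^{d-1}\times I \subset M^\circ$ appearing in the Weiss construction, and $r_\ast$ is the underlying-embedding map.

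Now for any group homomorphism $\varphi\colon G \to H$ with $H$ residually finite one has $\varphi(\mathrm{fr}(G)) \subseteq \mathrm{fr}(H) = 1$, since the preimage of any finite-index normal subgroup of $H$ is a finite-index normal subgroup of $G$ and therefore contains $\mathrm{fr}(G)$. Applying this to $r_\ast$ and using Proposition~\ref{prop:embedding-rel-bdry-resfinite}, we conclude $r_\ast(\mathrm{fr}(\pi_0 \Diff_\partial(M^\circ))) = 1$. Exactness of the sequence above then yields $\mathrm{fr}(\pi_0 \Diff_\partial(M^\circ)) \subseteq \mathrm{Im}(\iota_\ast)$, which is the desired inclusion modulo identifying the Weiss disc with the chosen disc in the statement.

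To handle this identification, observe that the image of the extension-by-identity map $\pi_0 \Diff_\partial(D^d) \to \pi_0 \Diff_\partial(M^\circ)$ only depends on the isotopy class of the chosen embedding $D^d \hookrightarrow M^\circ$: if two embeddings $e_0, e_1$ are ambient isotopic through a path $\psi_t \in \Diff_\partial(M^\circ)$, then for $\phi \in \Diff_\partial(D^d)$ the extensions satisfy $(e_1)_\ast \phi = \psi_1 \cdot (e_0)_\ast \phi \cdot \psi_1^{-1}$, and since $\psi_1$ lies in the identity component the two extensions are isotopic. The Weiss disc $D_+^{d-1}\times I$ can be pushed slightly off $\partial M^\circ$ to land in $\Int(M^\circ)$, after which it becomes isotopic (through embeddings into $\Int(M^\circ)$) to the disc fixed in the statement, since $M^\circ$ is connected of dimension $\geq 6$ and any two smooth embeddings of $D^d$ into a connected manifold's interior are isotopic (up to post-composition with an orientation-reversal of $D^d$, which does not alter the image as a subset of $\pi_0 \Diff_\partial(M^\circ)$). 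This completes the argument.

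The only real content is the input Proposition~\ref{prop:embedding-rel-bdry-resfinite}; the remaining steps are formal manipulations of the long exact sequence of a fibration of $E_1$-spaces together with an isotopy argument to match the two discs. Hence the main obstacle is already absorbed into the previously established residual finiteness of $\pi_0 \Emb_{\partial/2}^{\cong}(M^\circ, M^\circ)$.
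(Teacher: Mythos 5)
Your proof is correct and follows the same strategy as the paper's: both rely on the Weiss fibre sequence together with Proposition~\ref{prop:embedding-rel-bdry-resfinite}. The only cosmetic difference is in the final formal step: the paper observes that the quotient $\pi_0\Diff_\partial(M^\circ)/\mathrm{Im}(\iota_\ast)$ injects into the residually finite group $\pi_0\Emb^{\cong}_{\partial/2}(M^\circ,M^\circ)$ and is hence itself residually finite, whereas you note directly that $r_\ast$ annihilates $\mathrm{fr}(\pi_0\Diff_\partial(M^\circ))$ and invoke exactness $\ker r_\ast=\mathrm{Im}(\iota_\ast)$; these are two equivalent phrasings of the same argument. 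Your closing remark identifying the disc arising in the Weiss fibre sequence with the interior disc fixed in the statement is a bookkeeping point the paper leaves implicit; it is correct and is handled, as you say, by pushing the Weiss disc off the boundary and applying the disc theorem, with the orientation-reversal ambiguity absorbed by an inner automorphism of $\Diff_\partial(D^d)$.
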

\begin{proof}
Let $M^{\circ}$ denote $M \setminus \Int(D^d)$, where $D^d \hookrightarrow M$ is a fixed embedded disc. We also fix an embedding $D^d \hookrightarrow \Int(M^\circ)$. The proof boils down to showing that the quotient group
$$G \defeq \faktor{\pi_0 \Diff_\partial (M^{\circ})}{\left(\mathrm{Im}(\pi_0\Diff_\partial (D^d)\to \pi_0 \Diff_\partial(M^{\circ})\right)}$$
is a residually finite group: indeed, if that is the case, we take $g \in \pi_0 \Diff_{\partial}(M^{\circ})$ which does not lie in the image of $\pi_0 \Diff_{\partial}(D^d)$. This $g$ maps to a non-trivial element in the quotient group, and if the quotient group $G$ is residually finite, this element can be detected by a finite group, hence is not in $\mathrm{fr}(\pi_0 \Diff_{\partial}(M^{\circ}))$. To show that $G$ is residually finite, we consider the smooth Weiss fibre sequence
$$B\Diff_{\partial}(D^d) \to B\Diff_\partial(M^{\circ}) \to B\Emb_{\partial/2}^{\cong}(M^{\circ},M^{\circ})$$
which, on homotopy groups, yields an exact sequence
$$\pi_0 \Diff_{\partial}(D^d) \to \pi_0\Diff_{\partial}(M^{\circ}) \to \pi_0 \Emb_{\partial/2}^{\cong}(M^{\circ},M^{\circ})$$
As a consequence, we observe that the image of the morphism $\pi_0 \Diff_{\partial}(D^d) \to \pi_0 \Diff_{\partial}(M^{\circ})$ is a normal subgroup, and hence the quotient is indeed a group. By \cref{prop:embedding-rel-bdry-resfinite}, we know that $\pi_0 \Emb_{\partial/2}^{\cong}(M^{\circ},M^{\circ})$ is residually finite. Thus, the quotient group $G$, which is identified as a subgroup of $\pi_0 \Emb_{\partial/2}^{\cong}(M^{\circ},M^{\circ})$, is consequently itself a residually finite group, as subgroups of residually finite groups are residually finite.
\end{proof}
We now study the case of the mapping class groups of $M$, $\pi_0 \Diff(M)$.

\begin{theorem}\label{thm:fin-res-smooth}
    Let $M$ be a smooth, closed 2-connected manifold of dimension $d \geq 6$, and fix an embedded disc $D^d \subset M$. Then,
    \begin{enumerate}[(i)]
        \item $\mathrm{fr}(\pi_0 \Diff^+ (M)) \subseteq \mathrm{Im}(\pi_0 \Diff_\partial(D^d) \to \pi_0 \Diff^+(M))$, where the map $\pi_0 \Diff_\partial (D^d) \to \pi_0 \Diff^+(M)$ is given by extension by the identity;
        \item $\mathrm{fr}(\pi_0 \Diff(M)) \subseteq \mathrm{Im}(\pi_0 \Diff_\partial(D^d) \to \pi_0 \Diff(M))$, where the map $\pi_0 \Diff_\partial (D^d) \to \pi_0 \Diff(M)$ is given by extension by the identity
    \end{enumerate}
\end{theorem}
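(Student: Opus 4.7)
The plan is to derive both statements from \cref{lemma:fin-res-smooth} by combining it with the long exact sequences of the parametrized isotopy extension fibrations
\[
\Diff_\partial(M^\circ) \to \Diff^+(M) \to \Emb^+(D^d, M) \simeq \Fr^+(M)
\]
and
\[
\Diff_\partial(M^\circ) \to \Diff(M) \to \Emb(D^d, M) \simeq \Fr(M),
\]
where the base is identified with the (oriented) frame bundle in each case.

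For part (i): Since $M$ is $2$-connected, the oriented frame bundle $\Fr^+(M)$ is connected with $\pi_1 \Fr^+(M) \cong \pi_1 SO(d) \cong \mathbb{Z}/2$. The long exact sequence thus yields a surjection $p \colon \pi_0 \Diff_\partial(M^\circ) \twoheadrightarrow \pi_0 \Diff^+(M)$ whose kernel $K$ is a subgroup of $\mathbb{Z}/2$, in particular finite. I argue by contrapositive: fix $g \in \pi_0 \Diff^+(M)$ with $g \notin I \defeq \mathrm{Im}(\pi_0 \Diff_\partial(D^d) \to \pi_0 \Diff^+(M))$ and exhibit a finite-index normal subgroup of $\pi_0 \Diff^+(M)$ avoiding $g$. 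Choose any lift $\tilde g \in p^{-1}(g)$. The map $\pi_0 \Diff_\partial(D^d) \to \pi_0 \Diff^+(M)$ factors through $\pi_0 \Diff_\partial(M^\circ)$ with image $I' \defeq \mathrm{Im}(\pi_0 \Diff_\partial(D^d) \to \pi_0 \Diff_\partial(M^\circ))$, after using ambient isotopy to identify the two disc embeddings involved; thus $p^{-1}(I) = I' \cdot K$, which gives $\tilde g \notin I'$ and, writing $K = \{1,k\}$, also $k\tilde g \notin I'$. By \cref{lemma:fin-res-smooth}, neither $\tilde g$ nor $k\tilde g$ lies in $\mathrm{fr}(\pi_0 \Diff_\partial(M^\circ))$, so I can select finite-index normal subgroups $N_1, N_2 \trianglelefteq \pi_0 \Diff_\partial(M^\circ)$ with $\tilde g \notin N_1$ and $k\tilde g \notin N_2$. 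Setting $N \defeq (N_1 \cap N_2) \cdot K$, this is a finite-index normal subgroup containing $K$ with $\tilde g \notin N$, and its image $p(N)$ is a finite-index normal subgroup of $\pi_0 \Diff^+(M)$ avoiding $g$, as desired.

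For part (ii): The unoriented fibration together with $\pi_0 \Fr(M) \cong \mathbb{Z}/2$ shows that the image of $\pi_0 \Diff_\partial(M^\circ) \to \pi_0 \Diff(M)$ coincides with $\pi_0 \Diff^+(M)$, which sits inside $\pi_0 \Diff(M)$ as a subgroup of index at most $2$. The remaining ingredient is the elementary fact that $\mathrm{fr}(H) = \mathrm{fr}(G)$ whenever $H$ is a finite-index subgroup of $G$: one inclusion follows by intersecting finite-index normal subgroups of $G$ with $H$, the other by taking normal cores in $G$ of finite-index normal subgroups of $H$. Applied to $H = \pi_0 \Diff^+(M) \leq G = \pi_0 \Diff(M)$, and using that $\pi_0 \Diff_\partial(D^d) \to \pi_0 \Diff(M)$ automatically factors through $\pi_0 \Diff^+(M)$ (since any diffeomorphism of $D^d$ rel boundary is orientation-preserving), this reduces (ii) to the conclusion of (i).

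The main obstacle is the bookkeeping in part (i): the subgroup $I'$ need not contain the kernel $K$, so one cannot naively project a finite-index normal subgroup of $\pi_0 \Diff_\partial(M^\circ)$ down to $\pi_0 \Diff^+(M)$ while preserving information about $g$. This is what forces the simultaneous treatment of the two lifts $\tilde g$ and $k\tilde g$ of $g$. Luckily, since $K$ has order at most two and \cref{lemma:fin-res-smooth} applies uniformly to each lift, a finite intersection of finite-index normal subgroups suffices.
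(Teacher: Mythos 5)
Your proof is correct, and it arrives at the theorem by a route that differs from the paper's in two structural respects, both worth noting.

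For part~(i), you and the paper both establish that the gluing-in map $p\colon \pi_0\Diff_\partial(M^\circ)\to\pi_0\Diff^+(M)$ is surjective with finite kernel $K$, and both feed this into \cref{lemma:fin-res-smooth}. You extract this from the isotopy-extension fibration $\Diff_\partial(M^\circ)\to\Diff^+(M)\to\Emb^+(D^d,M)\simeq\Fr^+(M)$ directly, whereas the paper routes through $\Diff^+(M,\ast)$ and the fibre sequences over $BSO(d)$ and over $M$; these are two presentations of the same input. The substantive difference is how residual finiteness is transported along $p$. The paper passes to quotients by $\mathrm{Im}(\Theta_{d+1})$ on both sides and asserts the induced map is an isomorphism; the injectivity of that induced map, once unpacked, amounts to the inclusion $K\subseteq\mathrm{Im}(\Theta_{d+1}\to\pi_0\Diff_\partial(M^\circ))$. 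Your coset bookkeeping --- choosing a lift for each coset of $K$, separating each lift from $\mathrm{Im}(\Theta_{d+1})$ by a finite-index normal subgroup, intersecting, absorbing $K$ into the intersection, and pushing forward along $p$ --- avoids relying on any such inclusion and works for any finite kernel. This is more robust and arguably more careful, at the cost of a little coset manipulation. The one spot where you gloss is the identification of the two images of $\Theta_{d+1}$ coming from the two disc embeddings; it is worth saying explicitly that this identification is legitimate because that image equals $p(I')$, which is normal in $\pi_0\Diff^+(M)$ since $I'$ is normal in $\pi_0\Diff_\partial(M^\circ)$ (from the Weiss fibre sequence) and $p$ is surjective, so conjugation by the ambient isotopy does not move it.

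For part~(ii), the paper reruns the quotient argument in the unoriented setting and then applies \cref{lemma:finite-index-resfin}, whereas you invoke the elementary fact that $\mathrm{fr}(H)=\mathrm{fr}(G)$ whenever $H\leq G$ has finite index (normal cores in one direction, intersection with $H$ in the other) and thereby reduce (ii) entirely to (i). Your reduction is shorter and self-contained; it also makes explicit that no separate normality claim for $\mathrm{Im}(\Theta_{d+1})$ in $\pi_0\Diff(M)$ is needed for the containment of finite residuals.
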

\begin{proof}
    We begin by showing (i). For the proof, we make use of the following three fibre sequences below, where $M$ is a smooth, 2-connected closed manifold, and where $M^{\circ}$ denotes, as usual, $M$ with the interior of an embedded disc deleted; this new disc is taken to be disjoint from the fixed disc $D^d \subset M$. These fibre sequences are the following:
    \begin{equation}\label{eq:fibseq1}
        B\Diff_{\partial}(D^d) \to B\Diff_\partial (M^{\circ}) \to B\Emb_{\partial/2}^{\cong}(M^{\circ},M^{\circ})
    \end{equation}
    \begin{equation}\label{eq:fibseq2}
        B\Diff_{\partial}(M^{\circ}) \to B\Diff^+(M,\ast) \to BSO(d)
    \end{equation}
    \begin{equation}\label{eq:fibseq3}
        M \to B\Diff^+(M,\ast) \to B\Diff^+(M)
    \end{equation}

The fibre sequence (\ref{eq:fibseq1}) is the Weiss fibre sequence, and fibre sequences (\ref{eq:fibseq2}) and (\ref{eq:fibseq3}) are as in \cite[equation (6) and (10)]{ManuelFibrations}; $\Diff^+(M,\ast)$ denotes the group of orientation preserving diffeomorphisms that fix a basepoint of $M$, and the morphism $\Diff_\partial(M^{\circ}) \to \Diff^+(M,\ast)$ is given by extension by the identity. As $BSO(d)$ is 1-connected, and $M$ is 2-connected, it follows that the morphism $B\Diff_\partial(M) \to B\Diff^+(M)$ is surjective on $\pi_1$; therefore, the morphism $\pi_0 \Diff_\partial (M^{\circ}) \to \pi_0 \Diff^+ (M)$ is surjective. Additionally, we observe that the image of $\pi_0 \Diff_\partial(D^d) \cong \Theta_{d+1}$ in $\pi_0 \Diff_\partial (M^{\circ})$ is normal, as can be seen from the exact sequence on $\pi_1$ for the fibre sequence (\ref{eq:fibseq1}). Thus, the image of $\pi_0 \Diff_\partial(D^d)$ in $\pi_0 \Diff^+(M)$ is also normal, as surjective group homormophisms send normal subgroups to normal subgroups. We thus obtain a group morphism
$$\frac{\pi_0 \Diff_\partial(M^{\circ})}{\Theta_{d+1}} \to \frac{\pi_0 \Diff^+(M)}{\Theta_{d+1}}$$
where by the quotient by $\Theta_{d+1}$, we mean the quotient by the image of $\Theta_{d+1}$ in each of the respective groups. By \cref{lemma:fin-res-smooth}, we know that the group on the left hand side is residually finite. We claim that the above morphism is an isomorphism. Surjectivity follows from the fact that the group morphism $\pi_0 \Diff_\partial (M^{\circ}) \to \pi_0 \Diff^+(M)$ is surjective prior to taking quotients. For injectivity, we consider an element $[g] \in \frac{\pi_0 \Diff_{\partial}(M^{\circ})}{\Theta_{d+1}}$, represented by an isotopy class of diffeomorphisms in $\pi_0 \Diff_\partial (M^{\circ})$, that is mapped to the trivial element in $\frac{\pi_0 \Diff^+(M)}{\Theta_{d+1}}$. Thus, the diffeomorphism $g \cup_{\partial}\mathrm{id}$, given by extension by the identity applied to $g$, is in the image of $\Theta_{d+1}$. However, we have a commutative diagram
\begin{center}
    \begin{tikzcd}
        \Theta_{d+1}\cong \pi_0 \Diff_{\partial}(D^d) \arrow{dr}\arrow{r} & \pi_0\Diff_\partial(M^{\circ}) \arrow{d}
        \\
        & \pi_0 \Diff^+(M)
    \end{tikzcd}
\end{center}
where all morphisms are given by extensions by the identity. Thus, $g$ is itself in the image of $\Theta_{d+1}$ in $\pi_0 \Diff_{\partial}(M^{\circ})$, so that $[g]=0$ in the quotient. Consequently, $\frac{\pi_0 \Diff^+(M)}{\Theta_{d+1}}$ is residually finite, and (i) follows.
\\

We now show (ii). The non-oriented analogue of fibre sequence (\ref{eq:fibseq3}) holds, namely we have a fibre sequence
\begin{equation}\label{eq:fibseq4}
    M \to B\Diff(M,\ast) \to B\Diff(M)
\end{equation}
where $\Diff(M,\ast)$ is the group of diffeomorphisms preserving a basepoint $\ast \in M$. As $M$ is assumed 2-connected, the group morphism $\pi_0 \Diff(M,\ast) \to \pi_0 \Diff(M)$ is an isomorphism. As a consequence, we observe that $\mathrm{Im}(\Theta_{d+1} \to \pi_0 \Diff(M))$ is a normal subgroup of $\pi_0 \Diff(M)$: indeed, any diffeomorphism of $M$ is isotopic to a diffeomorphism that fixes a point, and hence fixes a neighborhood disc around that point setwise. As a consequence, we have a group morphism
$$\frac{\pi_0 \Diff_\partial (M^{\circ})}{\Theta_{d+1}} \to \frac{\pi_0 \Diff(M)}{\Theta_{d+1}}$$
where we use the same convention regarding the quotient by $\Theta_{d+1}$ as in the proof of (i). The above group morphism is injective, for the same argument above; however, it may fail to be surjective. Nonetheless, it embedds $\frac{\pi_0 \Diff_{\partial}(M^{\circ})}{\Theta_{d+1}}$ as a finite index subgroup of $\frac{\pi_0 \Diff(M)}{\Theta_{d+1}}$. As the domain of the map is residually finite, \cref{lemma:finite-index-resfin} implies the target is also residually finite, and (ii) follows. 
\end{proof}

As a corollary, it follows that $\pi_0 \Diff(M)$ and $\pi_0 \Diff^+(M)$ are residually finite, for $M$ a closed smooth 2-connected manifold of dimension $d \geq 5$, whenever $\Theta_{d+1}$ is the trivial group. By \cite[Corollary 1.3]{trivialexotic}, this holds for $d\in \{5,11,55,60\}$, for instance.
\bigskip
\newpage
\appendix
\section{Diagrams of finite completion}\label{appendix}
The content of this appendix builds up to a proof of \cref{thm:profinite} and \cref{cor:profinite}. We wish to thank Thomas Blom for the main ideas presented here.

\begin{definition}[Tensoring/cotensoring]
    We consider the following two functors
    \begin{align*}
        \mathcal{S} \times \Pro(\mathcal{S}_\pi) &  \to \Pro(\mathcal{S}_\pi)
        \\
        (K,X) & \mapsto K\otimes X \coloneqq \colim_K X
    \end{align*}
    and
    \begin{align*}
        \mathcal{S}^{\op} \times \Pro(\mathcal{S}_\pi) &\to \Pro(\mathcal{S}_\pi)
        \\
        (K,X) & \mapsto X^K \coloneqq \lim_K X
    \end{align*}
    both of which are taken over the constant diagrams.
\end{definition}

The following couple of lemmas consist of showing that the above tensoring-cotensoring define a $\Pro(\mathcal{S}_\pi)$-enriched mapping space between $\widehat X$ and $\widehat Y$, whenever $X$ and $Y$ are finite, simply connected spaces, which recovers $\Map_{\Pro(\mathcal{S}_\pi)}(\widehat X, \widehat Y)$ upon materialisation.

\begin{lemma}\label{lemma:cotensoring-mapping-space}
    Let $K$ be a space, and $X$ a profinite space. Then,
    $$\Map_{\Pro(\mathcal{S}_\pi)}(\widehat{K},X) \simeq \Mat(X^K)$$
\end{lemma}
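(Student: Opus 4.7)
The proof is essentially formal, exploiting the adjunction $\widehat{(-)} \dashv \Mat$ recalled earlier in the paper together with the fact that $\Mat$, being a right adjoint, preserves all limits.

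The plan is to rewrite both sides using universal properties and observe that they coincide. First, by the defining adjunction for pro-$\pi$-finite completion,
\[
\Map_{\Pro(\mathcal{S}_\pi)}(\widehat{K}, X) \simeq \Map_{\mathcal{S}}(K, \Mat(X)).
\]
Next, since $\mathcal{S}$ is cartesian closed and mapping spaces out of an $\infty$-groupoid are computed as limits over the corresponding diagram, we have
\[
\Map_{\mathcal{S}}(K, \Mat(X)) \simeq \lim_{K} \Mat(X),
\]
where the right-hand side is the limit of the constant diagram at $\Mat(X)$ indexed by $K$ (viewed as an $\infty$-groupoid). Finally, because $\Mat \colon \Pro(\mathcal{S}_\pi) \to \mathcal{S}$ is a right adjoint, it commutes with limits, giving
\[
\lim_{K} \Mat(X) \simeq \Mat\bigl(\lim_{K} X\bigr) \eqqcolon \Mat(X^K).
\]
Chaining these three equivalences yields the claim.

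The only point that requires any care is the last step: one must verify that the cotensor $X^K = \lim_K X$ really exists in $\Pro(\mathcal{S}_\pi)$ and that the equivalence $\Mat(\lim_K X) \simeq \lim_K \Mat(X)$ is natural enough to compose with the adjunction equivalence. This is immediate from the fact that $\Pro(\mathcal{S}_\pi)$ admits all small limits (computed levelwise on representing diagrams) and that $\Mat$, as a right adjoint, preserves them; no further hypotheses on $K$ or $X$ (such as finiteness or connectivity) are needed for this particular statement. Hence the lemma follows with no further input.
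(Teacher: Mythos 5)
Your proof is correct and follows the paper's argument essentially verbatim: the adjunction $\widehat{(-)}\dashv\Mat$, the identification $\Map_{\mathcal{S}}(K,-)\simeq\lim_K(-)$ (which the paper phrases via $K\simeq\colim_K\ast$), and preservation of limits by the right adjoint $\Mat$. No meaningful difference.
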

\begin{proof}
    By adjunction, we have an equivalence
    $$\Map_{\Pro(\mathcal{S_\pi})}(\widehat{K},X) \simeq \Map_\mathcal{S}(K,\Mat(X))$$
    In spaces, we may write $K$ as a colimit over the constant diagram
    $$K \simeq \colim_K \ast$$
    which in turns gives an equivalence
    $$\Map_\mathcal{S}(K,\Mat(X)) \simeq \lim_K \Map_{\mathcal{S}}(\ast,\Mat(X)) \simeq \lim_K \Mat(X)$$
    As materialisation is a right adjoint, it preserves limits, and we consequently obtain the desired equivalence
    $$\Map_{\Pro(\mathcal{S_\pi})}(\widehat{K},X) \simeq \Mat(X^K)$$
\end{proof}

\begin{lemma}\label{lemma:tensoring-finite-spaces}
    Let $X \in \Pro(\mathcal{S}_\pi)$, and fix $K$ a simply connected, finite space. Then, $K \otimes X \simeq \widehat{K} \times X$, naturally in $K$ and $X$.
\end{lemma}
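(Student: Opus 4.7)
The plan is to exhibit the desired equivalence by testing both sides against an arbitrary $\pi$-finite space: by the pro-Yoneda lemma, it suffices to produce a natural equivalence
\[
\Map_{\Pro(\mathcal{S}_\pi)}(K \otimes X,\, F) \;\simeq\; \Map_{\Pro(\mathcal{S}_\pi)}(\widehat{K} \times X,\, F)
\]
for every $F \in \mathcal{S}_\pi$. The tensor side is handled immediately by the universal property of the colimit over a constant diagram:
\[
\Map_{\Pro(\mathcal{S}_\pi)}(K \otimes X,\, F) \simeq \lim_K \Map_{\Pro(\mathcal{S}_\pi)}(X, F) \simeq \Map_\mathcal{S}\bigl(K,\, \Map_{\Pro(\mathcal{S}_\pi)}(X, F)\bigr).
\]

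For the product side, I would write $X$ as the formal cofiltered limit $\{X_\alpha\}$ of $\pi$-finite spaces and $\widehat{K} = \{\widehat{K}_\beta\}$, so that the product in $\Pro(\mathcal{S}_\pi)$ is computed levelwise as $\{\widehat{K}_\beta \times X_\alpha\}$. Four moves then chain together. First, by definition of mapping spaces in pro-categories, $\Map_{\Pro(\mathcal{S}_\pi)}(\widehat{K}\times X, F) \simeq \colim_\alpha \colim_\beta \Map_\mathcal{S}(\widehat{K}_\beta \times X_\alpha, F)$. Second, $\mathcal{S}_\pi$ is closed under internal mapping spaces (for $\pi$-finite $X_\alpha, F$ the function space $F^{X_\alpha}$ is again $\pi$-finite), so Cartesian closedness rewrites the inner term as $\Map_\mathcal{S}(\widehat{K}_\beta, F^{X_\alpha})$. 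Third, the colimit over $\beta$ reassembles this into $\Map_{\Pro(\mathcal{S}_\pi)}(\widehat{K}, F^{X_\alpha})$, which by the adjunction $\widehat{(-)} \dashv \mathrm{Mat}$ (with $\mathrm{Mat}$ of a constant pro-object being the underlying space) identifies with $\Map_\mathcal{S}(K, F^{X_\alpha})$. Fourth, since $K$ is a finite CW complex and hence a compact object of $\mathcal{S}$, $\Map_\mathcal{S}(K,-)$ commutes with filtered colimits, giving
\[
\colim_\alpha \Map_\mathcal{S}(K, F^{X_\alpha}) \simeq \Map_\mathcal{S}\bigl(K,\, \colim_\alpha F^{X_\alpha}\bigr) \simeq \Map_\mathcal{S}\bigl(K,\, \Map_{\Pro(\mathcal{S}_\pi)}(X, F)\bigr),
\]
matching the tensor side. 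Each step is manifestly natural in $K$ and $X$.

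The main obstacle I anticipate is the bookkeeping around step one: carefully justifying that the product of two pro-objects in $\Pro(\mathcal{S}_\pi)$ is modeled by the formal pairwise product indexed over the product of the cofiltered indexing diagrams, and that its mapping space into a $\pi$-finite $F$ is indeed the stated double colimit. Simply-connectedness of $K$ enters through step three, ensuring that $\widehat{K}$ is a sufficiently tractable pro-object for the universal-property identification to be clean — compatibly with Sullivan's identification of the homotopy groups of $\mathrm{Mat}(\widehat{K})$ with the profinite completions of those of $K$. The remaining verifications — closure of $\mathcal{S}_\pi$ under internal mapping spaces, and compactness of finite CW complexes — are standard.
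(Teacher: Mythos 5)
Your proof is correct, and it takes a genuinely different route from the paper. The paper first reduces to $X$ constant at a $\pi$-finite space $X_i$ (commuting the finite colimit $\colim_K$ with cofiltered limits), proves $K \otimes X_i \simeq \widehat{K \times X_i}$ by a chain of adjunctions, and then separately establishes $\widehat{K \times X_i} \simeq \widehat{K} \times \widehat{X_i}$. That last step is where the paper uses simple connectedness of $K$: it invokes conservativity of $\Mat$, Sullivan's theorem identifying $\pi_n\Phi^s$ of a good space with $\Phi^g\pi_n$, and exactness of $\Phi^g$ on finite products. Your approach bypasses all of that by doing the entire computation at the level of mapping spaces into $\pi$-finite test objects, trading Sullivan's theorem for two structural facts: $\mathcal{S}_\pi$ is closed under internal mapping spaces, and a finite space $K$ is compact in $\mathcal{S}$. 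This is cleaner, avoids any analysis of homotopy groups of profinite completions, and in fact proves a \emph{more general} statement — nowhere in your argument is simple connectedness of $K$ actually used. Your final remark misattributes this: step three is simply the adjunction $\widehat{(-)} \dashv \Mat$ applied to a constant pro-object, which holds for arbitrary $K$; Sullivan's homotopy-group identification plays no role in your argument. (Connectedness is not needed either; only that $K$ is a finite space.) The one input you should flag with a citation or short argument is the closure of $\mathcal{S}_\pi$ under internal hom — it is true, but not entirely formal when the source $X_\alpha$ is $\pi$-finite rather than a finite complex; it requires an obstruction-theoretic or Postnikov induction. Since the paper later chases through the explicit form of this equivalence in the proof that $\widehat{X \times Y} \to \widehat{X} \times \widehat{Y}$ is an equivalence, one should also verify that your natural equivalence and the paper's agree, but by Yoneda both witness the same identification of pro-objects, so this is routine.
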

\begin{proof}
    By definition, $X \coloneqq \lim_i X_i$, where $X_i$ is a cofiltered system of $\pi$-finite spaces, and where the limit is meant as a formal limit. Then, since cofiltered limits commute with finite colimits, we get the equivalence
    \begin{align*}
        K \otimes X & \simeq \colim_K \lim_i X_i
        \\
        & \simeq \lim_i \colim_K X_i \simeq \lim_i K \otimes X_i
    \end{align*}
    We now check that for a fixed $\pi$-finite space $X_i$, $K \otimes X_i  \simeq X_i \times \widehat K $ where we view $X_i$ as a constant $\Pro$-object in $\mathcal{S}_\pi$. By the Yoneda lemma, it suffices to show that for all $Y \in \Pro(\mathcal{S}_\pi)$, we have a natural equivalence
    $$\Map_{\Pro(\mathcal{S}_\pi)}(K \otimes X_i , Y) \simeq \Map_{\Pro(\mathcal{S}_\pi)}(\widehat K \times X_i, Y)$$
    which we may further restrict to the case where $Y$ is a $\pi$-finite space, viewed as a constant $\Pro$-object in $\mathcal{S}_\pi$. We have the following chain of natural equivalences
    \begin{align*}
        \Map_{\Pro(\mathcal{S}_\pi)}(K \otimes X_i ,Y) & \stackrel{\circled{1}}{\simeq} \Map_{\Pro(\mathcal{S}_\pi)}(X_i, Y^K)
        \\
        &\stackrel{\circled{2}}{\simeq}\Map_{\mathcal{S}}(X_i,\Mat(Y^K))
        \\
        &\stackrel{\circled{3}}{\simeq} \Map_{\mathcal{S}}(X_i,\Map_{\Pro(\mathcal{S}_\pi)}(\widehat K, Y))
        \\
        &\stackrel{\circled{4}}{\simeq}\Map_{\mathcal{S}}(X_i,\Map_{\mathcal{S}}( K, Y))
        \\
        &\stackrel{\circled{5}}{\simeq} \Map_\mathcal{S}(X_i \times K, Y)
        \\
        &\stackrel{\circled{6}}{\simeq} \Map_{\Pro(\mathcal{S}_\pi)}(\widehat{K \times X_i},Y)
    \end{align*}
    where
    \begin{itemize}
        \item $\circled{1}$ is given by the tensoring-cotensoring adjunction;
        \item $\circled{2}$ follows from the fact that $X_i = \widehat{X_i}$, as $X_i$ is a $\pi$-finite space;
        \item $\circled{3}$ follows from  \cref{lemma:cotensoring-mapping-space};
        \item $\circled{4}$ is obtained from the adjunction $\widehat{(-)} \dashv \Mat$ adjunction, along with the fact that $\Mat(\widehat{Y}) \simeq Y$, since $Y$ is $\pi$-finite;
        \item $\circled{5}$ follows from the usual tensor-hom adjunction in $\mathcal{S}$;
        \item $\circled{6}$ is again obtained from the adjunction $\widehat{(-)} \dashv \Mat$
    \end{itemize}

    As a consequence, we obtain an equivalence $K \otimes X_i \simeq \widehat{K \times X_i}$. We conclude by showing that the natural map
    $$\widehat{K \times X_i} \to \widehat{K} \times \widehat{X_i}=\widehat{K} \times X_i$$
    is an equivalence. First, observe that $\pi_0(K \times X_i) \cong \pi_0 X_i$ is a finite set, as $K$ is a connected space. As (space) finite completion induces the set profinite completion on $\pi_0$, it follows that $\pi_0 (\widehat {K \times X_i}) \cong \pi_0 (\widehat{K} \times X_i) \cong \pi_0 X_i$. As the functor $\Mat\colon \Pro(\mathcal{S}_\pi) \to \mathcal{S}$ is conservative \cite[thm E.3.1.6]{SAG}, it follows that the above map is an equivalence of profinite spaces if it induces an isomorphism on all homotopy groups, and all choices of basepoints. Now, we note that $K \times X_i$ satisfies Sullivan's conditions for \cite[thm 3.1]{Sullivan-genetics} (\cref{rem:Sullivan3.1}), and we see that (omitting basepoint notation) the map induces the morphism
    $$\Phi^g (\pi_n (K \times X_i)) \to \Phi^g \pi_n K \times \Phi^g \pi_n X_i$$
    Since group profinite completion preserves finite products, it follows that the above is an isomorphism, for all $n \in \mathbb{N}$, and thus that $\widehat{K \times X_i} \simeq \widehat{K} \times X_i$. Combining with the above, we conclude that $K \otimes X_i \simeq \widehat{K} \times X_i$, and thus that $K \otimes X \simeq \widehat{K} \times X$, as desired.
\end{proof}
While showing the above lemma, we also show a separate result of interest; we record it as a separate lemma and reprove it, and point out the similarity between the proofs.
\begin{lemma}\label{lemma:commute_tensors_hats}
    Let $X, Y \in \mathcal{S}$ be arbitrary spaces. Then, there are equivalences
    $$X \otimes \widehat{Y} \simeq \widehat{X \times Y} \simeq Y \otimes \widehat{X}$$
    that are natural in both variables.
\end{lemma}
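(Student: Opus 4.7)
The plan is to prove the first equivalence $X \otimes \widehat{Y} \simeq \widehat{X \times Y}$ by a Yoneda argument, verifying that the two objects have equivalent mapping spaces into an arbitrary $Z \in \Pro(\mathcal{S}_\pi)$, naturally in $X$, $Y$ and $Z$; the second equivalence will then follow immediately by interchanging the roles of $X$ and $Y$ and using the symmetry $X \times Y \simeq Y \times X$.

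Concretely, I would first unfold the tensoring: by definition $X \otimes \widehat{Y} = \colim_X \widehat{Y}$, where the colimit is taken in $\Pro(\mathcal{S}_\pi)$ over the constant diagram. By the universal property of colimits, for any $Z \in \Pro(\mathcal{S}_\pi)$ one obtains
$$\Map_{\Pro(\mathcal{S}_\pi)}(X \otimes \widehat{Y}, Z) \simeq \lim_X \Map_{\Pro(\mathcal{S}_\pi)}(\widehat{Y}, Z).$$
Writing $X \simeq \colim_X \ast$ in $\mathcal{S}$, this limit is itself equivalent to $\Map_{\mathcal{S}}(X, \Map_{\Pro(\mathcal{S}_\pi)}(\widehat{Y}, Z))$.

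Next, I would apply the adjunction $\widehat{(-)} \dashv \Mat$ twice, together with the standard tensor-hom adjunction in $\mathcal{S}$, to chain together the following natural equivalences:
$$\Map_{\mathcal{S}}\!\bigl(X, \Map_{\Pro(\mathcal{S}_\pi)}(\widehat{Y}, Z)\bigr) \simeq \Map_{\mathcal{S}}(X, \Map_{\mathcal{S}}(Y, \Mat Z)) \simeq \Map_{\mathcal{S}}(X \times Y, \Mat Z) \simeq \Map_{\Pro(\mathcal{S}_\pi)}(\widehat{X \times Y}, Z).$$
Since this chain is natural in all variables and holds for every $Z \in \Pro(\mathcal{S}_\pi)$, Yoneda produces the desired natural equivalence $X \otimes \widehat{Y} \simeq \widehat{X \times Y}$. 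Running the exact same argument with $X$ and $Y$ swapped, and invoking $X \times Y \simeq Y \times X$, gives $\widehat{X \times Y} \simeq Y \otimes \widehat{X}$.

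There is no real obstacle here beyond keeping track of naturality in the adjunction equivalences, which is routine. It is worth emphasizing the contrast with \cref{lemma:tensoring-finite-spaces}: the delicate step there was identifying $\widehat{K \times X_i}$ with $\widehat{K} \times \widehat{X_i}$, which required $K$ to be simply connected and finite, and ultimately relied on Sullivan's theorem and the fact that group finite completion preserves products of finitely generated groups. In the present lemma, by contrast, the product remains \emph{inside} the finite completion on the right hand side, so the adjunction chain closes up formally and no finiteness or connectivity hypotheses on $X$ or $Y$ are needed.
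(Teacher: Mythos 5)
Your argument is correct and follows the same route as the paper: a Yoneda argument against arbitrary $Z\in\Pro(\mathcal{S}_\pi)$ followed by unwinding the tensoring and chaining the $\widehat{(-)}\dashv\Mat$ adjunction with the tensor-hom adjunction in $\mathcal{S}$. A minor point of style: the paper first reduces to $\pi$-finite $Z$ (so it can write $\Map_{\mathcal{S}}(X,Z)$ directly) and routes through \cref{lemma:cotensoring-mapping-space}, whereas you carry $\Mat Z$ through the chain and thereby avoid that reduction entirely — slightly more economical, but not a different method.
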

\begin{proof}
    It suffices to show that we have a natural equivalence $X \otimes \widehat{Y} \simeq \widehat{X \times Y}$. Using the Yoneda lemma, we restrict ourselves to showing that for all $Z \in \Pro(\mathcal{S}_\pi)$, there are natural equivalences
    $$\Map_{\Pro(\mathcal{S}_\pi)}(X \otimes \widehat{Y},Z) \simeq \Map_{\Pro(\mathcal{S}_\pi)}(\widehat{X \times Y},Z)$$
    As $Z$ can be written as a limit of $\pi$-finite spaces $\{Z_\alpha\}$, and as we are mapping into a limit, we may further restrict to the case $Z \in \mathcal{S}_\pi$. We consider the following chain of equivalences
    \begin{align*}
        \Map_{\Pro(\mathcal{S}_\pi)}(X \otimes \widehat{Y}, Z) & \stackrel{\circled{1}}{\simeq} \Map_{\Pro(\mathcal{S}_\pi)}(\widehat{Y},Z^X)
        \\
        & \stackrel{\circled{2}}{\simeq} \Map_{\mathcal{S}}(Y,\Mat(Z^X))
        \\
        &\stackrel{\circled{3}}{\simeq} \Map_{\mathcal{S}}(Y,\Map_{\mathcal{S}}(X,Z))
        \\
        & \stackrel{\circled{4}}{\simeq} \Map_{\mathcal{S}}(X \times Y, Z)
        \\
        & \stackrel{\circled{5}}{\simeq} \Map_{\Pro(\mathcal{S}_\pi)}(\widehat{X \times Y},Z)
    \end{align*}
    where
    \begin{itemize}
        \item $\circled{1}$ follows from the tensoring-cotensoring adjunction;
        \item $\circled{2}$ follows from the adjunction $\widehat{(-)} \dashv \Mat$;
        \item $\circled{3}$ follows from \cref{lemma:cotensoring-mapping-space};
        \item $\circled{4}$ follows from the tensor-hom adjunction in $\mathcal{S}$;
        \item $\circled{5}$ follows from the adjunction $\widehat{(-)} \dashv \Mat$, along with the fact that $Z$ is $\pi$-finite
    \end{itemize}
\end{proof}

We combine the previous two lemmas to obtain an interesting result; it will not play an important role in what follows, but is worth mentioning.
\begin{lemma}
    Let $X \in \mathcal{S}$ be an arbitrary space, and let $Y \in \mathcal{S}_{>1}^{\fin}$ be a finite, 1-connected space. The comparison map
    $$\widehat{X \times Y} \to \widehat{X} \times \widehat{Y}$$
    is an equivalence of profinite spaces.
\end{lemma}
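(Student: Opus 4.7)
The plan is to chain the two previous lemmas. Namely, for $X \in \mathcal{S}$ arbitrary and $Y \in \mathcal{S}_{>1}^{\fin}$, I would first apply \cref{lemma:commute_tensors_hats} to identify
$$\widehat{X \times Y} \simeq Y \otimes \widehat{X},$$
and then apply \cref{lemma:tensoring-finite-spaces}, with $K = Y$ (which is allowed since $Y$ is simply connected and finite) and with the profinite space $\widehat{X} \in \Pro(\mathcal{S}_\pi)$, to obtain
$$Y \otimes \widehat{X} \simeq \widehat{Y} \times \widehat{X}.$$
Composing, we get an equivalence $\widehat{X \times Y} \simeq \widehat{X} \times \widehat{Y}$ in $\Pro(\mathcal{S}_\pi)$.

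The only thing that remains is to verify that this composite equivalence agrees, up to homotopy, with the canonical comparison map $\widehat{X \times Y} \to \widehat{X} \times \widehat{Y}$ induced by the two projections. The plan is to do this by Yoneda: test the identification against mapping spaces into an arbitrary $\pi$-finite space $Z$, unfold both sides through the adjunction $\widehat{(-)} \dashv \Mat$, the tensoring-cotensoring adjunction, and the tensor-hom adjunction in $\mathcal{S}$, and observe that along this chain the structure morphisms defining the canonical comparison map are preserved on the nose; both the canonical map and the composite from the two lemmas correspond, under Yoneda, to the morphism
$$\Map_\mathcal{S}(X \times Y, Z) \xrightarrow{\simeq} \Map_\mathcal{S}(X, \Map_\mathcal{S}(Y, Z))$$
obtained by curryng. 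Since the naturality statements in \cref{lemma:cotensoring-mapping-space}, \cref{lemma:tensoring-finite-spaces} and \cref{lemma:commute_tensors_hats} are already in place, this identification is essentially formal.

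The main obstacle I expect is simply the bookkeeping of naturality, in particular writing the canonical comparison map $\widehat{X \times Y} \to \widehat{X} \times \widehat{Y}$ in a form amenable to comparison with a composite of adjunction-defined equivalences. No further homotopy-theoretic input should be needed beyond the two lemmas above; in particular, the hypothesis that $Y$ is finite and $1$-connected enters only via \cref{lemma:tensoring-finite-spaces}, which is where the assumption is genuinely used.
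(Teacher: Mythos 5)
Your proposal is correct and coincides with the paper's own proof: the paper also chains \cref{lemma:commute_tensors_hats} (to identify $\widehat{X\times Y}\simeq Y\otimes\widehat{X}$) with \cref{lemma:tensoring-finite-spaces} applied to $K=Y$ (to identify $Y\otimes\widehat{X}\simeq\widehat{Y}\times\widehat{X}$), and then observes by chasing the adjunction equivalences that the resulting composite is the canonical comparison map. Your Yoneda unfolding is simply a more explicit rendering of the paper's phrase ``by chasing through the proofs of the above two lemmas.''
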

\begin{proof}
    By \cref{lemma:commute_tensors_hats}, we have an equivalence
    $$Y \otimes \widehat{X} \simeq \widehat{X \times Y}$$
    By \cref{lemma:tensoring-finite-spaces}, we have an equivalence
    $$Y \otimes \widehat{X} \simeq \widehat{Y} \times \widehat{X}$$
    and by chasing through the proofs of the above two lemmas, we see that the equivalence is indeed given by the natural map
    $$\widehat{X \times Y} \to \widehat{X} \times \widehat{Y}$$
\end{proof}

Using \cref{lemma:tensoring-finite-spaces}, one can construct a composition morphism on the cotensoring.

\begin{construction}
    Let $X$, $Y$ and $Z$ be simply connected, finite spaces. We recall that we have a composition morphism
    $$-\circ- \colon \Map_{\Pro(\mathcal{S}_\pi)}(\widehat X, \widehat Y) \times \Map_{\Pro(\mathcal{S}_\pi)}(\widehat Y, \widehat Z) \to \Map_{\Pro(\mathcal{S}_\pi)}(\widehat X, \widehat Z)$$
    Similarly, we know that $\Map_{\Pro(\mathcal{S}_\pi)}(\widehat X, \widehat Y) \simeq \Mat(\widehat{Y}^X)$, where $\widehat{Y}^X$ is the cotensoring. We construct a map
    $$-\widehat{\circ}- \colon \widehat{Y}^X \times \widehat{Z}^Y \to \widehat{Z}^X$$
    as follows. We first observe that we have an equivalence
    $$\Map_{\Pro(\mathcal{S}_\pi)}(\widehat{Y}^X \times \widehat{Z}^Y, \widehat{Z}^X) \simeq \Map_{\Pro(\mathcal{S}_\pi)}(X \otimes(\widehat{Y}^X \times \widehat{Z}^Y),\widehat{Z})$$
    and using this equivalence, we construct the desired composition as an element of the mapping space on the right hand side. This is done as follows
    \begin{align*}
        X \otimes (\widehat{Y}^X \times \widehat{Z}^Y) & \stackrel{\circled{1}}{\simeq}\widehat X \times \widehat{Y}^X \times \widehat{Z}^Y
        \\
        &\xrightarrow[]{\circled{2}} \widehat Y \times \widehat{Z}^Y 
        \\
        &\xrightarrow[]{\circled{3}}\widehat Z
    \end{align*}
    where $\circled{1}$ follows from \cref{lemma:tensoring-finite-spaces}, and where $\circled{2}$ and $\circled{3}$ are given by the corresponding image of the identity morphism via the following equivalence
    $$\Map_{\Pro(\mathcal{S}_\pi)}(\widehat{Y}^X,\widehat{Y}^X) \simeq \Map_{\Pro(\mathcal{S}_\pi)}(X \otimes \widehat{Y}^X,\widehat Y) \simeq \Map_{\Pro(\mathcal{S}_\pi)}(\widehat X \times \widehat{Y}^X,\widehat Y)$$
\end{construction}
We have thus constructed, for simply connected finite spaces $X$, $Y$ and $Z$, a map
$$- \widehat{\circ} -\colon \widehat{Y}^X \times \widehat{Z}^Y \to \widehat{Z}^X$$
We now aim at generalizing the above composition morphism to the case of profinite completion of space-valued functors. This will be done using the universal properties of endomorphism objects of Lurie \cite[§4.7.1]{HA}. The setup is as follows: for $\mathcal{C}$ an arbitrary $\infty$-category, and a functor $\mathcal{F} \colon \mathcal{C} \to \mathcal{S}_{>1}^{\fin}$, we denote $\widehat{\mathcal{F}}$ to be the composite
$$\widehat{\mathcal{F}} \colon \mathcal{C} \xrightarrow[]{\mathcal{F}} \mathcal{S}_{>1}^{\fin} \xrightarrow[]{(\widehat{-})} \Pro(\mathcal{S}_\pi)$$
We then show that there is an $E_1$-algebra $\widehat{\mathcal{F}}^{\mathcal{F}}$ in $\Pro(\mathcal{S}_\pi)$ which, upon materialisation, is equivalent to the space of endomorphisms
$$\Map_{\Fun(\mathcal{C},\Pro(\mathcal{S}_\pi))}(\widehat{\mathcal{F}},\widehat{\mathcal{F}})$$
as spaces, and such that both compositions agree.

We begin by constructing the above object, first as an object of $\Pro(\mathcal{S}_\pi)$, and show that its materialisation is equivalent to the mapping space. We later use Lurie's theory of endomorphism objects to check that the object defined promotes to an $E_1$-algebra in $\Pro(\mathcal{S}_\pi)$, and compare the multiplications of $\Mat(\widehat{\mathcal{F}}^{\mathcal{F}})$ to that of $\mathrm{End}(\widehat{\mathcal{F}})$.

\begin{corollary}\label{cor:diagram-finite-completion}
    Let $\mathcal{C}$ be an arbitrary $\infty$-category, and let $\mathcal{F} \colon \mathcal{C} \to \mathcal{S}$, $\mathcal{G} \colon \mathcal{C} \to \Pro(\mathcal{S}_\pi)$ be two functors. Then, there exists an object in $\Pro(\mathcal{S}_\pi)$, denoted $\mathcal{G}^{\mathcal{F}}$, such that
    $$\Map_{\Fun(\mathcal{C},\Pro(\mathcal{S}_\pi))}(\widehat{\mathcal{F}}, \mathcal{G}) \simeq \Mat(\mathcal{G}^{\mathcal{F}})$$
\end{corollary}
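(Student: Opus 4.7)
The plan is to construct $\mathcal{G}^{\mathcal{F}}$ as an appropriate end in $\Pro(\mathcal{S}_\pi)$ built out of the pointwise cotensorings $\mathcal{G}(c)^{\mathcal{F}(c)}$, and then deduce the equivalence from the analogous formula for single objects, namely \cref{lemma:cotensoring-mapping-space}. The key point is that $\Mat \colon \Pro(\mathcal{S}_\pi) \to \mathcal{S}$, being a right adjoint, commutes with all limits, so taking an end after materialising can be exchanged with materialising the end, provided all objects involved live in $\Pro(\mathcal{S}_\pi)$.

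Concretely, I would first observe that mapping spaces in functor $\infty$-categories are computed as ends: for any $\infty$-category $\mathcal{D}$ admitting small limits, and for $F,G \in \Fun(\mathcal{C},\mathcal{D})$, one has
$$\Map_{\Fun(\mathcal{C},\mathcal{D})}(F,G) \simeq \int_{c \in \mathcal{C}} \Map_{\mathcal{D}}(F(c),G(c)),$$
where the end can be realised as an honest limit over the twisted arrow category $\mathrm{Tw}(\mathcal{C})$ (see \cite[§5.2.1]{HA}). Applying this to $\mathcal{D} = \Pro(\mathcal{S}_\pi)$, which admits all small limits, the left-hand side of our desired equivalence becomes
$$\Map_{\Fun(\mathcal{C},\Pro(\mathcal{S}_\pi))}(\widehat{\mathcal{F}},\mathcal{G}) \simeq \int_{c \in \mathcal{C}} \Map_{\Pro(\mathcal{S}_\pi)}(\widehat{\mathcal{F}(c)},\mathcal{G}(c)).$$

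Next I would apply \cref{lemma:cotensoring-mapping-space} pointwise, replacing each term in the integrand by $\Mat\bigl(\mathcal{G}(c)^{\mathcal{F}(c)}\bigr)$. Since $\Mat$ is right adjoint to $\widehat{(-)}$ and hence preserves limits (and in particular ends), this gives
$$\int_{c \in \mathcal{C}} \Mat\bigl(\mathcal{G}(c)^{\mathcal{F}(c)}\bigr) \simeq \Mat\!\left(\int_{c \in \mathcal{C}} \mathcal{G}(c)^{\mathcal{F}(c)}\right),$$
so it is natural to define
$$\mathcal{G}^{\mathcal{F}} \defeq \int_{c \in \mathcal{C}} \mathcal{G}(c)^{\mathcal{F}(c)} \in \Pro(\mathcal{S}_\pi),$$
which makes sense because $\Pro(\mathcal{S}_\pi)$ admits all small limits.

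The main obstacle is a bookkeeping issue: one has to upgrade the pointwise construction $c \mapsto \mathcal{G}(c)^{\mathcal{F}(c)}$ to an honest functor out of $\mathrm{Tw}(\mathcal{C})$ (or equivalently a diagram of the shape needed to form the end), and verify that the pointwise equivalence $\Map_{\Pro(\mathcal{S}_\pi)}(\widehat{\mathcal{F}(c)},\mathcal{G}(c)) \simeq \Mat(\mathcal{G}(c)^{\mathcal{F}(c)})$ of \cref{lemma:cotensoring-mapping-space} is natural in the pair $(c,c')$ indexing twisted arrows. Once this naturality is established, the chain of equivalences above glues together into the desired identification. I would handle this coherence by packaging the cotensoring into a functor $\mathcal{S}^{\op} \times \Pro(\mathcal{S}_\pi) \to \Pro(\mathcal{S}_\pi)$, already in the statement of the definition, and observing that the equivalence in \cref{lemma:cotensoring-mapping-space} comes from the tensor–cotensor adjunction and hence is natural in both variables, which suffices to promote it to an equivalence of functors out of $\mathrm{Tw}(\mathcal{C})$ before taking the limit.
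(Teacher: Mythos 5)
Your proposal matches the paper's proof essentially verbatim: both rewrite $\Map_{\Fun(\mathcal{C},\Pro(\mathcal{S}_\pi))}(\widehat{\mathcal{F}},\mathcal{G})$ as a limit over $\mathrm{Tw}(\mathcal{C})$, apply \cref{lemma:cotensoring-mapping-space} pointwise, define $\mathcal{G}^{\mathcal{F}}$ as the resulting limit of cotensorings $\mathcal{G}(b)^{\mathcal{F}(a)}$, and conclude because $\Mat$ is a right adjoint and hence preserves limits. Your extra remark about promoting the pointwise equivalence to a natural equivalence of functors on $\mathrm{Tw}(\mathcal{C})$ is a reasonable note of caution, but it is the same argument.
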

\begin{proof}
    We rewrite the mapping space in the category $\Fun(\mathcal{C},\Pro(\mathcal{S}_\pi))$ as a limit over the twisted arrow category of $\mathcal{C}$ (\cite[Prop. 5.1]{twistedarrowcategory})
    \begin{align*}
        \Map_{\Fun(\mathcal{C},\Pro(\mathcal{S}_\pi))}(\widehat{\mathcal{F}}, \mathcal{G}) & \simeq \lim_{(a \to b) \in \mathrm{Tw}(\mathcal{C})}\Map_{\Pro(\mathcal{S}_\pi)}(\widehat{\mathcal{F}(a)},\mathcal{G}(b))
        \\
        &\simeq \lim_{(a \to b) \in \mathrm{Tw}(\mathcal{C})} \Mat(\mathcal{G}(b)^{\mathcal{F}(a)})
    \end{align*}
    Defining $\mathcal{G}^{\mathcal{F}} \in \Pro(\mathcal{S}_\pi)$ as
    $$\mathcal{G}^{\mathcal{F}} \coloneqq \lim_{(a \to b) \in \mathrm{Tw}(\mathcal{C})} \mathcal{G}(b)^{\mathcal{F}(a)}$$
    the claim follows from the fact that $\Mat$ is a right adjoint, hence preserves all limits.
\end{proof}

We now proceed as follows: we fix $\mathcal{F} \colon \mathcal{C} \to \mathcal{S}^{\mathrm{fin}}_{>1}$, a functor from an arbitrary category $\mathcal{C}$ to the category of finite, simply connected spaces; we denote by $\widehat{\mathcal{F}}$ the composite
$$\mathcal{C} \xrightarrow[]{\mathcal{F}} \mathcal{S}^{\mathrm{fin}}_{>1} \xrightarrow[]{\widehat{(-)}}\Pro(\mathcal{S}_\pi)$$
We consider $\widehat{\mathcal{F}}^{\mathcal{F}}$ as the object constructed in the above proof, namely
$$\widehat{\mathcal{F}}^{\mathcal{F}} \coloneqq \lim_{(a \to b) \in \mathrm{Tw}(\mathcal{C})} \widehat{\mathcal{F}(b)}^{\mathcal{F}(a)}$$
By \cref{cor:diagram-finite-completion}, we already know that
$$\Map_{\Fun(\mathcal{C},\Pro(\mathcal{S}_\pi))}(\widehat {\mathcal{F}}, \widehat {\mathcal{F}}) \simeq \Mat(\widehat{\mathcal{F}}^\mathcal{F})$$
We furthermore observe that the space on the left hand side admits a structure of $E_1$-algebra in $\mathcal{S}$ (with respect to the cartesian monoidal structure), where the multiplication is given by composition of natural transformations $\widehat{\mathcal{F}} \Rightarrow \widehat{\mathcal{F}}$. The aim in what follows is to first show that $\widehat{{\mathcal{F}}}^{\mathcal{F}}$ promotes to an $E_1$-algebra in $\Pro(\mathcal{S}_\pi)$ (with respect to the cartesian monoidal structure), and then show that the multiplication map $\widehat{\mu}\colon\widehat{\mathcal{F}}^{\mathcal{F}} \times \widehat{\mathcal{F}}^{\mathcal{F}} \to \widehat{\mathcal{F}}^{\mathcal{F}}$, obtained as part of the datum of the $E_1$-algebra structure on $\widehat{\mathcal{F}}^{\mathcal{F}}$, agrees with the usual composition multiplication on $\mathrm{End}(\mathcal{F})$ upon materialisation; note that $\Mat$, being a right adjoint, in particular preserves finite products. This requires the technology of \cite[§4.7.1]{HA}. In what follows, we give a brief summary of the required results.
\\
Let $\mathcal{C}$ be a monoidal $\infty$-category, and let $\mathcal{M}$ be an $\infty$-category left-tensored over $\mathcal{C}$. Given an object $M\in \mathcal{M}$, an action of an object $C \in \mathcal{C}$ on $M$ is precisely the datum of a map 
$$C \otimes M \to M$$
in $\mathcal{M}$. The endomorphism object of $\mathrm{End}(M)$ is then defined to be universal among objects acting on $M$ in the following sense: if for any $C \in \mathcal{C}$, we have an equivalence
$$\Map_{\mathcal{C}}(C,\mathrm{End}(M)) \simeq \Map_{\mathcal{M}}(C \otimes M,M)$$
which is natural in $C$. Should such an object exits, Lurie shows it admits the structure of an $E_1$-algebra in $\mathcal{C}$ (\cite[Corollary 4.7.1.40]{HA}).
\\

In our setting, we consider the monoidal category $(\Pro(\mathcal{S}_{\pi}),\times)$, and view $\Fun(\mathcal{C},\Pro(\mathcal{S}_\pi))$ as left-tensored over $\Pro(\mathcal{S}_\pi)$, via
    \begin{center}
        $(X,F) \mapsto X\times F$, $X \in \Pro(\mathcal{S}_\pi)$, $F \in \Fun(\mathcal{C},\Pro(\mathcal{S}_\pi))$
    \end{center}

\begin{lemma}\label{lemma:endomorphism_object}
    Let $\mathcal{C}$ be an arbitrary $\infty$-category, and fix a functor $\mathcal{F} \colon \mathcal{C} \to \mathcal{S}_{>1}^{\fin}$ landing in the full subcategory of finite, simply connected spaces. Then, $\widehat{\mathcal{F}}^{\mathcal{F}}$ is an endomorphism object for $\widehat{\mathcal{F}}$ in $\Pro(\mathcal{S}_\pi)$.
\end{lemma}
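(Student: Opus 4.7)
The plan is to verify the defining universal property of an endomorphism object (as in \cite[\S4.7.1]{HA}), namely that for every $X \in \Pro(\mathcal{S}_\pi)$ there is an equivalence
$$\Map_{\Pro(\mathcal{S}_\pi)}(X, \widehat{\mathcal{F}}^{\mathcal{F}}) \simeq \Map_{\Fun(\mathcal{C},\Pro(\mathcal{S}_\pi))}(X \times \widehat{\mathcal{F}}, \widehat{\mathcal{F}}),$$
which is natural in $X$. Once this is established, \cite[Corollary 4.7.1.40]{HA} automatically promotes $\widehat{\mathcal{F}}^{\mathcal{F}}$ to an $E_1$-algebra in $\Pro(\mathcal{S}_\pi)$, as required for the subsequent applications.

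The strategy is a chain of natural equivalences, carried out in two main reductions. First, I would use the standard description of mapping spaces in functor categories via the twisted arrow category (\cite[Prop. 5.1]{twistedarrowcategory}), exactly as in the proof of \cref{cor:diagram-finite-completion}, to rewrite
$$\Map_{\Fun(\mathcal{C},\Pro(\mathcal{S}_\pi))}(X \times \widehat{\mathcal{F}}, \widehat{\mathcal{F}}) \simeq \lim_{(a \to b) \in \mathrm{Tw}(\mathcal{C})} \Map_{\Pro(\mathcal{S}_\pi)}\bigl(X \times \widehat{\mathcal{F}(a)},\widehat{\mathcal{F}(b)}\bigr).$$
The key input then is \cref{lemma:tensoring-finite-spaces}, which, since $\mathcal{F}(a)$ is a simply connected finite CW complex, supplies a natural equivalence $\mathcal{F}(a) \otimes X \simeq \widehat{\mathcal{F}(a)} \times X$. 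Substituting this and invoking the tensor-cotensor adjunction gives
$$\Map_{\Pro(\mathcal{S}_\pi)}\bigl(X \times \widehat{\mathcal{F}(a)},\widehat{\mathcal{F}(b)}\bigr) \simeq \Map_{\Pro(\mathcal{S}_\pi)}\bigl(\mathcal{F}(a) \otimes X, \widehat{\mathcal{F}(b)}\bigr) \simeq \Map_{\Pro(\mathcal{S}_\pi)}\bigl(X, \widehat{\mathcal{F}(b)}^{\mathcal{F}(a)}\bigr).$$

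The second reduction commutes the limit over $\mathrm{Tw}(\mathcal{C})$ past the corepresentable functor $\Map_{\Pro(\mathcal{S}_\pi)}(X, -)$, which is possible since mapping spaces preserve limits in the second variable. Combined with the definition
$$\widehat{\mathcal{F}}^{\mathcal{F}} = \lim_{(a \to b) \in \mathrm{Tw}(\mathcal{C})} \widehat{\mathcal{F}(b)}^{\mathcal{F}(a)},$$
this yields the desired equivalence $\Map_{\Pro(\mathcal{S}_\pi)}(X, \widehat{\mathcal{F}}^{\mathcal{F}})\simeq \Map_{\Fun(\mathcal{C},\Pro(\mathcal{S}_\pi))}(X \times \widehat{\mathcal{F}}, \widehat{\mathcal{F}})$.

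The main subtlety to verify — and where I expect the most care is needed — is the naturality of the whole chain in the variable $X$. Each individual step is natural in $X$ (the twisted arrow formula is functorial, the equivalence of \cref{lemma:tensoring-finite-spaces} is asserted to be natural in both variables, the tensor-cotensor adjunction counit is natural, and corepresentables preserve limits naturally), but assembling these into a single coherent natural transformation of presheaves on $\Pro(\mathcal{S}_\pi)$ is what licenses the application of the Yoneda lemma to produce the universal action map $\widehat{\mathcal{F}}^{\mathcal{F}} \times \widehat{\mathcal{F}} \to \widehat{\mathcal{F}}$. Having done so, one finally checks that upon materialising and using that $\Mat$ preserves finite products, the induced multiplication $\widehat{\mu}\colon \widehat{\mathcal{F}}^{\mathcal{F}} \times \widehat{\mathcal{F}}^{\mathcal{F}} \to \widehat{\mathcal{F}}^{\mathcal{F}}$ recovers composition of natural transformations on $\mathrm{End}(\widehat{\mathcal{F}})$; this is immediate from tracing the universal action map through the chain of equivalences above.
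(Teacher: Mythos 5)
Your proposal is correct and follows essentially the same route as the paper's proof: rewrite the mapping space in $\Fun(\mathcal{C},\Pro(\mathcal{S}_\pi))$ as a limit over $\mathrm{Tw}(\mathcal{C})$, apply \cref{lemma:tensoring-finite-spaces} pointwise to replace $X \times \widehat{\mathcal{F}(a)}$ by $\mathcal{F}(a)\otimes X$, use the tensor--cotensor adjunction, and pull the limit inside, thereby verifying the universal property of \cite[\S4.7.1]{HA}. The closing remarks about naturality in $X$ and the subsequent comparison of $\Mat(\widehat\mu)$ with composition are likewise consistent with what the paper does immediately afterwards (\cref{lemma:composition}).
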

\begin{proof}
    We have to check the universal property for endomorphism objects. Namely, we need to check that for all $X \in \Pro(\mathcal{S}_\pi)$, we have equivalences
    $$\Map_{\Fun(\mathcal{C},\Pro(\mathcal{S}_\pi))}(X \times \widehat{\mathcal{F}},\widehat{\mathcal{F}}) \simeq \Map_{\Pro(\mathcal{S}_\pi)}(X,\widehat{\mathcal{F}}^{\mathcal{F}})$$
    that are natural in $X$. For this, we consider the following list of equivalences
    \begin{align*}
        \Map_{\Fun(\mathcal{C},\Pro(\mathcal{S}_\pi))}(X \times \widehat{\mathcal{F}},\widehat{\mathcal{F}}) & \stackrel{\circled{1}}{\simeq} \lim_{(a\to b)\mathrm{Tw}(\mathcal{C})} \Map_{\Pro(\mathcal{S}_\pi)}(X \times \widehat{\mathcal{F}(a)},\widehat{\mathcal{F}(b)})
        \\
        & \stackrel{\circled{2}}{\simeq}\lim_{(a\to b)\mathrm{Tw}(\mathcal{C})} \Map_{\Pro(\mathcal{S}_\pi)}(X \otimes \mathcal{F}(a),\widehat{\mathcal{F}(b)})
        \\
        &\stackrel{\circled{3}}{\simeq}\lim_{(a\to b)\mathrm{Tw}(\mathcal{C})} \Map_{\Pro(\mathcal{S}_\pi)}(X,\widehat{\mathcal{F}(b)}^{\mathcal{F}(a)})
        \\
        &\stackrel{\circled{4}}{\simeq}\Map_{\Pro(\mathcal{S}_\pi)}(X,\lim_{(a\to b)\mathrm{Tw}(\mathcal{C})}\widehat{\mathcal{F}(b)}^{\mathcal{F}(a)})
        \\
        &=\Map_{\Pro(\mathcal{S}_\pi)}(X,\widehat{\mathcal{F}}^{\mathcal{F}})
    \end{align*}
    where
    \begin{itemize}
        \item $\circled{1}$ follows from the limit description of mapping spaces in functor categories;
        \item $\circled{2}$ follows from \cref{lemma:tensoring-finite-spaces};
        \item \circled{3} follows from the tensoring-cotensoring adjunction;
        \item \circled{4} follows after pulling the limit into the mapping space
    \end{itemize}
    
    As all the above equivalences are natural in $X$, it follows that $\widehat{\mathcal{F}}^{\mathcal{F}}$ is an endomorphism object for $\widehat{\mathcal{F}}$, and in particular promotes to an $E_1$-algebra in $(\Pro(\mathcal{S}_\pi),\times)$.
\end{proof}
We now stick to the situation of \cref{lemma:endomorphism_object}. We fix a functor $\mathcal{F} \colon \mathcal{C} \to \mathcal{S}_{>1}^{\fin}$ from an $\infty$-category $\mathcal{C}$ to the $\infty$-category of finite, 1-connected spaces. Using \cref{lemma:endomorphism_object}, we obtain an evaluation morphism
$$\mathrm{ev} \colon \widehat{\mathcal{F}}^{\mathcal{F}} \times \widehat{\mathcal{F}} \to \widehat{\mathcal{F}}$$
obtained explicitly as the adjoint of the identity map via the equivalence described in the proof, namely
$$\Map_{\Fun(\mathcal{C},\Pro(\mathcal{S}_\pi))}(\widehat{\mathcal{F}}^{\mathcal{F}} \times \widehat{\mathcal{F}},\widehat{\mathcal{F}}) \simeq \Map_{\Pro(\mathcal{S}_\pi)}(\widehat{\mathcal{F}}^{\mathcal{F}},\widehat{\mathcal{F}}^{\mathcal{F}})$$
The multiplication morphism of the $E_1$-algebra $\widehat{\mathcal{F}}^{\mathcal{F}}$ is explicitly described as the adjoint of the composition
$$ \widehat{\mathcal{F}}^{\mathcal{F}} \times \widehat{\mathcal{F}}^{\mathcal{F}} \times \widehat{\mathcal{F}} \xrightarrow[]{\mathrm{id} \times \mathrm{ev}} \widehat{\mathcal{F}}^{\mathcal{F}} \times \widehat{\mathcal{F}} \xrightarrow[]{\mathrm{ev}} \widehat{\mathcal{F}}$$
which we henceforth denote
$$\widehat{\mu} \colon \widehat{\mathcal{F}}^{\mathcal{F}} \times \widehat{\mathcal{F}}^{\mathcal{F}} \to \widehat{\mathcal{F}}^{\mathcal{F}}$$
We denote $\mathrm{End}(\widehat{\mathcal{F}})$ as the space of endomorphisms
$$\mathrm{End}(\widehat{\mathcal{F}}) \defeq \Map_{\Fun(\mathcal{C},\Pro(\mathcal{S}_\pi))}(\widehat{\mathcal{F}},\widehat{\mathcal{F}})$$
We note that the above is indeed an endomorphism object of $\widehat{\mathcal{F}}$, with respect to the following tensoring: we consider the monoidal $\infty$-category $(\mathcal{S},\times)$, and view $\Fun(\mathcal{C},\Pro(\mathcal{S}_\pi))$ as left-tensored over $\mathcal{S}$ via:
\begin{center}\label{eq:univ-ppty-endo}
    $(X,\mathcal{G}) \mapsto X \otimes \mathcal{G}$, $X \in \mathcal{S}$, $\mathcal{G} \in \Fun(\mathcal{C},\Pro(\mathcal{S}_\pi))$
\end{center}
Similarly to the proof of \cref{lemma:endomorphism_object}, we have natural equivalences
\begin{equation} \label{eq:diagram}
    \Map_{\Fun(\mathcal{C},\Pro(\mathcal{S}_\pi))}(X \otimes \widehat{\mathcal{F}},\widehat{\mathcal{F}}) \simeq \Map_{\mathcal{S}}(X,\mathrm{End}(\widehat{\mathcal{F}}))
\end{equation}
exhibiting $\mathrm{End}(\widehat{\mathcal{F}})$ as the endomorphism object of $\widehat{\mathcal{F}}$ in $\mathcal{S}$. The composition
$$-\circ- \colon \mathrm{End}(\widehat{\mathcal{F}}) \times \mathrm{End}(\widehat{\mathcal{F}}) \to \mathrm{End}(\widehat{\mathcal{F}})$$
thus occurs as the adjoint of applying twice the action map
$$\mathrm{End}(\widehat{\mathcal{F}}) \otimes \widehat{\mathcal{F}} \to \widehat{\mathcal{F}}$$
given by the adjoint of the identity $\mathrm{id} \colon \widehat{\mathcal{F}} \to \widehat{\mathcal{F}}$ in equation \ref{eq:diagram}. Consequently, to compare the multiplication maps, it suffices to compare the action maps. The counit of the adjunction $\widehat{(-)} \dashv \Mat$ yields a map
$$\mathrm{counit}\colon \widehat{\Mat(\widehat{\mathcal{F}}^{\mathcal{F}})} \to \widehat{\mathcal{F}}^{\mathcal{F}}$$
We consider the following commutative diagram

\begin{center}
    \begin{tikzcd}
        \Map_{\Fun(\mathcal{C},\Pro(\mathcal{S}_\pi))}(\widehat{\mathcal{F}}^{\mathcal{F}} \times \widehat{\mathcal{F}},\widehat{\mathcal{F}}) \arrow[no head]{r}{\simeq}[swap]{\circled{1}} \arrow{d}[swap]{-\circ(\mathrm{counit}\times \mathrm{id})} & \Map_{\Pro(\mathcal{S}_{\pi})}(\widehat{\mathcal{F}}^{\mathcal{F}},\widehat{\mathcal{F}}^{\mathcal{F}}) \arrow{d}[swap]{-\circ(\mathrm{counit})} \arrow[bend left=90]{dd}{\Mat}
        \\
        \Map_{\Fun(\mathcal{C},\Pro(\mathcal{S}_\pi))}(\widehat{\Mat(\widehat{\mathcal{F}}^{\mathcal{F}})} \times \widehat{\mathcal{F}},\widehat{\mathcal{F}}) \arrow[no head]{r}{\circled{2}}[swap]{\simeq} \arrow[no head]{d}{\circled{4}}[swap]{\simeq} & \Map_{\Pro(\mathcal{S}_{\pi})}(\widehat{\Mat(\widehat{\mathcal{F}}^{\mathcal{F}})},\widehat{\mathcal{F}}^{\mathcal{F}}) \arrow[no head]{d}{\circled{3}}[swap]{\simeq}
        \\
        \Map_{\Fun(\mathcal{C},\Pro(\mathcal{S}_\pi))}(\mathrm{End}(\widehat{\mathcal{F}}) \otimes \widehat{\mathcal{F}},\widehat{\mathcal{F}}) \arrow[no head]{r}{\simeq}[swap]{\circled{5}} & \Map_{\mathcal{S}}(\mathrm{End}(\widehat{\mathcal{F}}),\mathrm{End}(\widehat{\mathcal{F}}))
    \end{tikzcd}
\end{center}
where
\begin{itemize}
    \item $\circled{1}$ and $\circled{2}$ are the natural equivalences constructed in the proof of \cref{lemma:endomorphism_object};
    \item $\circled{3}$ follows from the adjunction $\widehat{(-)} \dashv \Mat$, and \cref{cor:diagram-finite-completion};
    \item $\circled{4}$ is obtained by combining the equivalences
    $$\mathrm{End}(\widehat{\mathcal{F}}) \otimes \widehat{\mathcal{F}} \simeq \mathcal{F} \otimes \widehat{\mathrm{End}(\widehat{\mathcal{F}})} \simeq \widehat{\Mat(\widehat{\mathcal{F}}^{\mathcal{F}})} \times \widehat{\mathcal{F}}$$
    which follow from \cref{lemma:tensoring-finite-spaces,lemma:commute_tensors_hats}
    \item $\circled{5}$ follows from equation \ref{eq:diagram}
\end{itemize}

With the above diagram at hand, we see that the action map $\mathrm{ev} \colon \widehat{\mathcal{F}}^{\mathcal{F}} \times \widehat{\mathcal{F}} \to \widehat{\mathcal{F}}$ exhibiting $\widehat{\mathcal{F}}^{\mathcal{F}}$ as an endomorphism object of $\widehat{\mathcal{F}}$ in $\Pro(\mathcal{S}_\pi)$, and which is adjoint to the identity in $\Map_{\Pro(\mathcal{S}_\pi)}(\widehat{\mathcal{F}}^{\mathcal{F}},\widehat{\mathcal{F}}^{\mathcal{F}})$, is mapped by materialisation to the action map $\mathrm{End}(\widehat{\mathcal{F}}) \otimes \widehat{\mathcal{F}} \to \widehat{\mathcal{F}}$, exhibiting $\mathrm{End}(\widehat{\mathcal{F}})$ as an endomorphism object of $\widehat{\mathcal{F}}$ in $\mathcal{S}$, and which is adjoint to the identity in $\Map_{\mathcal{S}}(\mathrm{End}(\widehat{\mathcal{F}}),\widehat{\mathcal{F}})$. As a consequence, the two multiplication morphisms on $\mathrm{End}(\widehat{\mathcal{F}}) \simeq \Mat(\widehat{\mathcal{F}}^\mathcal{F})$, namely the usual composition $-\circ-$ and $\Mat(\widehat{\mu})$ agree.

Combining the results of the section, we have shown the following
\begin{lemma}\label{lemma:composition}
    Let $\mathcal{F} \colon \mathcal{C} \to \mathcal{S}_{>1}^{\fin}$. Then, we have a commutative diagram
    \begin{center}
        \begin{tikzcd}
            \Mat(\widehat{\mathcal{F}}^{\mathcal{F}}) \times \Mat(\widehat{\mathcal{F}}^{\mathcal{F}}) \arrow{d}[swap]{\Mat(\widehat{\mu})} \arrow[no head]{r}{\simeq}  &  \mathrm{End}(\widehat{\mathcal{F}}) \times \mathrm{End}(\widehat{\mathcal{F}}) \arrow{d}{-\circ-}
            \\
            \Mat(\widehat{\mathcal{F}}^{\mathcal{F}}) \arrow[no head]{r}[swap]{\simeq} &  \mathrm{End}(\widehat{\mathcal{F}})
        \end{tikzcd}
    \end{center}
\end{lemma}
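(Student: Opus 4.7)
The plan is to identify both multiplications as arising from universal properties of endomorphism objects, one in $(\Pro(\mathcal{S}_\pi),\times)$ and one in $(\mathcal{S},\times)$, and then reduce the comparison to the compatibility of their associated evaluation/action maps under materialisation. By \cite[Corollary 4.7.1.40]{HA}, the $E_1$-multiplication $\widehat{\mu}$ is uniquely characterised as the arrow adjoint to the composite $\mathrm{ev}\circ(\mathrm{id}\times \mathrm{ev})$, where $\mathrm{ev}\colon \widehat{\mathcal{F}}^{\mathcal{F}}\times \widehat{\mathcal{F}}\to \widehat{\mathcal{F}}$ is the evaluation map coming from \cref{lemma:endomorphism_object}; symmetrically, the usual composition on $\mathrm{End}(\widehat{\mathcal{F}})$ is the arrow adjoint to the twofold action $\mathrm{End}(\widehat{\mathcal{F}})\otimes \widehat{\mathcal{F}}\to \widehat{\mathcal{F}}$ associated to the endomorphism object $\mathrm{End}(\widehat{\mathcal{F}})$ in $\mathcal{S}$ (for the tensoring $(X,\mathcal{G})\mapsto X\otimes \mathcal{G}$ of $\Fun(\mathcal{C},\Pro(\mathcal{S}_\pi))$ over $\mathcal{S}$).

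To carry this out, I would first record the universal property exhibiting $\mathrm{End}(\widehat{\mathcal{F}})$ as an endomorphism object of $\widehat{\mathcal{F}}$ in $\mathcal{S}$, obtained via the same twisted arrow category argument as in the proof of \cref{lemma:endomorphism_object}, namely $\Map_{\Fun(\mathcal{C},\Pro(\mathcal{S}_\pi))}(X\otimes \widehat{\mathcal{F}},\widehat{\mathcal{F}})\simeq \Map_{\mathcal{S}}(X,\mathrm{End}(\widehat{\mathcal{F}}))$, natural in $X\in \mathcal{S}$. Next I would assemble a commuting square of mapping spaces with vertices
$$\Map_{\Fun(\mathcal{C},\Pro(\mathcal{S}_\pi))}(\widehat{\mathcal{F}}^{\mathcal{F}}\times \widehat{\mathcal{F}},\widehat{\mathcal{F}}),\quad \Map_{\Pro(\mathcal{S}_\pi)}(\widehat{\mathcal{F}}^{\mathcal{F}},\widehat{\mathcal{F}}^{\mathcal{F}}),$$
$$\Map_{\Fun(\mathcal{C},\Pro(\mathcal{S}_\pi))}(\mathrm{End}(\widehat{\mathcal{F}})\otimes \widehat{\mathcal{F}},\widehat{\mathcal{F}}),\quad \Map_{\mathcal{S}}(\mathrm{End}(\widehat{\mathcal{F}}),\mathrm{End}(\widehat{\mathcal{F}})),$$
with vertical maps given by precomposition with the counit $\widehat{\Mat(\widehat{\mathcal{F}}^{\mathcal{F}})}\to \widehat{\mathcal{F}}^{\mathcal{F}}$ of the adjunction $\widehat{(-)}\dashv \Mat$, and horizontal maps the equivalences from the two universal properties. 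Chasing the identity of $\widehat{\mathcal{F}}^{\mathcal{F}}$ through this square would then show that the action map $\mathrm{End}(\widehat{\mathcal{F}})\otimes \widehat{\mathcal{F}}\to \widehat{\mathcal{F}}$ coincides with the one obtained from $\mathrm{ev}$ by materialisation. The same square, replayed one dimension up with an extra factor of $\widehat{\mathcal{F}}^{\mathcal{F}}$, would give the statement for the multiplication.

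The main technical obstacle will be making the left vertical edge of this square actually land in the required mapping space, i.e. producing the identification
$$\mathrm{End}(\widehat{\mathcal{F}})\otimes \widehat{\mathcal{F}}\ \simeq\ \widehat{\Mat(\widehat{\mathcal{F}}^{\mathcal{F}})}\times \widehat{\mathcal{F}}$$
pointwise in $\mathcal{C}$. This is where the hypothesis $\mathcal{F}\colon \mathcal{C}\to \mathcal{S}_{>1}^{\fin}$ is essential: for each $c\in \mathcal{C}$, \cref{lemma:commute_tensors_hats} gives $\mathrm{End}(\widehat{\mathcal{F}})\otimes \widehat{\mathcal{F}(c)}\simeq \widehat{\mathrm{End}(\widehat{\mathcal{F}})\times \mathcal{F}(c)}$ for arbitrary $\mathcal{F}(c)$, and \cref{lemma:tensoring-finite-spaces}, applied to the simply connected finite space $\mathcal{F}(c)$, then identifies this with $\widehat{\mathrm{End}(\widehat{\mathcal{F}})}\times \widehat{\mathcal{F}(c)}$. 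Once the square commutes, the lemma will follow automatically: since $\Mat$ is a right adjoint it preserves finite products, so $\Mat(\widehat{\mu})$ is adjoint to twofold application of $\Mat(\mathrm{ev})$, and by uniqueness of the multiplication attached to an endomorphism object this must agree with the standard composition on $\mathrm{End}(\widehat{\mathcal{F}})$.
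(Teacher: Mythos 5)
Your proposal follows essentially the same approach as the paper: it compares the two multiplications by identifying both $\widehat{\mathcal{F}}^{\mathcal{F}}$ and $\mathrm{End}(\widehat{\mathcal{F}})$ as endomorphism objects (in $\Pro(\mathcal{S}_\pi)$ and $\mathcal{S}$ respectively), precomposes with the counit $\widehat{\Mat(\widehat{\mathcal{F}}^{\mathcal{F}})}\to\widehat{\mathcal{F}}^{\mathcal{F}}$ to build a commuting diagram of mapping spaces, and uses \cref{lemma:tensoring-finite-spaces} and \cref{lemma:commute_tensors_hats} to bridge the tensorings. The only cosmetic difference is that you collapse the paper's three-row diagram into a four-vertex square, but the argument is the same.
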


We observe that the techinques described above are better aimed at showing that $\pi_0 \mathrm{End}(\widehat{\mathcal{F}})$ admits the structure of a monoid in profinite spaces. The following lemma serves as a bridge to conclude that $\pi_0 \mathrm{Aut}(\widehat{\mathcal{F}})$, i.e. the group of units of the above monoid, admits the structure of a profinite group compatible with that of its monoid.
\begin{lemma}\label{lemma:group-of-units-profinite}
    Let $M$ be a monoid in $\Pro(\mathcal{S}_\pi)$, i.e. a topological monoid whose underlying topological space is a Stone space. Then, $\mathcal{u}(M)$, its group of units, admits the structure of a profinite group via the subspace topology.
\end{lemma}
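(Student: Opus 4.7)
The plan is to realize $\mathcal{u}(M)$ as a cofiltered limit of finite groups; this will simultaneously equip it with a profinite topological group structure and match that structure with the subspace topology inherited from $M$. I would begin by upgrading the Stone-space presentation of $M$ to a monoidal one. Stone duality gives $M \simeq \lim_{i \in I} M_i$ as a cofiltered limit of finite discrete sets; since the multiplication $m \colon M \times M \to M$ and the unit $e \colon \ast \to M$ are continuous, and since $M \times M$ is again Stone while each $M_i$ is finite discrete, one may cofinally refine the indexing category $I$ so that $m$ and $e$ are induced by maps $M_i \times M_i \to M_i$ and $\ast \to M_i$ exhibiting each $M_i$ as a finite monoid and each transition map $f_{ji} \colon M_j \to M_i$ as a morphism of monoids. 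This is the classical presentation of profinite topological monoids as cofiltered limits of finite monoids, which I would invoke without reproof.

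The heart of the argument is the commutation $\mathcal{u}(M) = \lim_i \mathcal{u}(M_i)$. The inclusion $\mathcal{u}(M) \subseteq \lim_i \mathcal{u}(M_i)$ is immediate, since each projection $M \to M_i$ is a monoid morphism and so sends units to units. For the reverse inclusion, consider $(x_i) \in \lim_i \mathcal{u}(M_i)$, and let $y_i$ denote the unique inverse of $x_i$ in the finite group $\mathcal{u}(M_i)$. For each transition map $f_{ji}\colon M_j \to M_i$, applying $f_{ji}$ to the equation $y_j x_j = 1$ yields $f_{ji}(y_j) \cdot x_i = 1$, and uniqueness of inverses in $\mathcal{u}(M_i)$ forces $f_{ji}(y_j) = y_i$. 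Hence $(y_i)_{i \in I}$ is a compatible system, defining an element of $\lim_i M_i = M$ which is a two-sided inverse to $(x_i)$; this shows $(x_i) \in \mathcal{u}(M)$.

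With this identification in hand, $\mathcal{u}(M) \cong \lim_i \mathcal{u}(M_i)$ is a cofiltered limit of finite discrete groups, hence a profinite topological group by definition. To finish, one checks that the subspace topology on $\mathcal{u}(M) \subseteq M$ coincides with the profinite group topology: both admit a common basis of opens given by preimages of points under the projections $M \to M_i$, restricted to $\mathcal{u}(M)$, since $\mathcal{u}(M_i)$ is a finite discrete subset of the finite discrete monoid $M_i$.

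The main subtlety in the plan is the first step, namely promoting the Stone presentation to a pro-finite-monoid presentation; this hinges on the fact that a continuous map from the Stone space $M \times M$ to a finite discrete set factors through some finite quotient, allowing the multiplication to descend along sufficiently refined projections. Once this is in place, the commutation of the unit functor with the resulting cofiltered limit is elementary, relying only on uniqueness of inverses in a finite group and on transition maps being monoid morphisms.
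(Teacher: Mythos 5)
Your proof is correct, and it is essentially a concrete, element-level unwinding of the paper's more abstract argument. The paper proceeds by observing that $\iota \dashv \mathcal{u}$ is an adjunction between $\Grp$ and $\mathrm{Mon}$ restricting to finite objects, that applying $\Pro$ yields an adjunction $\Pro(\iota) \dashv \Pro(\mathcal{u})$ (because $\iota$ preserves limits), and then asserting that the resulting square commutes with the materialisation/forgetful functors, which gives the desired lift. The commutativity of that square is precisely the identity $\mathcal{u}(\lim_i M_i) = \lim_i \mathcal{u}(M_i)$ that you prove explicitly by checking that the componentwise inverses $(y_i)$ assemble into a compatible system; in this sense you fill in the one step the paper treats somewhat tersely. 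Both approaches rely on the same foundational input (the equivalence between compact Hausdorff totally disconnected topological monoids and pro-objects in finite monoids, attributed to Johnstone), which you cite as the "classical presentation." The paper's route buys compactness and a clean abstract package via preservation of adjunctions under $\Pro$; your route buys transparency, in particular it makes the agreement of the subspace topology with the inverse-limit topology immediate, and it implicitly records that $\mathcal{u}(M) = \lim_i \mathcal{u}(M_i)$ is closed in $M$ (each $\mathcal{u}(M_i) \subseteq M_i$ being finite, hence closed). Either argument is acceptable.
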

\begin{proof}
    We begin by observing that the category of profinite monoids, i.e. pro-objects in the category $\mathrm{Mon}^{\fin}$, is equivalent to the category $\mathrm{Prof}(\mathrm{Mon})$, the category of topological monoids with underlying topological space a compact, Hausdorff, totally disconnected space; this follows from \cite[Chapter VI, §2.9]{johnstone}. More precisely, the functor $\Pro(\mathrm{Mon}^\fin) \to \mathrm{Prof}(\mathrm{Mon})$, sending a cofiltered system of finite monoids to its limit, where we endow the limit with the profinite topology, is an equivalence of categories. The anologous statement in the case of profinite groups is classical, but a proof can also be found in \textit{loc.cit}. Consider the adjunction $\iota \dashv \mathcal{u}$:
    \begin{center}
        \begin{tikzcd}
            \Grp \arrow[hook]{r}{\iota} & \arrow[bend left]{l}{\mathcal{u}} \mathrm{Mon}
        \end{tikzcd}
    \end{center}
    where $\Grp$ is the category of discrete groups and group homomorphisms, $\mathrm{Mon}$ is the category of discrete monoids and monoid homomorphisms, $\iota$ is the inclusion functor and $\mathcal{u}$ is the group of units functor. The above adjunction restricts to the analogous one on finite groups and monoids. Applying $\Pro\colon \mathrm{Cat}_\infty \to \mathrm{Cat}_\infty$, we obtain a diagram
    \begin{center}
        \begin{tikzcd}
            \Pro(\Grp) \arrow{r}{\Pro(\iota)} & \arrow[bend left]{l}{\Pro(\mathcal{u})} \Pro(\mathrm{Mon})
        \end{tikzcd}
    \end{center}
    which can also be seen to be an adjunction, as $\iota$ itself preserves all limits. As a consequence, we obtain a commutative diagram
    \begin{center}
        \begin{tikzcd}
            \mathrm{Prof}(\mathrm{Mon}) \simeq\Pro(\mathrm{Mon}^\fin) \arrow{r}{\Pro(\mathcal{u})} \arrow{d} & \Pro(\Grp^\fin) \simeq \mathrm{Prof}(\Grp) \arrow{d}
            \\
            \mathrm{Mon} \arrow{r}[swap]{\mathcal{u}} & \Grp
        \end{tikzcd}
    \end{center}
    which in particular implies that given a profinite monoid $M$, the group of units of its underlying discrete monoid lifts to profinite groups via the above diagram.
\end{proof}
We are now ready to show \cref{thm:profinite}.

\begin{proof}[Proof of \cref{thm:profinite}]
    The goal is to show that 
    $$G \defeq \pi_0 \Map^{\simeq}_{\mathrm{Fun}(\mathcal{C},\mathrm{Pro}(\mathcal{S}_\pi))}(\widehat{\mathcal{F}}, \widehat{\mathcal{F}})$$
    admits the structure of a profinite group. Let $M \defeq \pi_0 \Map_{\Fun(\mathcal{C},\Pro(\mathcal{S}_\pi))}(\widehat{\mathcal{F}},\widehat{\mathcal{F}})$, and observe that $G$ is the group of units of $M$. \cref{lemma:group-of-units-profinite} gives therefore a reduction to showing that $M$ is a profinite monoid. 
    The latter fact follows readily from \cref{lemma:composition}.
\end{proof}

\nocite{Goodwillie-Weiss}
\nocite{HA}
\nocite{Hilton-Roitberg}
\nocite{HTT}
\nocite{Manuel-Oscar-arithmetic}
\nocite{Manuel-Sander-Copenhagen}
\nocite{Nikolov-Segal}
\nocite{Piotr}
\nocite{SAG}
\nocite{Sander-finiteness}
\nocite{Serre-arithmetic}
\nocite{Sullivan-genetics}
\nocite{Sullivan-infinitesimal}
\nocite{Weiss-Dalian}
\nocite{discstructurespace}
\nocite{Kirby-Siebenmann}
\nocite{Lashof}
\nocite{Deligne}
\nocite{Bousfield-Kan}
\nocite{Schneebeli}
\nocite{paratopological-group}
\nocite{MayPonto}
\nocite{Weiss}
\nocite{Jan-Maxime}
\nocite{Sander-boundary}
\nocite{Kervaire-Milnor}
\nocite{twistedarrowcategory}
\nocite{ManuelFibrations}
\nocite{trivialexotic}
\nocite{johnstone}

\vspace{-0.1cm}
\bibliographystyle{amsalpha}
\bibliography{literature}

\providecommand{\bysame}{\leavevmode\hbox to3em{\hrulefill}\thinspace}
\providecommand{\MR}{\relax\ifhmode\unskip\space\fi MR }
\providecommand{\MRhref}[2]{%
  \href{http://www.ams.org/mathscinet-getitem?mr=#1}{#2}
}
\providecommand{\href}[2]{#2}
\begin{thebibliography}{BHHM20}

\bibitem[BHHM20]{trivialexotic}
M.~Behrens, M.~Hill, M.~J. Hopkins, and M.~Mahowald, \emph{Detecting exotic spheres in low dimensions using {${\rm coker}\,J$}}, J. Lond. Math. Soc. (2) \textbf{101} (2020), no.~3, 1173--1218. \MR{4111938}

\bibitem[BK72]{Bousfield-Kan}
Aldrige~K. Bousfied and Daniel~M. Kan, \emph{Homotopy limits, completions and localizations}, Lecture Notes in Mathematics, vol. 304, Springer, 1972.

\bibitem[Del78]{Deligne}
Pierre Deligne, \emph{Extensions centrales non r{\'e}siduellement finies de groupes arithm{\'e}tiques}, 1978.

\bibitem[Ell57]{paratopological-group}
Robert Ellis, \emph{A note on the continuity of the inverse}, 1957.

\bibitem[GHN20]{twistedarrowcategory}
David Gepner, Rune Haugseng, and Thomas Nikolaus, \emph{Lax colimits and free fibrations in $\infty$-categories}, arxiv:1501.02161, 2020.

\bibitem[GW99]{Goodwillie-Weiss}
Thomas~G Goodwillie and Michael Weiss, \emph{{Embeddings from the point of view of immersion theory : Part II}}, Geometry \& Topology \textbf{3} (1999), no.~1, 103 -- 118.

\bibitem[HR79]{Hilton-Roitberg}
Peter Hilton and Joseph Roitberg, \emph{Profinite completion and generalizations of a theorem of blackburn}, Journal of Algebra \textbf{60} (1979), no.~1, 289--306.

\bibitem[HRS]{Jan-Maxime}
Peter Haine, Maxime Ramzi, and Jan Steinebrunner, \emph{Fully faithful functors and pushouts of $\infty$-categories}, in progress.

\bibitem[Joh82]{johnstone}
P.T. Johnstone, \emph{Stone spaces}, Cambridge Studies in Advanced Mathematics, Cambridge University Press, 1982.

\bibitem[KK22]{discstructurespace}
Manuel Krannich and Alexander Kupers, \emph{The disc-structure space}, arxiv:2205.01755, 2022.

\bibitem[KK24]{Manuel-Sander}
\bysame, \emph{$\infty$-operadic setup for embedding calculus}, arxiv:2409.10991, 2024.

\bibitem[KM63]{Kervaire-Milnor}
Michel~A. Kervaire and John~W. Milnor, \emph{Groups of homotopy spheres. {I}}, Ann. of Math. (2) \textbf{77} (1963), 504--537. \MR{148075}

\bibitem[Kra21]{ManuelFibrations}
Manuel Krannich, \emph{On characteristic classes of exotic manifold bundles}, Math. Ann. \textbf{379} (2021), no.~1-2, 1--21. \MR{4211081}

\bibitem[KRW20]{Manuel-Oscar-arithmetic}
Manuel Krannich and Oscar Randal-Williams, \emph{Mapping class groups of simply connected high-dimensional manifolds need not be arithmetic}, Comptes Rendus. Math\'ematique \textbf{358} (2020), no.~4, 469--473 (en).

\bibitem[KS77]{Kirby-Siebenmann}
Robion~C. Kirby and Laurence~C. Siebenmann, \emph{Foundational essays on topological manifolds, smoothings, and triangulations. (am-88)}, Princeton University Press, 1977.

\bibitem[Kup19]{Sander-finiteness}
Alexander Kupers, \emph{Some finiteness results for groups of automorphisms of manifolds}, Geometry \&; Topology \textbf{23} (2019), no.~5, 2277–2333.

\bibitem[Kup24a]{Manuel-Sander-Copenhagen}
Alexander Kupers, \emph{Copenhagen lectures on an operadic setup for embedding calculus and its variants}, \url{https://www.utsc.utoronto.ca/people/kupers/wp-content/uploads/sites/50/lecturenotes.pdf}, 2024.

\bibitem[Kup24b]{Sander-boundary}
\bysame, \emph{Mapping class groups of manifolds with boundary are of finite type}, arxiv:2204.01945, 2024.

\bibitem[Las76]{Lashof}
R.~Lashof, \emph{{Embedding spaces}}, Illinois Journal of Mathematics \textbf{20} (1976), no.~1, 144 -- 154.

\bibitem[Lur09]{HTT}
Jacob Lurie, \emph{Higher topos theory (am-170)}, Princeton University Press, 2009.

\bibitem[Lur17]{HA}
\bysame, \emph{Higher algebra}, \url{https://people.math.harvard.edu/~lurie/papers/HA.pdf}, 2017.

\bibitem[Lur18]{SAG}
\bysame, \emph{Spectral algebraic geometry}, \url{https://www.math.ias.edu/~lurie/papers/SAG-rootfile.pdf}, 2018.

\bibitem[MP11]{MayPonto}
J.~Peter May and Kate Ponto, \emph{More concise algebraic topology localization, completion, and model categories}, Chicago Lectures in Mathematics, The University of Chicago Press, 2011.

\bibitem[NS07]{Nikolov-Segal}
Nikolay Nikolov and Dan Segal, \emph{On finitely generated profinite groups, i: strong completeness and uniform bounds}, Annals of Mathmatics \textbf{165} (2007), 171--238.

\bibitem[Pst23]{Piotr}
Piotr Pstragowski, \emph{Moduli of spaces with prescribed homotopy groups}, Journal of Pure and Applied Algebra \textbf{227} (2023), no.~10, 107409.

\bibitem[Sch78]{Schneebeli}
Hans~Rudolf Schneebeli, \emph{Group extensions whose profinite completion is exact}, Arch. Math \textbf{31} (1978), 244--253.

\bibitem[Ser79]{Serre-arithmetic}
J.-P. Serre, \emph{Arithmetic groups}, London Mathematical Society Lecture Note Series, p.~105–136, Cambridge University Press, 1979.

\bibitem[Sul74]{Sullivan-genetics}
Dennis Sullivan, \emph{Genetics of homotopy theory and the adams conjecture}, Annals of Mathematics \textbf{100} (1974), no.~1, 1--79.

\bibitem[Sul77]{Sullivan-infinitesimal}
Dennis Sullivan, \emph{Infinitesimal computations in topology}, Publications Math\'ematiques de l'IH\'ES \textbf{47} (1977), 269--331 (en). \MR{646078}

\bibitem[Wei99]{Weiss}
Michael Weiss, \emph{Embeddings from the point of view of immersion theory: Part i}, Geometry \& Topology \textbf{3} (1999), no.~1, 67–101.

\bibitem[Wei21]{Weiss-Dalian}
Michael~S Weiss, \emph{Rational pontryagin classes of euclidean fiber bundles}, Geometry \&; Topology \textbf{25} (2021), no.~7, 3351–3424.

\end{thebibliography}

\end{document}